\documentclass[11pt]{article}

\usepackage[T1]{fontenc}
\usepackage[margin=1in]{geometry}
\usepackage{amsmath,amssymb,amsfonts,mathtools}
\usepackage{xcolor}
\usepackage{hyperref}
\usepackage{amsthm}
\usepackage{aliascnt}
\usepackage{enumitem}
\usepackage[capitalize,noabbrev]{cleveref}
\usepackage{dsfont}
\hypersetup{
  hidelinks,
  pdftitle={Repeated Averaging on Graphs},
  pdfauthor={Dong Yao and Lingfu Zhang}
}
\usepackage{etoolbox}
\apptocmd{\thebibliography}{%
  \setlength{\itemsep}{2pt}%
  \setlength{\parskip}{0pt}%
}{}{}

\newtheorem{theorem}{Theorem}[section]
\newaliascnt{prop}{theorem}
\newtheorem{prop}[prop]{Proposition}
\aliascntresetthe{prop}
\crefname{prop}{proposition}{propositions}
\Crefname{prop}{Proposition}{Propositions}
\newaliascnt{lemma}{theorem}
\newtheorem{lemma}[lemma]{Lemma}
\aliascntresetthe{lemma}
\newaliascnt{corollary}{theorem}

\aliascntresetthe{corollary}
\newaliascnt{assumption}{theorem}

\aliascntresetthe{assumption}
\newaliascnt{definition}{theorem}
\newtheorem{definition}[definition]{Definition}
\aliascntresetthe{definition}
\newaliascnt{remark}{theorem}

\aliascntresetthe{remark}

\newcommand{\E}{\mathbb E}
\newcommand{\Pp}{\mathbb P}
\newcommand{\R}{\mathbb R}
\newcommand{\N}{\mathbb N}
\renewcommand{\P}{\mathbb{P}}
\newcommand{\Z}{\mathbb Z}
\newcommand{\T}{\mathbb T}

\newcommand{\cE}{\mathcal E}
\newcommand{\cF}{\mathcal F}

\renewcommand{\hat}{\widehat}

\newcommand{\cP}{\mathcal P}
\newcommand{\one}{\mathbf 1}
\newcommand{\Poi}{\operatorname{Poi}}
\newcommand{\Var}{\operatorname{Var}}
\newcommand{\dist}{\operatorname{dist}}
\newcommand{\tx}{\mathsf{tx}}
\newcommand{\dd}{\mathrm d}
\newcommand{\abs}[1]{\left|#1\right|}
\newcommand{\norm}[1]{\left\|#1\right\|}
\newcommand{\cvp}{\xrightarrow{\Pp}}
\newcommand{\law}{\xrightarrow{\mathrm d}}

\title{Repeated averaging on expanders I: random $d$-regular graph}
\author{
  Dong Yao
  \thanks{School of Mathematics and Statistics/RIMS, Jiangsu Normal University, Xuzhou, Jiangsu, China.
  E-mail: dongyao@jsnu.edu.cn}
  \and
  Lingfu Zhang
  \thanks{\protect\raggedright Division of Physics, Mathematics and Astronomy, Caltech,
  Pasadena, CA, USA. E-mail: lingfuz@caltech.edu}
}
\date{\today}

\begin{document}
\maketitle

\begin{abstract}
Repeated averaging on graphs is a natural prototype of local
stochastic exchange, modeling consensus formation, information
spreading, and wealth redistribution.
Its convergence has attracted longstanding interest, from
Bourgain's complete-graph $L^1$ cutoff question in the 80s (answered in recent years
by Chatterjee, Diaconis, Sly, and Zhang)
and the systematic study of Aldous and Lanoue.
Addressing the still outstanding and widely asked $L^1$-cutoff question on general graphs,
we establish a criterion based on size-biased sampling, and develop a general framework for sparse expanders.
Specifically, we prove $L^1$ cutoff with a Gaussian profile on random
$d$-regular graphs, for any $d\ge 3$ and uniformly over starting vertices.
Remarkably, although the expected mass profile agrees exactly
with the corresponding random-walk distribution, cutoff occurs
strictly later than both the random walk cutoff time and the
universal lower bound attained on complete graphs,
revealing an unexpected entropy gap.
The framework is robust, involving ideas of polymer representations, random geometry,
universal-cover lifting, and adaptive revealing.
Our companion work will extend it to Ramanujan graphs
and bounded-degree configuration models.
\end{abstract}
\setcounter{tocdepth}{1}
\tableofcontents

\section{Introduction}

Consider the repeated averaging process on a finite connected graph
$G=(V,E)$: a real number $X(v)$ is assigned to each vertex $v$, and at each step an edge $e=\{u,v\}$ is chosen uniformly
from $E$; the two endpoint values are both replaced by
$(X(u)+X(v))/2$.  
This is a canonical asynchronous consensus or gossip model: vertices
represent agents and each update is a local exchange of information or
opinion.  The model is also connected to quantum computing through an
analogy with random quantum circuits \cite{MGW};
see \cite{DeGroot,ChatterjeeSeneta,BoydGhoshPrabhakarShah,Shah,%
OlshevskyTsitsiklis} for more background.

The mathematical study goes back to a question of
Bourgain from the 1980s, as recounted in \cite{CDSZ}. A systematical study was by Aldous and Lanoue \cite{AL}.  
So far, its mixing
behavior, scaling limits, and variants have attracted substantial
attention
\cite{MGW,QS,CQS,SauConcentration,Eide,CQS24,CQSMeanField,Spiro}.
A more detailed comparison with the literature is below in
\Cref{sec:context-related-work}.

Our primary interest is on the $L^1$ convergence of this process.
Note that every update is doubly stochastic, so total mass $\sum_{v\in V} X(v)$ is preserved.
Naturally, we assume each $X(v)\ge 0$ and the total mass is $1$. 
Then the limiting profile is $(\frac{1}{n},\ldots, \frac{1}{n})=:\frac{\one}{n}$, where $n=|V|$. 
The $L^1$ distance, i.e., $\sum_{v\in V} |X(v)-1/n|$, lies in $[0,2]$, and is non-increasing at each update.
It gives a canonical linear measure on the failure of consensus.
For example, if taking the repeated averaging as a model of wealth redistribution, the $L^1$ distance is the twice the Hoover index \cite{Hoover1936}.  

Mathematically, the $L^1$ decay is of particular interest, as it reveals a sharp phase transition. 
Typically, one starts with $X(v)=1$ for one vertex $v$, and all other numbers being zero; by linearity of this model, such initial profiles are the extremal ones that can generate all other initial profiles, and are the slowest in terms of $L^1$ decay (see e.g., \cite[Section~2.3, proof of Theorem~1.3]{CDSZ} and \cite[Theorem~2]{MGW}).
In a range of graphs, the $L^1$ distance when starting from such `one-point' initial profile would first stay nearly $2$, and falls to nearly $0$ in a window negligible relative to the waiting time.
Adapting a convention from random walks, this sharp transition is referred to as \emph{cutoff}.
Determining whether this transition occurs, and identifying its
location, window size, and profile of decay within the window, is a central challenge. So far, it has been achieved for a few particular well-structured graphs \cite{CDSZ,CQS}. 

Another reason why the $L^1$ decay and cutoff are of interest is the exact connection with ordinary random walk.  
Consider the continuous time setting where each edge updates through independent Poisson clocks over all edges.
Denoting $X_t(u)$ to be the number at a vertex $u$ at time $t$, if initially $X_0(u)=\mathds{1}[u=v]$ for some vertex $v$, then 
\begin{equation}   \label{eq:Xtuv}
    \E X_t(u) = p_t(v, u),
\end{equation}
where $p_t$ is the transition probability of underlying continuous time random walk (under a corresponding rate);
see e.g., \cite[Lemma~1]{AL}.
Therefore, the repeated averaging process can be viewed as random walk plus extra randomness (and we will repeatedly exploit this fact in this paper).
Random-walk $L^1$ decay, i.e., total-variation mixing, also has a central cutoff phenomenon, which has been studied extensively since
the foundational work on card shuffling, through landmark results on
sparse graphs, and  to recent structural criteria
\cite{DiaconisShahshahani,AldousDiaconis,DiaconisCutoff,%
LS,LubetzkyPeres,%
SalezNonnegativeCurvature,PedrottiSalezLocalProduct}.

We study $L^1$ mixing on sparse expander graphs, by which we mean
bounded-degree graphs with a uniform random-walk spectral gap
(equivalently, in the regular setting, uniform edge expansion).
Such graphs are of particular interest: for random walk, expansion forces mixing on the logarithmic scale, and is closely related to cutoff.
More precisely, although expansion does not by itself
force cutoff (see e.g., examples in \cite{LubetzkySlyExplicitExpanders}),  random walk cutoff is shown for many canonical sparse expanders, such as random
$d$-regular graphs \cite{LS} and all Ramanujan graphs
\cite{LubetzkyPeres}. 
There is a  intriguing conjecture that every vertex-transitive
expander exhibits cutoff, which remains open \cite[Conjecture~1.7]{SalezVarentropyExpanders}.  
For a bounded-degree expander sequence $(G_n)$, Salez proved that the
associated continuous-time simple random walks exhibit total-variation 
cutoff if and only if a varentropy bound is satisfied \cite{SalezVarentropyExpanders}.

For repeated averaging, the $L^1$-cutoff problem on sparse expanders, in particular random $d$-regular graphs, remained largely open. It has been broadly discussed and asked; see e.g., \cite[Proposition~5, and Section~1.3]{MGW} and \cite[Section~1.2.3]{CQSRandomRegular}.
More concretely, here are the questions:
\begin{enumerate}
\item Does repeated averaging exhibit $L^1$ cutoff?
\item If cutoff occurs, at what time does it happen?
\item If cutoff occurs, what are its window size and profile?
\end{enumerate}
For many sparse expanders, the associated random walk has cutoff at
its entropic time, with square-root (of mixing time) window size and a Gaussian profile. 
It is natural to guess repeated averaging has cutoff at the same time with similar window size and profile.  
In fact, \eqref{eq:Xtuv} implies one-side: that the repeated averaging cutoff time should be lower bounded by the corresponding random walk cutoff time.

The same cutoff time prediction is true on the hypercube \cite{CQS}, but fails on
the complete graph \cite{CDSZ}.  More generally,
\cite[Theorem~1]{MGW} proves a universal lower bound for cutoff time, for every connected graph (with given number of vertices), so a random-walk prediction below
this fragmentation scale cannot be correct.  For random $d$-regular
graphs, \cite[Section~1.2.3]{CQSRandomRegular} compares the random-walk
and entropy lower bounds and observes that the latter is stronger for
$d>7$, suggesting a possible finite-degree change of mechanism.

We develop a toolkit for analyzing the repeated averaging process on sparse expanders.
In this paper, we settle the three outstanding questions above on uniformly random
$d$-regular graphs, the canonical random sparse expander.  We prove
$L^1$ cutoff for every $d\ge3$.  
The cutoff time is shown to be at the entropic time of a typical
quenched mass on the infinite $d$-regular tree. 
Moreover, the quenched mass has a decay rate strictly smaller than the random walk transition probability decay rate, therefore
the cutoff time is strictly larger the random walk cutoff time by some multiplicative constant. Moreover, the cutoff window has square-root window size with a Gaussian profile.
We also prove that the cutoff time is strictly larger than the universal lower bound from \cite{MGW} for each fixed $d$, and  approaches that bound as $d\to \infty$. 

In our companion work \cite{YZ2}, we will further adapt the developed techniques to derive repeated averaging cutoffs for Ramanujan graphs and configuration models (see \Cref{ssec:subseq} below).

We next state the precise results of this paper.
We will always work under continuous time setting: there is an independent Poisson clock at each edge; and whenever it rings, the two numbers incident to the edge get averaged.
The discrete time version then follows from a simple de-Poissonization.

\subsection{Random $d$-regular graph}
Unless otherwise stated, on a $d$-regular graph, each edge
rings at rate $1/d$, so that the total ringing rate is $n/2$.

\begin{theorem}
\label{thm:main-rdreg}
Fix $d\ge3$, and let $n\to\infty$ through integers for which $nd$ is even. 
Let $G_n=(V_n, E_n)$ be a uniformly random simple connected $d$-regular graph on $n$ vertices. 
For each vertex $v$, let $X_t^{(n,v)}$ denote the vector of the numbers at time $t$, for the repeated averaging starting from $X_0^{(n,v)}(u)=\mathds{1}[u=v]$.
Then there are constants $\kappa_*, \sigma>0$, depending only on $d$, such that for any $a\in\R$ and $\epsilon>0$, 
\[
  \max_{v\in V_n}
  \P\Big[
    \abs{
      \norm{X_{\tau_n(a)}^{(n,v)}-\one/n}_1
      -2\Phi(-a)
    }>\epsilon \Big| G_n
  \Big]
  \to 0,
\]
in probability.
Here $\Phi$ is the CDF of the standard normal distribution, 
and
\begin{equation}\label{eq:profile-time}
  \tau_n(a)
  :=
  \frac{\log(n)}{\kappa_*}
  +
  \frac{a\sigma}{\kappa_*^{3/2}}\sqrt{\log(n)}.
\end{equation}
\end{theorem}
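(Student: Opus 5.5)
The plan has three stages: first a size-biased criterion that reduces $L^1$ distance to one scalar statistic, then a quenched analysis of that statistic on the infinite $d$-regular tree, and finally a transfer from the tree to $G_n$.

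\emph{Size-biased criterion.} Using $\norm{X-\one/n}_1 = 2\sum_u (X(u)-1/n)_+$, one sees that $L^1$ cutoff at time $t$ is essentially equivalent to the following: if $U$ is a vertex drawn with probability $X_t(U)$ (size-biased sampling), then $nX_t(U)$ is either $\gg1$ or $\ll1$ with high probability. The distance is then asymptotically $2\,\P[nX_t(U)>1\mid X_t]$. So the task is to find the law of $\log X_t(U)$ under size-biasing and show it concentrates at $-\kappa_* t + \sigma\sqrt t\,Z$, with $Z$ standard Gaussian. Two points are needed here. First, a concentration estimate showing that this size-biased probability is close to its annealed version over the clock randomness. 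Second, control of the regime where $X_t(U)\approx 1/n$; I would handle it by a variance or second-moment bound at times slightly past $\tau_n$, where the mass is genuinely near-uniform.

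\emph{Tree analysis (polymer representation).} Writing $X_t(u)$ as a sum over backward paths in the clock process gives a polymer representation. In it, $X_t(U)$ under size-biasing is the mass carried by a "spine" particle. On $\T_d$ the spine moves like the random walk but carries a multiplicative weight of $1/2$ per averaging, together with contributions from the other branches. Define $\kappa_*$ as the almost-sure decay rate $-\lim t^{-1}\log X_t(\text{spine})$; existence follows from a subadditive or renewal argument over regeneration times of the spine's distance from the root, since the spine is transient on $\T_d$. A CLT for sums over regeneration blocks then gives $\sigma$. Strictness $\kappa_*<$ the random-walk entropy rate would follow from a strict Jensen inequality, because the randomness of the clocks makes $\E\log X < \log \E X$ by a linear-in-$t$ amount. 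I expect this to be established in the body of the paper as a separate proposition.

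\emph{Transfer to $G_n$.} Up to time $(1-\delta)\log n/\kappa_*$, a radius-$R$ ball is tree-like for $R=c\log n$ with $c$ small, uniformly over $v$, except for a bounded number of cycles. Lifting to the universal cover dominates the mass on $G_n$ by the projection of the tree process, and that projection is exact while the relevant paths avoid cycles. The main obstacle is the window $t\approx \tau_n(a)$. At that time the mass has spread to distance comparable to $\log n$, beyond the tree-like radius, so the pure tree computation no longer applies. I would split time as $t=t_1+t_2$. For the first $t_1=(1-\delta)\tau_n$ I use the tree coupling, which already fixes the Gaussian fluctuation of $\log X(U)$. Over the remaining $t_2=\delta\log n$, I use adaptive revealing of the graph along the spine's path together with expansion, so that the spine's mass decays at rate $\kappa_*$ with negligible fluctuation, the cycle effects being lower order. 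The final uniformity over $v$ comes from a union bound, which requires failure probabilities $o(1/n)$ for the local tree-likeness and revealing estimates.
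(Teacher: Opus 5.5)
There are genuine gaps in the proposal, the most serious in the transfer to $G_n$.

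\textbf{The transfer step.} Because $\kappa_*<\kappa_{\rm RW}$, the cutoff time $\log n/\kappa_*$ is strictly later than the random-walk mixing time $\log n/\kappa_{\rm RW}$. At time $(1-\delta)\tau_n(a)$ the spine has travelled distance about $\frac{d-2}{2d}(1-\delta)\log n/\kappa_*$, which exceeds $\log_{d-1}n$ once $\delta$ is small. By then the annealed profile $\E X_t(\cdot)=p_t(v,\cdot)$ is already essentially uniform on $G_n$.

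So there is no tree coupling inside a ball of radius $c\log n$ up to time $(1-\delta)\tau_n(a)$; such a coupling reaches only times of order $c\log n$. You cannot fix the Gaussian fluctuation in a tree-like phase and then merely propagate it by expansion.

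What is needed is a comparison of the spine weight with its tree counterpart for all times up to $\tau_n(a)$. This has two parts. First, localize the polymer in a thin space-time tube around the size-biased walk; the graph is tree-like along that trajectory, so the tube lifts injectively to the universal cover. Second, bound the weight of paths that leave the spine and rejoin it after wrapping around the graph.

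These wrapped excursions are not lower order a priori. The paper controls them in two regimes:
\begin{itemize}
\item Excursions of intermediate length are handled by heat-kernel decay of the walk at a rate $\kappa_2\in(\kappa_*,\kappa_{\rm RW})$. This makes them $n^{-c}$ relative to the spine weight, whose decay rate stays below $\kappa_1<\kappa_2$.
\item Long excursions are handled by total-variation mixing. They contribute $O((\log n)^4/n)$, below the threshold $(\log n)^5/n$ fed into the criterion.
\end{itemize}
This is where $\kappa_*<\kappa_{\rm RW}$ is indispensable, and your plan offers no substitute for it.

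Uniformity over $v$ is also not a union-bound matter. Vertices on short cycles exist with probability bounded away from zero, so starting-vertex tree-neighbourhood estimates fail for them with graph-probability that is not $o(1/n)$. The paper instead imposes only graph-level conditions: tree excess at most one in all $R_n$-balls, a spectral gap, and heat-kernel bounds. It then runs a burn-in of length $8K_n$ to reach a vertex with a tree neighbourhood and uses convexity of the $L^1$ norm.

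\textbf{The tree CLT.} Regeneration times of the spine's radial process do not cut $-\log X_t(W_t)$ into independent blocks. The identity $X_t(W_t)=\sum_u X_s(u)w_{s,t}(u,W_t)$ includes mass sitting off the spine at time $s$, behind it on the geodesic or in side branches, that flows back onto it later. So the past environment keeps influencing the increments.

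A genuine forgetting estimate is required. The paper proves $\E|P_{s+t}-P^{(s)}_t|^2\le Ce^{-ct}$ by a four-path switching argument. There, non-switchable path pairs have expected weight at most a product of transition probabilities, decaying at rate $\kappa_{\rm RW}$, against a normalization of order $e^{-\kappa_*t}$. It then applies an $m$-dependent CLT with slowly growing $m$. Again $\kappa_*<\kappa_{\rm RW}$ is the engine of the argument, not a side remark.

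That inequality does not come from a ``linear strict Jensen gap'', which would be circular. What one can actually prove is an entropy deficit $H(p_t(o,\cdot))-\E H(X_t)\ge c\sqrt t$, obtained by revealing clocks and tracking the martingale $\E[X_t(v)\mid\cF_s]$. The infimum formula $\kappa_*=\inf_t\E H(X_t)/t$ then turns this into a strict inequality of rates.

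Two further ingredients are missing:
\begin{itemize}
\item The statement asserts $\sigma>0$, which needs its own argument. The paper uses a Mermin--Wagner-type perturbation of clock rates in a tube to show unbounded variance, then upgrades this via covariance stabilization.
\item Your ``concentration of the size-biased probability'' requires a two-walk CLT, i.e.\ asymptotic independence of two size-biased samples from the same environment. The paper proves this on the tree via separation into disjoint branches and then transfers it to $G_n$.
\end{itemize}
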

In words, this is a quenched result, in the sense of conditioning on the sequence of graphs. It is plausible to refine our analysis to upgrade the in probability convergence to almost sure convergence. We do not pursue this technical strengthening here.

The constants $\kappa_*$ and $\sigma$ can be determined in terms of repeated averaging on the infinite $d$-regular tree $\T_d$ (see \Cref{ssec:tree}). 
We also have the following bounds on the rate $\kappa_*$.
\begin{theorem}   \label{thm:kappasbounds}
We have $0<\kappa_*<\min\{\kappa_{\rm RW}, \log(2)\}$, where $\kappa_{\rm RW}:=\frac{d-2}{2d}\log(d-1)$ is the random walk entropy rate on $\T_d$. 
Moreover, $\kappa_*\to \log(2)$ as $d\to\infty$.
\end{theorem}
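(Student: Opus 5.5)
We do not yet have the precise definition of $\kappa_*$ (it is given in \Cref{ssec:tree}). We will take it to be the almost-sure quenched entropy rate on $\T_d$. That is, for the process on $\T_d$ started from unit mass at the root $o$, consider a vertex $U_t$ sampled by size-biasing, i.e.\ with probability $X_t(U_t)$. We set
\[
\kappa_*:=\lim_{t\to\infty}-\tfrac1t\log X_t(U_t),
\]
where the limit exists almost surely and in $L^1$ by a subadditive ergodic argument. With this definition, the three claims are proved separately: the strict upper bound $\kappa_*<\kappa_{\rm RW}$, the strict upper bound $\kappa_*<\log 2$, and the positivity of $\kappa_*$ together with its limit as $d\to\infty$.

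\emph{Bound $\kappa_*\le\kappa_{\rm RW}$ and strictness.} By \eqref{eq:Xtuv}, $\E X_t(u)=p_t(o,u)$. Averaging over $u$ with weights $X_t(u)$ gives
\[
\E\sum_u X_t(u)\log X_t(u)\;\ge\;\sum_u p_t(o,u)\log p_t(o,u).
\]
This holds by convexity of $x\log x$ and Jensen applied vertexwise. It says the expected quenched entropy is at most the annealed (random-walk) entropy, whose rate is $\kappa_{\rm RW}$.

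For strictness, I would quantify the Jensen gap. At each update, the conditional variance of $X_t(u)$ given the environment far away is a positive fraction of $X_t(u)^2$. This comes from local randomness: whether the edges near $u$ have rung an even or odd number of times in a unit window. Hence $\E[X\log X]-\E X\log\E X\ge c\,\E X$ for the bulk of the mass. Summing this over unit time blocks, and using the Markov property at block boundaries, gives an entropy deficit linear in $t$. A cleaner route is a decomposition: the mass at $u$ is a sum over "polymer" paths, each path carrying weight $2^{-(\#\text{rings along it})}$. The randomness of the ring counts produces a strictly positive rate loss.

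\emph{Bound $\kappa_*<\log 2$.} Each ring at least halves nothing, but mass at a vertex is halved only by rings on its incident edges. A unit of mass therefore splits at most at rate equal to the total rate of incident edges, which is $d\cdot(1/d)=1$. At each split it divides into two halves, so the number of "atoms" grows like $2^{N_t}$, where $N_t\sim\Poi(t)$ along a size-biased lineage. This heuristic gives $\kappa_*\le\log 2$. Strictness comes from recombination on the tree. With positive rate, an edge $\{u,w\}$ rings twice with no other ring at $u$ or $w$ in between; the second ring then produces no new entropy, since both endpoints already hold equal values. This yields a rate loss of order $1/d^{2}\cdot$const$>0$.

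\emph{Positivity and the limit $\kappa_*\to\log 2$.} Positivity holds because $\max_u X_t(u)$ decays exponentially on $\T_d$. Indeed, $\E\sum_u X_t(u)^2$ decays at the rate of the return probability $p_{2t}(o,o)$, which is exponential by non-amenability; Markov's inequality then transfers this to the size-biased sample. For $d\to\infty$, the recombination events above have probability $O(1/d)$ per ring. The size-biased lineage therefore sees a halving at essentially every incident ring, giving $-\log X_t(U_t)\approx N_t\log 2$ and hence $\kappa_*\ge\log2-O(1/d)$.

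\emph{Main obstacle.} I expect the strict inequality $\kappa_*<\kappa_{\rm RW}$ to be hardest. It needs a linear-in-$t$ Jensen gap under the size-biased measure. My approach would be a renewal/regeneration structure along the size-biased lineage, which lets the unit-window variance bound be summed with a uniform positive contribution per window.
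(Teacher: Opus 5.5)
Your arguments for $\kappa_*>0$, for $\kappa_*\le\kappa_{\rm RW}$, and for $\kappa_*<\log 2$ use the same mechanisms as the paper. For the last of these, that is the domination $-\log_2 X_t(W_t)\le N_t$ with $N_t\sim\Poi(t)$, improved by a linear number of redundant repeat rings of the same edge. The genuine gap is the strict inequality $\kappa_*<\kappa_{\rm RW}$.

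First, you do not need a linear-in-$t$ deficit. The subadditivity you invoke gives $\kappa_*=\inf_{t>0}\E H(X_t)/t$. Since $H(p_t(o,\cdot))=\kappa_{\rm RW}t+O(\log t)$, a deficit $H(p_t(o,\cdot))-\E H(X_t)\gg\log t$ at one large $t$ already suffices.

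Second, your sketch does not produce any growing deficit. A one-level Jensen gap (conditional variance of $X_t(u)$ given a far environment) yields only an $O(1)$ deficit. ``Summing over unit blocks by the Markov property'' is exactly the missing step. Randomness revealed around time $s$ enters the final entropy only through $M_s(v)=\E[X_t(v)\mid\cF_s]=\sum_u X_s(u)p_{t-s}(u,v)$. At a ring of $\{u,u'\}$ this jumps by $\tfrac12(X_{s-}(u')-X_{s-}(u))(p_{t-s}(u,v)-p_{t-s}(u',v))$. The later annealed evolution washes such fluctuations out unless two things hold:
\begin{enumerate}
\item the heat-kernel gradient is comparable to the kernel itself (true on $\T_d$ along the geodesic toward $v$ by the local limit theorem, but not on, say, $\Z^d$);
\item the mass differences $X_{s-}(u')-X_{s-}(u)$ are bounded below in an averaged sense.
\end{enumerate}
The paper does exactly this. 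It writes $H(p_t)-\E H(X_t)=\sum_v\bigl(F(M_0(v))-\E F(M_t(v))\bigr)$ with $F(x)=-x\log x$ and uses the second-order concavity bound along this martingale. It then applies Cauchy--Schwarz to replace the quenched difference by the annealed one $p_s(o,u')-p_s(o,u)$, and uses tree local-CLT estimates for $s\in[t/3,2t/3]$ with $u$ on the geodesic to $v$. This gives a deficit $\ge c\sqrt t$, which the infimum formula turns into $\kappa_*<\kappa_{\rm RW}$. Incidentally, since a repeated ring of the same edge is redundant, the relevant local randomness is ``at least one ring'', not parity.

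Your $d\to\infty$ lower bound is also only heuristic. Exact halving fails at every incident ring across an edge whose far endpoint already carries mass (mass spread there earlier, or a return of the walk), not just at immediate recombinations. At such rings $X(W)$ can even increase. So ``$-\log X_t(W_t)\approx N_t\log 2$'' needs two ingredients. One is a regeneration argument showing the walk's current vertex typically has at least $d-d^{3/4}$ massless children. The other is a moment bound such as $\E[X_{i+1}(W_{i+1})/X_i(W_i)]\le 1$, combined with Cauchy--Schwarz, to control the bad intervals. This is how the paper obtains $\kappa_*(d)\ge\log 2-Cd^{-1/16}$.

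Two minor points. First, $\E\sum_u X_t(u)^2=\P[W_t=\hat W_t]$ for two walks driven by the same clocks is not $p_{2t}(o,o)$, because the pair is sticky at a common vertex. It still decays exponentially, since $\dist(W_t,\hat W_t)$ drifts outward. Second, an almost-sure limit should be taken along the coupled walk $W$ via pathwise subadditivity, not for a freshly sampled $U_t$ at each $t$.
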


The universal lower bound on cutoff time in \cite{MGW} would translate to $\kappa_*\le \log(2)$. 
The corresponding random walk, on other other hand, has total-variation cutoff at $\frac{\log(n)}{\kappa_{\rm RW}}$, proved by Lubetzky and Sly \cite{LS}.
Therefore, as alluded to above, Theorem \ref{thm:kappasbounds} in particular imply that the cutoff time $\frac{\log(n)}{\kappa_*}$ is larger than either the universal lower bound or the random walk cutoff time. This is beyond some natural speculations, as in \cite[Section~1.2.3]{CQSRandomRegular}.

The strict inequality $\kappa_*<\kappa_{\rm RW}$ can be understood as a result of the extra layer of randomness on top of random walks, and that these two layers of randomness interact non-trivially. This will be more evident from the size-biased sampling perspective, to be discussed  in \Cref{ssec:tree}.

The $\kappa_*<\log(2)$ part actually follows from a more general bound below, applicable to any regular graph.
\begin{theorem}
\label{thm:intro-general-lower}
For each $n\in\N$, let $G_n=(V_n, E_n)$ be a connected $d_n$-regular graph with $|V_n|=n$. For repeated averaging with  rate-$1/d_n$ edge clocks, and $0<\epsilon<1$, define
\begin{equation}\label{eq:tn_epsilon}
      t_{n,\epsilon}
  :=
  \inf\left\{
    t\ge0:
    \max_{v\in V_n}
    \E\norm{X_t^{(n,v)}-\one/n}_1\le2\epsilon
  \right\},
\end{equation}
where $X_t^{(n,v)}$ is defined in the same way as \Cref{thm:main-rdreg}.
Then we have
\begin{equation}\label{eq:general_mixing_lower}
      \liminf_{n\to\infty}
  \frac{t_{n,\epsilon}}{\log_2(n)}\ge1.
\end{equation}
If $d_n\equiv d$ is fixed, there is $\underline c_d>1$, depending only
on $d$, such that
\begin{equation}\label{eq:bounded_mixing_lower}
    \liminf_{n\to\infty}
  \frac{t_{n,\epsilon}}{\log_2(n)}\ge\underline c_d.  
\end{equation}
\end{theorem}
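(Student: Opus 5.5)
The plan is to compare the repeated averaging to a tagged unit of mass. The key observation is that, from the point of view of a size-biased sample of the mass, the value at the sampled vertex can shrink by at most a factor $2$ per clock ring there. Rings at a vertex occur at rate $d_n\cdot(1/d_n)=1$.

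\emph{Step 1 (tagged particle).} Fix the edge clocks on $[0,t]$. Run a particle $U_s$ started at $v$. Whenever an edge incident to $U_s$ rings, the particle moves to a uniformly chosen endpoint of that edge, using independent fair coins. By induction over ring times, the quenched law of $U_t$ given the clocks is exactly $X_t^{(n,v)}$: averaging on $\{w,x\}$ splits the mass at $w$ and at $x$ equally between them. This refines \eqref{eq:Xtuv}.

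\emph{Step 2 (lower bound on the value at the tagged vertex).} Let $N_t$ be the number of rings at the particle's current location up to time $t$. Between such rings the value at $U_s$ is unchanged. At a ring on $\{w,x\}$ with $U=w$, both endpoints receive $(X(w)+X(x))/2\ge X(w)/2$. Hence $X_t(U_t)\ge 2^{-N_t}$ deterministically. Since the coins are independent of the clocks and each vertex is rung at total rate $1$, the strong Markov property gives $N_t\sim\Poi(t)$ under the annealed law.

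\emph{Step 3 (from large values to large $L^1$ distance).} Set $A=\{u: X_t(u)>K/n\}$. Then
\[
\norm{X_t-\one/n}_1\ge 2\sum_{u\in A}\bigl(X_t(u)-1/n\bigr)\ge 2(1-1/K)\,X_t(A).
\]
Taking expectations and using Steps 1 and 2,
\[
\E X_t(A)=\P\bigl(X_t(U_t)>K/n\bigr)\ge \P\bigl(N_t<\log_2 n-\log_2 K\bigr).
\]
If $t\le(1-\delta)\log_2 n$, Poisson concentration makes this probability tend to $1$. So $\E\norm{X_t-\one/n}_1\ge 2(1-1/K)-o(1)$, which exceeds $2\epsilon$ once $K$ is large, for any $\epsilon<1$. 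This proves \eqref{eq:general_mixing_lower}.

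\emph{Step 4 (bounded degree, the main obstacle).} For \eqref{eq:bounded_mixing_lower} we need to show that a positive fraction of the rings counted by $N_t$ are ``wasted''. Here I would refine the counting so that only \emph{effective} rings are charged a halving. The first wasted event to try is a ring on the same edge $\{w,x\}$ that last rang, with no ring at $w$ or $x$ in between. At that moment $X(w)=X(x)$, so the averaging does nothing and the value at $U$ does not drop.

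For fixed $d$, after each ring at the particle's location, with probability at least some $p_d>0$ the next ring touching $\{U,\text{partner}\}$ is the same edge. This holds because the competing rate is $2-2/d$ against $1/d$, and it holds uniformly regardless of where the particle currently sits. The number of effective rings is then stochastically dominated by a Poisson-type count of rate $1-q_d$ for some $q_d>0$; I would make this precise via a renewal or thinning argument along the particle's trajectory. Rerunning Step 3 with this count gives $\underline c_d=1/(1-q_d)>1$.

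The delicate point is handling the Markov dependence between consecutive rings cleanly; in particular, the particle may jump to the partner vertex, and ring rates there must be re-examined. I expect this bookkeeping, rather than the concentration estimate, to be the main work.
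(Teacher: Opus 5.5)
Your proposal is correct and follows essentially the same route as the paper. For \eqref{eq:general_mixing_lower} you use the size-biased walk with $-\log_2 X_t(W_t)$ dominated by $\Poi(t)$, and for fixed $d$ the same redundant-ring mechanism: the edge $e_*$ rings again before any other edge touching its endpoints, so the averaging is void. The only difference is bookkeeping. The paper packages the redundant ring into unit-time events $\mathcal E_i$ with a uniform conditional lower bound $p_d$ and a Chernoff bound, whereas you track it ring by ring with conditional probability $1/(2d-1)$ and a binomial comparison, which works equally well.
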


For regular graphs, the first inequality sharpens the general
entropy lower bound of \cite{MGW} by removing its fixed-accuracy loss $\epsilon$ in
the leading coefficient.  The strict fixed-degree inequality is a further
step.
More precisely, the $\log(2)$ bound stems from the fact that, each edge ring can reduce the incident numbers of two endpoints of ringing edge by at most a half; the bounded degree condition then enforces that, at least for a portion of the steps, the decay is slower than by a half.
The $\kappa_*\to \log(2)$ part of \Cref{thm:kappasbounds} then precisely says that, for random $d$-regular graph, such slow-down disappears in the $d\to\infty$ limit.

\subsection{A general criterion for cutoff}   \label{ssec:crit}
The mechanism behind \Cref{thm:main-rdreg} is a general entropic
criterion for repeated-averaging cutoff.  Let
$G_n=(V_n,E_n)$ be a sequence of connected graphs with $|V_n|=n$.
Let 
$\lambda_{2,n}$ be the second-smallest eigenvalue of the unnormalized
combinatorial Laplacian $L_n$ (where
$L_nf(v)=\sum_{u\sim v}(f(v)-f(u))$).
Since $G_n$ is not necessarily a regular graph, we let each edge ring with rate $1$, in repeated averaging.  
Let $X_0^{(n)}:V_n\to \R_{\ge 0}$
be a possibly random probability vector on $V_n$, independent of the Poisson clocks, satisfying $\sum_{v\in V_n}X_0^{(n)}(v)=1$.
Let $X_t^{(n)}$ be the averaging profile at any time $t>0$.

There is a natural random walk coupled to the repeated averaging.
To begin with, conditional on $X_0^{(n)}$, choose $W_0^{(n)}$ with law $X_0^{(n)}$.
Whenever an edge incident to $W_t^{(n)}$ rings, independently let the
walk stay at its current vertex or cross that edge, each with
probability $1/2$.  Then it follows that
\begin{equation}   \label{eq:WXsample}
  \P\big[ W_t^{(n)}=v \big| X_t^{(n)}\big]
  =X_t^{(n)}(v),
  \qquad \forall v\in V_n.
\end{equation}
We write
\[
  X_t^{*,(n)}:=X_t^{(n)}(W_t^{(n)}).
\]
In words, $X_t^{*,(n)}$ is the number seen by this walk. We can also think of $X_t^{*,(n)}$ as a size-biased sampling from the coordinates of $X_t^{(n)}$, or as
the number containing a typical unit of the mass (rather
than at a uniformly sampled vertex).  
\begin{theorem}
\label{thm:intro-entropic-criterion}
If  there is a positive sequence $\{t_n,n\geq 1\}$ such that
\begin{equation}\label{eq:hypo_general_cri}
      \liminf_{n\to\infty}
  \frac{\lambda_{2,n}t_n}{\log(n)}>0,\qquad
  nX_{t_n}^{*,(n)}\cvp\infty,
  \qquad
  n^{1-\eta}X_{t_n}^{*,(n)}\cvp0
  \quad\text{for every }\eta>0,
\end{equation}
then there is a sequence $s_n=o(t_n)$ such that
\begin{equation}   \label{eq:Xt20}
  \norm{X_{t_n}^{(n)}-\one/n}_1\cvp2,\qquad
  \norm{X_{t_n+s_n}^{(n)}-\one/n}_1\cvp0.
\end{equation}
\end{theorem}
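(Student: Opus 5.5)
The proof has two halves, one for each limit in \eqref{eq:Xt20}.

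\textbf{Lower bound (distance near $2$ at time $t_n$).} I would use \eqref{eq:WXsample}. Write
\[
\norm{X-\one/n}_1 = 2\sum_v (X(v)-1/n)_+ \ge 2\sum_{v:\,X(v)\ge K/n} X(v)\,(1-1/K).
\]
The sum $\sum_{v:\,X(v)\ge K/n}X(v)$ equals $\P[nX^{*}_{t_n}\ge K\mid X_{t_n}]$. By the hypothesis $nX^{*,(n)}_{t_n}\cvp\infty$, this conditional probability has expectation tending to $1$. Being bounded by $1$, it therefore tends to $1$ in probability. Letting $K\to\infty$ slowly gives $\norm{X_{t_n}-\one/n}_1\cvp 2$. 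This half uses neither the spectral condition nor the upper hypothesis on $X^*$.

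\textbf{Upper bound (distance near $0$ at time $t_n+s_n$).} The idea is to condition on $\mathcal F_{t_n}$ and decompose $X_{t_n}=Y+Z$.
\begin{itemize}
\item $Y(v)=X_{t_n}(v)\mathds 1[X_{t_n}(v)\le n^{-1+\eta}]$ is the part with small entries.
\item $Z$ is the remainder. By the third hypothesis, $\norm{Z}_1=\P[X^*_{t_n}>n^{-1+\eta}\mid X_{t_n}]\cvp 0$ for each fixed $\eta$, hence also along a slowly decreasing $\eta_n\to0$.
\end{itemize}
The dynamics after $t_n$ is a random doubly stochastic linear map $M$, independent of $\mathcal F_{t_n}$, and it is an $L^1$ contraction. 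Hence $\norm{MZ-\norm{Z}_1\one/n}_1\le 2\norm Z_1\to0$, so $Z$ is negligible.

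For $Y$, I would use $L^2$. Subtracting its mean, $Y-\bar Y\one$ with $\bar Y=\norm Y_1/n$ satisfies
\[
\norm{Y-\bar Y\one}_2^2\le \sum_v Y(v)^2\le n^{-1+\eta}\norm{Y}_1\le n^{-1+\eta}.
\]
For the averaging process, the standard second-moment computation (e.g.\ \cite{AL}) gives
\[
\E\norm{M f}_2^2\le e^{-c\lambda_{2,n}s}\norm f_2^2
\]
for mean-zero $f$ and some absolute constant $c$, since each ring of edge $\{u,v\}$ decreases $\norm f_2^2$ by $(f(u)-f(v))^2/2$. By Cauchy--Schwarz, $\norm{\cdot}_1\le \sqrt n\norm\cdot_2$, so
\[
\E\norm{M(Y-\bar Y\one)}_1\le \sqrt{n}\, n^{(-1+\eta)/2}e^{-c\lambda_{2,n}s/2}=n^{\eta/2}e^{-c\lambda_{2,n}s/2}.
\]
Now choose $s_n$ so that $c\lambda_{2,n}s_n\ge 2\eta_n\log n\cdot\omega_n$ for a slowly growing $\omega_n$. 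By the first hypothesis $\lambda_{2,n}\ge \delta\log n/t_n$, so we may take $s_n\asymp \eta_n\omega_n t_n$, which is $o(t_n)$ provided $\eta_n\omega_n\to0$. The right-hand side then tends to $0$.

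Combining the two pieces with the fact that $\norm Y_1+\norm Z_1=1$, $X_{t_n+s_n}$ is within $o_P(1)$ of $\one/n$ in $L^1$.

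\textbf{Main obstacle.} The delicate point is the diagonal extraction: $\eta_n\to0$ slowly enough that $\norm{Z}_1\cvp0$ still holds along $\eta_n$, while $s_n=o(t_n)$. This is a routine subsequence argument, since convergence holds for each fixed $\eta$. One must also check the $L^2$ contraction constant for the rate-$1$ continuous-time averaging, including the expectation taken over $M$ conditional on $\mathcal F_{t_n}$; this is standard.
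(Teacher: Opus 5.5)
Your proof is correct and is essentially the paper's argument: the paper applies \Cref{prop:threshold-L1} at time $t_n$, and that proposition's proof is your threshold split. The mass above $\theta/n$ bounds the $L^1$ distance from below, and for the upper bound the low part is handled by the $L^2$ contraction of \Cref{lem:L2-contraction} and the high part by the trivial bound $2M_\theta$, with $\theta_n\to\infty$ for the first limit and $\theta_n=n^{\eta_n}$ for the second.

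There are two minor differences. You get concentration of the above-threshold mass by Markov on a $[0,1]$-valued variable rather than the two-walk second moment $\pi_2$; this is legitimate here because $\pi_1\to1$ or $\pi_1\to0$. Also, with your choice of $s_n$ you need $\eta_n\omega_n\log n\to\infty$, which holds automatically if $\eta_n\downarrow0$ slowly enough, or can be ensured by an additive term as the paper does with $\frac{8}{\lambda_{2,n}}\log(1/\epsilon_n)$.
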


The two assumptions on $X_{t_n}^{*,(n)}$ say that a typical unit of
mass lies in a coordinate of size slightly above order $1/n$.  Thus almost
all mass is carried by $o(n)$ coordinates at time $t_n$, so the
$L^1$ distance at time $t_n$ is still nearly $2$.  The spectral-gap assumption will then smooth the profile and bring the $L^1$ distance to nearly $0$, in $o(t_n)$ additional time.

We state the criterion here to keep the idea of the mechanism transparent. 
It can be upgraded, so that the next order fluctuation of $X_t^{*,(n)}$  determines cutoff profiles; see \Cref{prop:threshold-L1} below. 

The criterion can also be viewed as entropy concentration.  In fact, the assumptions on $X_{t_n}^{*,(n)}$ can be understood as a concentration of the random variable $-\log X_{t_n}^{*,(n)}$. By \eqref{eq:WXsample}, we have
\[
  \E\big[-\log X_t^{*,(n)}\big| X_t^{(n)}\big]
  =-\sum_{v\in V_n}X_t^{(n)}(v)\log X_t^{(n)}(v).
\]
The right-hand side is entropy of the profile at time $t$.
Then at $t=t_n$ it also concentrates around $\log(n)$.
This is in the spirit of Salez's varentropy criterion for random-walk cutoff on expanders
\cite{SalezVarentropyExpanders}.  The essential difference is the
extra layer of randomness here: once a graph is fixed, a random-walk
heat kernel is deterministic, whereas $X_t^{(n)}$ remains random
through the graphical environment and the size-biased walk samples
from that same environment.  
To control this common-environment
randomness, we need to consider two walks at the profile level, as in \Cref{prop:threshold-L1} below.

\subsection{Infinite $d$-regular tree}    \label{ssec:tree}

The criterion above reduces the cutoff problem to understanding the scaling limit of the size-biased mass: a law of large numbers locates cutoff, while a central limit theorem gives the
next-order window and Gaussian profile. 
To achieve this for random $d$-regular graphs, we exploit the fact such graphs are (with hight probability) locally tree-like, as established in \cite{LS}.
We first
establish the required size-biased mass limiting theorem on the infinite $d$-regular tree, and then transfer it to the finite graph using the the locally tree-like results.

Fix $d\ge3$, and let $\T=\T_d$ be the infinite $d$-regular tree rooted at
$o$. 
Consider repeated averaging on $\T$ with edge rate $1/d$, and let $X_t^{\T}$ be the time $t$ averaging profile started from $X_0^{\T}(v)=\mathds{1}[v=o]$.
Let $(W_t^{\T})_{t\ge 0}$ be the associated size-biased walk.
More precisely, take $W_0^{\T}=o$, the walk jumps cross an edge with probability $1/2$, whenever an edge incident to it rings.
Denote $X_t^{*,\T}:=X_t^{\T}(W_t^{\T})$.

\begin{theorem}
\label{thm:intro-tree-clt}
As $t\to\infty$, $\frac{-\log X_t^{*,\T}-\kappa_*t}{\sigma\sqrt{t}}$ converges in distribution to standard normal.
Here $\kappa_*$ and $\sigma$ are the same as those constants in \Cref{thm:main-rdreg}.
\end{theorem}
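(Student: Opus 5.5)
The plan is to write $-\log X_t^{*,\T}$ as an additive functional of a Markov process seen from the walker, and then run a regeneration argument. I use the \emph{polymer representation} of the averaging profile. Conditional on the Poisson clocks, $X_t^{\T}(v)$ is the probability that a backward lazy walk started at $v$ at time $t$ ends at $o$ at time $0$. This backward walk crosses each ringing edge with probability $1/2$, using the clock reversed in time. Equivalently, $X_t^{\T}(W_t^{\T})$ is the quenched probability of the walker's own trajectory class.

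The key structural fact is that $W^{\T}_t$ is transient with positive speed on $\T_d$. Once it has moved far from $o$, the geodesic from $o$ to $W_t$ is essentially frozen. The mass at $W_t$ is then determined by how mass flowed along that ray, together with the "dangling" subtrees hanging off it, which sit in i.i.d.\ environments. Concretely, I would define \emph{regeneration times} $\tau_1<\tau_2<\cdots$: times at which the walker crosses a new edge of the ray for the last time, and after which neither the walker nor the relevant mass flow returns across that edge. By the tree structure and the independence of clocks on disjoint edge sets, the increments
\[
\big(-\log X^{*,\T}_{\tau_{k+1}}+\log X^{*,\T}_{\tau_k},\ \tau_{k+1}-\tau_k\big)
\]
would then be i.i.d.\ (after the first one). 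I expect this to hold only up to a small correction. The mass at the tip also depends on later rings that send mass backward and forward along the path. I would handle this by a Markov-chain-in-random-environment formulation: the state is the vector of masses in a neighbourhood of the walker, normalized by the tip mass. I would then show that this environment-seen-from-the-particle process has an invariant law and mixes exponentially fast, for example by a coupling of two environments that agree once enough rings have occurred on the last few path edges.

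Given i.i.d.\ (or exponentially mixing) increments with exponential tails, the renewal-reward CLT gives the conclusion. It yields $-\log X_t^{*,\T}=\kappa_* t+\sigma\sqrt t\,N+o_P(\sqrt t)$ with
\[
\kappa_*=\frac{\E\Delta\log}{\E\Delta\tau},
\]
and $\sigma^2$ given by the usual renewal formula. To get the tail bounds I would use two facts. First, each ring affecting the walker multiplies its mass by a factor in $[1/2,\,\cdot\,]$, so the log-increment is at most $\log 2$ times the number of rings, which is Poisson. Second, the regeneration gaps have exponential tails, because the distance of a lazy walk on $\T_d$ from $o$ has positive drift $(d-2)/(2d)$ per unit time. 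Positivity of $\sigma$ needs a separate nondegeneracy check. I would show the increment is not almost surely an affine function of $\Delta\tau$, by exhibiting two ring configurations with the same duration but different mass loss.

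The main obstacle is the decoupling step: showing that the log-mass increments between regenerations are genuinely (asymptotically) independent. Mass can flow back from the walker's past region, so the mass at the tip is not a product of local factors. I would first try to bound the influence of the far past in ratio form. The goal would be to show that the normalized profile near the walker forgets its initial condition geometrically, via a contraction estimate for the averaging operators along the path. This is analogous to a Hilbert projective metric contraction for products of positive random matrices, and it would make $-\log X^{*,\T}_t$ a Lyapunov-exponent-type quantity satisfying a CLT in the style of Le Page.
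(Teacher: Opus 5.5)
There is a genuine gap, and it sits exactly at the step you flag as the main obstacle.

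\textbf{No regeneration structure.} The structure you describe does not exist. After the walker crosses an edge $e$ for the last time, $e$ keeps ringing at rate $1/d$ forever, so mass keeps crossing it. Consequently $X_t(W_t)=\sum_u X_{\tau_k}(u)\,w_{\tau_k,t}(u,W_t)$ receives contributions from polymer paths that cross $e$ three, five, \dots\ times. The increment after $\tau_k$ therefore depends on the whole profile $X_{\tau_k}$, and ``the relevant mass flow never returns'' is a null event. The ``small correction'' is the entire problem.

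\textbf{Your fallbacks do not supply the decoupling.}
\begin{itemize}
\item The environment seen from the walker is an infinite-dimensional continuous state. Two copies never agree after finitely many rings, because distant mass keeps flowing in.
\item A Birkhoff--Hilbert contraction needs kernels mapping the positive cone into a subcone of finite projective diameter. Here $w_{s,t}(u,\cdot)$ on $\T_d$ is finitely supported, so no such contraction holds.
\item Le Page's theorem concerns norm cocycles of i.i.d.\ finite-dimensional matrices. Here the walker is size-biased by the same environment, so that framework does not apply.
\end{itemize}

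\textbf{What the paper uses instead.} Forgetting there is quantitative and rests on a separate theorem you never invoke: $\kappa_*<\kappa_{\rm RW}$. That inequality is proved by revealing the clocks progressively and showing $H(p_t(o,\cdot))-\E H(X_t)\ge c\sqrt t$. The argument then runs as follows.
\begin{itemize}
\item The difference between the full and restarted increments is written as a cross-ratio of four polymer weights.
\item Switching the tails of two paths at their first shared vertex or ring leaves only pairs that never touch.
\item The expected weight of such pairs is at most the annealed product $p_h(A,B)p_h(A',B')$, typically of order $e^{-\kappa_{\rm RW}h}$.
\item The normalizing polymer weight is typically $e^{-\kappa_*h}$.
\end{itemize}
The gap $\kappa_{\rm RW}-\kappa_*>0$ is what makes the memory decay exponential. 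Without it, or some substitute, you have no decorrelation estimate and hence no CLT.

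\textbf{Positivity of $\sigma$.} Your argument here is also insufficient. Checking that one increment is not a.s.\ affine in $\Delta\tau$ gives $\sigma>0$ only under an exact i.i.d.\ renewal--reward structure, which is unavailable here. For merely exponentially mixing increments, $\sigma^2=e_0+2\sum_{j\ge1}e_j$ can vanish even though each increment is nondegenerate, e.g.\ for a coboundary $P_m=g_m-g_{m-1}$. What you need is that $\Var(\log X_t(W_t))$ is unbounded in $t$; combined with exponential covariance stabilization, this forces $\sigma^2>0$.

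The paper obtains this unboundedness from a Mermin--Wagner type perturbation.
\begin{itemize}
\item Slowing every clock by the factor $1-2d\delta$, with $\delta\asymp t^{-1+\alpha}$, raises $\E\log X_t(W_t)$ by order $\delta t=t^{\alpha}$.
\item An intermediate law slows clocks only in a thin space-time tube around the walker. It is $o(1)$-close in total variation to the original law.
\item With high probability, this intermediate law gives the walker at least half the mass it has under the slowed law.
\end{itemize}
Hence the original $\log X_t(W_t)$ dominates the slowed one up to $\log 2$, while having mean smaller by order $t^{\alpha}$. This forces a variance of order $t^{2\alpha}$.
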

In other words, we can characterize the constants $\kappa_*$ and $\sigma$ from \Cref{thm:main-rdreg} as the expectation and variance constants in the central limit theorem of $-\log X_t^{*,\T}$.

To get the finite random $d$-regular cutoff result \Cref{thm:main-rdreg}, we will also prove a two-walk version of \Cref{thm:intro-tree-clt} (see \Cref{thm:two-walker-clt} below).
The one-walk limit determines the
asymptotic mass above the threshold $1/n$ in an expected sense, while the two-walk limit
further upgrades it to a in-probability sense.  The tree-to-finite comparison Proposition \ref{prop:finite-cover} then transfers both convergence to a random $d$-regular graph;
\Cref{prop:threshold-L1} then gives the Gaussian profile.

The strict inequality $\kappa_*<\kappa_{\rm RW}$ illustrates a logarithmic correlation between the profile $X_t^{\T}$ and the walk $W_t^\T$,
which further implies a fractal behavior of the profile.  Indeed, together with a law of large numbers for random walks on trees, we get
\[
  \frac1t\log
  \frac{X_t^{*,\T}}{p_t^{\T}(o,W_t^{\T})}
  \cvp
  \kappa_{\rm RW}-\kappa_*>0,
\]
where $p_t^{\T}$ is the transition probability of the continuous time random walk on $\T$ (with jump rate across any edge being $\frac{1}{2d}$).
Thus, at the same endpoint, the mass sampled from the realized profile $X_t^{*,\T}$
is exponentially larger than the annealed random-walk transition
probability.

This has an interesting geometric implication of the profile. At time
$t$, with high probability the walk has radius $\frac{(d-2)t}{2d}+o(t)$, and the corresponding
sphere has $\exp(\kappa_{\rm RW}t+o(t))$ vertices.  A profile
spread uniformly over these outward directions would give mass
$\exp(-\kappa_{\rm RW}t+o(t))$ to a typical endpoint.  However, a
size-biased sampling gives $\exp(-\kappa_*t+o(t))$.  This indicates that $X_t^{\T}$ is fragmented among an exponentially smaller
random collection of branches or rays, rather than distributed
uniformly among all rays of $\T$.

\subsection{Further proof novelties}   \label{ssec:fpn}
Other than using the coupled walk perspective to capture cutoff, and exploiting local tree-like structures, there are some other key aspects of our proofs at the more technical level, that we explain now.

\paragraph{Strict slowdown via adaptive revealing.}
The strict inequality $\kappa_*<\kappa_{\rm RW}$ is not only an interesting result by itself, but also plays crucial roles in proving the central limit theorem on the infinite $d$-regular tree $\T=\T_d$, and the tree-to-finite comparison. We next outline the proof of this inequality.

A priori, $\kappa_*$ is defined as the $t\to\infty$ limit of
$\frac{-\E[\log X_t^{*,\T}]}{t}$.  The existence of this limit
follows from a straightforward subadditivity argument; see
\Cref{prop:kappa-star} below.
Then \eqref{eq:Xtuv} and entropy concavity would lead to $\kappa_*\le\kappa_{\rm RW}$.
To prove strictness, the new idea is to reveal the Poisson clocks gradually.
Namely, as before we let $p_t^\T$ be the transition probability of continuous time random walk on $\T$, with edge jump rate $\frac{1}{2d}$.
Fix $t>0$, let $\cF_s$ be the sigma-algebra generated by the Poisson clock rings
revealed through time $s<t$.
We denote (with $V^\T$ being the vertex set of $\T$)
\[
  M_s(v):=\E[X_t^\T(v)\mid\cF_s]
  =\sum_{u\in V^\T}X_s^\T(u)p_{t-s}^\T(u,v).
\]
The process $M_s(v)$ interpolates between the annealed transition probability $p_t^\T(o,v)$, and
the quenched mass $X_t(v)$.  With
$H(q):=-\sum_{v\in V^\T} q(v)\log q(v)$ for any probability vector $q(\cdot)$, differentiation along the reveal time $s$
gives
\[
  H(p_t^\T(o,\cdot))-\E H(X_t^\T)
  =\int_0^t - \frac{\dd}{\dd s}
    \E[H(M_s)]\,\dd s.
\]
We analyze the derivative in the integrand, by factoring the effect of a ringing edge through a mass gradient and a random walk transition probability gradient. 
These end up showing that at least order $t^{-1/2}$ entropy is produced per unit time around time $s$ for a positive proportion of $s\in [0,t]$, and hence
\[
  H(p_t^\T(o,\cdot))-\E H(X_t^\T)\ge c\sqrt t.
\]
Finally, applying the infimum formula for the subadditive rate at one sufficiently
large time $t$ turns this finite-time deficit into
$\kappa_*<\kappa_{\rm RW}$. 

\paragraph{Central limit theorem and comparison: via polymer and random geometry.}
Take any graph $G=(V, E)$ with independent Poisson clocks on the edges.
Consider the set of all space-time paths $\gamma:[s, t]\to V$, which can only jumps across an edge at Poisson clock ringing (and when a Poisson clock at an incidental edge rings, it can also choose to stay). 
Any such  path receives weight
$2^{-N(\gamma)}$, where $N(\gamma)$ counts rings incident to the path.
With fixed time interval $[s, t]$ and starting point $\gamma(s)$, this gives a path measure, or \textit{polymer measure}. 
This precisely gives the coupled random walk, conditional on all the Poisson clocks. (See \Cref{sec:setup} below for more details.)

To get the central limit theorem on $\T$, essentially one needs some mixing of $-\log X_t^{*,\T}$ in time $t$.
More precisely, we need to get a fast enough decay of memory for $\log\frac{X_t^{*,\T}}{X_{t-1}^{*,\T}}$ given $\log\frac{X_s^{*,\T}}{X_{s-1}^{*,\T}}$, as $t-s \to \infty$.
This can be encoded as comparing four random paths under some polymer measures, with different starting points and time interval, but the same Poisson clocks.
Using a path switching argument, the mixing can be encoded as the probability of two random paths stay disjoint. 
Conditional on one path, the other one avoiding its trajectory would have a probability of roughly $e^{-\kappa_{\rm RW}(t-s)}$, while the renormalization factor would be around $e^{-\kappa_*(t-s)}$.
Here by exploiting the inequality that $\kappa_{\rm RW} > \kappa_*$, we have that the mixing is exponential in $t-s$.

A caveat here is that mixing is not sufficient for a non-degenerate central limit theorem: one also needs that the scaled variance $\sigma^2>0$.
We use a Mermin-Wagner type argument, via slightly changing the Poisson clock rates. 
This gives that the variance of $-\log X_t^{*,\T}$ is unbounded. 
Using the above exponential mixing, such unboundedness can be upgraded to at least linear growth; thereby  we conclude $\sigma^2>0$.

For the tree-to-finite comparison, essentially one lifts polymers on a finite $d$-regular graph, to polymers on the universal cover graph (which is $\T$).
Error terms in such a lifting can come from two sources: synchronization of Poisson clocks in the projection from the cover graph, and the lifting of paths with different winding numbers. 
Both error terms are controlled using geometric arguments on polymers, with tree-like estimates for random $d$-regular graphs from \cite{LS}.
The strict inequality $\kappa_{\rm RW} > \kappa_*$ provides us with an order $\log(n)$ time interval between the random walk cutoff time and the repeated averaging cutoff time, in which many of the geometric arguments are carried out.

\subsection{Robustness of the method and subsequent works}   \label{ssec:subseq}

As being evident so far, in proving the repeated averaging cutoff results on random $d$-regular graphs, we effectively have developed a toolkit. Many of the components are robust and can be applicable to a wider range of graphs and processes.

In principle, at least for repeated averaging cutoff, it suffices that the sequence of finite graphs have three inputs: (1) a uniform spectral gap for the final smoothing, (2) locally tree-likeness (at least for most vertices in neighborhoods with growing radius) for the decay of correlation, and (3) mesoscopic and global geometric information preventing polymer concentration due to graph structure at larger scales. 
The second input may also be replaced by the Benjamini-Schramm convergence to an infinite `homogeneous graph', such as a unimodular Galton-Waltson tree or an infinite vertex transitive graph.
The third input can often be supplied by estimates on the spread of random walk transition probabilities, or random walk mixing time information.
We comment that in the absence of such conditions, cutoff may fail. For instance, Quattropani  and Sau \cite{QS} proved non-coff for repeated averaging on graphs with finite-dimensional geometries (in the sense of Nash inequality), including finite-dimensional tori as a particular example.

In our forthcoming companion paper \cite{YZ2}, we will use the developed toolkit to prove repeated averaging $L^1$ cutoff for two families of graphs: the bounded degree configuration models, and all Ramanujan graphs. While these are more general families of graphs, we remark that the results in \cite{YZ2} do not precisely recover the random $d$-regular graph results of this paper.

For configuration models, we take the usual non-degeneracy and convergence assumptions on the degree
sequence, and each degree is in  $[3, M]$ for a fixed $M$.  Their Benjamini-Schramm limit is the unimodular Galton-Watson tree determined by the limiting degree law, whose quenched entropy rate gives the cutoff
time. 
This is a generalization of random $d$-regular graphs, while the additional randomness on the cover tree adds some extra technicality.
The required local tree-likeness estimates follow from the standard configuration-model exploration, and the uniform spectral gap follows from uniform vertex expansion estimates (see e.g.,\cite{WormaldShortCycles, BordenaveLelargeResolvent,BordenaveCaputoNeighborhood,GkantsidisMihailSaberi,DurrettRandomGraphDynamics,BenjaminiKozmaWormald}). The third global geometric input can be extracted from the random walk cutoff results \cite{BerestyckiLubetzkyPeresSly,BenHamouLubetzkyPeres}. See also \cite{HLYZ} for geometric and random-walk estimates on these graphs, obtained while establishing mean-field asymptotics for coalescing random walks.

For every sequence of fixed-degree Ramanujan graphs with growing size, the eigenvalue bound supplies the desired spectral gap, and the sequence converges in the Benjamini-Schramm sense to the infinite $d$-regular tree \cite{AbertGlasnerVirag}.
The third input there is then from the $L^2$ bound of random walk transition probability, obtained from the Ramanujan property.
We note that the same inputs even apply to any almost-Ramanujan sequence, by which we mean a sequence of connected $d$-regular graphs whose adjacency matrix has second largest absolute eigenvalue at most $2\sqrt{d-1}+o(1)$ \cite[Section~1.2]{LubetzkyPeres}.

Beyond these two families, another natural random graph to consider is the giant component of the 
supercritical Erd\H{o}s-R\'enyi graph, with fixed expected degree.
The Benajmini-Schramm limit is a  Poisson Galton--Watson tree conditioned to survive, and random walk starting from a uniformly chosen vertex has cutoff at an entropic time \cite[Theorem~1]{BerestyckiLubetzkyPeresSly}. These suggest that our general strategy should still apply. 
On the other hand, the giant component has unbounded degrees, with dangling trees and long degree-two paths rather than a uniform spectral gap. 
In light of these, we expect that repeated averaging, when starts from a typical vertex (rather than the worst initial case), should still have $L^1$ cutoff, and we leave it to future exploration.

Another natural (and orthogonal to the above) direction is to extend our results from repeated averaging to more general random exchange rules.
For example, Caputo, Quattropani, and Sau study a family of mean-field exchange models in which each pairwise update is in terms of a random fraction sampled from a prescribed distribution \cite{CQSMeanField}.
A key example here is Kac's walk, which is more commonly stated as a random two-coordinate rotation at each step \cite{KacKineticTheory}.
In \cite{CQSMeanField}, these models are considered on the complete graph, with cutoff established.
For many of these models, the limiting stationary state is not the deterministic vector $\one/n$ but a nontrivial stationary law.
Accordingly, their cutoff results are in terms of Wasserstein-$1$ distance.
Very recently, Jain and Mizgerd established total-variation cutoff for the Kac's walk started from a coordinate vector \cite{JainMizgerdKacCutoff}.
We hope that combining our techniques with those of \cite{CQSMeanField} could yield Wasserstein-$1$ cutoff for these general exchange models on various sparse expander graphs.
It would also be intriguing to determine whether our toolkit can help establish total-variation cutoff for Kac's walk with coordinate-pair updates restricted to the edges of a sparse expander.

\subsection{Further discussions on related work}
\label{sec:context-related-work}

We conclude the introduction with a more detailed comparison with the
literature.  Some of these results were mentioned above; here we place
them in a broader context or provide more detailed discussions.

The systematic treatment of the repeated averaging process
by Aldous and Lanoue \cite{AL} established basic results
including spectral contraction, entropy bounds, and particle
duality.  The first $L^1$-cutoff theorem was proved by Chatterjee,
Diaconis, Sly, and the second-named author \cite{CDSZ}.  Under the
normalization of one averaging update per unit time, cutoff occurs at
$\frac{n}{2}\log_2(n)$, with a window of order
$n\sqrt{\log(n)}$ and a Gaussian profile.  Subsequently, Movassagh,
Szegedy, and Wang \cite{MGW} proved a universal $L^1$ mixing-time lower
bound and sharp-order $L^1$ estimates for bounded-degree expanders,
cycles, stars, and dumbbells, together with bounds in other $L^p$
norms.  Quattropani and Sau \cite{QS} treated finite-dimensional
geometries and proved cutoff for the binomial-splitting dual, while
Caputo, Quattropani, and Sau \cite{CQS} established cutoff on
hypercubes and complete bipartite graphs.  Spiro extended spectral
convergence estimates to hypergraph averaging \cite{Spiro}.
On the complete graph, Caputo, Quattropani, and Sau \cite{CQS24} studied
repeated block averaging, establishing sharp criteria for $L^1$ cutoff
and identifying the cutoff window and Gaussian profile when cutoff
occurs.  As noted
above, the more recent work \cite{CQSRandomRegular} studies
repeated averaging on random regular graphs and proves, from a typical
starting vertex, $L^2$ cutoff as well as $L^p$ pre-cutoff for
$1\le p<2$.

The repeated averaging process has also be studied in other contexts and aspects.  On discrete tori and $\mathbb Z^d$, results
include concentration, local smoothing, hydrodynamic limits, and
central limit and nonequilibrium fluctuation theorems
\cite{SauConcentration,Eide,NagahataYoshida,SauFluctuations}.  For
i.i.d.~initial profile, recent works study convergence on infinite
bounded-degree graphs, poly-logarithmic consensus bounds on finite
graphs, and the dependence on moments of the initial law
\cite{GantertVilkas,ElboimPeresPeretz,ZuoMoment}.  On the complete graph,
empirical distributions and kinetic limits have been investigated in
\cite{CaoGini,CamposFrancoHeydenreichSchrocke}.  These works differ from
ours in their geometry, initial data, or asymptotic question: they
concern Euclidean or infinite-volume settings, typical random initial
conditions, or macroscopic evolution.

Finally, we compare our result with the random-walk cutoff theorem of
Lubetzky and Sly \cite{LS}, which is both an important input to and a
source of inspiration for our work.  After Poissonizing their
discrete-time theorem to match our time-normalization,
the random walk total-variation cutoff occurs at $\log(n)/\kappa_{\rm RW}$, with a
window of order $\sqrt{\log(n)}$ and a Gaussian profile.  This
closely parallels \Cref{thm:main-rdreg}: at a high level, in both
settings observables sampled along a walk has fast temporal de-correlation, leading to Gaussian fluctuations.  The
key difference is precisely the extra layer of randomness from the Poisson clocks, which leads to a size-biased walk in  repeated averaging.

A useful heuristic analogy is the $1$D random walk versus the $(1+1)$D directed polymer model, where paths are reweighted by random
space-time disorder; see e.g., \cite{ZygourasDirectedPolymers}. 
At strong disorder the polymer model is expected to belong to the Kardar-Parisi-Zhang (KPZ) universality class and
exhibit 1:2:3 scaling, but the scaling convergence remains out
of reach, except for some special disorder environments
\cite{SeppalainenPolymerScaling,OConnellYorBurke}.
See the recent works \cite{DrillickParekhSauCriticalKPZ,BarraquandCasiniKMPKPZ} for more direct connections from the repeated averaging and related models to the KPZ universality class.

Likewise, on random regular graphs, repeated averaging is substantially
less explicit than random walk.  Although $\kappa_*$ and $\sigma$ can
in principle be approximated numerically using e.g., the infinite-tree
central limit theorem in \Cref{thm:intro-tree-clt}, we do not give
closed-form expressions for them, in contrast to the corresponding
random-walk constants in \cite{LS}.  The proofs are
correspondingly more involved.  In \cite{LS}, local tree-likeness is exploited to reduce much of the analysis to
simple random walk on the infinite $d$-regular tree, where spherical
symmetry enables explicit transition probability calculations.  For repeated
averaging, the Poisson-clock environment makes the analogous
observables substantially more complicated.  Our random geometry
arguments instead reduce the required estimates to expectations of
polymer weights, which, after suitable averaging, can be expressed in
terms of transition probabilities for one or several coupled random
walks.

\paragraph{Organization of the remaining text}
In \Cref{sec:setup}, we develop the graphical construction and polymer
representation, introduce the size-biased walks, and establish the geometric
localization estimates used later.  \Cref{sec:general-criterion} proves the
general entropic criterion that converts threshold estimates for
size-biased weights into $L^1$ cutoff, in particular proving \Cref{thm:intro-entropic-criterion}.  The next three sections concern
the infinite tree: \Cref{sec:strictslow} constructs the entropy rate
$\kappa_*$ and proves its strict slowdown relative to random walk;
\Cref{sec:cltontrees} establishes exponential forgetting and the
one-walk central limit theorem, which is essentially \Cref{thm:intro-tree-clt}; and \Cref{sec:two-walkers} proves the
joint two-walk limit and positivity of the limiting variance.
\Cref{sec:cover} transfers these tree estimates to finite
$d$-regular graphs (satisfying certain conditions) through the universal cover, using technical geometric constructions and estimates on finite and infinite $d$-regular graphs.  \Cref{sec:profile}  verifies that the
required conditions hold for random $d$-regular graphs with high probability, and derives the
Gaussian profile uniformly over any starting vertex, thereby finishes proving \Cref{thm:main-rdreg}.  Finally, \Cref{sec:general-lower} proves the universal lower
bound for regular graphs, i.e., \Cref{thm:intro-general-lower}, and the convergence of $\kappa_*$ to $\log(2)$ as the degree $d\to \infty$, completing the proof of \Cref{thm:kappasbounds}.

\paragraph{Acknowledgement}
DY is partially supported by National Key 
R\&D Program of China (No.2023YFA1010101) and NSFC grant (No.12571161).
LZ is partially supported by the NSF grant DMS-2505625, and a Sloan fellowship.
Part of the work was completed when LZ was visiting DY at Jiangsu Normal University in the summer of 2025, and LZ would like thank their hospitality. 

\paragraph{Statement on AI}
For the current paper, all the mathematical ideas and proofs are human developed and written. We however do have used Large Language Models for literature review and improving expository. 
The upcoming companion paper \cite{YZ2} is co-developed by the authors and Large Language Models, using techniques and results of the current paper as inputs.

\paragraph{Notations}
In the rest of this paper, we fix $d\ge 3$ unless otherwise stated, and typically take the limit of $n\to\infty$ (for finite graphs) or $t\to\infty$ (for infinite trees). 
We will use $C,c$ to denote large and small positive constants, which are independent of $n$ or $t$, but may depend on $d$ and other parameters explicitly held fixed, and their precise values may change from line to line.
For quantities $f$ and $g$ depending on $n$ or $t$,
we use
$f=O(g)$ and $f\lesssim g$ to denote
$|f|< C|g|$, and write
$f\asymp g$ when $c|g|< |f|< C|g|$. We also write $f=o(g)$ for
$f/g\to0$ as $n\to\infty$ or $t\to\infty$. 
For any $x,y \in \R$, we write $x\vee y=\max\{x, y\}$, and $x\wedge y = \min\{x, y\}$.
For two vertices $u, v$ in a graph, $u\sim v$ denotes that there is an edge $\{u, v\}$.

\section{Preliminaries on polymers in a Poisson field}
\label{sec:setup}

In this section, we give the setup of the space-time Poisson field on graphs, and the directed polymers on it. We will also prove some polymer geometric estimates on trees, which will be used repeatedly throughout.

Take a connected graph $G=(V,E)$ with finite degree at each vertex, and $\rho>0$. Let $\Pi\subset E\times \R$ be a rate $\rho$ Poisson point process. 
We start by defining the random transition kernel from $\Pi$.

\begin{definition} \label{def:compath}
We consider the family of all
paths \emph{compatible} with $\Pi$.
Namely, such a path is a  c\`adl\`ag function $\gamma:[s,t]\to V$ for some $s<t$, and may jump
only across an edge at one of its ringing times: if
$\gamma(r)\ne\gamma(r-)$ for some $s<r\le t$, then
$(\{\gamma(r-),\gamma(r)\},r)\in\Pi$.
For any given $s\le t$ and $u,v\in V$, write
\[
  \mathcal C_{s,t}[u,v;\Pi]
  :=\{\gamma:[s,t]\to V\text{ is compatible with }\Pi,
       \gamma(s)=u,\ \gamma(t)=v\}.
\]
We call $\Pi$ \emph{locally finite} if
$\bigcup_{v\in V}\mathcal C_{s,t}[u,v;\Pi]$ is finite for every
$s\le t$ and $u\in V$.
We note that local finiteness holds almost surely for graphs with bounded degree.
\end{definition}

\begin{definition}
Given that $\Pi$ is locally finite, for a compatible path
$\gamma:[s,t]\to V$, its weight is defined to be
\[
  w(\gamma;\Pi):=2^{-\abs{\{(e,r)\in\Pi:s<r\le t,\ \gamma(r-)\in e\}}}.
\]
For $u, v\in V$, define the polymer weight from $(u, s)$ to $(v, t)$ by
\begin{equation}\label{eq:polymer-kernel}
  w_{s,t}(u, v;\Pi)
  :=
  \sum_{\gamma \in \mathcal C_{s,t}[u, v;\Pi]}
  w(\gamma;\Pi).
\end{equation}
\end{definition}
Whenever there is no confusion with the Poisson environment, we typically drop $\Pi$ from the above notations.

The polymer weight is precisely the random transition kernel for repeated averaging. 
Namely, for repeated averaging starting from $X_0: V\to [0,1]$, driven by the Poisson point process $\Pi$, the time $t$ profile is
\begin{equation} \label{eq:XWrel}
X_t(v) = \sum_{u\in V} X_0(u) w_{0,t}(u, v).
\end{equation}
Moreover, the polymer weights are doubly stochastic, and satisfy a composition law: for any $r<s<t$ and $u, v\in V$,
\begin{equation}\label{eq:chapman-kolmogorov}
\sum_{u'\in V} w_{r,t}(u',v)=\sum_{v'\in V} w_{r,t}(u,v')=1,\qquad
  w_{r,t}(u,v)
  =
  \sum_{u'\in V} w_{r,s}(u, u')w_{s,t}(u', v).
\end{equation}
There is also consistency for the path weights: for any $r<s<t$, and $\gamma: [r,s]\to V$ compatible with $\Pi$, we have
\[
w(\gamma)=\sum_{\gamma' \in \bigcup_{v\in V}\mathcal{C}_{r,t}[\gamma(r), v]:\gamma'|_{[r,s]}=\gamma} w(\gamma').
\]
From these properties, we can then define random walks.
\begin{definition}\label{def:random walk gene by polymer}
Given $\Pi$ that is locally finite, for any $u\in V$ and $s<t$, the \emph{random walk generated by $\Pi$ (in time interval $[s,t]$ and starting from $v$)} is a random function $W:[s,t]\to V$, with $\P[W=\gamma | \Pi] = w(\gamma)$ for any $\gamma\in\bigcup_{v\in V}\mathcal C_{s,t}[u,v]$. We also write $W_r=W(r)$ for $r\in [s, t]$.

Moreover, by consistency for the path weights, we can send $t\to\infty$, and get \emph{random walk generated by $\Pi$ (in time interval $[s,\infty)$ and starting from $v$)}
\end{definition}
As mentioned in the introduction, the law of such $W$ is effectively as follows. Conditional on $\Pi$, whenever a Poisson clock rings at an edge incident to the walk $W$, it jumps across the edge with probability $1/2$, independently of $\Pi$.
For any $r\in [s, t]$, we have 
\begin{equation}\label{eq:size-biased}
  \P[W_r=v\mid\Pi]=X_r(v),
\end{equation}
where $X_r$ the is repeated averaging profile at time $r$, started from $X_s(v)=\mathds{1}[v=u]$.
The annealed law of $W$ is a continuous time random walk: since $\Pi$ has rate $\rho$, the walk $W$ starts from $u$ at time $s$, and jumps across each edge with rate $\rho/2$, independently.

We next provide an estimate on the localization of polymers. 
This will be the cornerstone of our later geometric arguments.
\begin{definition}\label{defn:splspt}
Suppose that $G$ is a tree, and $u=u_0,u_1,\ldots,u_k=v$ is the
shortest path from $u$ to $v$, and $s<t$.  The space-time line from
$(u,s)$ to $(v,t)$ is
\[
  \Gamma_{s,t}^{u,v}(r)
  :=u_{\lfloor (r-s)k/(t-s)\rfloor},
  \qquad s\le r\le t.
\]
For $x,y\ge0$, the space-time $x\times y$ tube around $\Gamma_{s,t}^{u,v}$ is defined by
\[
  \mathcal V_{s,t;x}(u,v;y)
  :=
  \Bigg\{
    (w,r)\in V\times[s,t]:
    \inf_{\substack{r'\in[s,t]\\ |r'-r|\le x}}
    \dist\bigl(w,\Gamma_{s,t}^{u,v}(r')\bigr)
    \le y
  \Bigg\}.
\]
We also define
\[
  \mathring{\mathcal C}_{s,t;x}[u,v;y]
  :=
  \left\{
    \gamma\in\mathcal C_{s,t}[u,v]:
    (\gamma(r),r)\in\mathcal V_{s,t;x}(u,v;y),
    \ \forall r\in[s,t]
  \right\},
\]
which is the collection of compatible paths contained in the tube.
\end{definition}

\begin{lemma}\label{lem:tube}
Let $G=\T_d$ and set rate $\rho=1/d$ for $\Pi$. 
Take $t>0$, and let $W$ be the walk
generated by $\Pi$ in time interval $[0, t]$, starting from the root $o$.  Then for $1\le x\le t$ and $y\ge1$, we have
\begin{equation}\label{eq:wotin}
  \P[W|_{[0,t]} \not\in \mathring{\mathcal C}_{0,t;x}[o,W_t;2y] ]
< C e^{-cx^2/t}+Ct(d-1)^{-y/2},
\end{equation}
and
\begin{equation}\label{eq:tubeint}
      \E\int_0^t
  \mathds{1}[(W_r,r)\notin
    \mathcal V_{0,t;x}(o,W_t;2y)]
  \,\dd r <
  Cte^{-cx^2/t}+Ct(d-1)^{-y/2}.
\end{equation}
\end{lemma}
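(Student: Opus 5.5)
The key observation is that both events in the statement depend only on the path $W|_{[0,t]}$. So after averaging over $\Pi$, the probabilities are governed by the annealed law of $W$. By the discussion after \Cref{def:random walk gene by polymer}, this is a continuous time simple random walk on $\T_d$ started at $o$, jumping across each incident edge at rate $\rho/2=1/(2d)$. The lemma is therefore a pure statement about this random walk, and I would prove it in that setting.

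\textbf{Decomposition.} Let $o=u_0,\dots,u_k=W_t$ be the geodesic to the endpoint. For each $r$, let $j(r)$ be the index of the projection of $W_r$ onto this geodesic, and let $D(r):=\dist(W_r,u_{j(r)})$ be the depth of $W_r$ off the geodesic, so that $|W_r|=j(r)+D(r)$. Since $\Gamma^{o,W_t}_{0,t}(r')=u_{\lfloor r'k/t\rfloor}$, it suffices to show, outside an event of the claimed probability, that for all $r\in[0,t]$:
\begin{itemize}[label={}]
\item (a) $D(r)\le y$;
\item (b) $j(r)$ lies within $y$ of $\{\lfloor r'k/t\rfloor: |r'-r|\le x\}$.
\end{itemize}
Together these give $\dist(W_r,\Gamma(r'))\le 2y$ for some admissible $r'$.

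\textbf{Step (a): depth.} Any excursion of $W$ off the geodesic must return to it by time $t$, because the walk ends at $u_k$. The distance from the geodesic, away from the geodesic, is a birth–death chain with outward rate $(d-2)/(2d)$ or $(d-1)/(2d)$ and inward rate $1/(2d)$. Consequently, given that an excursion reaches depth $y$, the chance that it ever returns is at most $(d-1)^{-y+O(1)}$. The number of excursions is at most the number of jumps, which is $\mathrm{Poi}(t/2)$. A union bound over excursions then gives
\[
\P[\sup_r D(r)>y]\le Ct(d-1)^{-y/2},
\]
with slack absorbing constants.

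\textbf{Step (b): linear progress.} This is the step I expect to be the main obstacle. For $r\ge$ a constant, $|W_r|$ away from the root is the biased chain with drift $v=(d-2)/(2d)$. I would apply an Azuma/Freedman bound to the martingale $|W_r|-vr$, corrected by the bounded root-reflection time, together with a union bound over a discretization of $[0,t]$, using that jumps are Poisson so the process moves little between grid points. This gives
\[
\P\big[\sup_{r\le t}\,\big||W_r|-vr\big|\ge c_0x\big]\le Ce^{-cx^2/t}.
\]
On this event, $|k-vt|\le c_0x+y$, and hence
\[
\big|j(r)-rk/t\big|\le |W_r|-j(r)+\big||W_r|-vr\big|+r\,|v-k/t|\le y+2c_0x+y.
\]
The issue is converting a spatial error of order $x$ into a time shift of at most $x$. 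This needs $k/t\gtrsim$ a constant, which holds because $|k-vt|\le c_0x+y$ with $x\le t$, after choosing $c_0$ small compared with $v$ and handling the regime $y\gtrsim t$ trivially. When $k/t\ge v/2$, a shift of $r'$ by $x$ moves $\lfloor r'k/t\rfloor$ by at least $xv/2\ge 2c_0x$, so (b) follows. To make the last display fit exactly within $2y$ rather than $3y$, I would run (a) with $y/2$ in place of $y$ or adjust constants.

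\textbf{The integral bound.} \eqref{eq:tubeint} follows directly from \eqref{eq:wotin}: the integrand vanishes on the good event, and is at most $1$ on each $r\in[0,t]$ otherwise, which gives a factor of at most $t$.
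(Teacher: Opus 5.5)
Your proof of \eqref{eq:wotin} follows the paper's route. You decompose $\dist(W_r,\Gamma(r'))$ into the depth $D(r)$ of $W_r$ below its last common ancestor with $W_t$, plus the radial deviation of $|W_r|$ from $r'|W_t|/t$. You bound the depth by gambler's ruin and the radial part by a maximal inequality, and the time-shift conversion you do in (b) works. One point of phrasing: since the geodesic is determined by $W_t$, the statement that the distance from the geodesic is a birth--death chain is not literally true under the annealed law. What you actually need, and what the paper uses, is the following. If $D(r)>y/2$, then after time $r$ the walk must descend $y/2$ levels from $W_r$ toward $o$. That is a forward event, so gambler's ruin applies via the strong Markov property at each jump time, and the union bound should run over jump times rather than over excursions.

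The genuine gap is in your last step. You deduce \eqref{eq:tubeint} from \eqref{eq:wotin} by bounding the integrand by $1$ on the bad event. This only gives $t\cdot\P[\text{bad}]\le Cte^{-cx^2/t}+Ct^2(d-1)^{-y/2}$, which is a factor $t$ worse in the second term than the stated bound. The loss matters. In Step 3 of \Cref{lem:variance-unbounded} the estimate is used with $\delta\asymp t^{-1+\alpha}$ and $(d-1)^{-r/2}\approx t^{-\beta/2}$, where $2\alpha<\beta<1/4$. With the stated bound the error is of order $t^{\alpha-\beta/2}\to0$, while your version gives $t^{1+\alpha-\beta/2}\to\infty$.

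The fix is to avoid the uniform-in-$r$ event for the depth term. For each fixed $r$, the Markov property at time $r$ and a single application of gambler's ruin give $\P[D(r)>y/2]\le(d-1)^{-y/2}$, with no union bound over the $\mathrm{Poi}(t/2)$ jumps. Combined with the radial event, which has probability at least $1-Ce^{-cx^2/t}$, this gives $\P[(W_r,r)\notin\mathcal V_{0,t;x}(o,W_t;2y)]\le Ce^{-cx^2/t}+C(d-1)^{-y/2}$ for each $r$. Integrating over $r\in[0,t]$ with Fubini then yields \eqref{eq:tubeint}. Your step (b) already fits this fixed-time version, because your bound on $|j(r)-rk/t|$ uses $D$ only at the single time $r$, together with the radial event.
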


\begin{proof}
We start with proving \eqref{eq:wotin}.  Let $V_{r,t}$ be the last
common ancestor of $W_r$ and $W_t$ in the rooted tree $\T_d$, and put
\[
  R_r:=\dist(o,W_r),
  \qquad
  J_r:=\dist(o,V_{r,t}).
\]
Then
\begin{equation}\label{eq:wtaudiff}
    \begin{split}
          \inf_{\substack{r'\in[0,t]\\|r-r'|\le x}}
  \dist(W_r,\Gamma_{0,t}^{o,W_t}(r'))
  =& \dist(W_r, V_{r,t})+ 
    \inf_{\substack{r'\in[0,t]\\|r-r'|\le x}}
    \dist(V_{r,t},\Gamma_{0,t}^{o,W_t}(r'))\\
      =& R_r-J_r+ 
      \inf_{\substack{r'\in[0,t]\\|r-r'|\le x}}
      \left|J_r-\left\lfloor\frac{r'}{t}R_t\right\rfloor\right|\\
      \le & 2(R_r-J_r)+
           \inf_{\substack{r'\in[0,t]\\|r-r'|\le x}}
           \left|R_r-\frac{r'}{t}R_t\right|+1.
    \end{split}
\end{equation}
We control the two terms separately.  If
$R_r-J_r>y/2$, the future radial walk must move at least $y/2$
levels toward $o$ before reaching $W_t$.  Gambler's ruin for the
outward-biased radial chain, applied at the walk jump times in each
unit interval, gives
$\P[\mathcal{E}_{WV}^c]
  \le Ct(d-1)^{-y/2}$, where
$\mathcal{E}_{WV}:=\{\sup_{r\le t}(R_r-J_r)\le y/2\}$.
For the second term in the last line of \eqref{eq:wtaudiff}, we define the good radial event
\[
  \mathcal{E}_{\rm rad}
  :=\Big\{\sup_{0\le r\le t}|R_r-\nu_dr|\le \nu_dx/3\Big\},
\]
where $\nu_d=\frac{d-2}{2d}$.
Standard maximal estimates for the radial walk give $\P[\mathcal{E}_{\rm rad}^c]  \le Ce^{-cx^2/t}$. On the other hand, on $\mathcal{E}_{\rm rad}$, we have that for any $r$,
\begin{equation*}
    \begin{split}
       R_r-\frac{(r-x)\vee 0 }{t}R_t &\geq 
  (\nu_d/3)\wedge 0=0,\\
R_r-\frac{(r+x)\wedge  t }{t}R_t & \leq 
(-\nu_d x/3) \vee \bigg(\sup_{0 \le s\le t} R_s-R_t\bigg)
\leq 0 \vee \bigg(\sup_{0 \le s\le t} R_s-R_t\bigg).
    \end{split}
\end{equation*}
Also note that under $\mathcal{E}_{WV}$, we have $\sup_{r\le t}R_r-R_t\le y/2$.
Therefore, with \eqref{eq:wtaudiff}, under $\mathcal{E}_{WV}\cap\mathcal{E}_{\rm rad}$ we now have
\[
  \inf_{\substack{r'\in[0,t]\\|r-r'|\le x}}
  \dist(W_r,\Gamma_{0,t}^{o,W_t}(r')) \le \frac32y+1.
\]
For $y\ge2$, the right-hand side is at most $2y$. Thus we get
\eqref{eq:wotin} by the upper bounds on $\P[\cE_{WV}^c]$ and $\P( \cE_{\rm rad}^c)$.  The range $1\le y<2$ is absorbed by
taking a large enough $C$.

For the integrated estimate \eqref{eq:tubeint}, we use the corresponding fixed-time
version of the preceding argument. 
Namely, for fixed $r$, by considering the events $\{R_r-J_r\le y/2\}$ and $\cE_{\rm rad}$, we conclude that
\[
  \P[(W_r,r)\notin
    \mathcal V_{0,t;x}(o,W_t;2y)]< Ce^{-cx^2/t}+C(d-1)^{-y/2}.
\]
Integrating this estimate over $r\in[0,t]$ and applying Fubini's
theorem proves \eqref{eq:tubeint}.
\end{proof}

The following lemma will be frequently used 
in combination with the preceding one to compare the weight at a random walk endpoint and its tube-restricted version.
\begin{lemma}\label{lem:sb-markov}
Take a countable set $\Lambda$, and $p, q:\Lambda\to \R_{\ge 0}$, with
$q\le p$ and $\sum_{x\in\Lambda}p(x)=1$. Let $Z$ be a $\Lambda$-valued random variable  with law $p$, then for
$0<\alpha<1$,
\[
  \P[q(Z)<\alpha p(Z)]
  \le
  \frac{1-\sum_{x\in \Lambda}q(x)}{1-\alpha}.
\]
\end{lemma}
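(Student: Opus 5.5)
The inequality is a Markov-type bound applied to the nonnegative random variable $1-q(Z)/p(Z)$, which lies in $[0,1]$ wherever $p(Z)>0$. I would first observe that $Z$ almost surely lands in the support $\{x: p(x)>0\}$, so the ratio $q(Z)/p(Z)$ is well defined almost surely and takes values in $[0,1]$ because $q\le p$.

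Next I would compute the expectation directly:
\[
  \E\Big[1-\frac{q(Z)}{p(Z)}\Big]
  =\sum_{x:\,p(x)>0}p(x)\Big(1-\frac{q(x)}{p(x)}\Big)
  =1-\sum_{x:\,p(x)>0}q(x)
  =1-\sum_{x\in\Lambda}q(x).
\]
The last equality holds because $q\le p$ forces $q(x)=0$ wherever $p(x)=0$.

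Then I would note that the event $\{q(Z)<\alpha p(Z)\}$ coincides, almost surely, with the event $\{1-q(Z)/p(Z)>1-\alpha\}$. Since $1-\alpha>0$, Markov's inequality applied to the nonnegative variable $1-q(Z)/p(Z)$ bounds its probability by $\frac{1-\sum_x q(x)}{1-\alpha}$, which is the claimed estimate.

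There is no real obstacle here. The only care needed is with points where $p(x)=0$: they carry no probability, and $q$ vanishes there as well, so they contribute to neither side. Countability of $\Lambda$ ensures all the sums are well defined, so Fubini-type interchanges are not needed.
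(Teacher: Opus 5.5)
Your proof is correct and is the same as the paper's: both apply Markov's inequality to $(p(Z)-q(Z))/p(Z)$, whose expectation is $1-\sum_{x}q(x)$. Your handling of points with $p(x)=0$ spells out a detail the paper covers only by noting that $p(Z)>0$ almost surely.
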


\begin{proof}
We note that $p(Z)>0$ almost surely.
The event in the left-hand side can be written as
$(p(Z)-q(Z))/p(Z)>1-\alpha$, while the right-hand side equals $\E[(p(Z)-q(Z))/p(Z)]$. Thus the conclusion follows by Markov inequality.
\end{proof}

\section{General $L^1$ decay criterion}
\label{sec:general-criterion}

In this section, we build the connection between $L^1$ decay in repeated averaging, with a biased sampling of the profile.
The main result is \Cref{prop:threshold-L1} below, which gives two-sides bounds of the $L^1$ distance, using threshold probabilities. 
It will be used to prove the Gaussian cutoff profile in \Cref{thm:main-rdreg}.
From \Cref{prop:threshold-L1} we also deduce the cutoff criterion \Cref{thm:intro-entropic-criterion} in this section.

As in \Cref{ssec:crit}, we let $G=(V,E)$ be a connected graph with $n$ vertices, and let
$\lambda_2>0$ be the second-smallest eigenvalue of its combinatorial Laplacian.
Throughout this section, the Poisson point process $\Pi\subset E\times \R$ has rate $\rho=1$. 
We consider repeated averaging on $G$ driven by $\Pi$, with initial 
profile $X_0$, which may be random but independent of $\Pi$, and almost surely
\[
  X_0(v)\ge0, \quad \forall\; v\in V,
  \qquad
  \sum_{v\in V}X_0(v)=1.
\]
We first recall the $L^2$ contraction for repeated averaging
\cite[Proposition 2]{AL}.
\begin{lemma}\label{lem:L2-contraction}
For every $t\ge0$,
\[
  \E[
    \norm{X_t-\one/n}_2^2\mid X_0
  ]
  \le
  e^{-\lambda_2t/2}
  \norm{X_0-\one/n}_2^2.
\]
\end{lemma}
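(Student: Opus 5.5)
We will compute the expected rate of change of $\norm{X_t-\one/n}_2^2$ given the current profile, and then close with Grönwall. Set $Y_t:=X_t-\one/n$. Since every update preserves total mass and $\one/n$ is invariant, $Y_t$ evolves by the same averaging dynamics and satisfies $\sum_v Y_t(v)=0$ for all $t$.

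\textbf{Effect of one ring.} Suppose edge $e=\{u,v\}$ rings. Both $Y(u)$ and $Y(v)$ are replaced by their mean $(Y(u)+Y(v))/2$. The change in the squared norm is
\[
2\Big(\tfrac{Y(u)+Y(v)}{2}\Big)^2-Y(u)^2-Y(v)^2=-\tfrac12\big(Y(u)-Y(v)\big)^2 .
\]

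\textbf{Generator identity.} Each edge rings at rate $1$, so summing the one-ring change over edges gives the generator applied to $\norm{Y}_2^2$:
\[
\mathcal L\norm{Y}_2^2=-\tfrac12\sum_{\{u,v\}\in E}\big(Y(u)-Y(v)\big)^2=-\tfrac12\langle Y,L Y\rangle .
\]
Because $Y\perp\one$, the variational characterization of the second-smallest eigenvalue of the combinatorial Laplacian gives $\langle Y,LY\rangle\ge\lambda_2\norm{Y}_2^2$. Hence $\mathcal L\norm{Y}_2^2\le-\tfrac{\lambda_2}{2}\norm{Y}_2^2$.

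\textbf{Closing the argument.} Let $f(t):=\E[\norm{Y_t}_2^2\mid X_0]$. The process $\norm{Y_t}_2^2$ is bounded by $\norm{Y_0}_2^2$, since each update is an averaging and does not increase the norm. Therefore Dynkin's formula applies without integrability issues. It gives
\[
f(t)=f(0)+\int_0^t\E[\mathcal L\norm{Y_s}_2^2\mid X_0]\,\dd s\le f(0)-\tfrac{\lambda_2}{2}\int_0^tf(s)\,\dd s .
\]
Grönwall's inequality then yields $f(t)\le e^{-\lambda_2t/2}f(0)$, which is the claim.

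The only point needing care is this last step. We work with the finite graph, where there are finitely many edges and hence a finite total ring rate. So Dynkin's formula applies to the bounded function $\norm{Y}_2^2$, and the integral inequality yields the exponential bound through the standard Grönwall argument. None of this is a real obstacle; the whole argument is the one in \cite[Proposition 2]{AL}.
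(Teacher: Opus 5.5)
Your argument is correct and is exactly the generator (Dirichlet-form) computation behind \cite[Proposition 2]{AL}, which the paper cites rather than reproves: with rate-$1$ edge clocks the Dirichlet form is $\frac12\langle Y,LY\rangle$, so the rate $\lambda_2/2$ comes out as you found. One small precision: because the coefficient is negative, the integral inequality $f(t)\le f(0)-\frac{\lambda_2}{2}\int_0^t f(s)\,\dd s$ does not by itself imply the exponential bound, so argue from the Dynkin identity instead: $f$ is absolutely continuous with $f'(t)=\E[\mathcal L\norm{Y_t}_2^2\mid X_0]\le-\frac{\lambda_2}{2}f(t)$ a.e., hence $\bigl(e^{\lambda_2t/2}f(t)\bigr)'\le0$ and the claim follows.
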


We next state the key conversion.
\begin{prop}
\label{prop:threshold-L1}
Conditional on $X_0$, let $U_1,U_2$ be independent with common law
$X_0$, and put $X_0^{*,i}:=X_0(U_i)$ for $i=1, 2$.  For $\theta>1$, define the unconditional probabilities
\[
  \pi_1:=\P[X_0^{*,1}>\theta/n],
  \qquad
  \pi_2:=\P[X_0^{*,1}>\theta/n,\ X_0^{*,2}>\theta/n].
\]
Then, for $0<\epsilon<1$, we have
\begin{equation}\label{eq:X011bd}
\P\Big[
  \norm{X_0-\one/n}_1
  \ge
  2\Big(
    \pi_1-\sqrt{\epsilon^{-1}(\pi_2-\pi_1^2)}-\theta^{-1}
  \Big)
\Big]\ge1-\epsilon.
\end{equation}
Moreover, with 
\begin{equation}\label{eq:cleanup_time_def}
          s:=
  \frac2{\lambda_2}
  (\log((1-\pi_1)\theta))\vee 0
  +
  \frac8{\lambda_2}\log(1/\epsilon)
\end{equation}
we have
\begin{equation}\label{eq:Xp1ubd}
\P\Big[
  \norm{X_s-\one/n}_1
  \le
  2\Big(
    \pi_1+\sqrt{\epsilon^{-1}(\pi_2-\pi_1^2)}
  \Big)+\epsilon
\Big]\ge1-\epsilon.
\end{equation}
\end{prop}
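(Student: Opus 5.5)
The key object will be the conditional threshold mass
\[
  P:=\P[X_0^{*,1}>\theta/n\mid X_0]=\sum_{v:\,X_0(v)>\theta/n}X_0(v),
\]
which is the total mass carried by coordinates above $\theta/n$. By conditional independence of $U_1,U_2$ given $X_0$, we have $\E P=\pi_1$ and $\E P^2=\pi_2$, so $\Var(P)=\pi_2-\pi_1^2$. Chebyshev then gives $|P-\pi_1|\le\sqrt{\epsilon^{-1}(\pi_2-\pi_1^2)}$ with probability at least $1-\epsilon$. Both \eqref{eq:X011bd} and \eqref{eq:Xp1ubd} will be deduced on this event from deterministic inequalities relating $P$ to $L^1$ distances.

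For the lower bound, let $A:=\{v:X_0(v)>\theta/n\}$. Since $X_0\ge 0$ has mass $1$, we have $|A|<n/\theta$. Then
\[
\norm{X_0-\one/n}_1=2\sum_v\bigl(X_0(v)-1/n\bigr)_+\ge 2\sum_{v\in A}\bigl(X_0(v)-1/n\bigr)\ge 2\bigl(P-|A|/n\bigr)\ge 2(P-\theta^{-1}).
\]
Combined with the Chebyshev event, this gives \eqref{eq:X011bd}.

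For the upper bound, split $X_0=Y+Z$, where $Y:=X_0\mathds{1}_A$ and $Z:=X_0\mathds{1}_{A^c}$. Both evolve linearly under the same clocks, so $X_s=Y_s+Z_s$ with $\norm{Y_s}_1=P$ and $\norm{Z_s}_1=1-P$. Then
\[
\norm{X_s-\one/n}_1\le \norm{Y_s-P\one/n}_1+\norm{Z_s-(1-P)\one/n}_1\le 2P+\norm{Z_s-(1-P)\one/n}_1 .
\]
For the $Z$ part, apply \Cref{lem:L2-contraction} to the normalized profile $Z/(1-P)$. The statement is linear up to scaling, so it extends to $Z$ directly. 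Since $Z\le\theta/n$ pointwise, $\norm{Z}_2^2\le(\theta/n)(1-P)$, and hence
\[
\E\bigl[\norm{Z_s-(1-P)\one/n}_2^2\mid X_0\bigr]\le e^{-\lambda_2 s/2}\,\theta(1-P)/n .
\]
Cauchy–Schwarz converts this to $L^1$, with $\norm{\cdot}_1\le\sqrt n\,\norm{\cdot}_2$, giving
\[
\E\bigl[\norm{Z_s-(1-P)\one/n}_1\mid X_0\bigr]\le \sqrt{\theta(1-P)}\,e^{-\lambda_2 s/4}.
\]
Markov's inequality at level $\epsilon$ costs probability about $\epsilon^{-1}\sqrt{\theta(1-P)}\,e^{-\lambda_2 s/4}$. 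With $s$ as in \eqref{eq:cleanup_time_def}, the term $e^{-\lambda_2 s/4}$ contains the factor $((1-\pi_1)\theta)^{-1/2}\epsilon^{2}$, which makes the failure probability of order $\epsilon$.

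The main obstacle is the bookkeeping of the total failure probability. Both the Chebyshev event and the Markov event have to fit into a total budget of $\epsilon$, and the factor is $1-P$ rather than $1-\pi_1$. To handle this, I will take the expectation over $X_0$ before applying Markov, using $\E\sqrt{1-P}\le\sqrt{1-\pi_1}$ by Jensen. The generous exponent $8/\lambda_2\cdot\log(1/\epsilon)$ gives slack $\epsilon^2$, so that Markov at level $\epsilon$ fails with probability $O(\epsilon)$. I expect to split the budget as Chebyshev at $\epsilon/2$ and Markov at $\epsilon/2$, or to absorb the constants by running the argument with a slightly adjusted $\epsilon$; which exact split the stated constants accommodate is the point that needs checking. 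A union bound then yields \eqref{eq:Xp1ubd}.
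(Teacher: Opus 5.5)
Your proposal is the paper's proof essentially step for step. It uses the same threshold mass (your $P$, the paper's $M_\theta$) with $\E P=\pi_1$, $\E P^2=\pi_2$ and Chebyshev, and the same lower bound from $|\{v:X_0(v)>\theta/n\}|\le n/\theta$. It also uses the same high/low split, where \Cref{lem:L2-contraction}, Cauchy--Schwarz and Jensen give $\E\norm{Z_s-(1-P)\one/n}_1\le\epsilon^2$ before Markov.

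On the bookkeeping point you left open, neither of your proposed fixes reproduces the stated constants. Putting only $\epsilon/2$ on Chebyshev inflates the threshold to $\sqrt{2\epsilon^{-1}(\pi_2-\pi_1^2)}$. With the given $s$, the available estimate on the low-part error is exactly $\epsilon^2$ whenever $(1-\pi_1)\theta\ge1$, so Markov at level $\epsilon$ can cost up to $\epsilon$, not $\epsilon/2$.

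What the argument gives is probability at least $1-2\epsilon$ in \eqref{eq:Xp1ubd}; \eqref{eq:X011bd} uses Chebyshev alone and holds as stated. The paper's proof simply concludes ``from Markov's inequality'' together with the Chebyshev event, so it has the same factor-$2$ slack. This is harmless, because every later use (with $\epsilon_n\to0$, or $\epsilon=\delta^{1/3}$ against $C\delta^{1/3}$ errors) only needs failure probability $O(\epsilon)$.
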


\begin{proof}
Let
\[
  M_\theta
  :=
  \sum_{v\in V}X_0(v)\mathds{1}[X_0(v)>\theta/n].
\]
Then we have $\E M_\theta=\pi_1$ and $\E M_\theta^2=\pi_2$.  Hence, with probability
at least $1-\epsilon$,
\begin{equation}\label{eq:gamma-concentration}
  \pi_1-\sqrt{\epsilon^{-1}(\pi_2-\pi_1^2)} \le M_\theta\le \pi_1+\sqrt{\epsilon^{-1}(\pi_2-\pi_1^2)}.
\end{equation}
Write $\mathcal H_\theta:=\{v\in V:X_0(v)>\theta/n\}$.  Then necessarily
$|\mathcal H_\theta|/n\le1/\theta$, and
\begin{equation}\label{eq:mtheta_comp}
      M_\theta
  =\sum_{v\in\mathcal H_\theta}
    \left(X_0(v)-\frac1n\right)
    +\frac{|\mathcal H_\theta|}{n}
  \le\frac12\norm{X_0-\frac{\one}{n}}_1+\theta^{-1}.
\end{equation}
This proves the lower bound \eqref{eq:X011bd}.

For the upper bound \eqref{eq:Xp1ubd}, let $X_t^{\rm lo}$ be the repeated averaging
process driven by the same Poisson point process $\Pi$, with initial profile
$X_0^{\rm lo}(v)=X_0(v)\mathds{1}[X_0(v)\le\theta/n]$.
Its total mass is $1-M_\theta$, and
\[
  \norm{X_0^{\rm lo}-(1-M_\theta)\one/n}_2^2 \le \norm{X_0^{\rm lo}}_2^2
  \le\frac{\theta(1-M_\theta)}n.
\]
By \Cref{lem:L2-contraction} and Cauchy-Schwarz inequality, we have
\begin{multline*}
  \E\norm{X_s^{\rm lo}-(1-M_\theta)\one/n}_1
  \le
    \sqrt{n}\E\norm{X_s^{\rm lo}-(1-M_\theta)\one/n}_2
    \le e^{-\lambda_2s/4} \E \sqrt{\theta(1-M_\theta)} \\
    \le 
  e^{-\lambda_2s/4}\sqrt{\theta(1-\pi_1)}\le \epsilon^2.
\end{multline*}
where the last inequality is by the choice of $s$.
Finally, $X_s-X_s^{\rm lo}$ is component-wise nonnegative and has
total mass $M_\theta$, so
\[
\begin{aligned}
  \norm{X_s-\one/n}_1
  &\le
  \norm{X_s^{\rm lo}-(1-M_\theta)\one/n}_1
  +\norm{(X_s-X_s^{\rm lo})-M_\theta\one/n}_1\\
  &\le
  \norm{X_s^{\rm lo}-(1-M_\theta)\one/n}_1+2M_\theta.
\end{aligned}
\]
The upper bound \eqref{eq:Xp1ubd} now follows from Markov's inequality and \eqref{eq:gamma-concentration}.
\end{proof}

For the cutoff criterion in \Cref{thm:intro-entropic-criterion}, it follows from applying the two bounds in \Cref{prop:threshold-L1} at an entropic time sequence.

\begin{proof}[Proof of \Cref{thm:intro-entropic-criterion}]
We apply \Cref{prop:threshold-L1} to $X_{t_n}$, with $t_n$ viewed as
the initial time. 
The hypothesis that $nX_{t_n}^{*,(n)}\cvp\infty$ precisely says that there is a sequence $\theta_n \to \infty$, such that
\[
  \P[X_{t_n}^{*,(n)}>\theta_n/n]\to 1.
\]
Then for $\pi_{1,n}:=\P[X_{t_n}^{*,(n)}>\theta_n/n]$ and $\pi_{2,n}:=\P[X_{t_n}^{*,(n)}>\theta_n/n, \hat{X}_{t_n}^{*,(n)}>\theta_n/n]$, where $\hat{X}_{t_n}^{*,(n)}$ is an independent copy of $X_{t_n}^{*,(n)}$ conditional on $X_{t_n}$, we have that as $n\to\infty$, both of them tend to 1, thus $\pi_{2,n}-\pi_{1,n}^2\to 0$.
Take a sequence $\epsilon_n\downarrow0$ slowly enough that
$(\pi_2-\pi_1^2)/\epsilon_n\to 0$.  Applying \eqref{eq:X011bd} now proves the first convergence in \eqref{eq:Xt20}.

For the other bound, the other hypothesis on $X_{t_n}^{*,(n)}$ implies
the existence of a sequence
$\eta_n\downarrow 0$, such that 
by instead taking $\theta_n=n^{\eta_n}$, we have that both $\pi_{1,n}$ and $\pi_{2,n}$ tend to zero as $n\to\infty$. 
Then since our first hypothesis in \eqref{eq:hypo_general_cri} implies that $\lambda_{2,n}t_n\to \infty$, we can choose $\epsilon_n\downarrow0$ slowly
enough such that
\[
  \pi_{1,n}/\epsilon_n\to0,
  \qquad
  \frac{\log(1/\epsilon_n)}{\lambda_{2,n}t_n}\to 0.
\]
Now by setting,
\[
  s_n:=
  \frac{
    2\eta_n}{\lambda_{2,n}}\log(n)+\frac{8}{\lambda_{2,n}}\log(1/\epsilon_n),
\]
we have $s_n=o(t_n)$ by the first hypothesis of \eqref{eq:hypo_general_cri} and the choice of $\eta_n$ and $\epsilon_n$.
Then using \eqref{eq:Xp1ubd}, we get the second convergence in \eqref{eq:Xt20}.
\end{proof}

\section{Entropy rate and strict slowdown on the tree}
\label{sec:strictslow}
In this and the next two sections, we work on the infinite $d$-regular tree $\T=\T_d=(V,E)$, with Poisson point process $\Pi\subset E\times \R$ with rate $1/d$.
We consider repeated averaging $X_t$ starting from unit mass at the root $o$, i.e., $X_0(v)=\mathds{1}[v=o]$.
Let $W$ be the random walk generated by $\Pi$ with time interval $[0, \infty)$, starting from $o$.

Let $p_t$ be the time $t$ transition probability of the random walk on $\T$, with edge jump rate $1/(2d)$. 
From \eqref{eq:size-biased}, we have $p_t(o,v)=\E X_t(v)$.
For any $q:V\to\R_{\ge 0}$ with $\sum_{v\in V} q(v)=1$, we write $H(q):=-\sum_{v\in V}q(v)\log q(v)$ for the entropy.
Then define
\[
  H_{\rm ann}(t):= H(p_t(o,\cdot)), \qquad 
  H_{\rm que}(t):= H(X_t).
\]

In this section, we construct the entropy rate $\kappa_*$ of the repeated averaging process on $\T$, and prove the strict inequality $0<\kappa_*<\kappa_{\rm RW}$, where we recall $\kappa_{\rm RW}=((d-2)\log(d-1))/(2d)$.
We start with collecting some asymptotics for the random walk transition probabilities and entropy $H_{\rm ann}(t)$.  

\begin{itemize}
    \item 
First, as $t\to\infty$, there is
\begin{equation}   \label{eq:rw-entropy}
    H_{\rm ann}(t)=\kappa_{\rm RW}t+O(\log(t)).
\end{equation}
This estimate follows by splitting the entropy into a spherical and a radial part, which account for the two terms respectively (see e.g., \cite[Section~6.4]{FlorescuPeresRacz}).
\item
Second, there is a local central limit theorem on the radial projection of the walk $W$: as $t\to\infty$, 
\[
  \sup_{m\in \Z_{\ge 0}}
  \left|
    \sqrt{\pi t} \P[\dist(o,W_t)=m]
    -
    \exp\left(-\frac{(m-(d-2)t/(2d))^2}{t}\right)
  \right|
  \to 0.
\]
This follows from representing the transition probability of the radial part $\dist(o,W_t)$ as that of a random walk on $\Z$ (as in e.g., \cite[Proposition~2.5(i)]{CowlingMedaSetti}), and applying a local central limit theorem for  random walks on $\Z$ (see, e.g.,
\cite[Theorem 3.5.3]{Durrett} and
\cite[Section 2.5.2]{LL}).
Consequently, for any fixed $M>0$, uniformly over all $u\in V$ with
$\bigl|\dist(o,u)-(d-2)t/(2d)\bigr|\le M\sqrt t$, 
\begin{equation}   \label{eq:tree-local-clt}
  p_t(o,u)\asymp t^{-1/2}(d-1)^{-|u|},
\qquad  \frac{p_t(o,u^-)}{p_t(o,u)}
  \to d-1,
\end{equation}
where $u^-$ is the parent of $u$.
\item 
For $\hat{W}$ be an independent (conditional on $\Pi$) copy of $W$. Then we have 
\begin{equation}\label{eq:distance-lyapunov2}
  \P\big[\dist(W_t, \hat{W}_t) =0\big]
  \le\P\big[\dist(W_t, \hat{W}_t) \le (d-2)t/(2d)\big]
  \le e^{-ct},
  \qquad \forall\;t\ge0.
\end{equation}
This can be proved by considering $\dist(W_t, \hat{W}_t)$ as a continuous time Markov chain on $\Z_{\ge 0}$, with upward drift $(d-2)/d$ away from $0$, and using Chernoff bounds.
\end{itemize}

The main result of this section is the following.
\begin{prop}\label{prop:kappa-star}
The random process $x\mapsto L_t:=-\log X_t(W_t)$ is stochastically subadditive: for any $s, t\ge 0$, we have that $L_{s+t}$ is stochastically dominated by $L_s + \hat{L}_t$, where $\hat{L}_t$ is an independent copy of $L_t$.
Then we can define 
\begin{equation}\label{eq:kappa-star}
\kappa_*:=\lim_{t\to\infty}\frac{\E[ -\log X_t(W_t)]}{t}=\inf_{t>0}\frac{\E [-\log X_t(W_t)]}{t},
\end{equation}
and $0<\kappa_*<\kappa_{\rm RW}$.
\end{prop}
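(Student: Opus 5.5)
The proof has three parts: subadditivity, then the limit together with $\kappa_*\le\kappa_{\rm RW}$ and $\kappa_*>0$, then strict inequality.

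\emph{Subadditivity.} Use the composition law \eqref{eq:chapman-kolmogorov}. Since $W$ is generated by $\Pi$, we have
\[
X_{s+t}(W_{s+t})=\sum_{u}w_{0,s}(o,u)\,w_{s,s+t}(u,W_{s+t})\ge X_s(W_s)\,w_{s,s+t}(W_s,W_{s+t}).
\]
Hence $L_{s+t}\le L_s+\hat L$, where $\hat L:=-\log w_{s,s+t}(W_s,W_{s+t})$. Condition on $\Pi|_{[0,s]}$ and on $W|_{[0,s]}$. The clocks on $(s,s+t]$ are independent of these. Conditional on $\Pi$, the walk $W$ after time $s$ is the walk generated by $\Pi$ from $(W_s,s)$. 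By transitivity of $\T$ (re-root at $W_s$) and time-stationarity of $\Pi$, $\hat L$ is then independent of $L_s$ and has the law of $L_t$. This gives the stochastic domination.

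\emph{Limit, upper bound and positivity.} Taking expectations, $a(t):=\E L_t$ is subadditive. To apply Fekete we need $a(t)<\infty$ and local boundedness; both follow from $X_t(W_t)\ge 2^{-N}$, where $N$ is the number of rings incident to the path, whose mean is linear in $t$. Fekete then gives \eqref{eq:kappa-star}. By \eqref{eq:size-biased}, $a(t)=\E H_{\rm que}(t)$. Concavity of $H$ and $\E X_t=p_t(o,\cdot)$ give $a(t)\le H_{\rm ann}(t)$, so $\kappa_*\le\kappa_{\rm RW}$ by \eqref{eq:rw-entropy}. For positivity, use two conditionally independent walks $W,\hat W$:
\[
\E\sum_v X_t(v)^2=\P[W_t=\hat W_t]\le e^{-ct}
\]
by \eqref{eq:distance-lyapunov2}. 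Jensen applied to $\E[-\log X_t(W_t)]$ then gives $a(t)\ge -\log\E X_t(W_t)\ge ct$, hence $\kappa_*\ge c>0$.

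\emph{Strictness, via adaptive revealing.} Fix a large $t$ and set $M_s(v)=\sum_u X_s(u)p_{t-s}(u,v)$, so that $M_0=p_t(o,\cdot)$ and $M_t=X_t$. Reveal the clocks on $[s,s+\dd s]$. A ring on an edge $e=\{x,y\}$ replaces $X_s(x),X_s(y)$ by their average. The resulting change of $M$ is
\[
\Delta M=\tfrac12\bigl(X_s(y)-X_s(x)\bigr)\bigl(p_{t-s}(x,\cdot)-p_{t-s}(y,\cdot)\bigr).
\]
The drift of $M$ vanishes, since $M$ is a martingale. By strict concavity, the expected entropy drop per unit time is at least of order
\[
\sum_e \tfrac1d\,\tfrac{(\Delta X_e)^2}{4}\sum_v \frac{(p_{t-s}(x,v)-p_{t-s}(y,v))^2}{M_s(v)}
\]
(a Fisher-information term), up to controlling the third-order error. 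The local CLT \eqref{eq:tree-local-clt} gives $p_{t-s}(x,v)/p_{t-s}(y,v)\approx d-1$ or $1/(d-1)$ on the typical shell. So the inner sum is comparable to $\sum_v p_{t-s}(x,v)^2/M_s(v)$, which by Cauchy--Schwarz is at least of order $1$ once $M_s$ is localized near $x$'s branch. The factor $(\Delta X_e)^2$ is of order $X_s(x)^2$ for edges incident to the bulk of the mass at time $s$.

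The target is a lower bound $c\,t^{-1/2}$ per unit time for a positive fraction of $s\in[0,t]$. Integrating then gives
\[
H_{\rm ann}(t)-\E H_{\rm que}(t)\ge c\sqrt t .
\]
Combined with \eqref{eq:rw-entropy} and the infimum formula,
\[
\kappa_*\le \frac{a(t)}{t}\le \kappa_{\rm RW}+O\Bigl(\frac{\log t}{t}\Bigr)-\frac{c}{\sqrt t}<\kappa_{\rm RW}
\]
for $t$ large.

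\emph{Main obstacle.} The hard step is the per-time lower bound, specifically turning the Fisher-type quantity into an expectation that can be bounded below. I would express the bound in size-biased form, $\E$ over $W_s$ of the mass-gradient term along the ringing edge, times the relative transition-probability gradient seen from the endpoint. I would restrict to the good event where $\dist(o,W_s)$ and the endpoint are in the CLT window, which is where the $t^{-1/2}$ factor enters. I would use \Cref{lem:tube} to keep the relevant paths localized so that the local CLT applies. Controlling the nonlinear remainder of the entropy expansion, and showing $M_s(v)$ is not much larger than $p_{t-s}(x,v)$ on that event, is where I expect the real difficulty.
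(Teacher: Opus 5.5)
Your subadditivity argument (including the conditional-independence justification), the Fekete limit, $\kappa_*\le\kappa_{\rm RW}$ by concavity, $\kappa_*>0$ via Jensen and the collision bound, and the reduction of strictness to $H_{\rm ann}(t)-\E H_{\rm que}(t)\ge c\sqrt t$ all coincide with the paper.

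\textbf{The gap.} The one step that carries the content, the per-time lower bound, is left open, and the heuristic you sketch for it has the wrong order. A lower bound of order $1$ for $\sum_v p_{t-s}(x,v)^2/M_s(v)$ is automatic from Cauchy--Schwarz, since $\sum_v M_s(v)=1$. Combined with $(\Delta X_e)^2\approx X_s(x)^2$, it gives a lower bound on the entropy production rate of order $\E\sum_x X_s(x)^2=\P[W_s=\hat W_s]\le e^{-cs}$. This integrates to $O(1)$ over $[0,t]$. That does not beat the logarithmic correction in \eqref{eq:rw-entropy}, which is $+\frac12\log t+O(1)$ from the Gaussian radial spread, so the infimum argument yields nothing.

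A rate $t^{-1/2}$ requires exploiting that $M_s(v)$, an average over the fragmented profile, is exponentially smaller than $p_{t-s}(x,v)$. Doing this at the quenched level, i.e.\ correlating the random gradient $X_s(y)-X_s(x)$ with the random denominator, is exactly the obstacle you name and do not resolve. Also, the denominator should be $\max\{M_{s-}(v),M_s(v)\}$, not $M_s(v)$.

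\textbf{The missing idea: never analyze the quenched profile.} Fix $v$. The exact inequality $F(y)-F(x)\le F'(x)(y-x)-\frac{(x-y)^2}{2\max\{x,y\}}$ (from $F''=-1/x$), together with the martingale property, gives with no remainder term
\[
F(M_0(v))-\E F(M_t(v))\gtrsim\int_0^t\sum_{\{u,u'\}\in E}\bigl(p_{t-s}(u,v)-p_{t-s}(u',v)\bigr)^2\,\E\Bigl[\frac{(X_{s-}(u')-X_{s-}(u))^2}{\hat M_s(v)}\Bigr]\dd s .
\]
Here $\hat M_s(v):=M_s(v)+\sum_uX_s(u)\sum_{u'\sim u}p_{t-s}(u',v)$ dominates $M(v)$ both before and after the ring.

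Now apply Cauchy--Schwarz \emph{in expectation}, $\E[Y^2/Z]\ge(\E Y)^2/\E Z$. This replaces the random mass gradient by the annealed one: the expectation is at least $(p_s(o,u)-p_s(o,u'))^2/\E\hat M_s(v)$, and $\E\hat M_s(v)\lesssim p_{t+1}(o,v)$. Everything is then a deterministic random-walk quantity.

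Restrict to $s\in[t/3,2t/3]$, to $v$ with $|\dist(o,v)-\frac{(d-2)t}{2d}|\le\sqrt t$, and to $u$ on the geodesic from $o$ to $v$ with $|\dist(o,u)-\frac{(d-2)s}{2d}|\le2\sqrt t$, with $u'$ its parent. By \eqref{eq:tree-local-clt} each term is $\asymp t^{-3/2}(d-1)^{-\dist(o,v)}$.
\begin{itemize}
\item There are $\asymp\sqrt t$ choices of $u$ and time $\asymp t$, giving $\gtrsim(d-1)^{-\dist(o,v)}$ for each such $v$.
\item The $\asymp\sqrt t$ spheres in the window each contribute $\asymp1$, giving $c\sqrt t$.
\item All other $v$ contribute nonnegatively.
\end{itemize}
So the sum is organized over target vertices and the edges on the geodesic toward them, not over edges carrying the quenched bulk. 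No tube localization or size-biased representation is needed.
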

The claimed subadditivity follows straightforwardly from the polymer model construction, and transitivity of $\T$.
The strict slowdown inequality is by continuous interpolating between random walks and repeated averaging, and establishing a lower bound on the speed of increasing of the entropy along this interpolation.
More precisely, since repeated averaging can be viewed as random walks with an extra layer of randomness, the interpolation is realized through an adaptive revealing of the extra randomness, as outlined in \Cref{ssec:fpn}.

\begin{proof}[Proof of \Cref{prop:kappa-star}]
\textbf{Subadditivity.}
From \eqref{eq:XWrel} and the composition law in \eqref{eq:chapman-kolmogorov}, we have
\[
X_{s+t}(W_{s+t}) = w_{0,s+t}(o, W_{s+t}) \ge w_{0,s}(o,W_s) w_{s,s+t}(W_s, W_{s+t}) = X_s(W_s) w_{s,s+t}(W_s, W_{s+t}).
\]
Using transitivity of $\T$ and stationarity of the point process $\Pi$, we have that $w_{s,s+t}(W_s, W_{s+t})$ has the same law as $w_{0,t}(o, W_t)=X_t(W_t)$. Thus the claimed stochastic subadditivity follows.
In particular, we have $\E L_{s+t}\le \E L_s + \E L_t$.
The
continuous-parameter subadditive theorem (see, e.g., \cite[Theorem 16.2.9]{KM}) then proves
\eqref{eq:kappa-star}.

\smallskip
\noindent
\textbf{Strict positivity.}
Since $x\mapsto -\log x$ is convex, we have
\begin{equation}\label{eq:lb_kappa}
    \E L_t \ge 
  -\log\E X_t(W_t)
  =
  -\log\E\Big[\sum_{v\in V}X_t(v)^2\Big] = -\log \P[W_t=\hat{W}_t].
\end{equation}
Here $\hat{W}$ is an independent (conditional on $\Pi$) copy of $W$, and two equalities are by \eqref{eq:size-biased}.
Then \eqref{eq:distance-lyapunov2} gives $\kappa_*>0$.

\smallskip
\noindent
\textbf{Strict slowdown.}
Recall that $p_t(o,v)=\E X_t(v)$. Set $F(x):=-x\log x$. Then by concavity of $F$, we have
$\E H_{\rm que}(t)\le H_{\rm ann}(t)$
and hence $\kappa_*\le\kappa_{\rm RW}$.  We now prove that, for all
sufficiently large $t$,
\begin{equation}\label{eq:entropy-gap-sqrt}
  H_{\rm ann}(t)-\E H_{\rm que}(t)\ge c\sqrt t.
\end{equation}
Indeed, once \eqref{eq:entropy-gap-sqrt} is proved, 
\eqref{eq:rw-entropy} gives, for all sufficiently large $t$,
\[
  \E H_{\rm que}(t)
  \le \kappa_{\rm RW}t+C\log(t)-c\sqrt t
  <\kappa_{\rm RW}t.
\]
By \eqref{eq:size-biased}, the infimum characterization of $\kappa_*$ in
\eqref{eq:kappa-star} can be written as $\kappa_*=\inf_{t>0}\E H_{\rm que}(t)/t$.
These imply that $\kappa_*<\kappa_{\rm RW}$.

It remains to prove \eqref{eq:entropy-gap-sqrt}.
For $0\le s\le t$, let
\[
  \cF_s:=\sigma\!\left(
    \Pi\cap(E\times[0,s])
  \right),\qquad
  M_s(v)
  :=
  \E[X_t(v)\mid\cF_s]
  =
  \sum_{u\in V}X_s(u)p_{t-s}(u,v).
\]
Then for each $v\in V$, $s\mapsto M_s(v)$ is a nonnegative martingale.  
Again using the concavity of $F$, we have that $\E F(M_s(v))$ is non-increasing in $s$.
Moreover, we can lower bound the ``derivative'' of $\E F(M_s(v))$ in $s$: in the case where $(\{u, u'\}, s)\in \Pi$ for some edge $\{u, u'\}\in E$, $M_s(v)-M_{s-}(v)$ equals
\[
  \frac12 (X_{s-}(u')-X_{s-}(u))
  (p_{t-s}(u,v)-p_{t-s}(u',v)).
\]
Using the fact that
\[
  F(y)-F(x)\le F'(x)(y-x)
  - \frac{(x-y)^2}{2\max\{x,y\}},
\]
which follows from integrating $F''(x)=-1/x$, and the fact that $M_s(v)$ is a martingale, we conclude that
\[
F(M_0(v))-\E F(M_t(v))\gtrsim \int_0^t \sum_{\{u,u'\}\in E} (p_{t-s}(u,v)-p_{t-s}(u',v))^2 \E \big[ (X_{s-}(u')-X_{s-}(u))^2 / \hat{M}_s(v)
    \big]  \dd s,
\]
where $\hat{M}_s(v):=M_s(v) + \sum_{u\in V}X_s(u)\sum_{u'\sim u}p_{t-s}(u',v)$.
Using Cauchy-Schwarz, we have
\[
\E \big[ (X_{s-}(u')-X_{s-}(u))^2 / \hat{M}_s(v)
    \big]  \ge   ( p_s(o,u) - p_s(o,u') )^2 / \E \hat{M}_s(v).
\]
We also have that 
\[
\E \hat{M}_s(v) = p_t(o, v) + \sum_{u\in V}p_s(o,u)\sum_{u'\sim u}p_{t-s}(u',v) \lesssim p_{t+1}(o, v),
\]
since the probability of staying put or making one prescribed jump
during one time unit is bounded below uniformly.
With the three estimates above, we conclude that
\[
F(M_0(v))-\E F(M_t(v))\gtrsim \int_0^t \sum_{\{u,u'\}\in E} (p_{t-s}(u,v)-p_{t-s}(u',v))^2  ( p_s(o,u) - p_s(o,u') )^2 / p_{t+1}(o,v)  \dd s.
\]
Take $v\in V$ with $|\dist(o,v)-(d-2)t/(2d)|\le \sqrt{t}$. Restrict the above integral to $s\in [t/3, 2t/3]$, and the sum to $u, u'$ being on the geodesic from $o$ to $v$, with $|\dist(o,u)-(d-2)s/(2d)|\le 2\sqrt{t}$ and $u'$ being the parent of $u$.
By \eqref{eq:tree-local-clt}, for such $v,s,u,u'$,
\[
\begin{split}
  |p_{t-s}(u,v)-p_{t-s}(u',v)|
  &\gtrsim
  t^{-1/2}(d-1)^{-\dist(o,v)+\dist(o,u)},\\
  |p_s(o,u)-p_s(o,u')|
  &\gtrsim
  t^{-1/2}(d-1)^{-\dist(o,u)},\\
  p_{t+1}(o,v)
  &\lesssim
  t^{-1/2}(d-1)^{-\dist(o,v)}.
\end{split}
\]
Thus we have
\[
F(M_0(v))-\E F(M_t(v))\gtrsim (d-1)^{-\dist(o,v)}.
\]
For any other $v$, we still have $F(M_0(v))-\E F(M_t(v))\ge 0$, by the above stated fact that $\E F(M_s(v))$ is non-increasing in $s$.

Then since $H_{\rm ann}=\sum_{v\in V} F(M_0(v))$, and $H_{\rm que}=\sum_{v\in V} F(M_t(v))$, by summing over all $v\in V$ we get \eqref{eq:entropy-gap-sqrt}, thereby the conclusion follows.
\end{proof}

\section{Exponential forgetting and tree central limit theorems}
\label{sec:cltontrees}
Continuing working on the infinite $d$-regular tree $\T=(V, E)$ and following the setup as in the previous section, 
in this section we prove the central limit theorem for $-\log X_t(W_t)$, modulus the non-degeneracy of the limiting variance.

As already discussed in \Cref{ssec:fpn}, the main strategy is to consider the increments $-\log \frac{X_t(W_t)}{X_{t-1}(W_{t-1})}$, and show they have exponential memory decay. 
More precisely, 
for $s\ge0$ and $t\ge1$, we consider the restarted process using only clocks after time $s$, and define
\[
  P_t^{(s)}
  :=-\log\frac{w_{s,s+t}(W_s,W_{s+t})}
  {w_{s,s+t-1}(W_s,W_{s+t-1})}
  +\E\log\frac{w_{s,s+t}(W_s,W_{s+t})}
  {w_{s,s+t-1}(W_s,W_{s+t-1})}.
\] 
We let $P_t:=P_t^{(0)}$.
We will prove an exponential forgetting in \Cref{lem:expdecay}, which bounds the difference $|P_{s+t}-P_t^{(s)}|$.
The key technical ingredient here is a four-path switching estimate (\Cref{lem:four-path}), proved by analyzing a pair of polymers.

In preparation for these, we first establish basic stochastic domination relations and moment estimates for polymers along the random walk path. 

\begin{itemize}
    \item For any $0\le s<t$, we have
\begin{equation}\label{eq:forward-ratio-one}
  \E\frac{X_s(W_s)}{X_t(W_t)}
  \le e^{t-s},\qquad \E\frac{X_t(W_t)}{X_s(W_s)}\le 1.
\end{equation}
    The first estimate follows from that
    \begin{equation}   \label{eq:stocha}
    -\log_2 w_{s,t}(W_s,W_t) \text{ is stochastically dominated by } \Poi(t-s),
    \end{equation}
    where, for $\lambda>0$, $\Poi(\lambda)$ denotes a Poisson random variable with mean $\lambda
    $.
    The second estimate in \eqref{eq:forward-ratio-one} is by 
    \[
 \E\left[
    \frac{X_t(W_t)}{X_s(W_s)}
    \,\middle|\,\Pi
  \right] = \sum_{u,v} w_{s,t}(u,v) X_s(u) \mathds{1}[X_s(u)>0] \frac{X_t(v)}{X_s(u)} \le 1,
    \]
    where the last inequality uses \eqref{eq:chapman-kolmogorov}.
    \item Take any $\eta>0$. Then there is $T$ so that $\E[-\log X_T(W_T)]\le(\kappa_*+\eta/2)T$. Thus by \Cref{prop:kappa-star} and \eqref{eq:stocha}, 
$-\log X_t(W_t)$ is stochastically dominated by a sum of
$\lfloor t/T\rfloor$ independent copies of
$-\log X_T(W_T)$ plus a remainder dominated by
$\log(2)\Poi(T)$. Thus we have
\begin{equation}    \label{eq:typical-weight}
  \P\big[
    X_t(W_t)\le e^{-(\kappa_*+\eta)t}
  \big]
  < Ce^{-ct},
  \qquad t\ge0,
\end{equation}
for $C, c>0$ depending on $\eta$.
    \item Using the representation of $\dist(o,W_t)$ in terms of a random walk on $\Z$, as in the proof of \eqref{eq:tree-local-clt}, and the bound $p_t(o,u)\lesssim (d-1)^{-\dist(o,u)}$ which comes from symmetry, we have that
    for every $\eta>0$,
\begin{equation}  \label{eq:tree-entropic-tail}
  \P\big[
    p_t(o,W_t)>e^{-(\kappa_{\rm RW}-\eta)t}
  \big]
  < Ce^{-ct}, \quad \forall\, t\geq 0,
\end{equation}
for $c,C>0$ depending on $\eta$.
\end{itemize}

We first isolate the weight of the pairs of paths for which the two tails cannot be
switched.     Call such a pair of paths $\gamma_1:[s_1, t_1]\to V$ and $\gamma_2:[s_2, t_2]\to V$ switchable if the histories share a
spacetime Poisson clock jump, i.e., $\gamma_1(r)=\gamma_2(r)$, or $(\{\gamma_1(r), \gamma_2(r)\}, r)\in \Pi$ for some $r\in [s_1, t_1]\cap [s_2, t_2]$.
Otherwise they are called non-switchable.
For $s,h\ge0$ and $A,A',B,B'\in V$, let
\[
 w_{\rm ns}^{s,s+h}(A,B\mid A',B')=
 \sum_{\substack{\gamma_1 \in \mathcal C_{s,s+h}[A,B],\gamma_2 \in \mathcal C_{s,s+h}[A',B']\\ \gamma_1 \textrm{ and }\gamma_2 \textrm{ are non-switchable}}} w(\gamma_1) w(\gamma_2).
\]

\begin{lemma}
\label{lem:graphical-nonconnection}
 We have $\E w_{\rm ns}^{s,s+h}(A,B\mid A',B')
  \le p_h(A,B)p_h(A',B')$.
\end{lemma}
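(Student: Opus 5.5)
We prove the bound by exhibiting $\E w_{\rm ns}$ as a sum over pairs of deterministic space-time skeletons, and bounding each pair's expected weight by that of two independent continuous-time walks. The guiding idea is this: if two compatible paths never share a vertex and never sit at the two endpoints of a ringing edge at the same time, then the clock rings affecting $\gamma_1$ and those affecting $\gamma_2$ lie on disjoint space-time regions of $\Pi$. By the independence of a Poisson process on disjoint sets, the joint expectation then factorizes.

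\textbf{Step 1: parametrize the paths.} Encode a compatible path $\gamma$ on $[s,s+h]$ by its jump data: the jump times $r_1<\dots<r_k$ and the sequence of vertices visited. A pair $(\gamma_1,\gamma_2)$ is then a pair of such skeletons. Given the skeletons, the event that both are compatible with $\Pi$ and the weight $w(\gamma_1)w(\gamma_2)$ are determined as follows.
\begin{itemize}
\item There must be rings at the specified jump edges and times.
\item The weight $w(\gamma_i)$ is $2^{-N_i}$, where $N_i$ counts all rings of $\Pi$ on edges incident to $\gamma_i(r-)$.
\end{itemize}
By Mecke's formula (Campbell for Poisson processes), $\E\sum$ over compatible pairs becomes an integral over skeletons against $(\rho\,\dd r)^{k_1+k_2}$. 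The remaining Poisson field is untouched, so the integrand is $\E\big[2^{-N_1'-N_2'}\big]$, where $N_i'$ counts the non-jump rings along $\gamma_i$. The jump rings themselves each contribute their factor $1/2$.

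\textbf{Step 2: factorize under non-switchability.} Non-switchability says that at no time do $\gamma_1,\gamma_2$ coincide, and no ring occurs on the edge joining their current positions. I will treat the second condition as an indicator on $\Pi$ and simply bound it by the event that no such ring occurs. Under this event, the edges incident to $\gamma_1(r)$ and to $\gamma_2(r)$ at each time form disjoint sets, except possibly for the shared edge $\{\gamma_1(r),\gamma_2(r)\}$ when the two paths are adjacent; that shared edge is required to have no ring. Hence
\[
\E\big[2^{-N_1'-N_2'}\one[\text{no ring on shared edges}]\big]=\prod_i \exp\Big(-\tfrac{\rho}{2}\int |\{e\ni\gamma_i(r)\}|\,\dd r\Big)\cdot e^{-\rho\,|\text{shared-edge time}|}.
\]
Here each non-shared edge contributes $e^{-\rho(1-1/2)}$ per unit time. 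A shared edge would contribute $e^{-\rho(1-1/4)}$ if rings on it were allowed; with rings forbidden it contributes $e^{-\rho}$, which is smaller than the $e^{-\rho/2}\cdot e^{-\rho/2}$ it would get in the independent product. In all cases the integrand is at most the product of the single-path integrands.

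\textbf{Step 3: sum the single-path integrands.} Summing the single-path integrand for $\gamma_i$ over skeletons with jump factor $\rho/2$ gives exactly the transition probability of the walk jumping at rate $\rho/2$ per edge. This is the annealed law described after \Cref{def:random walk gene by polymer}, namely $p_h(A,B)$. The non-switchability constraint only removes pairs of skeletons, so dropping it yields the bound $p_h(A,B)p_h(A',B')$.

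The main obstacle is the bookkeeping of the shared edge at adjacency times, and making Mecke's formula rigorous when the jump rings of one path lie on edges incident to the other. Non-switchability forbids exactly this overlap, so each ring is attributed to at most one path. I would verify this carefully, possibly first discretizing time.
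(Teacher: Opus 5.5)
Your argument is correct, and it reaches the bound by a different route from the paper. The paper argues dynamically. It asserts the exact identity $\E w_{\rm ns}^{s,s+h}(A,B\mid A',B')=\P[U_{s+h}=B,\,U'_{s+h}=B',\,\tau>h]$ for two independent rate-$\rho/2$ walks killed when they meet. It justifies this by a brief coupling that splits $\Pi$ into two independent halves driving $U$ and $U'$, and then drops the killing.

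You instead do a static Palm computation. You apply the multivariate Mecke formula to the $k_1+k_2$ jump atoms, which are distinct for non-switchable pairs, as you note. You then evaluate the Laplace functional of the remaining field explicitly. Your route makes the factorization transparent, which the paper's coupling sketch leaves implicit. In particular it shows why shared-edge rings, at total rate $\rho$, reproduce exactly the meeting rate $\rho/2+\rho/2$ of two independent walks. The paper's route gets the exact killed-walk identity in one line.

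Your computation actually recovers that identity, though you understate it in Step 2. The shared edge contributes $e^{-\rho}$ per unit time, which is \emph{equal} to $e^{-\rho/2}\cdot e^{-\rho/2}$, not smaller. So for every non-switchable skeleton pair the integrand is exactly $e^{-\rho d h}$, the product of the single-path integrands. The surviving skeleton sum is then precisely the probability for independent walks killed at meeting.

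Relatedly, in your displayed formula $|\{e\ni\gamma_i(r)\}|$ must count only the non-shared edges. Otherwise the extra factor $e^{-\rho|\text{shared-edge time}|}$ double-counts the shared edge. Your verbal per-edge accounting is the correct one. Both points are cosmetic, since only an upper bound is needed.
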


\begin{proof}
The non-switchability restriction has an exact Markovian description.
Let $U,U'$ be independent random walks on $\T$, starting from $A$ and $A'$ at time $s$ respectively, and each jumping
across an edge at rate $1/(2d)$. Set
$\tau:=\inf\{r\ge0:U_{s+r}=U_{s+r}'\}$.
We claim that
\[
  \E w_{\rm ns}^{s,s+h}(A,B\mid A',B')
  =
  \P\left[
    U_{s+h}=B,\ U_{s+h}'=B',\ \tau>h
  \right].
\]
Indeed, we can couple $\Pi$ with $U, U'$ until $\tau$, as follows. 
First, we split $\Pi=\Pi^U\cup \Pi^{U'}$, such that for every point in $\Pi$, it belongs to either $\Pi^U$ or $\Pi^{U'}$, with equal probability. Then each of $\Pi^U$ is $\Pi^{U'}$ is a Poisson point process with rate $1/(2d)$. 
Let $U$ (resp.~$U'$) be determined by $\Pi^U$ (resp.~$\Pi^{U'}$), such that whenever $U$ (resp.~$U'$) is incidental to a point in $\Pi^U$ (resp.~$\Pi^{U'}$), it must jump across the corresponding edge. 
This gives the correct law of $U, U'$ until $\tau$, and the claim above follows.
Dropping the condition $\tau>h$ then gives
the conclusion.\end{proof}

We now combine the non-switching bound with the previously given tail bounds to prove the switching estimate.
Throughout the rest of this section, we denote
$\mathcal A=W_s$, $\mathcal B=W_{s+t-1}$, $\mathcal D=W_{s+t}$,
and define the cross-ratio
\begin{equation}\label{eq:cross-ratio}
  \xi_{s,t}
  :=
  \log
  \frac{
    w_{0,s+t-1}(o,\mathcal B)w_{s,s+t}(\mathcal A,\mathcal D)
  }{
    w_{0,s+t}(o,\mathcal D)w_{s,s+t-1}(\mathcal A,\mathcal B)
  }.
\end{equation}

\begin{lemma}\label{lem:four-path}
For any $s\ge0$ and
$t\ge1$,
\begin{equation}\label{eq:cross-ratio-bound}
  \E\big| e^{\xi_{s,t}}-1\big|\le Ce^{-ct}.
\end{equation}
\end{lemma}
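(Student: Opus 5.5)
Writing $e^{\xi_{s,t}}$ in polymer language, the denominator and numerator are products of weights of two pairs of paths: the pair $(o\to\mathcal D \text{ on }[0,s+t],\ \mathcal A\to\mathcal B\text{ on }[s,s+t-1])$ and the ``switched'' pair $(o\to\mathcal B\text{ on }[0,s+t-1],\ \mathcal A\to\mathcal D\text{ on }[s,s+t])$. The plan is a path-switching argument. For any pair $(\gamma_1,\gamma_2)$ in the denominator that is \emph{switchable} (they meet, or sit at the two ends of a ringing edge, at some common time $r\in[s,s+t-1]$), cut both at the first such time and exchange tails. This produces a pair contributing to the numerator with the same product weight, since the factor $2^{-1}$ at each incident ring is counted once per path either way. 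The map is weight-preserving and injective (the first switching time is recovered from the image), and the symmetric map goes back. Hence
\[
 \bigl|\text{num}-\text{den}\bigr|\le w_{\rm ns}(o\!\to\!\mathcal D\mid \mathcal A\!\to\!\mathcal B)+w_{\rm ns}(o\!\to\!\mathcal B\mid\mathcal A\!\to\!\mathcal D),
\]
where on $[0,s]$ the first path is unrestricted and non-switchability only concerns the window $[s,s+t-1]$. Decomposing the first path at time $s$ through $w_{0,s}(o,u)$ and using independence of $\Pi$ on disjoint time intervals reduces this to the non-switchable weights $w_{\rm ns}^{s,s+t-1}$ of \Cref{lem:graphical-nonconnection}, with the extra unit-time segment handled by \eqref{eq:stocha}.

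It therefore suffices to bound $\E\bigl[w_{\rm ns}/\text{den}\bigr]$, with $\mathcal A,\mathcal B,\mathcal D$ sampled along $W$. Since $W$ is size-biased, the law of $(\mathcal A,\mathcal B,\mathcal D)$ given $\Pi$ has density $X_s(\mathcal A)w_{s,s+t-1}(\mathcal A,\mathcal B)w_{s+t-1,s+t}(\mathcal B,\mathcal D)$. Summing against this density cancels most of the denominator, leaving
\[
 \E\sum_{A,B,D}\frac{X_s(A)\,w_{\rm ns}(\cdots)}{w_{0,s+t}(o,D)}\cdot w_{s+t-1,s+t}(B,D).
\]
Split according to whether $w_{0,s+t}(o,\mathcal D)\ge e^{-(\kappa_*+\eta)(s+t)}$ or, more usefully, whether the relevant restarted weights $w_{s,s+t}(\mathcal A,\mathcal D)$ and $w_{s,s+t-1}(\mathcal A,\mathcal B)$ are at least $e^{-(\kappa_*+\eta)t}$. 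The bad event has probability $Ce^{-ct}$ by \eqref{eq:typical-weight} and stationarity, and on it one bounds $|e^{\xi}-1|$ using \eqref{eq:forward-ratio-one} and Cauchy--Schwarz (all ratios have bounded moments up to a factor $e^{O(1)}$ because the time gap is $1$). On the good event the denominator is $\ge e^{-2(\kappa_*+\eta)t}$ relative to the $[s,s+t]$ part, and after taking expectations \Cref{lem:graphical-nonconnection} bounds the non-switchable weight by products of $p_{t-1}$. Summing over the endpoints then gives roughly
\[
 e^{2(\kappa_*+\eta)t}\sum_{B,D}p_{t}(A,D)\,p_{t-1}(A',B),
\]
localized via \eqref{eq:tree-entropic-tail} to endpoints with $p\le e^{-(\kappa_{\rm RW}-\eta)t}$; the atypical endpoints again cost $Ce^{-ct}$. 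Matching one factor $e^{\kappa_*t}$ against one factor $e^{-\kappa_{\rm RW}t}$ yields $e^{-(\kappa_{\rm RW}-\kappa_*-3\eta)t}$, which is exponentially small by \Cref{prop:kappa-star}.

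The main obstacle is this bookkeeping. Naively each denominator factor costs $e^{\kappa_* t}$, but only one transition-probability factor gains $e^{-\kappa_{\rm RW}t}$, since the other is absorbed by summing. The size-biased sampling of $\mathcal B$ (and of $\mathcal D$) must therefore cancel one of the two denominator weights exactly, so that only a single $e^{\kappa_*t}$ remains. I would arrange this by sampling $\mathcal B,\mathcal D$ through the same polymer that appears in the denominator, and by applying the non-switchable estimate conditionally on $\cF_s$, taking care with the dependence between the $[0,s]$ path and the window.
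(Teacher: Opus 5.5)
Your skeleton matches the paper's: tail switching down to non-switchable pairs, cancelling $w_{s,s+t-1}(\mathcal A,\mathcal B)$ against the size-biased density, then \Cref{lem:graphical-nonconnection}, \eqref{eq:typical-weight}, \eqref{eq:tree-entropic-tail} and $\kappa_*<\kappa_{\rm RW}$. But the two steps that actually produce the exponent are either unsupported or missing.

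\textbf{Bad events.} You bound $|e^{\xi_{s,t}}-1|$ on the bad events by moments and Cauchy--Schwarz. This is not justified. \eqref{eq:forward-ratio-one} gives only a first-moment bound $\E[\cdot]\le 1$ for the forward ratio $w_{s,s+t}(\mathcal A,\mathcal D)/w_{s,s+t-1}(\mathcal A,\mathcal B)$. Nothing controls its higher moments: a low-mass vertex next to a high-mass one makes the ratio large. The unit time gap helps only the backward ratio $w_{0,s+t-1}(o,\mathcal B)/w_{0,s+t}(o,\mathcal D)$.

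The paper never needs moments of $e^{\xi_{s,t}}$. Keep the switching bound on the bad events as well, and use the pathwise inequality $w_{\rm ns}(A,B\mid A',B')\le w_{s,s+t-1}(A,B)\,w_{s,s+t-1}(A',B')$ with \eqref{eq:chapman-kolmogorov}. This gives $\sum_{A',B'}w_{0,s}(o,A')\,w_{\rm ns}(A,B\mid A',B')\,w_{s+t-1,s+t}(B',D)\le w_{0,s+t}(o,D)\,w_{s,s+t-1}(A,B)$. So the density-normalized sum, restricted to any bad set of triples $(A,B,D)$, is at most the probability that $(W_s,W_{s+t-1},W_{s+t})$ lies in that set. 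That probability is $Ce^{-ct}$ by \eqref{eq:typical-weight} and the geometric estimates.

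\textbf{Exponent count.} Your count does not close. After the density has cancelled $w_{s,s+t-1}(\mathcal A,\mathcal B)$, only $w_{0,s+t}(o,D)$ remains in the denominator. Your display pays $e^{2(\kappa_*+\eta)t}$ against a single $e^{-(\kappa_{\rm RW}-\eta)t}$. That would only decay if $2\kappa_*<\kappa_{\rm RW}$, which is not available in general, and your stated conclusion $e^{-(\kappa_{\rm RW}-\kappa_*-3\eta)t}$ does not follow from it.

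The remaining denominator is paid with a single factor via $w_{0,s}(o,A)\,w_{s,s+t-1}(A,B)\,w_{s+t-1,s+t}(B,D)\le w_{0,s+t}(o,D)$. This leaves $1/w_{s,s+t-1}(A,B)\le e^{(\kappa_*+\eta)t}$ on the good part. However, this uses up the density. After taking expectations you are left with $e^{(\kappa_*+\eta)t}\sum_D p_{s+t}(o,D)\sum_{(A,B)}p_{t-1}(A,B)$, where the sum over $(A,B)$ is unweighted. If $A$ ranges freely, then $\sum_A p_{t-1}(A,B)=1$ and the $e^{-\kappa_{\rm RW}t}$ gain disappears.

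\textbf{Missing ingredient.} You need a deterministic good set $\mathcal G_\eta(D,t)$ of cardinality $e^{C_0\eta t}$, defined by:
\begin{itemize}
\item $\dist(B,D)\le\eta t$ and $\dist(A,B)\le t$;
\item $A$ within $\eta t$ of its last common ancestor with $D$, so that $A$ sits near the ancestral line of $D$;
\item $p_{t-1}(A,B)\le e^{-(\kappa_{\rm RW}-\eta)(t-1)}$.
\end{itemize}
Its complement has probability $Ce^{-ct}$ by jump-count large deviations, the side-depth estimate from \Cref{lem:tube}, and \eqref{eq:tree-entropic-tail}. On the set itself the bound is $e^{(\kappa_*-\kappa_{\rm RW}+(2+C_0)\eta)t}$. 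Your localization through \eqref{eq:tree-entropic-tail} controls the size of each $p_{t-1}(A,B)$ but not the number of pairs, and counting the pairs is exactly where the tree geometry is needed.
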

 
\begin{proof} 
In the proof we write $w_{\rm ns}$ as a shorthand for  $w_{\rm ns}^{s,s+t-1}$.
For any $A,B,A',B'\in V$,
\begin{equation}\label{eq:weightswitch}
  w_{s,s+t-1}(A,B)w_{s,s+t-1}(A',B')
   -w_{s,s+t-1}(A,B')w_{s,s+t-1}(A',B)\le w_{\rm ns}(A,B\mid A',B').
\end{equation}
Indeed, this follows from switching the two tails at the first switchable point, for $\gamma_1 \in \mathcal C_{s,s+t-1}[A,B]$ and $\gamma_2 \in \mathcal C_{s,s+t-1}[A',B']$.
The composition law in \eqref{eq:chapman-kolmogorov} gives, for any $A, B, D\in V$,
\begin{align*}
  w_{0,s+t-1}(o,B)w_{s,s+t}(A,D)
  &=
  \sum_{A',B'\in V}w_{0,s}(o,A')
  w_{s,s+t-1}(A',B)
  w_{s,s+t-1}(A,B')
  w_{s+t-1,s+t}(B', D),\\
  w_{0,s+t}(o,D)w_{s,s+t-1}(A,B)
  &=
  \sum_{A',B'\in V}w_{0,s}(o,A')
  w_{s,s+t-1}(A',B')
  w_{s,s+t-1}(A,B)
  w_{s+t-1,s+t}(B',D).
\end{align*}
Using \eqref{eq:weightswitch} in these decompositions yields
\[
e^{\xi_{s,t}}-1 \le   \frac{
    \sum_{A', B\in V} w_{0,s}(o,A')
  w_{\rm ns}(\mathcal A,B\mid A',\mathcal B)
  w_{s+t-1,s+t}(B,\mathcal D)
  }{
    w_{0,s+t}(o,\mathcal D)w_{s,s+t-1}(\mathcal A,\mathcal B)
  },
\]
\[
1-e^{\xi_{s,t}} \le   \frac{
    \sum_{A', B'\in V} w_{0,s}(o,A')
  w_{\rm ns}(\mathcal A, \mathcal B\mid A', B')
  w_{s+t-1,s+t}(B',\mathcal D)
  }{
    w_{0,s+t}(o,\mathcal D)w_{s,s+t-1}(\mathcal A,\mathcal B)
  }.
\]
Note that $\P[\mathcal A=A, \mathcal B=B, \mathcal D=D \mid \Pi]=w_{0,s}(o,A)w_{s,s+t-1}(A,B)w_{s+t-1,s+t}(B,D)$. We therefore conclude that
\begin{equation}\label{eq:cross-ratio-normalized}
  \E|e^{\xi_{s,t}}-1|
  \le
  2\E\bigg[
  \sum_{D, A,B,A',B'\in V} 
  F_D(A,B;A',B')
  w_{0,s}(o,A)w_{s+t-1,s+t}(B,D)\bigg],
\end{equation}
where  
\[
  F_D(A,B;A',B')
  :=
  \begin{cases}
  \displaystyle
  \frac{w_{0,s}(o,A')
    w_{\rm ns}(A,B\mid A',B')
    w_{s+t-1,s+t}(B',D)}{w_{0,s+t}(o,D)},
    & w_{0,s+t}(o,D)>0,\\
  0,&w_{0,s+t}(o,D)=0.
  \end{cases}
\]
We next bound \eqref{eq:cross-ratio-normalized}. We will split the sum in terms of the vertices $A$ and $B$.

Fix a small $\eta>0$. All the constants $C,c>0$ in the rest of this proof are allowed to depend on $\eta$.
For any $D\in V$, let $\mathcal G_\eta(D,t)$ be the set of pairs $(A,B)$
satisfying the following conditions:
\[
 \dist(B,D)\le\eta t,\qquad
  \dist(A,B)\le t,\qquad
  \dist(A,\hat{A})\le\eta t,\qquad
  p_{t-1}(A,B)
  \le e^{-(\kappa_{\rm RW}-\eta)(t-1)},
\]
where $\hat{A}$ is the last common ancestor of $A$ and $D$ on the rooted tree $\T$.
Then the first three inequalities imply that $\dist(\hat{A},D) \le (1+2\eta)t$, and 
\begin{equation}\label{eq:good-pair-cardinality}
  |\mathcal G_\eta(D,t)|
  \le e^{C_0\eta t},
\end{equation}
where $C_0>0$ depends only on $d$.
As $W$ is a random walk with jump rate $1/(2d)$ across each
edge, uniformly in $s\ge0$ we have
\begin{equation}\label{eq:pwst}
  \P\bigl[(W_s,W_{s+t-1})
    \notin\mathcal G_\eta(W_{s+t},t)\bigr]
  < Ce^{-ct}.
\end{equation}
Indeed, $\P[\dist(W_{s+t-1}, W_{s+t})>\eta t], \P[\dist(W_s, W_{s+t-1})>t] < Ce^{-ct}$ by large-deviation estimates for the number of jumps of $W$ in a given interval.  For $\P[\dist(W_s,\tilde{W}_s)>\eta t]$, where $\tilde{W}_s$ is the last common ancestor of $W_s$ and $W_{s+t}$ on the rooted tree $\T$, it can also be bounded by $Ce^{-ct}$, using side-depth estimate as in the proof of \Cref{lem:tube}.  Finally,
$\P[p_{t-1}(W_s,W_{s+t-1})
  > e^{-(\kappa_{\rm RW}-\eta)(t-1)}]\le Ce^{-ct}$ is exactly by 
\eqref{eq:tree-entropic-tail}.

We now split the sum in \eqref{eq:cross-ratio-normalized} into three parts.

\noindent\textbf{Part (i): $(A,B)\notin \mathcal G_\eta(D,t)$.}
Let
$E_{\rm i}$ be the sum in \eqref{eq:cross-ratio-normalized}, restricted to such $(A, B)$.
Using Lemma \ref{lem:graphical-nonconnection}, we get
$$w_{\rm ns}(A,B\mid A',B')
\le w_{s,s+t-1}(A,B)w_{s,s+t-1}(A',B'),$$ we have
\begin{equation}\label{eq:fab}
   \sum_{A',B'\in V}F_D(A,B;A',B')\le w_{s,s+t-1}(A,B),
\end{equation}
which then implies
\begin{equation}\label{eq:parti}
    \begin{split}
    \E [E]_{\rm i}  ] 
    \le &\E\bigg[\sum_{D,A,B\in V; (A,B)\notin \mathcal G_{\eta}(D,t)}  w_{0,s}(o,A)
    w_{s,s+t-1}(A,B) w_{s+t-1,s+t}(B,D)\bigg]\\
    = &    \P[(W_s,W_{s+t-1})\notin\mathcal G_\eta(W_{s+t},t)]< C e^{-ct},
    \end{split}
\end{equation}
where the last step is due to \eqref{eq:pwst}.

\noindent\textbf{Part (ii):
$w_{s,s+t-1}(A,B)<e^{-(\kappa_*+\eta)t}$.}
Let
$E_{\rm ii}$ be the sum in \eqref{eq:cross-ratio-normalized}, restricted to such $(A, B)$.
Using \eqref{eq:fab} again gives
\begin{equation}\label{eq:partii}
    \begin{split}
      \E [E_{\rm ii} ]\le &\E\bigg[\sum_{D,A,B\in V;  w_{s,s+t-1}(A,B)<e^{-(\kappa_*+\eta)t} }  w_{0,s}(o,A)
    w_{s,s+t-1}(A,B) w_{s+t-1,s+t}(B,D)\bigg]\\
    = & \P[w_{s,s+t-1}(W_s, W_{s+t-1})\leq
    e^{-(\kappa_*+\eta)t}
    ] < C e^{-ct},
    \end{split}
\end{equation}
where the last inequality is by \eqref{eq:typical-weight}.

\noindent\textbf{Part (iii): $(A,B)\in\mathcal G_\eta(D,t)$ and
$w_{s,s+t-1}(A,B)\ge e^{-(\kappa_*+\eta)t}$.}
Let
$E_{\rm iii}$ be the sum in \eqref{eq:cross-ratio-normalized}, restricted to such $(A, B)$.
For such $(A, B)$, we have
\[
\begin{split}
&F_D(A,B;A',B')
 w_{0,s}(o,A)w_{s+t-1,s+t}(B,D)\\
=&
\frac{w_{0,s}(o,A)w_{s,s+t-1}(A,B)
 w_{s+t-1,s+t}(B,D)}{w_{0,s+t}(o,D)}\cdot
\frac{w_{0,s}(o,A')w_{\rm ns}(A,B\mid A',B')
 w_{s+t-1,s+t}(B',D)}{w_{s,s+t-1}(A,B)}\\
\le &
e^{(\kappa_*+\eta)t}
w_{0,s}(o,A')w_{\rm ns}(A,B\mid A',B')
w_{s+t-1,s+t}(B',D),
\end{split}
\]
because the first fraction is at most $1$ by \eqref{eq:chapman-kolmogorov}.  Then using the definition of $\mathcal G_{\eta}(D,t)$, 
\Cref{lem:graphical-nonconnection} and independence of weights across the
disjoint time intervals,
\begin{align}
\E[E_{\rm iii}]
&\le
e^{(\kappa_*+\eta)t}
\sum_{D, A', B'\in V;\,(A,B)\in\mathcal G_\eta(D,t)}
p_s(o,A')p_{t-1}(A,B)p_{t-1}(A',B')p_1(B',D)
\nonumber\\
&\le
e^{(\kappa_*+\eta)t-(\kappa_{\rm RW}-\eta)(t-1)}
\sum_{D\in V}|\mathcal G_\eta(D,t)|
 p_{s+t}(o,D)
\nonumber\\
&\le
C e^{(\kappa_*-\kappa_{\rm RW}+(2+C_0)\eta)t},
\label{eq:partiii}
\end{align}
where we used \eqref{eq:good-pair-cardinality} in the last inequality.
Since $\kappa_*-\kappa_{\rm RW}<0$, by choosing $\eta$ small enough, we get $\E[E_{\rm iii}]<Ce^{-ct}$.

Summing over the bounds on the three parts of \eqref{eq:cross-ratio-normalized}, the conclusion follows.
\end{proof}

We can now derive the exponential mixing of increments.
\begin{lemma}\label{lem:expdecay}
For every $\theta\ge 1$, we have $\sup_{m\ge1}\E|P_m|^\theta<\infty$.
Moreover, for any $s\ge0$ and
$t\ge1$,
\begin{equation}\label{eq:forgetting}
  \E\big| P_{s+t}-P_t^{(s)} \big|^2\le\E\xi_{s,t}^2< Ce^{-ct}.
\end{equation}
\end{lemma}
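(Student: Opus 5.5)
The plan is to identify $P_{s+t}-P_t^{(s)}$ exactly with a centered version of the cross-ratio $\xi_{s,t}$ from \eqref{eq:cross-ratio}. After that, the exponential bound comes from \Cref{lem:four-path} once we have uniform moment bounds, and those same moment bounds give the first claim.

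\textbf{Step 1: moments of one-step log-ratios.} For any $0\le r$ and $m\ge1$, set
\[
\ell:=\log\frac{w_{r,r+m}(W_r,W_{r+m})}{w_{r,r+m-1}(W_r,W_{r+m-1})}.
\]
By transitivity of $\T$ and stationarity of $\Pi$, its law does not depend on $r$, so it suffices to take $r=0$, where it is $\log(X_m(W_m)/X_{m-1}(W_{m-1}))$. Then \eqref{eq:forward-ratio-one}, applied with $(s,t)=(m-1,m)$, gives
\[
\E e^{\ell}\le 1,\qquad \E e^{-\ell}\le e.
\]
Hence $\E e^{|\ell|}\le 1+e$, and all moments of $\ell$ are bounded uniformly in $m$ (and $r$). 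Since $P_m=-\ell+\E\ell$, this gives $\sup_{m\ge1}\E|P_m|^\theta<\infty$ for every $\theta\ge1$.

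\textbf{Step 2: identification with $\xi_{s,t}$.} Write $\mathcal A=W_s$, $\mathcal B=W_{s+t-1}$, $\mathcal D=W_{s+t}$. Directly from the definitions, the random parts satisfy
\[
-\log\frac{w_{0,s+t}(o,\mathcal D)}{w_{0,s+t-1}(o,\mathcal B)}+\log\frac{w_{s,s+t}(\mathcal A,\mathcal D)}{w_{s,s+t-1}(\mathcal A,\mathcal B)}=\xi_{s,t}.
\]
The centering constants then differ by exactly $-\E\xi_{s,t}$, so $P_{s+t}-P_t^{(s)}=\xi_{s,t}-\E\xi_{s,t}$. Therefore
\[
\E|P_{s+t}-P_t^{(s)}|^2=\Var(\xi_{s,t})\le\E\xi_{s,t}^2 .
\]
Also, $\xi_{s,t}$ is a difference of two one-step log-ratios of the type in Step 1: one with $r=0$, $m=s+t$, and one with $r=s$, $m=t$. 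So $\E\xi_{s,t}^4\le C$ uniformly in $s,t$.

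\textbf{Step 3: from $\E|e^{\xi}-1|$ to $\E\xi^2$.} This is the main, though mild, obstacle, since \Cref{lem:four-path} controls $e^{\xi}-1$ rather than $\xi$. Split on the event $G:=\{|e^{\xi_{s,t}}-1|\le 1/2\}$.

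On $G$ we have $|\xi_{s,t}|\le 2|e^{\xi_{s,t}}-1|\le 1$. Hence $\xi_{s,t}^2\le|\xi_{s,t}|\le 2|e^{\xi_{s,t}}-1|$, and so $\E[\xi_{s,t}^2\mathds 1_G]\le 2\E|e^{\xi_{s,t}}-1|\le Ce^{-ct}$ by \eqref{eq:cross-ratio-bound}.

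On $G^c$, Markov's inequality and \eqref{eq:cross-ratio-bound} give $\P[G^c]\le 2\E|e^{\xi_{s,t}}-1|\le Ce^{-ct}$. By Cauchy--Schwarz,
\[
\E[\xi_{s,t}^2\mathds 1_{G^c}]\le(\E\xi_{s,t}^4)^{1/2}\P[G^c]^{1/2}\le Ce^{-ct/2},
\]
using the uniform fourth-moment bound from Step 2.

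Summing the two parts yields $\E\xi_{s,t}^2<Ce^{-ct}$ after renaming $c$, which proves \eqref{eq:forgetting}.
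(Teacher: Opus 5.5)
Your proof is correct and follows essentially the same route as the paper. It uses the exact identity $P_{s+t}-P_t^{(s)}=\xi_{s,t}-\E\xi_{s,t}$, moment control from \eqref{eq:forward-ratio-one}, and the exponential input from \Cref{lem:four-path}; the only difference is the final step converting $\E|e^{\xi_{s,t}}-1|$ into $\E\xi_{s,t}^2$, where the paper uses the pointwise bound $x^2\lesssim e^{-x/4}|e^x-1|^{1/2}$ together with Cauchy--Schwarz and $\E e^{-\xi_{s,t}/2}\le\sqrt e$, and your split on $\{|e^{\xi_{s,t}}-1|\le 1/2\}$ with a uniform fourth-moment bound is an equally valid variant.
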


\begin{proof}
The uniform moment bound follows from \eqref{eq:forward-ratio-one}, and
the inequality $|x|^\theta \lesssim e^x+e^{-x}$ for any $x\in\R$ and $\theta \ge 1$.
For the second assertion, the definition of $\xi_{s,t}$ gives
\[
  P_{s+t}-P_t^{(s)}
  =
  \xi_{s,t}-\E\xi_{s,t},
\]
and thus
\[
  \E\big| P_{s+t}-P_t^{(s)} \big|^2\le\E\xi_{s,t}^2.
\]
By Cauchy-Schwarz inequality, and \eqref{eq:forward-ratio-one} again, we have
\[
\E e^{-\xi_{s,t}/2}
  \le
    \left(\E\frac{w_{0,s+t}(o,\mathcal D)}
             {w_{0,s+t-1}(o,\mathcal B)}\right)^{1/2}
    \left(\E\frac{w_{s,s+t-1}(\mathcal A,\mathcal B)}
             {w_{s,s+t}(\mathcal A,\mathcal D)}
  \right)^{1/2}
  \le\sqrt e.
\]
Consequently, using \Cref{lem:four-path} and the inequality $x^2
  \lesssim
 e^{-x/4}|e^x-1|^{1/2}$,
we get
\begin{equation}\label{eq:cross-ratio-L2}
  \E\xi_{s,t}^2
  \le
  C\E\left[
    e^{-\xi_{s,t}/4}
    |e^{\xi_{s,t}}-1|^{1/2}
  \right]
  \le
  C(\E e^{-\xi_{s,t}/2})^{1/2}
  (\E|e^{\xi_{s,t}}-1|)^{1/2}
  < Ce^{-ct},
\end{equation}
which finishes proving \eqref{eq:forgetting}.
\end{proof}

The exponential forgetting provides refined estimates on both increment means and covariances, and identifies the asymptotic
variance.

\begin{lemma}
\label{lem:covariance}
There is $b_{\rm ent}\in\R$, such that for any $m\in \N$,
\begin{equation}\label{eq:mean-stabilization}
  \abs{\E\log X_m(W_m)+\kappa_*m-b_{\rm ent}}
  < Ce^{-cm}.
\end{equation}
Furthermore, for any $s\ge 1$ and $t \ge 0$,
\begin{equation}\label{eq:epspst}
  |\E[P_sP_{s+t}]|< Ce^{-ct}.
\end{equation}
For every integer $j\ge0$, the limit
$e_j:=\lim_{t\to\infty}\E[P_tP_{t+j}]$
exists, and for $t\ge 1$,
\begin{equation}\label{eq:covariance-stabilization}
  |e_j|< Ce^{-cj},
  \qquad
  |\E[P_tP_{t+j}]-e_j|
  < Ce^{-c(t\vee j)\}}.
\end{equation}
Finally, we can define
\begin{equation}\label{eq:sigma}
  \sigma^2:=\lim_{n\to\infty}
  \frac1n
  \Var\left(\sum_{m=1}^nP_m\right)
  =
  e_0+2\sum_{j=1}^\infty e_j.
\end{equation}
\end{lemma}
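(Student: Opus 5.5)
The plan is to get everything from the forgetting estimate \eqref{eq:forgetting} of \Cref{lem:expdecay}, the uniform moment bounds $\sup_m\E|P_m|^\theta<\infty$, and stationarity. By transitivity of $\T$ and stationarity of $\Pi$, the restarted increments $P_t^{(s)}$ are built from clocks in $[s,s+t]$ and the walk after time $s$ (relative to $W_s$), so $(P_t^{(s)})_{t\ge1}\overset{d}{=}(P_t)_{t\ge1}$ jointly, and they are independent of $\cF_s$ and of $W|_{[0,s]}$. The proof reduces every statement to this independence plus $L^2$ errors of size $e^{-ct}$.

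\textbf{Mean stabilization.} Set $a_m:=\E\log\frac{X_m(W_m)}{X_{m-1}(W_{m-1})}$ for $m\ge1$, so that $\E\log X_m(W_m)=\E\log X_0(W_0)+\sum_{k\le m}a_k$ for integer $m$. Since $\xi_{s,t}$ is exactly the difference of the raw (uncentered) increments, $|a_{s+t}-a_t|\le\E|\xi_{s,t}|\le Ce^{-ct}$ by \eqref{eq:forgetting}. Taking $t=\lceil m/2\rceil$ and $s=m'-t$ for $m'\ge m$ shows $(a_m)$ is Cauchy with $|a_m-a_\infty|\le Ce^{-cm}$. Then $\E\log X_m(W_m)=-a_\infty' m+b+O(e^{-cm})$ after summing the tail, and comparing with \eqref{eq:kappa-star} identifies the linear coefficient as $-\kappa_*$. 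This gives \eqref{eq:mean-stabilization}, with $b_{\rm ent}$ the summed deviation $\sum_k(a_k-a_\infty)$ (up to sign).

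\textbf{Covariance decay.} For $s\ge1$ and $t\ge1$, write
\[
\E[P_sP_{s+t}]=\E[P_sP_t^{(s)}]+\E[P_s(P_{s+t}-P_t^{(s)})].
\]
The first term is $\E P_s\,\E P_t^{(s)}=0$, because $P_s$ is $\sigma(\Pi\cap[0,s],W|_{[0,s]})$-measurable, $P_t^{(s)}$ is independent of that $\sigma$-field, and $P_t^{(s)}$ is centered. The second term is at most $\|P_s\|_2\,Ce^{-ct}$ by Cauchy–Schwarz and \eqref{eq:forgetting}. Small $t$ is absorbed into the constant. This proves \eqref{eq:epspst}.

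\textbf{Limits $e_j$.} For $t'>t$, compare $\E[P_{t'}P_{t'+j}]$ with $\E[P_tP_{t+j}]$ by restarting at $s=t'-t$. Replacing $P_{t'}$ by $P_t^{(s)}$ and $P_{t'+j}$ by $P_{t+j}^{(s)}$ costs $O(e^{-ct})$ in $L^2$, again using the uniform moments. The pair $(P_t^{(s)},P_{t+j}^{(s)})$ has the law of $(P_t,P_{t+j})$, so the sequence is Cauchy with error $Ce^{-ct}$. Combined with $|\E[P_tP_{t+j}]|\le Ce^{-cj}$ from \eqref{eq:epspst}, this gives both $|e_j|\le Ce^{-cj}$ and the bound $Ce^{-c(t\vee j)}$: use the $t$-bound when $t\ge j$, and when $j>t$ note that both terms are $O(e^{-cj})$.

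\textbf{Variance formula.} Expand
\[
\frac1n\Var\Big(\sum_{m=1}^n P_m\Big)=\frac1n\sum_m\E P_m^2+\frac2n\sum_{j\ge1}\sum_{m\le n-j}\E[P_mP_{m+j}].
\]
Each summand is $e_j+O(e^{-c(m\vee j)})$. The errors sum to $O(1)$ over all $m,j$, so they vanish after dividing by $n$. The truncation at $m\le n-j$ costs $\frac1n\sum_j j|e_j|\to0$. Hence the limit equals $e_0+2\sum_j e_j$, which converges absolutely.

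The main obstacle is the bookkeeping behind the independence claim. One must check that $P_t^{(s)}$ is genuinely independent of $(P_1,\dots,P_s)$, which requires the walk increments after time $s$, viewed from $W_s$, to be independent of the past under the annealed law. One must also confirm that $\xi_{s,t}$ equals the raw increment difference exactly. The first point follows from the Markov property of the generated walk (\Cref{def:random walk gene by polymer}) together with transitivity.
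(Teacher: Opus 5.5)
Your proposal is correct and follows the paper's proof essentially step for step: the paper likewise obtains convergence of the mean increments from $Q_{s+t}-Q_t^{(s)}=\xi_{s,t}$ and \eqref{eq:forgetting}, identifies the limit as $\kappa_*$ through \eqref{eq:kappa-star}, and sums the exponentially small tail to define $b_{\rm ent}$; it proves \eqref{eq:epspst} by replacing $P_{s+t}$ with $P_t^{(s)}$, obtains $e_j$ by restarting at a time $r$ and using $(P_t^{(r)},P_{t+j}^{(r)})\overset{d}{=}(P_t,P_{t+j})$, and expands the variance exactly as you do. The one thing you make explicit that the paper uses tacitly is $\E[P_sP_t^{(s)}]=0$, i.e.\ that $P_t^{(s)}$ is independent of the clocks and the walk up to time $s$ (by the Markov property of the generated walk and transitivity of $\T$), and your justification of this point is sound.
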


\begin{proof}
For $s\ge 0$ and $t\ge1$, we set
\[
  Q_t^{(s)}
  :=-\log\frac{w_{s,s+t}(W_s,W_{s+t})}
  {w_{s,s+t-1}(W_s,W_{s+t-1})},
  \qquad Q_t:=Q_t^{(0)}.
\]
By the definition \eqref{eq:cross-ratio},
$Q_{s+t}-Q_t^{(s)}=\xi_{s,t}$ and $Q_t^{(s)}$ has the same distribution as $Q_t$.
By \eqref{eq:forgetting}, for $t'\ge t\ge1$,
\begin{equation}\label{eq:increment-mean-rate}
  \abs{\E Q_{t'}-\E Q_t}
  \le \bigl(\E\xi_{t'-t,t}^2\bigr)^{1/2}
  < Ce^{-ct}.
\end{equation}
Thus $\E Q_t$ converges as $t\to\infty$, with the rate in
\eqref{eq:increment-mean-rate}, and its limit is
\[
  \lim_{t\to\infty}\E Q_t
  =\lim_{t\to\infty}\E Q_{\lfloor t\rfloor}
  =\lim_{t\to\infty}\frac{1}{\lfloor t\rfloor}\sum_{m=1}^{\lfloor t\rfloor}\E Q_m
  =\lim_{t\to\infty}\frac{\E[-\log X_{\lfloor t\rfloor}(W_{\lfloor t\rfloor})]}{\lfloor t\rfloor}
  =\kappa_*,
\]
by \eqref{eq:kappa-star}.  In particular, with
$a_t:=\E Q_t-\kappa_*$, we have $|a_t|\le Ce^{-ct}$.
Thus by defining
$$b_{\rm ent}:=-\sum_{k=1}^{\infty}a_k,$$
we have
\[
  \abs{\E\log X_m(W_m)+\kappa_*m-b_{\rm ent}}
  =\abs{\sum_{k=m+1}^{\infty}a_k}
  < Ce^{-cm},
\]
which proves \eqref{eq:mean-stabilization}.
For \eqref{eq:epspst}, it holds  for $t<1$ by the uniform  bound on $\E[P_t^2]$ (see Lemma \ref{lem:expdecay}). For $t\ge 1$, we have
\[
  |\E[P_sP_{s+t}]|
  =
  |\E[P_s(P_{s+t}-P_t^{(s)})]| \le (\E P_s^2)^{1/2} \big(\E |P_{s+t}-P_t^{(s)}|^2\big)^{1/2}
  < Ce^{-ct}.
\]
where the last inequality is by \Cref{lem:expdecay}.

For $t\ge1$ and $j\ge0$, put
\[
  \operatorname{Cov}_t(j)
  :=\operatorname{Cov}(P_t,P_{t+j})
  =\E[P_tP_{t+j}].
\]
For $r\ge0$, since that $(P_t, P_{t+j})$ have the same joint law as $(P_t^{(r)}, P_{t+j}^{(r)})$, we can write
\[
\begin{split}
|\operatorname{Cov}_{t+r}(j)-\operatorname{Cov}_t(j)| &= |\E[P_{t+r}P_{t+r+j}] - \E[P_t^{(r)}P_{t+j}^{(r)}]| \\&\le |\E[(P_{t+r}-P_t^{(r)})P_{t+r+j}]| + |\E[P_t^{(r)}(P_{t+r+j}-P_{t+j}^{(r)})]|
\\
&\le (\E|P_{t+r}-P_t^{(r)}|^2)^{1/2}(\E P_{t+r+j}^2)^{1/2} + (\E|P_t^{(r)}|^2)^{1/2}(\E|P_{t+r+j}-P_{t+j}^{(r)}|^2)^{1/2}.
\end{split}
\]
Then by \Cref{lem:expdecay}, we can bound the above by $Ce^{-ct}$.
This implies the convergence of $\operatorname{Cov}_t(j)$ as $t\to\infty$, as well as $\abs{\operatorname{Cov}_t(j)-e_j}\le Ce^{-ct}$.
Then \eqref{eq:covariance-stabilization} follows, from this together with \eqref{eq:epspst}.

Finally, \eqref{eq:sigma} holds true because
\begin{align*}
  \frac1n\Var\left(\sum_{m=1}^nP_m\right)
  &=\frac1n\sum_{m=1}^n\operatorname{Cov}_m(0)
    +\frac2n\sum_{j=1}^{n-1}\sum_{m=1}^{n-j}
      \operatorname{Cov}_m(j)\\
  &=e_0+2\sum_{j=1}^{n-1}\left(1-\frac jn\right)e_j
    +O(n^{-1})
  \to e_0+2\sum_{j\ge1}e_j, \quad \mbox{as }n\to \infty,
\end{align*}
where the error term $O(n^{-1})$ is due to
$\sum_{m\ge1}e^{-cm}
  +\sum_{m,j\ge1}e^{-c(m\vee j)}<\infty$.
\end{proof}
We can now deduce the central limit theorem for $-\log X_t(W_t)$, modulus the fact that $\sigma^2>0$, which will be shown in the next section.
\begin{theorem}\label{thm:tree-clt}
As $t\to\infty$, $\frac{-\log X_t(W_t)-\kappa_*t}{\sqrt t}$ converges in distribution to $\mathcal{N}(0, \sigma^2)$, the centered normal distribution with variance $\sigma^2$.
\end{theorem}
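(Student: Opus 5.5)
We will reduce the statement to a central limit theorem for the stationary-like sequence of increments $(P_m)_{m\ge1}$. First, restrict to integer times: for $t\in[m,m+1)$, $\log X_t(W_t)-\log X_m(W_m)$ has uniformly bounded second moment by \eqref{eq:forward-ratio-one} (applied in both directions, together with $|x|^2\lesssim e^x+e^{-x}$). Dividing by $\sqrt t$ sends this difference to zero in probability, so it suffices to treat $t=n\in\N$. Telescoping gives
\[
-\log X_n(W_n)=\sum_{m=1}^n Q_m=\sum_{m=1}^n P_m+\sum_{m=1}^n\E Q_m ,
\]
and by \eqref{eq:mean-stabilization} the deterministic part equals $\kappa_* n-b_{\rm ent}+O(e^{-cn})$, which is $\kappa_* n+o(\sqrt n)$. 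Hence the claim reduces to $n^{-1/2}\sum_{m=1}^n P_m\law\mathcal N(0,\sigma^2)$, with $\sigma^2$ as in \eqref{eq:sigma}.

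For this we will use a blocking (Bernstein big-block/small-block) argument built on the exponential forgetting \eqref{eq:forgetting}. Fix $\ell=\ell_n\to\infty$ with $\ell=o(\sqrt n)$, say $\ell=\lfloor n^{1/4}\rfloor$, and a gap $g=g_n$ with $g\to\infty$, $g=o(\ell)$, e.g. $g=\lfloor C'\log n\rfloor$. Cut $\{1,\dots,n\}$ into consecutive big blocks of length $\ell$ separated by small blocks of length $g$. The small blocks have total variance $O(ng/\ell)=o(n)$; this uses the covariance decay \eqref{eq:epspst}, which makes the variance of any sum of $k$ consecutive $P_m$ at most $Ck$. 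They are therefore negligible. In the $i$-th big block, starting at integer time $s_i+g$ with $s_i$ the end of the preceding small block, we replace each $P_{s_i+g+j}$ by its restarted version $P^{(s_i)}_{g+j}$, which uses only clocks in $(s_i,\infty)$ and the walk increments after $s_i$. By \eqref{eq:forgetting}, the $L^2$ error per term is $Ce^{-cg}$, so the total error is at most $nCe^{-cg}=o(1)$ for $C'$ large.

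The main obstacle is independence of the replaced block sums. $P^{(s)}_\cdot$ depends on $\Pi$ after time $s$ and on the walk's increments after $s$, but it is not measurable with respect to a finite time window, since the block after it also looks at clocks in this block's time range. To handle this, we define the block variable $S_i$ for block $i$ as the sum of $P^{(s_i)}_{g+j}$, $0\le j<\ell$, involving only $\Pi$ on $(s_i,s_i+g+\ell]$ and the walk increments there. By the Markov property of the walk generated by $\Pi$ and transitivity of $\T$, this exactly says $S_i$ is a function of $\Pi|_{(s_i,s_i+g+\ell]}$ and the walk increments on that window, shifted to start at the root. These windows are disjoint across $i$. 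The walk increments are conditionally independent given $\Pi$, since the jump coin flips are independent across disjoint windows, so the $S_i$ are i.i.d. Each $S_i$ has the law of $\sum_{j<\ell}P_{g+j}$. Its variance is $\ell\sigma^2+o(\ell)$ by \eqref{eq:covariance-stabilization} (the terms with index $\ge g\to\infty$ are already stabilized), and its fourth moment is $O(\ell^2)$. The fourth-moment bound is the technical point: we get it from the uniform moment bounds in \Cref{lem:expdecay} combined with a standard fourth-cumulant estimate for exponentially forgetting sequences, again derived by restarting via \eqref{eq:forgetting}. Lyapunov's condition then holds because the number of blocks is $\sim n/\ell$, giving $(n/\ell)\ell^2/n^2=\ell/n\to0$.

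Combining, $n^{-1/2}\sum_i S_i\law\mathcal N(0,\sigma^2)$, the errors vanish in $L^2$, and Slutsky's lemma yields the theorem. If $\sigma^2=0$ the same argument gives convergence to $\delta_0$, consistent with the statement; positivity is deferred to the next section.
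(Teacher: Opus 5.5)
Your proposal is correct, and its skeleton matches the paper's. You reduce to integer times via \eqref{eq:forward-ratio-one}, telescope $-\log X_n(W_n)$ into the centered increments $P_m$ plus a mean handled by \eqref{eq:mean-stabilization}, and use the exponential forgetting \eqref{eq:forgetting} to pass to variables with finite-range dependence.

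The difference is in the final CLT step.

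\textbf{The paper's route.} It restarts every term a fixed lag back, replacing $P_k$ by $P^{(k-m)}_m$ with $m=\lfloor(\log n)^2\rfloor$. This gives an $m$-dependent array, and the paper then cites Berk's CLT for $m$-dependent sequences with growing $m$. It checks bounded fourth moments, a linear variance bound on partial sums, convergence of the normalized variance, and $m^3/n\to0$.

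\textbf{Your route.} You restart once per big block, at the beginning of the preceding small block. This makes the block sums exactly i.i.d., and you conclude with Lyapunov. Your phrase ``$s_i$ the end of the preceding small block'' should read ``start''; that is what your replacement $P_{s_i+g+j}\mapsto P^{(s_i)}_{g+j}$ with error $e^{-cg}$ requires. In effect you inline the big-block/small-block proof of Berk's theorem. Your route is self-contained and gives exact independence at the cost of slightly more bookkeeping; the paper's is shorter given the citation.

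\textbf{Two small remarks.} First, the fourth-cumulant estimate you flag as the technical point is unnecessary. With $\ell=n^{1/4}$, the crude Minkowski bound $\E S_i^4\le C\ell^4$ (from the uniform moments in \Cref{lem:expdecay}) already gives a Lyapunov ratio of order $\ell^3/n=n^{-1/4}\to0$. This is exactly how Berk's theorem gets by with individual fourth moments, and why the paper needs $m^3/n\to0$. Second, Lyapunov requires $\sigma^2>0$. In the degenerate case the conclusion follows directly from \eqref{eq:sigma} and Chebyshev, as in the paper, rather than from ``the same argument''.
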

\begin{proof}
We prove this using Berk's central limit theorem with an increasing dependence range \cite{BerkMDependent}. We note that some other versions of dependent central limit theorem, such as \cite[Theorem~4.1]{JansonMDependent}, should be applicable as well.

Take any $n\in \N$ and $m=\lfloor (\log(n))^2 \rfloor$.
Then the sequence $P_m^{(k-m)}$ for $k=m+1, \ldots, n$ is $m$-dependent.
We next verify the conditions of the main theorem in \cite{BerkMDependent} for this sequence.

\begin{itemize}
    \item[(i)] The variables $P_m^{(k-m)}$ are centered and have uniformly bounded fourth moments by \Cref{lem:expdecay}.
    \item[(ii)] For $m<i<j\le n$ with $j-i\le m$,
\eqref{eq:forgetting}, \eqref{eq:epspst}, and the uniform second
moments give
\[
  \big|\E[P_m^{(i-m)}P_m^{(j-m)}]\big|
  < \big|\E[P_iP_j]\big|+Ce^{-cm}
  < Ce^{-c(j-i)}+Ce^{-cm}
  < Ce^{-c(j-i)}.
\]
The diagonal terms $\E[(P_m^{(i-m)})^2]$
are uniformly bounded, and for $j-i>m$, $\E[P_m^{(i-m)}P_m^{(j-m)}]=0$. Summing these bounds yields
\[
  \Var\left(\sum_{k=a}^bP_m^{(k-m)}\right)< C(b-a+1),
  \qquad \forall\; m<a\le b\le n.
\]
    \item[(iii)] By Minkovwski's inequality and \Cref{lem:expdecay}, we have
\begin{multline}    \label{eq:PPmdiff}
\E\left[\left( \sum_{k=1}^n P_k - \sum_{k=m+1}^n P_m^{(k-m)} \right)^2\right]^{1/2}
\le \sum_{k=1}^m \E[P_k^2]^{1/2} + \sum_{k=m+1}^n \E[(P_k-P_m^{(k-m)})^2]^{1/2} \\ < Cm + Cne^{-cm} = o(\sqrt{n}).    
\end{multline}
Consequently, \eqref{eq:sigma} gives
$\Var\big(\sum_{k=m+1}^n P_m^{(k-m)}\big)/(n-m)\to\sigma^2$.
For now we assume here that $\sigma^2>0$.
    \item[(iv)] We have $m^3/(n-m)\to0$.
\end{itemize}  
These verify all the hypotheses of the main theorem of \cite{BerkMDependent}
with parameter $\delta=2$.  Thus we conclude that
\[
  \frac{1}{\sqrt{n-m}}\sum_{k=m+1}^n P_m^{(k-m)} \law \mathcal N(0,\sigma^2).
\]
With \eqref{eq:PPmdiff} we then have that
\begin{equation}  \label{eq:Pkcovn}
  \frac{1}{\sqrt n}\sum_{k=1}^nP_k\law \mathcal N(0,\sigma^2),
\end{equation} 

In the case where $\sigma^2=0$, we would simply have $\frac{1}{\sqrt{n}}\sum_{k=1}^n P_k\to 0$ in distribution by 
\eqref{eq:sigma}. In particular, \eqref{eq:Pkcovn} still holds.

Now since $\sum_{k=1}^nP_k=-\log X_n(W_n)+\E\log X_n(W_n)$, by \eqref{eq:mean-stabilization}, we have
\[
\frac{1}{\sqrt n}(-\log X_n(W_n) -\kappa_*n )\to \mathcal N(0,\sigma^2),
\]
in distribution.
To extend this beyond convergence along integers, we note that for any $t>0$ and $n=\lfloor t\rfloor$, we have
\begin{equation}   \label{eq:integer-time-reduction}
  \E[ |\log X_t(W_t)-\log X_n(W_n)|^2]
  < C(1+e^{t-n})<C,
\end{equation}
which follows from \eqref{eq:forward-ratio-one} and the inequality $x^2<C(e^x+e^{-x})$. Thus Theorem \ref{thm:tree-clt} follows.
\end{proof}

\section{Two walks and positivity of the variance}
\label{sec:two-walkers}
In this section, we continue using the same setup as the previous section, and work on the $d$-regular tree $\T=(V,E)$. We complete the proof of the central limit theorems needed for the Gaussian cutoff profile.
This consists of two parts: 
in Section \ref{ssec:clt_two_walkers}, we extend the central limit theorem in Theorem \ref{thm:tree-clt} to two walks that are conditionally independent given the Poisson field $\Pi$ (as discussed after \Cref{thm:intro-tree-clt}); in Section \ref{ssec:positivity_variance}, we further prove that the limiting variance $\sigma^2$ is strictly positive.

\subsection{Joint Gaussian limit for two walks}\label{ssec:clt_two_walkers}
We consider two walks that are independent conditional on the same Poisson point process $\Pi$. The joint limit (of the logarithm of the weights, under appropriate scaling) would be two independent normal random variables.
The key observation is that the main contribution to the weights at large times for the two walks comes from disjoint subtrees, on which they evolve independently.

We start by introducing restricted weights. Given a subset $H \subset V$, we define the restricted weights: for  $s<t$ and $u, v\in V$,
\begin{equation}\label{eq:restricted_kernel}
    w_{s,t}^H(u,v)=\sum_{\gamma\in \mathcal C_{s,t}[u,v]:\gamma(r)\in H,\, \forall\, r\in [s,t]}  w(\gamma).
\end{equation}
In other words, we only sum over the paths that lie entirely in the set $H$. We have the following lemma which compare $w_{u,v}^H(x,y)$ with the full kernel using estimates for the escape time bounds. 

Recall that $W$ is the random walk generated by $\Pi$ with time interval $[0, \infty)$, starting from $o$.
\begin{lemma}\label{lem:separated-branches}
Take any $H\subset \T_d$, and let
$\tau:=\inf\{r\ge0: W_r\notin H\}$ be its escape time from $H$. 
Then for all
$0<\vartheta<1$ and $t>0$, we have
\begin{equation}\label{eq:killed-kernel-markov}
  \P\left[
    w_{0,t}^H(o, W_t)<\vartheta  w_{0,t}(o, W_t) \right]
  \le
\frac{\P\left[\tau\le t\right]}{1-\vartheta}.
\end{equation}
\end{lemma}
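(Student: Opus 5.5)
This is a direct application of \Cref{lem:sb-markov}, conditional on $\Pi$. Fix $\Pi$, which is almost surely locally finite. Take $\Lambda=V$, $p(v):=w_{0,t}(o,v)=X_t(v)$ and $q(v):=w^H_{0,t}(o,v)$. By \eqref{eq:size-biased}, conditional on $\Pi$ the endpoint $W_t$ has law $p$. Since the restricted sum \eqref{eq:restricted_kernel} runs over a subset of $\mathcal C_{0,t}[o,v]$ with nonnegative weights, we have $q\le p$ pointwise. Also $\sum_v p(v)=1$ by \eqref{eq:chapman-kolmogorov}. Applying \Cref{lem:sb-markov} conditionally with $\alpha=\vartheta$ then gives
\[
\P\left[w^H_{0,t}(o,W_t)<\vartheta\, w_{0,t}(o,W_t)\,\middle|\,\Pi\right]\le \frac{1-\sum_{v\in V}w^H_{0,t}(o,v)}{1-\vartheta}.
\]

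Next we identify the numerator. By \Cref{def:random walk gene by polymer}, $\P[W|_{[0,t]}=\gamma\mid\Pi]=w(\gamma)$ for each compatible $\gamma$ starting at $o$. Summing over all $v$,
\[
\sum_{v\in V} w^H_{0,t}(o,v)=\P\left[W_r\in H\ \forall r\in[0,t]\,\middle|\,\Pi\right].
\]
Hence $1-\sum_v w^H_{0,t}(o,v)=\P[\tau\le t\mid\Pi]$. There is a minor boundary point: if $o\notin H$, then $\tau=0$ and both sides are trivial. Otherwise the event that $W$ leaves $H$ by time $t$ is exactly $\{\tau\le t\}$, because paths are càdlàg and piecewise constant, so an exit happens at a jump time $r\le t$ and $W_r\notin H$.

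Finally, take expectation over $\Pi$ to obtain \eqref{eq:killed-kernel-markov}. No step is a real obstacle. The only care needed is the measurability and conditioning on $\Pi$, and checking that the infimum defining $\tau$ is attained, which follows from the right-continuity of $W$.
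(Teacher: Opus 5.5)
Your proposal is correct and matches the paper's proof. Both apply \Cref{lem:sb-markov} with $p=w_{0,t}(o,\cdot)$ and $q=w^H_{0,t}(o,\cdot)$, then identify the expected lost mass $\E\sum_v\bigl(w_{0,t}(o,v)-w^H_{0,t}(o,v)\bigr)$ with $\P[\tau\le t]$; your remarks on conditioning on $\Pi$ and on the infimum defining $\tau$ being attained only spell out details the paper leaves implicit.
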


\begin{proof}
The law of $W_t$ conditional on $\Pi$ is exactly equal to $w_{0,t}(o,\cdot)$.  Using this fact and apply \Cref{lem:sb-markov} with
$p(\cdot)=w_{0,t}(o,\cdot)$ and $q(\cdot)=w_{0,t}^H(o,\cdot)$, we get 
\[
\begin{split}
  \P\left[w_{0,t}^H(o, W_t)<\vartheta  w_{0,t}(o, W_t) \right]
   \le &
  \frac1{1-\vartheta}
  \E\sum_{v\in V}\bigl(w_{0,t}(o,v)-w_{o,t}^H(o,v)\bigr)
=  \frac{\P[\tau\le t]}{1-\vartheta},
\end{split}
\]
where we have used the definition in \eqref{eq:restricted_kernel} that  a path $\gamma$ contributes to $w_{0,t}(o,v)$ but not to $w_{0,t}^H(o,v)$ if and only if it exits $H$ before $t$. Thus \eqref{eq:killed-kernel-markov} is proved.
\end{proof}
We need another preparatory lemma on the separation of two walks.
\begin{lemma}  \label{lem:twosepe}
Let $u,v\in V$ with $D:=\dist(u,v)\ge 3$.  There are disjoint connected $H_u,H_v\in V$, containing $u,v$, respectively, such that
\begin{equation}\label{eq:branch-geometry}
  \dist(u,H_u^c), \dist(v,H_v^c)
  \ge\lfloor D/3\rfloor,
  \qquad
  \dist(H_u,H_v)\ge D/3-3.
\end{equation}
And if letting $W^u$ and $W^v$ be continuous time random walks starting from $u$ and $v$ respectively,  each with edge jump rate $1/(2d)$, and let $\tau_u:=\inf\{r\ge0: W_r^u\notin H_u\}$, $\tau_v:=\inf\{r\ge0: W_r^v\notin H_v\}$,  
then we have
\begin{equation}\label{eq:branch-exit}
  \P[\tau_u<\infty]
  +
  \P[\tau_v<\infty]
  \le C(d-1)^{-D/3}.
\end{equation}
\end{lemma}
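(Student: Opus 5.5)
Take the geodesic $u=x_0,x_1,\dots,x_D=v$ in $\T$ and cut it at two edges near its thirds. Set $k:=\lfloor D/3\rfloor\ge 1$. Removing the edge $\{x_{k},x_{k+1}\}$ splits $\T$ into two subtrees; let $H_u$ be the component containing $u$. Similarly, removing $\{x_{D-k-1},x_{D-k}\}$, let $H_v$ be the component containing $v$. Since $D\ge3$ we have $k+1\le D-k-1$, so the two cut edges are distinct or at worst share endpoints in the middle stretch. Hence $H_u\cap H_v=\emptyset$, and both sets are connected.

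The geometric bounds in \eqref{eq:branch-geometry} come directly from the tree structure. Any path from $u$ to $H_u^c$ must use the cut edge $\{x_k,x_{k+1}\}$, so $\dist(u,H_u^c)=k+1\ge\lfloor D/3\rfloor$, and symmetrically for $v$. Any path from $H_u$ to $H_v$ must traverse the geodesic segment from $x_{k+1}$ to $x_{D-k-1}$ and enter via $x_{k}$ and $x_{D-k}$. Thus $\dist(H_u,H_v)=D-2k\ge D/3\ge D/3-3$; the slack absorbs floor effects.

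For \eqref{eq:branch-exit}, note that $W^u$ leaves $H_u$ only by crossing from $x_k$ to $x_{k+1}$, so it must first hit $x_{k+1}$. Project the walk onto the distance to $x_{k+1}$. As long as it stays in $H_u$, every vertex other than $x_{k+1}$ has exactly one neighbor closer to $x_{k+1}$ and $d-1$ neighbors farther away, because $H_u\cup\{x_{k+1}\}$ sits inside a tree where $x_{k}$'s branch away from $x_{k+1}$ is full. The projected jump chain is therefore a nearest-neighbor walk on $\Z_{\ge0}$ that moves toward $0$ with probability $1/d$ and away with probability $(d-1)/d$. This holds up to the hitting time of $0$, and continuous time only reparametrizes it. By gambler's ruin, the probability of ever hitting $0$ from level $k+1$ is $(d-1)^{-(k+1)}\le (d-1)^{-D/3}$, since $k+1\ge D/3$. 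The same argument applies to $v$, giving the bound with $C=2$.

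The only delicate point is checking that the drift computation is exact. From $x_k$ the walk has one neighbor outside $H_u$, namely $x_{k+1}$, and $d-1$ neighbors farther out, which matches the radial chain. No step is a real obstacle; the work is just bookkeeping of the floors.
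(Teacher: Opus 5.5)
Your proof is correct and follows the paper's argument. You cut the geodesic near its thirds, take the components containing $u$ and $v$, and bound the exit probability by gambler's ruin for the biased distance-to-cut-vertex chain. Two small differences: you cut one edge farther from $u$ than the paper does, which gives $(d-1)^{-(k+1)}\le(d-1)^{-D/3}$ directly with $C=2$; and, a harmless slip, for $D=3$ the claimed inequality $k+1\le D-k-1$ fails and the two cut edges coincide, but disjointness only needs $k<D-k$, which always holds.
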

\begin{proof}
Write the geodesic between $u,v$ as $[u,v]=(u_0,u_1,\ldots,u_D)$ and put
$m=\lfloor D/3\rfloor$.  Delete the edges
\[
  \{u_{m-1},u_m\},
  \qquad
  \{u_{D-m},u_{D-m+1}\},
\]
and let $H_u$ and $H_v$ be the components containing $u_0=u$ and
$u_D=v$.  Then
\[
  \dist(u,H_u^c)=\dist(v,H_v^c)=m,
  \qquad
  \dist(H_u,H_v)=D-2m+2,
\]
which proves \eqref{eq:branch-geometry}.
For $W^u$, until it hits $u_m$, put $\zeta_s:=\dist(W^u_s, u_m)$.  
It is then a biased random walk on $\Z\ge 0$. Using Chernoff bounds we get
\[
  \P[\tau_u<\infty]
  =\P[\zeta_s=0 \text{ for some }s]
  =(d-1)^{-m}.
\]
The same holds for $\tau_v$, so that we get \eqref{eq:branch-exit}.
\end{proof}

With  \Cref{lem:separated-branches}, we are now ready to prove the two walks central limit theorem. 

\begin{theorem}\label{thm:two-walker-clt}
Let $W^1, W^2$ be random walks generated by $\Pi$ with time interval $[0, \infty)$, starting from $o$, and they are independent conditional on $\Pi$.
Then as $t\to\infty$, 
$$\frac{1}{\sqrt{t}}\left(-\log X_t(W_t^1)-\kappa_*t, -\log X_t(W_t^2)-\kappa_*t\right)\law \mathcal{N}(0, \sigma^2I_2), $$  
which corresponds to two independent centered Gaussian random variables with variance $\sigma^2$.
\end{theorem}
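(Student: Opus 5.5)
We decouple the two walks at an intermediate time $s=s(t)$ with $1\ll s\ll\sqrt t$, say $s=\log^2 t$. Up to time $s$ each walk contributes $O(s)$ to $-\log X$, which is $o(\sqrt t)$ after scaling. After time $s$ the two walks will, with high probability, be far apart and confined to disjoint branches, where they see independent parts of $\Pi$.

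\textbf{Step 1 (restart at time $s$).} For $i=1,2$ set $\mathcal A_i=W^i_s$. By the composition law \eqref{eq:chapman-kolmogorov},
\[
X_t(W^i_t)\ge X_s(\mathcal A_i)\,w_{s,t}(\mathcal A_i,W^i_t).
\]
Each walk, conditional on $\Pi$, has the law of the walk generated by $\Pi$. Hence the exponential forgetting of \Cref{lem:expdecay}, summed over the increments as in \eqref{eq:PPmdiff}, gives
\[
-\log X_t(W^i_t)=-\log X_s(\mathcal A_i)-\log w_{s,t}(\mathcal A_i,W^i_t)+O_{L^2}(1),
\]
uniformly in $s\le t$. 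More precisely, the difference $\log X_t(W_t)-\log X_s(W_s)-\log w_{s,t}(W_s,W_t)$ is a telescoping sum of cross ratios $\xi_{s,k}$ with $\E\xi_{s,k}^2\le Ce^{-ck}$, so it is bounded in $L^2$. The term $-\log X_s(\mathcal A_i)$ has mean $\kappa_* s+O(1)$ and fluctuations $O(\sqrt s)$, so it is negligible after dividing by $\sqrt t$. It therefore suffices to prove the joint CLT for the pair $L^i:=-\log w_{s,t}(\mathcal A_i,W^i_t)$.

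\textbf{Step 2 (separation).} By \eqref{eq:distance-lyapunov2}, with probability at least $1-e^{-cs}$ we have $D:=\dist(\mathcal A_1,\mathcal A_2)\ge(d-2)s/(2d)$. On this event we apply \Cref{lem:twosepe} to obtain disjoint connected sets $H_1\ni\mathcal A_1$ and $H_2\ni\mathcal A_2$ at distance at least $D/3-3\ge1$. These sets are measurable with respect to $(\mathcal A_1,\mathcal A_2)$, which depends only on $\Pi\cap(E\times[0,s])$ and the walks' own coins. Conditional on $\cF_s$ and the walks up to time $s$, the future walk from $\mathcal A_i$ is the walk generated by $\Pi$ on $[s,\infty)$. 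Its annealed law is the continuous-time random walk, so its escape probability from $H_i$ is at most $C(d-1)^{-D/3}$ by \eqref{eq:branch-exit}. \Cref{lem:separated-branches}, applied in the time-shifted form, then shows that with probability $1-o(1)$,
\[
w^{H_i}_{s,t}(\mathcal A_i,W^i_t)\ge\tfrac12\,w_{s,t}(\mathcal A_i,W^i_t),
\]
and also that $W^i$ stays in $H_i$ throughout $[s,t]$. Thus we may replace $L^i$ by $\tilde L^i:=-\log w^{H_i}_{s,t}(\mathcal A_i,W^i_t)$ at cost $O_{\P}(1)$.

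\textbf{Step 3 (independence).} Since $H_1$ and $H_2$ are disjoint and not joined by any edge, the restricted weights and the walks killed outside $H_i$ depend on $\Pi$ restricted to edges inside $H_1$ and inside $H_2$ respectively. These are disjoint edge sets, and after time $s$ the clocks are independent of $\cF_s$. Hence, conditional on $(\mathcal A_1,\mathcal A_2)$, the pair $(\tilde L^1,\tilde L^2)$ restricted to the non-escape events is a pair of independent variables. We then couple each $\tilde L^i$ with an unrestricted copy built from an independent Poisson field that agrees with $\Pi$ on $H_i$. By Step 2 applied in reverse, each copy equals $-\log w_{0,t-s}(o,W_{t-s})$ in law up to $O_{\P}(1)$, using transitivity of $\T$. \Cref{thm:tree-clt} then gives that each copy, centered by $\kappa_*(t-s)$ and scaled by $\sqrt t$, converges to $\mathcal N(0,\sigma^2)$.

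Combining Steps 1 to 3, the joint characteristic function factorizes up to $o(1)$, which yields the limit $\mathcal N(0,\sigma^2I_2)$. The main obstacle is Step 3: one must check that conditioning on non-escape, and on events depending on the time-$[0,s]$ history, does not spoil the independence or the marginal CLT. I would handle this by working throughout with the coupled independent fields, so that every discrepancy is confined to an event of probability $o(1)$, and then using that convergence in distribution is stable under modifications on vanishing events and under additive $O_{\P}(1)$ errors.
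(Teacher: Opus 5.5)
Your proposal is correct and follows essentially the same route as the paper: restart at time $\log^2 t$, separate the walks via \eqref{eq:distance-lyapunov2} and \Cref{lem:twosepe}, pass to killed weights with \Cref{lem:separated-branches}, use conditional independence on the disjoint branches together with \Cref{thm:tree-clt} for the marginals, and control the replacement error through the $L^2$-summable cross ratios of \Cref{lem:expdecay}. Your auxiliary-field coupling in Step 3 is just a more explicit packaging of the paper's conditional independence of $\mathds{1}[\tau_i\ge t]\log w^{H_i}_{L,t}(W^i_L,W^i_t)$; one small correction is that $w^{H_i}$ depends on $\Pi$ on all edges incident to $H_i$, boundary edges included, since $w(\gamma)$ counts every ring at an edge touching $\gamma$, and not only on edges inside $H_i$. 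This is harmless because $\dist(H_1,H_2)\ge 2$.
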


\begin{proof}
For any $t>1$, take $L\in [\lfloor\log(t)^2\rfloor, \lfloor\log(t)^2\rfloor+1)$ such that $t-L\in \Z$.
The proof consists of two steps. We first prove 
the joint convergence with $(X_t(W_t^1), X_t(W_t^2))$ replaced by $(w_{L,t}(W_L^1, W_t^1), w_{L,t}(W_L^2, W_t^2))$ . In the second step we show that this replacement has a negligible effect after scaling by $\sqrt{t}$.

\noindent\textbf{Step 1: restarted polymers.}
By \eqref{eq:distance-lyapunov2}, we have 
\begin{equation}    \label{eq:sLbd}.
\P[\mathcal S_L]\ge 1- e^{-cL},
\end{equation}
where $\mathcal S_L:=\{\dist(W_L^1,W_L^2)> (d-2)L/(2d)\}$.
On $\mathcal S_L$, apply \Cref{lem:twosepe} to
$W_L^1,W_L^2$.  For large $t$, this gives $H_1,H_2\subset V$ with
\[
\dist(H_1, H_2)\ge 2,\qquad  \dist(W_L^i,H_i^c)\ge cL,
  \qquad i=1,2.
\]
For $i=1,2$, let $\tau_i=\inf\{r\ge L: W_r^i\not\in H_i\}$.
Then by \eqref{eq:branch-exit}, we have $\P[\tau_i \le t \,|\, W^1_L, W^2_L]<Ce^{-cL}$.
Applying
\Cref{lem:separated-branches} with $\vartheta=1/2$, we thus get that, under $\mathcal S_L$,
\begin{equation}\label{eq:killed-full-comparison}
  \P\left[
      \left| - \mathds{1}[\tau_i \geq t]\log w_{L,t}^{H_i}(W_L^i,W_t^i) + \log w_{L,t}(W_L^i,W_t^i) \right|>\log(2)
 \,\middle| W^1_L, W^2_L
  \right]
  \le Ce^{-cL}.
\end{equation}
By this and \Cref{thm:tree-clt} applied to the time interval $[L, t]$, we get that as $t\to\infty$,
\[
\frac{- \mathds{1}[\tau_i \geq t]\log w_{L,t}^{H_i}(W_L^i,W_t^i) - \kappa_*(t-L)}{\sqrt{t-L}} \,\bigg| W^1_L, W^2_L \law \mathcal{N}(0, \sigma^2),
\]
for almost surely $W^1_L, W^2_L$ under $\mathcal S_L$,
Since the left-hand side here for $i=1,2$ are independent (conditional on $W^1_L, W^2_L$), we get that the joint limit for $i=1,2$ is $\mathcal{N}(0, \sigma^2I_2)$.
Then by \eqref{eq:killed-full-comparison} again, and also using \eqref{eq:sLbd}, we see that as $t\to\infty$,
\begin{equation}\label{eq:two-segment-clt}
  \frac1{\sqrt{t-L}}
  \left(
    -\log w_{L,t}(W_L^1, W_t^1)-\kappa_*(t-L),
    -\log w_{L,t}(W_L^2, W_t^2)-\kappa_*(t-L)
  \right)
  \law \mathcal N(0,\sigma^2I_2).
\end{equation}

\noindent\textbf{Step 2: bound for replacement.}
 For $i=1,2$, and $s\ge1$, let
\[
  Q_s^i
  :=
  -\log\frac{X_s(W_s^i)}{X_{s-1}(W_{s-1}^i)},
  \qquad
  Q_s^{i,(L)}
  :=
  -\log\frac{
    w_{L,L+s}(W_L^i,W_{L+s}^i)
  }{
    w_{L,L+s-1}(W_L^i,W_{L+s-1}^i)
  }.
\]
Like \eqref{eq:cross-ratio}, we define 
$\xi_{L,s}^i= Q_{L+s}^i-Q_s^{i,(L)}$.
We then write 
\begin{equation*}
\begin{split}
  \Delta_i
  &:=-\log X_t(W_t^i)+\log X_L(W_L^i)
    +\log w_{L,t}(W_L^i,W_t^i)
  =\sum_{k=1}^{t-L}
    \left(Q_{L+k}^i-Q_k^{i,(L)}\right)
  =\sum_{k=1}^{t-L}\xi_{L,k}^i.
\end{split}
\end{equation*}
By \Cref{lem:expdecay}, and using Minkovwski's inequality, we have
\[
\big(\E\Delta_i^2\big)^{1/2} \le  \sum_{k=1}^{t-L} \big(\E (\xi_{L,k}^i)^2\big)^{1/2} \leq C \sum_{k=1}^{t-L} e^{-c k}  < C'.
\]
Thus $\Delta_i/\sqrt{t}$ converges to 0 in probability as $t\to\infty$.
Moreover, by \Cref{thm:tree-clt}, we have $(-\log X_L(W_L^i)-\kappa_*L)/\sqrt{t} $ also converges to 0  in probability.
Consequently,  with \eqref{eq:two-segment-clt} and
\[
  -\log X_t(W_t^i)-\kappa_*t
  =
  -\log w_{L,t}(W_L^i, W_t^i) -\kappa_*(t-L)-\log X_L(W_L^i)-\kappa_*L
  +\Delta_i,
\]
the conclusion follows.
\end{proof}

\subsection{Non-degeneracy of $\sigma^2$}
\label{ssec:positivity_variance}
Given the stabilization estimates in \Cref{lem:covariance}, it would actually suffice to show that the variance of $\log X_t(W_t)$ is unbounded as $t\to\infty$; and this is content of \Cref{lem:variance-unbounded} below.
The idea is to use a Mermin-Wagner type argument, where the Poisson rate in a tube is slightly tuned.

For this, we need an alternative description of the Poisson point process $\Pi$, and its generated walk $W$ with time interval $[0, \infty)$, starting from $o$.
For any path $f:[0,\infty)\to V$, set
\[
  \mathcal R(f)
  :=
  \left\{
    (e, t)\in E\times[0, \infty): f(t)\in e
  \right\}.
\]
\begin{lemma}\label{lem:palm-law}
Conditional on the walk $W:[0, \infty)\to V$, then $\Pi$ is the union of $\{(\{W_{s-}, W_s\}, s): W_{s-}\neq W_s\}$, and an inhomogeneous Poisson point process on $E\times [0,\infty)$, with rate $1/(2d)$ on $\mathcal R(W)$, and rate $1/d$ on $E\times [0,\infty)\setminus \mathcal R(W)$.
\end{lemma}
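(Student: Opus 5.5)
The plan is to compute the joint law of $(\Pi, W)$ directly, by exhibiting a construction of the pair whose marginal of $\Pi$ is the rate-$1/d$ Poisson process and whose conditional law of $W$ given $\Pi$ is the polymer walk of \Cref{def:random walk gene by polymer}. Reading off the conditional law of $\Pi$ given $W$ from that construction then gives the claim. The construction is the splitting device already used in the proof of \Cref{lem:graphical-nonconnection}. Each point of $\Pi$ is independently marked ``jump'' or ``stay'' with probability $1/2$, producing two independent rate-$1/(2d)$ Poisson processes $\Pi^{\rm J}$ and $\Pi^{\rm S}$ with $\Pi=\Pi^{\rm J}\cup\Pi^{\rm S}$. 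We then let $W$ start at $o$ and cross the edge of every point of $\Pi^{\rm J}$ incident to its current position, ignoring points of $\Pi^{\rm S}$.

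The first step is to check that this $W$ has the correct conditional law given $\Pi$. Condition on $\Pi$. The marks are i.i.d.\ fair coins, and $W$ is a deterministic function of $\Pi$ and the marks. The path $W|_{[0,t]}$ equals a given compatible path $\gamma$ exactly when the marks of the points of $\Pi$ incident to $\gamma(r-)$ at their times $r\in(0,t]$ are forced to be ``jump'' at the jump times of $\gamma$ and ``stay'' otherwise. The marks of non-incident points are unconstrained. This event has probability $2^{-N(\gamma)}=w(\gamma)$, as required. By local finiteness and consistency of path weights this extends to $[0,\infty)$, so $(\Pi,W)$ has the joint law of the lemma.

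The second step is to compute the law of $\Pi$ given $W$. The key observation is that $W$ is measurable with respect to $\Pi^{\rm J}$ alone, and the map $\Pi^{\rm J}\mapsto W$ is a functional of an independent-increment process. I would decompose $\Pi^{\rm J}$ into $\Pi^{\rm J}\cap\mathcal R(W)$ and $\Pi^{\rm J}\setminus\mathcal R(W)$. Given $W$:
\begin{itemize}
\item The points of $\Pi^{\rm J}\cap\mathcal R(W)$ are exactly the jumps $\{(\{W_{s-},W_s\},s):W_{s-}\ne W_s\}$. Points of $\Pi^{\rm J}$ incident to the walk always cause a jump, and the two endpoints of the edge at a jump time both lie in $e$, so the endpoint convention in $\mathcal R$ causes no trouble.
\item The points of $\Pi^{\rm J}$ off $\mathcal R(W)$ remain a rate-$1/(2d)$ Poisson process on $E\times[0,\infty)\setminus\mathcal R(W)$.
\item $\Pi^{\rm S}$ is independent of $W$ and stays rate $1/(2d)$ everywhere.
\end{itemize}
Superposing gives rate $1/(2d)$ on $\mathcal R(W)$ (only $\Pi^{\rm S}$, besides the jump points), rate $1/d$ off $\mathcal R(W)$, plus the deterministic jump set.

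The main obstacle is making the second bullet rigorous: $\mathcal R(W)$ is a random set determined by $\Pi^{\rm J}$ itself. I would handle this with a stopping-set (strong Markov) argument. Discretize time into intervals of length $\delta$, or equivalently proceed jump by jump. Between consecutive jumps of $W$, the walk sits at a vertex $x$, and the next jump time is the first point of $\Pi^{\rm J}$ on the $d$ edges incident to $x$. On $\{$edges incident to $x\}\times[\text{current time},\infty)$, the region before that first point is empty of $\Pi^{\rm J}$ points by definition, and that region is exactly what becomes part of $\mathcal R(W)$. Points on the complementary edges during the same period are independent of this event, by independence of Poisson processes on disjoint sets. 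Iterating over jumps, or verifying directly through the Janossy density of $\Pi^{\rm J}$ restricted to a bounded window $E_0\times[0,t]$, yields the product form: the density factorizes as $e^{-\frac{1}{2d}|\mathcal R|}\cdot(\frac{1}{2d})^{\#\text{jumps}}$ times an independent Poisson density off $\mathcal R$. This identifies the conditional law. Finally, I would pass from bounded windows to all of $E\times[0,\infty)$ by consistency.
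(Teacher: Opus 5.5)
Your proposal is correct and is essentially the paper's proof. The paper splits $\Pi=\Pi^+\cup\Pi^-$ into two independent rate-$1/(2d)$ processes and lets $W$ cross exactly at the incident points of $\Pi^+$. It then reads off that, given $W$, $\Pi^-$ is unchanged, $\Pi^+\cap\mathcal R(W)$ is the jump set, and $\Pi^+\setminus\mathcal R(W)$ is a rate-$1/(2d)$ process on the complement. Your check that this construction reproduces the polymer walk, and your stopping-set/Janossy-density justification of the last point (which the paper only asserts), are sound refinements of the same route rather than a different argument.
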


\begin{proof}
We can write $\Pi=\Pi^+\cup\Pi^-$ as the union of two independent Poisson point processes on $E\times [0, \infty)$, each with rate $1/(2d)$. 
We can construct $W$, such that for any $r>0$ with $W_r\neq W_{r-}$, it holds that $(\{W_r, W_{r-}\},r) \in \Pi^+$; and for any $W_r=W_{r-}$, $(\{W_r, v\}, r)\not\in \Pi^+$ for any $v\sim W_r$. 
Thus given $W$, $\Pi^-$ and $\Pi^+$ are still independent, with $\Pi^-$ still being a rate $1/(2d)$ Poisson point process on $E\times [0, \infty)$, and $\Pi^+\setminus \mathcal R(W)$ being a rate $1/(2d)$ Poisson point process on $E\times [0, \infty)\setminus \mathcal R(W)$.
The set $\Pi^+\cap \mathcal R(W)$ is determined by $W$.
Thus the conclusion follows.
\end{proof}

We now use the planted description to couple the original clock rate to a
slightly slowed rate, to force an unbounded logarithmic
variance. 
Let $d_{\rm TV}$ denote the total-variational distance between two random variables. Then using Pinsker's inequality, for any $0<\lambda_1<\lambda_2$, one can bound
\begin{equation}\label{eq:dtvmunu}
      d_{\rm TV}(\Poi(\lambda_1),\Poi(\lambda_2)) < C(\lambda_2-\lambda_1)/\sqrt{\lambda_1}.
\end{equation}
\begin{lemma}\label{lem:variance-unbounded} We have
$\sup_{t\in \mathbb{N}}
  \Var\bigl(\log X_t(W_t)\bigr)=\infty$.
\end{lemma}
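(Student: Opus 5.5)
We argue by contradiction: suppose $\Var(\log X_t(W_t))\le K$ for all $t\in\N$. The plan is a Mermin--Wagner type perturbation. We slightly change the Poisson rate in a tube around the walk's trajectory and compare two facts. On one hand, a rate change of total size $O(1)$ in the relevant Poisson counts is a small total-variation perturbation, by \eqref{eq:dtvmunu}. On the other hand, it shifts the typical value of $\log X_t(W_t)$ by an amount that grows with $t$. Bounded variance together with TV-closeness forces the two medians to be close, which gives the contradiction.

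\textbf{Step 1 (planted description and perturbation).} By \Cref{lem:palm-law}, conditionally on $W$ the field $\Pi$ is the jump set of $W$ together with a Poisson process of rate $1/(2d)$ on $\mathcal R(W)$ and rate $1/d$ elsewhere. The rings on $\mathcal R(W)$ that the walk ignores contribute $\Poi(t/2)$ factors of $2^{-1}$ directly to the weight of the walk's own path. We define a perturbed environment $\Pi'$ by lowering the rate on $\mathcal R(W)$ from $1/(2d)$ to $1/(2d)-\delta_t$ with $\delta_t\asymp t^{-1/2}$, keeping everything else fixed. The number of such points up to time $t$ is $\Poi(t/2)$, versus $\Poi(t/2-d\delta_t t)$. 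By \eqref{eq:dtvmunu}, and since the conditional law is a Poisson process whose restriction to $\mathcal R(W)$ is determined by its count (given the count, locations are uniform in both cases), $d_{\rm TV}((W,\Pi),(W,\Pi'))\le C\delta_t\sqrt t\le \epsilon$ for a small fixed $\epsilon$, after tuning the constant in $\delta_t$. Because of the independent-increment structure, the same bound holds when the perturbation is applied jointly with the ordinary field.

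\textbf{Step 2 (the weight shifts).} We couple $\Pi'\subset\Pi$ by thinning, deleting each point of $\Pi$ on $\mathcal R(W)$ independently with probability $2d\delta_t$. Every path weight is monotone nondecreasing under deleting rings; moreover the walk's own path gains a factor $2^{N}$, where $N\sim\Poi(\delta_t t)$ counts the deleted points, so $N\asymp\sqrt t$. We need the full weight $X_t(W_t)=w_{0,t}(o,W_t)$ to gain a comparable factor, not just the single path. The plan is to use \Cref{lem:tube} and \Cref{lem:sb-markov}: most of the polymer mass at $W_t$ comes from paths sharing a positive fraction of time with $W$. More precisely, we want $\log w'_{0,t}(o,W_t)-\log w_{0,t}(o,W_t)\ge c\sqrt t$ with probability bounded below. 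To get this, we bound below the expected fraction of ignored rings on $\mathcal R(W)$ that are also incident to an independent conditional sample $\hat W$ of the polymer bridge to $W_t$. This reduces to the two walks being at the same vertex a positive fraction of time. One could instead perturb in a whole tube $\mathcal V_{0,t;x}(o,W_t;y)$ with $y$ fixed, at the cost of a worse TV bound that is still $O(\delta_t\sqrt t)$ since the tube has $O(t)$ edge-time volume.

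\textbf{Step 3 (contradiction).} The laws of $\log X_t(W_t)$ under $\Pi$ and under $\Pi'$ are within TV distance $\epsilon$. Their variances are both at most $K$; for $\Pi'$ this follows because a slightly modified rate gives the same structure, and we can also avoid it by using only Chebyshev on the original law. Hence their quantiles differ by $O(\sqrt K)$. But Step 2 gives a shift of order $\sqrt t$ with probability bounded below, which is impossible for large $t$. We use only the unperturbed law's concentration: with probability $\ge 1-\epsilon$ it lies in an interval of length $C\sqrt{K/\epsilon}$, and the same interval must carry mass $\ge 1-2\epsilon$ under the perturbed law, which contradicts the coupled monotone shift.

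The main obstacle is Step 2: showing that the gain at the single path transfers to the total polymer weight at a positive-probability level, i.e.\ that the polymer bridge to $W_t$ overlaps $W$'s trajectory for a time of order $t$ rather than merely staying in a tube. I expect this to need the size-biasing identity $\P[W=\gamma\mid\Pi]=w(\gamma)$ to turn the question into an annealed overlap estimate for two conditionally independent walks with common endpoint.
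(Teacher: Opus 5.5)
\textbf{There is a genuine gap in Step 2.} Your Steps 1 and 3 are essentially sound. Lowering the rate only on $\mathcal R(W)$ costs $O(\delta_t\sqrt t)$ in total variation, and TV-closeness plus a variance bound $K$ does force both endpoint laws into a common window of width $O(\sqrt{K/\epsilon})$. But Step 2 carries the entire content of the lemma, and you leave it unproved. As you set it up, it needs ingredients that neither the paper nor your sketch provides.

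First, the monotonicity you invoke fails for the total weight. Deleting a ring that $W$ ignored also kills every compatible path that jumps across it. Removing a single ring $(\{a,b\},s)$ with $W_s=a$, keeping all other rings, changes the endpoint weight by exactly
\[
  \tfrac12\bigl(X_{s-}(a)-X_{s-}(b)\bigr)\bigl(w_{s,t}(a,W_t)-w_{s,t}(b,W_t)\bigr),
\]
which has no definite sign.

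Second, suppose the sign were favorable. Turning $\asymp\sqrt t$ deletions into a gain of order $\sqrt t$ in $\log X_t(W_t)$, with probability bounded below, requires two things at a positive fraction of the ignored rings:
\begin{enumerate}[label=(\roman*)]
\item a positive fraction of the polymer mass reaching $(W_t,t)$ passes through $(W_s,s)$;
\item the two differences in the display are not small relative to the terms themselves.
\end{enumerate}
This is a replica-overlap (localization) statement for the polymer bridge relative to its own spine. \Cref{lem:tube} gives nothing of the sort, since it only confines paths to a tube around the straight space-time line, and no such estimate appears in the paper.

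Your fallback of perturbing the tube $\mathcal V_{0,t;x}(o,W_t;y)$ does not work at the $\sqrt t$ scale either. A tube capturing most of the polymer mass needs time-width $x\gg\sqrt t$. Its edge-time volume is then at least of order $t^{3/2}$, not $O(t)$, and with $\delta_t\asymp t^{-1/2}$ the TV bound diverges.

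\textbf{The missing idea is to avoid computing the effect of a localized perturbation directly.} The paper proceeds in three comparisons.
\begin{enumerate}[label=(\arabic*)]
\item It compares with the globally slowed field, in which all rates are multiplied by $1-2d\delta$. Its endpoint law at time $t$ equals the original one at time $(1-2d\delta)t$. So by \eqref{eq:mean-stabilization} and $\kappa_*>0$, its mean of $\log X_t(W_t)$ is larger by exactly $2d\kappa_*\delta t+O(1)$.
\item It passes to a law slowed only inside the tube $\mathcal U_{W_t}$. Relative to the slowed field, this only \emph{adds} rings outside the tube, so paths avoiding them keep their compatibility and weight. \Cref{lem:tube} together with \Cref{lem:sb-markov} shows these paths retain at least half the endpoint weight with high probability.
\item The tube law is TV-close to the original, because the tube volume is $t^{3/2+\alpha+2\beta}$ and $\delta\asymp t^{-1+\alpha}$.
\end{enumerate}
Together these give the one-sided comparison $\log X^\circ_t(W^\circ_t)\ge\log X^{\rm slow}_t(W^{\rm slow}_t)-\log 2$ with probability $1-o(1)$. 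This sits against a mean gap of order $t^{\alpha}$, which is enough to contradict bounded variance without any overlap estimate.
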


\begin{proof}
We take four steps. First, we introduce three different joint laws of random walk and Poisson clocks. Then in the second and third steps we compare these different laws. Finally we use these comparisons to deduce the unboundedness of variance.

\noindent\textbf{Step 1: the slowed and tube planted laws.}
Take any parameters $\alpha$ and $\beta$, satisfying
\[
  0<\alpha<\frac1{20},
  \qquad
  2\alpha<\beta<\frac14-\frac32\alpha.
\]
Take a large enough $t$, choose $\delta=\delta_t$ so
that
\[
  \delta\asymp t^{-1+\alpha},
  \qquad 2d\delta t\in\Z,
\]
and put
\[
  r=\lfloor\beta\log_{d-1}(t)\rfloor,
  \qquad
  \ell^{\rm tube}:=t^{1/2+\alpha}.
\]  
Using these parameters we  construct three joint laws of random walk and  Poisson clocks: original, slowed, and tube laws. Let
$(\Pi^\circ,W^\circ)$ have the same law as $(\Pi, W)$, and
$(\Pi^{\mathrm{slow}},W^{\mathrm{slow}})$ have the same law as
\[
\left(\{(e, s)\in E\times \R: (e, (1-2\delta d)s)\in \Pi\}, \quad s\mapsto W_{(1-2\delta d)s}\right).
\]
In words, $\Pi^{\mathrm{slow}}$ is a Poisson point process on $E\times \R$ with rate $1/d-2\delta$, and $W_{\mathrm{slow}}$ is generated by $\Pi^{\mathrm{slow}}$ on $[0, \infty)$, starting from $o$.

We next construct the tube law $(\Pi^{\rm tube}, W^{\rm tube})$, from the perspective provided by \Cref{lem:palm-law}.
For any  $v\in V$, define the space-time tube and its incident-edge
expansion by (recall tubes from \Cref{defn:splspt})
\begin{equation}\label{eq:variance-tube-definition}
  \mathcal V_v
  :=\mathcal V_{0,t;\ell^{\rm tube}}(o,v;2r),
  \qquad
  \mathcal U_v:=
  \left\{
    (e,r)\in E\times\R:
    \exists\,u\in e\text{ with }(u, r)\in\mathcal V_v
  \right\}.
\end{equation}
We set $W^{\rm tube}$ to be a random walk on $\T$ with jump
 rate $1/(2d)-\delta$ across each edge.  
Conditional on $W^{\mathrm{tube}}$, let $\Pi^{\rm tube}$ be the union of $\{(\{W_{s-}^{\mathrm{tube}}, W_s^{\mathrm{tube}}\}, s): W_{s-}^{\mathrm{tube}}\neq W_s^{\mathrm{tube}}\}$, and a Poisson point process on $E\times \R$ with rate
\begin{equation}\label{eq:tube-law-intensity}
  \begin{cases}
    1/(2d)-\delta,
      &\text{ in }\mathcal R(W^{\mathrm{tube}}),\\
    1/d-2\delta,
      &
       \text{ in } \mathcal U_{W_t^{\mathrm{tube}}}
        \setminus \mathcal R(W^{\mathrm{tube}}),\\
    1/d,
      & \text{ in } (E\times\R) \setminus (\mathcal U_{W_t^{\mathrm{tube}}}
        \cup \mathcal R(W^{\mathrm{tube}}) ).
  \end{cases}
\end{equation}
This auxiliary tube law $(\Pi^{\rm tube}, W^{\rm tube})$ is the intermediary in the comparison.

For $\bullet\in\{\circ,\mathrm{slow},\mathrm{tube}\}$, we further write $X_t^\bullet(v):=w_{0,t}^{\Pi^\bullet}(o,v)$.

\noindent\textbf{Step 2: original versus tube.}
For the walks $W^\circ$ and $W^{\mathrm{tube}}$, their numbers of jumps in time $[0, t]$ are $\Poi(t/2)$ and $\Poi(t/2-d\delta t)$, respectively.
The laws of $W^\circ|_{[0,t]}$ and $W^{\mathrm{tube}}|_{[0,t]}$, conditional on the same number of jumps, are the same.
Then by the  total-variation distance bound  \eqref{eq:dtvmunu}, we can couple them such that
\begin{equation}\label{eq:walker-rate-coupling}
  \P\big[
    W^\circ|_{[0,t]}\ne W^{\mathrm{tube}}|_{[0,t]}
  \big]
  \le C\delta\sqrt t.
\end{equation}
We next consider $\Pi^\circ$ and $\Pi^{\mathrm{tube}}$ on $\mathcal U_{W_t^{\mathrm{tube}}}$. 
We have that $\P[\dist(o, W_t^{\rm tube})>2t]<Ce^{-ct}$, using  Chernoff bounds.
Thus the total measure of $(\mathcal U_{W_t^{\rm tube}} \cup R(W^{\mathrm{tube}}))\cap (E\times [0,t])$ (with counting measure on $E$ and Lebesgue measure on time) is
\[
\int_0^t
    \big|\big\{e:(e,s)\in\mathcal U_{W_t^{\rm tube}}\cup R(W^{\mathrm{tube}}) \big\}\big|\,\dd s< C(t+\ell^{\rm tube}\dist(o, W_t^{\rm tube}))(d-1)^{2r},
\]
which, with probability $>1-Ce^{-ct}$, is bounded by $Ct^{3/2+\alpha+2\beta}$.
Then combining \Cref{lem:palm-law} for $\Pi^\circ$ and using \eqref{eq:dtvmunu} (applied separately to the Poisson point processes on 
 $\mathcal U_{W_t^{\rm tube}}$ and 
$R(W^{\mathrm{tube}}) $), under the coupling of \eqref{eq:walker-rate-coupling} and the event $\dist(o, W_t^{\rm tube})\le 2t$, we can couple $(\Pi^{\circ}, W^{\circ})$ and $(\Pi^{\rm tube}, W^{\rm tube} )$, such that
\begin{multline}   \label{eq:original-tube-tv}
\P\big[ (\Pi^{\circ}\cap (E\times [0,t]), W^{\circ}|_{[0,t]})\neq 
      (\Pi^{\rm tube}\cap (E\times [0,t]), W^{\rm tube}|_{[0,t]} ) \big] \\ < Ce^{-ct} + C\delta\sqrt{t} + C\delta\sqrt{t^{3/2+\alpha+2\beta}}<Ct^{-1/4+3\alpha/2+\beta}=o(1).
\end{multline}

\noindent\textbf{Step 3: slow versus tube.}
As $  W^{\mathrm{slow}}$ and $W^{\mathrm{tube}}$ have the same jump rates, we couple then so that
$W^{\mathrm{slow}}=W^{\mathrm{tube}}$. 
Let $\Pi^+$ be a rate $2\delta$ Poisson point process on $E\times \R$, independent of $(\Pi^{\rm slow}, W^{\rm slow})$.
Then from the construction of $\Pi^{\rm tube}$, and \Cref{lem:palm-law}, conditional on $W^{\mathrm{slow}}$, we have
$\Pi^{\mathrm{tube}}=\Pi^{\mathrm{slow}}\cup(\Pi^+\setminus (\mathcal U_{W_t^{\mathrm{slow}}} \cup \mathcal R(W^{\mathrm{slow}})))$ in distribution. 
We can couple them so that the equality holds almost surely.

For every $v\in V$, define the retained weight
\[
  X_t^{\rm ret}(v)
  :=
  \sum_{\substack{
    \gamma \in \mathcal C_{0,t}[o,v; \Pi^{\rm slow}] \\
    \gamma(s)\not\in e, \, \forall (e,s)\in \Pi^+ \setminus \mathcal U_v, \, 0\le s \le t
  }}
  w^{\rm slow}(\gamma)\le X_t^{\mathrm{slow}}(v),
  \qquad \text{thus }
  \sum_{v\in V}X_t^{\rm ret}(v)\le 1.
\]
At the endpoint
$W_t^{\mathrm{slow}}=W_t^{\mathrm{tube}}$, every retained path avoids $\Pi^+\setminus \mathcal U_{W_t^{\mathrm{slow}}}$, so it has the same weight in $\Pi^{\rm tube}$.  Hence
\begin{equation}\label{eq:retained-below-both}
  X_t^{\rm ret}(W_t^{\mathrm{slow}})
  \le
  X_t^{\rm tube}(W_t^{\mathrm{tube}}).
\end{equation}
Using the fact that $W^{\mathrm{slow}}$ is sampled from the weight $w^{\rm slow}$ and recalling the definition \eqref{eq:variance-tube-definition}, we have
\begin{equation}\label{eq:retained-mass-loss}
\begin{split}
  \E\Big[\sum_{v\in V}\bigl(X_t^{\rm slow}(v)-X_t^{\rm ret}(v)\bigr)\Big]
  &=  \P[\exists (e,s)\in \Pi^+\setminus\mathcal U_{W^{\rm slow}_t},\, 0\le s \le t:\, W^{\rm slow}_s\in e]
  \\
  &\le
  \E\Big[
    1-\exp\Big(
      -2d\delta\int_0^t
      \mathds{1}[(W_s^{\rm slow},s)
        \notin\mathcal V_{W_t^{\rm slow}}]
      \,\dd s
    \Big)
  \Big]\\
  &\le
  2d\delta\,
  \E\int_0^t
  \mathds{1}[(W_s^{\rm slow},s)
    \notin\mathcal V_{W_t^{\rm slow}}]
  \,\dd s.
\end{split}
\end{equation}
By the occupation
estimate \eqref{eq:tubeint}, we can bound \eqref{eq:retained-mass-loss} by
\begin{equation}\label{eq:slow-tube-loss}
  C\delta t
  \big(
    e^{-c(\ell^{\rm tube})^2/t}+(d-1)^{-r/2}
  \big)
  \le
  Ct^\alpha
  \big(e^{-ct^{2\alpha}}+t^{-\beta/2}\big)
  =o(1).
\end{equation}
Applying \Cref{lem:sb-markov} conditionally on the $\Pi^{\rm slow}$ and $\Pi^+$,
and then averaging, give
\begin{equation}\label{eq:retained-markov}
\begin{split}
  &\P\Big[
    X_t^{\rm ret}(W_t^{\rm slow})
    <\frac12X_t^{\rm slow}(W_t^{\rm slow})
  \Big]
  \le
  2\E\Big[\sum_{v\in V}
  \bigl(X_t^{\rm slow}(v)-X_t^{\rm ret}(v)\bigr)\Big]
  =o(1).
\end{split}
\end{equation}

\noindent\textbf{Step 4: unbounded variance.}
Combining \eqref{eq:original-tube-tv},
\eqref{eq:retained-below-both}, and \eqref{eq:retained-markov} yields
\begin{equation}\label{eq:slow-coupling}
  \P[\log X_t^\circ(W_t^\circ) \ge \log X_t^{\rm slow}(W_t^{\rm slow})-\log2]=1-o(1),
\end{equation}
and in particular the probability is greater than $1/2$ for all
large $t$. Note that slow law at time $t$ equals the original law at time
$
  (1-2d\delta)t.
$
Therefore, for integers $t$ and $(1-2d\delta)t$, \eqref{eq:mean-stabilization} gives
\begin{equation}\label{eq:slow-original-means}
\begin{split}
  \E\log X_t^\circ(W_t^\circ)
  &=-\kappa_*t+O(1),\\
  \E\log X_t^{\rm slow}(W_t^{\rm slow})
  &=-\kappa_*t+2d\kappa_*\delta t+O(1).
\end{split}
\end{equation}
On the event in
\eqref{eq:slow-coupling}, either
\[
  \log X_t^\circ(W_t^\circ)\ge -\kappa_*t+d\kappa_*\delta t-\log2
  \quad\text{or}\quad
  \log X_t^{\rm slow}(W_t^{\rm slow})\le -\kappa_*t+d\kappa_*\delta t.
\]
One of these events has probability at least $1/4$. 
Then by  \eqref{eq:slow-original-means}, we have
\[
\Var(\log X_t^\circ(W_t^\circ)) \vee \Var(\log X_t^{\rm slow}(W_t^{\rm slow})) \gtrsim (\delta t)^2 \asymp t^{2\alpha},
\]
which goes to $\infty$  along any diverging integer sequence of $t$. This finishes the proof.
\end{proof}

We now upgrade the unbounded variance result to the non-degeneracy of $\sigma^2$ , using \Cref{lem:covariance}.

\begin{prop}\label{prop:sigma-positive}
The constant $\sigma^2$ in \eqref{eq:sigma} is strictly positive.
\end{prop}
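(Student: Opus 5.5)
We argue by contradiction. Suppose $\sigma^2=0$; we show that $\Var(\log X_n(W_n))$ then stays bounded along integers $n$, which contradicts \Cref{lem:variance-unbounded}.

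Since $\sum_{k=1}^n P_k=-\log X_n(W_n)+\E\log X_n(W_n)$, it suffices to bound $V_n:=\Var(\sum_{k=1}^n P_k)$ uniformly in $n$. Expand
\[
V_n=\sum_{m=1}^n\operatorname{Cov}_m(0)+2\sum_{j=1}^{n-1}\sum_{m=1}^{n-j}\operatorname{Cov}_m(j),
\]
with $\operatorname{Cov}_m(j)=\E[P_mP_{m+j}]$ as in the proof of \Cref{lem:covariance}. Now compare with $n\sigma^2=n(e_0+2\sum_{j\ge1}e_j)$. The difference is
\[
V_n-n\sigma^2=\sum_{m=1}^n(\operatorname{Cov}_m(0)-e_0)+2\sum_{j=1}^{n-1}\sum_{m=1}^{n-j}(\operatorname{Cov}_m(j)-e_j)-2\sum_{j=1}^{n-1}je_j-2n\sum_{j\ge n}e_j.
\]
We bound the four terms as follows.
\begin{itemize}
\item By \eqref{eq:covariance-stabilization}, $|\operatorname{Cov}_m(j)-e_j|<Ce^{-c(m\vee j)}$. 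So the first two sums are bounded by $C\sum_{m,j\ge0}e^{-c(m\vee j)}<\infty$.
\item Since $|e_j|<Ce^{-cj}$, the third term is bounded by $C\sum_j je^{-cj}$.
\item For the same reason, the fourth term is bounded by $Cne^{-cn}$.
\end{itemize}
Hence $|V_n-n\sigma^2|\le C$ uniformly in $n$. Under $\sigma^2=0$ this gives $\sup_n V_n<\infty$.

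It follows that $\sup_{t\in\N}\Var(\log X_t(W_t))<\infty$, which contradicts \Cref{lem:variance-unbounded}. Therefore $\sigma^2>0$. The statement of \Cref{lem:variance-unbounded} is over integer $t$, and the variance here is along integers, so the two match directly. If needed, \eqref{eq:integer-time-reduction} handles non-integer times as well.

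The only delicate point is that the bound $|V_n-n\sigma^2|=O(1)$ must hold uniformly, not merely $V_n/n\to\sigma^2$. This is exactly what the exponential rates in \eqref{eq:covariance-stabilization}, with $m\vee j$ in the exponent, provide; the rest is bookkeeping.
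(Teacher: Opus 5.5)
Your proof is correct and takes essentially the same approach as the paper: assuming $\sigma^2=0$, you use the stabilization bounds $|\E[P_mP_{m+j}]-e_j|\lesssim e^{-c(m\vee j)}$ and $|e_j|\lesssim e^{-cj}$ to show $\Var(\sum_{k\le n}P_k)=O(1)$, which contradicts \Cref{lem:variance-unbounded}. Your explicit treatment of the tail term $-2n\sum_{j\ge n}e_j$ is just a more careful writing of the bookkeeping the paper absorbs into its $O(1)$.
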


\begin{proof}
We argue by contradiction, and assume that $\sigma^2=0$.  For $i\le j\in \N$, by \Cref{lem:covariance} we have
\[
  \E[P_iP_j]
  =
  e_{j-i}
  +
  O(e^{-c(i\vee(j-i))}).
\]
By summing over all $1\le i, j \le n$, we have 
\[
\Var\bigg(\sum_{i=1}^nP_i\bigg) = O(1) + ne_0+ \sum_{i=1}^{n-1} 2(n-i) e_i.
\]
Using the assumption that $\sigma^2=e_0+2\sum_{i=1}^\infty e_i=0$ and the bound $|e_j|\lesssim e^{-cj}$ from  \eqref{eq:covariance-stabilization}, we have
\[
\Var\bigg(\sum_{i=1}^nP_i\bigg) < O(1) + \sum_{i=1}^\infty 2i|e_i| = O(1),
\]
Thus we conclude that
\[
  \sup_{n\in \N}\Var( \log X_n(W_n)  )
  =
  \sup_{n\in \N}\Var\bigg(\sum_{i=1}^nP_i\bigg)<\infty,
\]
which contradicts \Cref{lem:variance-unbounded}.
\end{proof}

\section{From the tree to a finite \texorpdfstring{$d$}{d}-regular graph}
\label{sec:cover}
In this section we compare repeated averaging on a large finite simple $d$-regular graph and the infinite $d$-regular tree $\T=\T_d=(V, E)$, with root $o$. Let $G_n=(V_n,E_n)$ be a connected simple $d$-regular graph with
$n=|V_n|$, and fix a root $o_n$.   Throughout this section we consider $t \asymp \log n$, or more specifically,
\begin{equation}\label{eq:finite-comparison-window}
  \frac{\log(n)}{2\kappa_{\rm RW}}
  \le t\le
  \frac{2\log(n)}{\kappa_{*}}.
\end{equation}
Let $\cP:\T\to G_n$
be the universal covering map with $\cP(o)=o_n$.  We consider the following Poisson fields:
\begin{enumerate}
\item $\Pi^{\T}$: rate-$1/d$ Poisson point process on $E\times\R$;
\item $\Pi^n$: rate-$1/d$ Poisson point process on $E_n\times\R$;
\item $\widetilde\Pi$: the pullback of $\Pi^n$ through $\cP$.
\end{enumerate}
Due to multiple lifts of one edge through $\cP$, the set $\widetilde\Pi$ is no longer an independent Poisson point process on $E\times \R$, and its intersections with different parts of $\T$ are correlated.

For any one of these environments $\Xi$, let $W^\Xi$ be the walk generated by $\Xi$ on time interval $[0, \infty)$, starting from $o_\Xi$. 
Here and below, we let $o_{\Pi^n}=o_n$, and $o_{\Pi^{\T}}=o_{\widetilde\Pi}=o$. We then write
\[
  X_t^*(\Xi):=w_{0,t}(o_\Xi,W_t^\Xi;\Xi).
\]
We will also consider the two-walk case: let $W^{1,\Xi},W^{2,\Xi}$ be independent copies of $W^{\Xi}$, conditional on $\Xi$. We then write, for $i=1, 2$,
\[
  X_t^{*,i}(\Xi):=w_{0,t}(o_\Xi,W_t^{i,\Xi};\Xi).
\]
We want to compare $(X_t^{*,1}(\Pi^\T), X_t^{*,2}(\Pi^\T))$ with  $(X_t^{*,1}(\Pi^n), X_t^{*,2}(\Pi^n))$, and $(X_t^{*,1}(\widetilde\Pi), X_t^{*,2}(\widetilde\Pi))$ will act as the intermediary process.

\subsection{Setup and preliminary estimates}   \label{ssec:setupprees}

In this subsection, we introduce some basic notations and setup, and prove estimates to be used repeatedly in later subsections. 

\smallskip

\noindent\textbf{General graph notations.}
For a connected graph $H=(V^H, E^H)$, and $v\in V^H$, $m>0$, we use $B_m(v)$ to denote the subgraph generated by all vertices $u$ with $\dist(u,v)\le m$.
For any $\mathcal A\subset V^H\times \R$, we denote its incidental space-time edge set as
\[
  \mathcal R(\mathcal A)
  :=
  \left\{
    (e,r)\in E^H\times\R:
    \exists\,v\in e\text{ with }(v,r)\in\mathcal A
  \right\}.
\]
For $f:I\to V^H$ where $I\subset \R$ is an interval, we also write
$\mathcal R(f)=\mathcal R(\{(f(s),s): s\in I\})$.
For $\mathcal A\subset V^H\times \R$ or $\mathcal A\subset E^H\times \R$, we use $\mathcal A_s$ to denote the time $s$-slice, i.e., $$
\mathcal A_s=\{v\in V^H\, (\mbox{or }E^H): (v,s)\in \mathcal A\}.
$$
If $H$ is a finite graph, write $\tx(H)=|E^H|-|V^H|+1$ for its tree excess.
\smallskip

\noindent\textbf{Parameters and assumptions.}
We next provide some setup that will be used in proofs in later subsections. The motivations behind many parameters here are from  later proofs. The readers may quickly go over these setup for now, but revisit them frequently while reading the next two subsections.

Take four intermediate rates
\begin{equation}\label{eq:intermediate-rates}
  \kappa_i
  :=
  \frac{i\kappa_{\rm RW}+(5-i)\kappa_*}{5},
  \qquad i=1,2,3,4.
\end{equation}
Since $\kappa_*<\kappa_{\rm RW}$, we have $\kappa_*<\kappa_1<\kappa_2<\kappa_3<\kappa_4
  <\kappa_{\rm RW}$.
Fix a constant $\lambda_{\rm sp}>0$, the uniform lower
bound on the combinatorial Laplacian gap. Let $c_*>0$ be a small constant and define:
\begin{equation}\label{eq:detour-times}
\begin{split}
  L_n=\lfloor c_*\log(n)\rfloor,
  \qquad
  T_{\rm ini}=100L_n,
  \qquad
  &T_1=L_n/10,
    \qquad
  T_2^-=\frac{\log(n)}{\kappa_4},
  \qquad
  T_2=\frac{\log(n)}{\kappa_3},
\\
    R_n=\left\lfloor\frac15\log_{d-1}(n)\right\rfloor,&
  \qquad
  K_n=\left\lfloor8(\log(n))^{0.49}\right\rfloor.
\end{split}
  \end{equation}
Notice that $T_2^-<T_2$.
We choose $c_*$ sufficiently small depending only on $d$, such that
\[
  T_{\rm ini}+T_1<\frac{\log(n)}{2\kappa_{\rm RW}}\le t,
  \qquad
  T_{\rm ini}<R_n/2.
\]
We also fix constants
\begin{equation}\label{eq:coarse-interval-scale}
\begin{gathered}
  C_{\rm cp}>10+10\log(d-1),\\
  0<\varrho<\frac18\min\left\{
    \frac{c_*}{10},
    \frac{1}{\kappa_3}-\frac{1}{\kappa_4},
    \frac{1}{\kappa_2}-\frac{1}{\kappa_3},
    \frac{(\kappa_2-\kappa_1)c_*}{10C_{\rm cp}}
  \right\}.
\end{gathered}
\end{equation}
Implicit constants and $C,c>0$ in the remainder of this section may depend on the constants $\lambda_{\rm sp}, c_*, C_{\rm cp}, \varrho$.

\begin{definition}\label{def:good-graph}
For $\delta>0$, the simple $d$-regular graph $G_n$ with root $o_n$ is
$\delta$-good if:
\begin{enumerate}[label=(\arabic*)]
\item $\tx(B_{R_n}(v))\le1$ for every $v\in V_n$;
\item $B_{K_n}(o_n)$ is a tree, i.e., $\tx(B_{K_n}(o_n))=0$;
\item $\lambda_{2,n}\ge\lambda_{\rm sp}$, where $\lambda_{2,n}$ is the second-smallest eigenvalue of the combinatorial Laplacian;
\item the time $s$ random walk transition probability $p^n_s$ of the random walk on $G_n$ (with edge jump rate $1/(2d)$) satisfies
\begin{align}
\sum_{u\in V_n}\bigl(p_s^n(v,u)-1/n\bigr)\vee 0
  &\le\delta,
  &&\forall\, v\in V_n,\quad s\ge T_2^-,
  \label{eq:TV-mixing}\\
\sum_{u \in V_n}
  \bigl(p_s^n(v,u)-e^{-\kappa_2s}\bigr)\vee 0
  &\le\delta,
  &&\forall\, v\in V_n,\quad T_1< s\le T_2+2\varrho \log n+4.
  \label{eq:entropic-mixing}
\end{align}
\end{enumerate}
\end{definition}
These conditions are tailored for random $d$-regular graphs (see \Cref{prop:entropic-heat-kernel} below). 
Roughly speaking, the first two conditions state that $G_n$ is locally tree-like, and the third condition gives the uniform spectral gap.
The last condition is on the global geometry: the logarithmic random walk transition probability decays at least at the speed of $\kappa_2$, and until it arrives nearly uniform. 
The fact  $\kappa_*<\kappa_{\rm RW}$ (proved in Proposition \ref{prop:kappa-star}) is essential for such decaying condition.

\smallskip

\noindent\textbf{Preliminary random walk estimates.}
We then give some results on random walks on $d$-regular graphs, that will be repeatedly used.
Below we always assume the conditions (1) and (2) in \Cref{def:good-graph}, and that $t$ satisfies
\eqref{eq:finite-comparison-window}.

Because $T_{\rm ini}<R_n/2$, item~(1) of
\Cref{def:good-graph} implies that $B_{T_{\rm ini}}(o_n)$ contains
at most one cycle.  If that cycle exists, choose on it a vertex
$u_*$ with maximum distance to $o_n$.  Let
\begin{equation}\label{eq:initial-tree-component}
  \mathcal T_{\rm ini}
  :=
  \begin{cases}
  \text{the component containing $o_n$ of }
  B_{T_{\rm ini}}(o_n)\setminus\{u_*\},
    &\text{if the cycle exists},\\
  B_{T_{\rm ini}}(o_n),&\text{otherwise}.
  \end{cases}
\end{equation}
We then
let $\widetilde{\mathcal T}_{\rm ini}$ be the connected component containing
$o$ of $\cP^{-1}(\mathcal T_{\rm ini})$. 
Item~(2) in \Cref{def:good-graph} gives $\dist(o_n,u_*)>K_n$ whenever $u_*$ is present.
 The map $\cP$ restricted
to $\widetilde{\mathcal T}_{\rm ini}$ is an isomorphism onto
$\mathcal T_{\rm ini}$.

Denote
\[
  x_n:=(\log(n))^{0.51},
  \qquad
y_n:=K_n/128=\left\lfloor8(\log(n))^{0.49}\right\rfloor/128.
\] 
For $t>0$ and $v\in V$, define (recall paths contained in a tube from \Cref{defn:splspt})
\[
\mathcal V_t^{\rm str}(v) = \mathcal V_{0,t;x_n}[o,v;2y_n] \setminus \{ (u, s): u\not\in\widetilde{\mathcal T}_{\rm ini},\; 0\le s\le T_{\rm ini}+T_1 \}.
\]
For
$\Xi\in\{\Pi^\T,\widetilde\Pi\}$, set
\begin{equation}\label{eq:structural-path-set}
\mathcal C_t^{\rm str}[v;\Xi]:= \{\gamma\in \mathcal C_{0,t}[o,v;\Xi]: (\gamma(r), r)\in\mathcal V_t^{\rm str}(v),\; \forall r\in [0,t] \},
\end{equation}
\[
  S_t(v;\Xi):=\sum_{\gamma\in\mathcal C_t^{\rm str}[v;\Xi]}
    w(\gamma;\Xi),
  \qquad
  S_t^{*}(\Xi):=S_t^\Xi(W_t^{\Xi}),
  \qquad
  S_t^{*,i}(\Xi):=S_t^\Xi(W_t^{i,\Xi}),
  \quad i\in\{1,2\}.
\]
Our estimates are as follows.
\begin{itemize}
\item By considering the  distance of the walk to the boundary of 
$\mathcal T_{\rm ini}$, we have
\begin{equation}   \label{eq:WsPi}
        \P[B_{K_n/2}(W_s^{\Pi^n}) \subset {\mathcal T}_{\rm ini},\, \forall\, 0\le s\le T_{\rm ini}+T_1]> 1- Ce^{-cK_n}.
\end{equation}
\item 
For each $v\in V_n$, let $W^v$ be a walk on $G_n$ with rate $1/(2d)$ over each edge, started at $v$. Since we have assumed that every $R_n$-ball has tree excess at most one, for
$r<R_n/10$,  the following two estimates hold:
\begin{align}
  \sup_{v\in V_n}\P\bigl[B_{r}(W^v_{8r})\text{ contains a cycle}\bigr]
  &< Ce^{-cr},
  \label{eq:root-after-burnin}\\
 \sup_{v\in V_n: B_{4r}(v) \textrm{ is a tree}} \P\left[
    \begin{array}{c}
    B_r(W_s^v)\text{ contains a cycle for some }s\le R_n/2,\\
    \text{or }B_{4r}(W_{R_n/2}^v)\text{ contains a cycle}
    \end{array}
  \right]
  &< Ce^{-cr},
  \label{eq:root-propagation}
\end{align}
These two estimates can be proved in the same way as in \cite[Lemma 3.2]{LS}, via analyzing the birth-death chain evolution of the distance to the nearest cycle and then applying Chernoff bounds.
Applying  \eqref{eq:root-after-burnin} with $r=8L_n$ (the choices in \eqref{eq:detour-times} ensure that $8L_n<R_n/10$), then \eqref{eq:root-propagation} with
$r=2L_n$, and iterating over the $O(1)$ intervals of length $R_n/2$
covering $[T_{\rm ini},t]$, we get
\begin{equation}  \label{eq:B2LWs}
    \P \left[  B_{2L_n}(W_s^{\Pi^n})\text{ is a tree for every }
  T_{\rm ini}\le s\le t\right] > 1-Ce^{-cL_n}.
\end{equation}
    \item We have
\begin{equation}  \label{eq:strone}
\P[W^{\Pi^\T}|_{[0,t]}\in\mathcal C_t^{\rm str}[W_t^{\Pi^\T};\Pi^\T]] > 1-Ce^{-c(\log(n))^{0.02}}.
\end{equation}
Indeed, by \eqref{eq:WsPi} and \Cref{lem:tube}, we get that the left-hand side of \eqref{eq:strone} is at least
\[
1 - C (e^{-cx_n^2/t}-Ct (d-1)^{-y_n/2}-C e^{-cK_n} )> 1- C e^{-c (\log(n))^{0.02}}.
\]
\end{itemize}

\subsection{Independent tree field versus pullback field}\label{ssec:independent_pullback}

In this subsection, we compare walks generated from $\Pi^\T$ and $\widetilde\Pi$, including multiple walks that are independent conditional on $\Pi^T$ and $\widetilde \Pi$.
We next show that the restricted weights induced from  $\Pi^{\T}$ and $\widetilde \Pi$ can be coupled with high probability.  

\begin{lemma}
\label{lem:structural-corridors}
Suppose that $G_n$ with root $o_n$ satisfies (1) and (2) of
\Cref{def:good-graph} and that $t$ satisfies
\eqref{eq:finite-comparison-window}.  One may couple $\Pi^\T$ and
$\widetilde\Pi$, together with two walks $W^{1,\Pi^\T}, W^{2,\Pi^\T}$ and $W^{1,\widetilde\Pi}, W^{2,\widetilde\Pi}$, such that
\begin{equation}\label{eq:structural-restricted-coupling}
  \P\left[
    \exists i\in\{1,2\}:
    S_t^{*,i}(\Pi^\T)\ne S_t^{*,i}(\widetilde\Pi)
  \right]
  \le Ce^{-c(\log(n))^{0.02}}.
\end{equation}
\end{lemma}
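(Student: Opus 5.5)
The idea is to build the coupling from the annealed (Palm) description of a walk together with its field, as in \Cref{lem:palm-law}, instead of coupling $\Pi^\T$ and $\widetilde\Pi$ globally. The annealed law of $W^{\widetilde\Pi}$ is the lift through $\cP$ of the rate-$1/(2d)$ walk on $G_n$, and this is again the rate-$1/(2d)$ walk on $\T$. So we first couple $W^{1,\Pi^\T}=W^{1,\widetilde\Pi}$ exactly. Conditional on this walk $W$, \Cref{lem:palm-law} (applied on $\T$, and on $G_n$ to $\cP\circ W$) describes each field as the planted jumps of $W$, plus a Poisson process of rate $1/(2d)$ on $\mathcal R(W)$ and rate $1/d$ elsewhere. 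For $\widetilde\Pi$ the description holds after pulling back through $\cP$. The key observation is that $S_t(v;\Xi)$ depends only on $\Xi\cap\mathcal R(\mathcal V_t^{\rm str}(v))$. Hence it suffices to show the following: with probability $1-Ce^{-c(\log n)^{0.02}}$, the map $\cP$ is injective on each time slice of $\mathcal R(\mathcal V_t^{\rm str}(W_t))$, and these slices meet the preimage of $\mathcal R(\cP\circ W)$ only in $\mathcal R(W)$. On that event the two conditional laws restricted to this region coincide, since Poisson points at distinct times are independent and injectivity removes the duplication caused by multiple lifts. We can then set the fields equal there and sample them independently elsewhere.

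For injectivity we use the structure of $\mathcal V_t^{\rm str}$. On $[0,T_{\rm ini}+T_1]$ the region lies in $\widetilde{\mathcal T}_{\rm ini}$, where $\cP$ is an isomorphism; by \eqref{eq:WsPi} the walk stays there at distance at least $K_n/2$ from the boundary, so the incident edges are also covered. For $s\ge T_{\rm ini}$, the slice $(\mathcal V_t^{\rm str}(W_t))_s$ is the $2y_n$-neighbourhood of a geodesic segment of length $O(x_n)$. By \eqref{eq:strone}, with high probability $W_s$ lies in this slice, so the slice is contained in $B_{Cx_n}(W_s)\subset B_{2L_n}(W_s)$ because $x_n\ll L_n$. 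On the event of \eqref{eq:B2LWs}, $B_{2L_n}(\cP(W_s))$ is a tree in $G_n$, and $\cP$ maps the corresponding tree ball isomorphically. This gives injectivity, and the same argument shows that points of $\mathcal R(\cP\circ W)$ in this ball lift only to $\mathcal R(W)$. Summing the failure probabilities of \eqref{eq:WsPi}, \eqref{eq:B2LWs} and \eqref{eq:strone} gives the bound $Ce^{-c(\log n)^{0.02}}$ for a single walk.

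For two walks we will use the two-walk Palm description. The annealed pair $(W^1,W^2)$ is a Markov chain in which the two walks share the clocks on common incident edges. Conditional on the pair, the field is the planted jumps plus a Poisson process whose rate is reduced on $\mathcal R(W^1)\cup\mathcal R(W^2)$. On $G_n$ the projected pair evolves by the same local rule, so the lifted pair agrees in law with the tree pair as long as the relevant neighbourhoods of the two walks project injectively \emph{jointly}. We then couple the fields on the union of the two structural regions. The main obstacle is this joint injectivity. Two lifts that are far apart in $\T$ may have projections that come close in $G_n$ at the same time, which makes the pulled-back clocks correlated. While the walks are close in $\T$ (an initial stretch), both lie in one tree ball $B_{2L_n}$ and the single-walk argument applies. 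After they separate, I would bound the bad event by an occupation-time estimate: a first-moment bound on the expected time during which $\cP(W^2_s)$ lies within $Cx_n$ of $\cP(W^1_s)$ while the lifts are distinct. Using \Cref{lem:graphical-nonconnection}-type domination, this reduces to two independent rate-$1/(2d)$ walks on $G_n$. The tree-excess bound (1) and the tree-ball events then make such near-collisions of distinct lifts cost $e^{-cL_n}$ per unit time up to time $t\asymp\log n$. If this local input turns out to be insufficient, the fallback is to absorb the near-collisions into the same $e^{-c(\log n)^{0.02}}$ error using \eqref{eq:root-propagation} iterated along the pair.
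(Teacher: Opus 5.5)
Your reduction is the paper's. You sample the walks first via the (two-walk) Palm description of \Cref{lem:palm-law}. You then reduce to the event that $(e,s)\mapsto(\cP(e),s)$ is injective on $\mathcal R(\mathcal V_t^{\rm str}(W^1_t)\cup\mathcal V_t^{\rm str}(W^2_t))$, with both walks in their structural path sets, and you get single-walk injectivity from \eqref{eq:WsPi}, \eqref{eq:B2LWs} and \eqref{eq:strone}. On $[0,T_{\rm ini}+T_1]$ both tubes lie in $\widetilde{\mathcal T}_{\rm ini}$, so joint injectivity is automatic there.

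\textbf{The gap is the remaining two-walk step}, which is the only new content beyond the one-walk case. Each projected slice lies within $4x_n+4y_n$ of the projected walk. So you need $D(s):=\dist(\cP(W^1_s),\cP(W^2_s))>m_n:=10x_n+10y_n$ for all $s\in[T_{\rm ini}+T_1,t]$, outside probability $Ce^{-c(\log n)^{0.02}}$. Your proposal only asserts that near-collisions ``cost $e^{-cL_n}$ per unit time''. That does not follow from a fixed-time first-moment bound as you set it up, for three reasons.
\begin{itemize}
\item The annealed walks start together and share clocks when adjacent. So the comparison with independent walks can only begin once the projections are apart.
\item Right after they separate, any heat-kernel decay you could extract from (1) is swamped by the volume factor $|B_{Cx_n}|=(d-1)^{Cx_n}$, unless the projections are already at $G_n$-distance $\gg x_n$.
\item Producing such a separation at the handoff time is itself a step you omit. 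Inside $\mathcal T_{\rm ini}$ the $G_n$-distance can be shortened through the cut vertex $u_*$, which may lie just beyond distance $K_n$ from $o_n$.
\end{itemize}
Your fallback via \eqref{eq:root-propagation} does not address any of this. It controls cycles near a single walk and says nothing about the relative position of two walks.

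\textbf{The missing idea is a pathwise drift argument for $D$.} On $\mathcal E^1_{\rm tree}\cap\mathcal E^2_{\rm tree}$, whenever $D(s)\le 2L_n$, both projected walks lie in the tree $B_{2L_n}(\cP(W^1_s))$. When $D\ge 2$ the walks use disjoint clocks, so each jump increases $D$ with probability $(d-1)/d$. Since $D$ moves by at most one per jump, getting from $2m_n$ down to $m_n$ requires an excursion against this drift. Gambler's ruin and Chernoff bounds make this cost $Ce^{-cm_n}$ per time window of length $L_n$, and $t/L_n=O(1)$ windows suffice. The starting input $D(T_{\rm ini}+T_1)>2m_n$, with probability $1-n^{-c}$, comes from running both walks in the tree $\mathcal T_{\rm ini}$ while also tracking their distance to $u_*$.

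Your occupation-time route can also be completed, but it needs two ingredients you have not supplied. First, a separation of order $\log n$ at the handoff time. Second, a uniform bound $\max_{u,v}p^n_r(u,v)\le n^{-c}$ for $r\ge c'\log n$, which one can derive from (1) through the universal cover; Poisson tails then cover shorter times.
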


\begin{proof}
Via a straightforward generalization of Lemma \ref{lem:palm-law} to the two-walk case, for $\Xi\in\{\Pi^\T,\widetilde\Pi\}$, one can first sample the (coupled) walks $W^{1,\Xi}, W^{2,\Xi}$, then the Poisson point process $\Xi$ conditional on the two walks.  To prove the statement, it suffices to work on $\Pi^\T$ and show that the event
\begin{multline}\label{eq:two-corridor-compatibility-event}
  \mathcal E_{12}:=\Big\{
  (e,s)\mapsto(\cP(e),s)\text{ is injective on }
  \mathcal R(\mathcal V_t^{\rm str}(W_t^{1,\Pi^\T}) \cup
             \mathcal V_t^{\rm str} (W_t^{2,\Pi^\T})  ), \\ W^{i,\Pi^\T}|_{[0,t]}\in\mathcal C_t^{\rm str}[W_t^{i,\Pi^\T};\Pi^\T],\; i=1,2
  \Big\}
\end{multline}
has probability at least $1-Ce^{-c(\log(n))^{0.02}}$.
Indeed, if we let $\widetilde{\mathcal E}_{12}$ be the same event with $\Pi^\T$ replaced by $\widetilde\Pi$, then $\P[\mathcal E_{12}]=\P[\widetilde{\mathcal E}_{12}]$, and $W^{1,\Pi^\T}, W^{2,\Pi^\T}$ under $\mathcal E_{12}$ and $W^{1,\widetilde\Pi}, W^{2,\widetilde\Pi}$ under $\widetilde{\mathcal E}_{12}$ have the same law, so we can couple them to make them equal almost surely. Therefore the Poisson processes in these sets, i.e., $\Pi^\T\cap \mathcal R(\mathcal V_t^{\rm str}(W_t^{1,\Pi^\T}) \cup \mathcal V_t^{\rm str} (W_t^{2,\Pi^\T})  )$ and $\widetilde\Pi \cap\mathcal R(\mathcal V_t^{\rm str}(W_t^{1,\widetilde\Pi}) \cup \mathcal V_t^{\rm str} (W_t^{2,\widetilde\Pi})  )$, can further be coupled to be equal almost surely.

Below we lower bound $\P[\mathcal E_{12}]$. 
Denote by $\mathcal E^{i}_{\rm str}$ the event in the second line of \eqref{eq:two-corridor-compatibility-event}, for each $i=1, 2$.
Then by \eqref{eq:strone}, we have $\P[\mathcal E^{i}_{\rm str}]> 1-Ce^{-c(\log(n))^{0.02}}$.

It now suffices to consider the injectivity event in \eqref{eq:two-corridor-compatibility-event}.
The plan is as follows. We first let $\mathcal{E}^i_{\rm tree}$ be the event where 
$B_{2L_n}(\mathcal P(W_s^{i,\Pi^\T}))$ is a tree for every $T_{\rm ini}\le s\le t$.
Since $\mathcal P(W^{i,\Pi^\T})$ is a random walk on $G_n$, by \eqref{eq:B2LWs} we have that $\P[\mathcal{E}^i_{\rm tree}]> 1-Ce^{-cL_n}$.
Then using the confinement event (i.e., the first line of \eqref{eq:two-corridor-compatibility-event}), $\mathcal{E}^i_{\rm tree}$ implies that $\mathcal P$  is injective on any time slide of the tube $\mathcal V_t^{\rm str}(W_t^{i,\Pi^\T})$.
Finally, we show that the projections of $W^{1,\Pi^\T}$ and $W^{2,\Pi^\T}$ are well separated after $T_{\rm ini}$.

We next provide the details.

\smallskip

\noindent\textbf{Step 1: loopless of time slices for each walk.}
Consider the event  
$$
\mathcal{E}^i_{\rm conf}:= \mathcal{E}^i_{\rm tree}\cap  \mathcal{E}^i_{\rm str}
\cap \{N_{W^i}(I) \leq 4 x_n, \, \forall \mbox{ interval }I \in [0,t] \mbox{ s.t., } |I|\leq 4x_n \},
$$
where $N_{W^i}(I)$ is the number of jumps of walk $W^i$ in the time interval $I$.  By \eqref{eq:strone} and the above stated inequality $\P[\mathcal{E}^i_{\rm tree}]> 1-Ce^{-cL_n}$, as well as  a Poisson tail bound, we find that
\begin{equation}\label{eq:omega_conf_bd}
    \P[\mathcal{E}^i_{\rm conf}]> 1 - C e^{-c (\log(n))^{0.02}}.
\end{equation}
On $\mathcal{E}^i_{\rm conf}$, the
triangle inequality give
\[
  \dist\bigl(\cP(v),\cP(W_s^{i,\Pi^\T}) \bigr)
  \le4x_n+4y_n=o(L_n),\; \forall \, s\geq T_{\rm ini} \mbox{ and } v\in \mathcal V^{\rm str}(W_t^{i,\Pi^\T})_s.
\]
 Consequently, on $\mathcal{E}^i_{\rm conf}$,  for $T_{\rm ini}\le s \le t$ the projection of each 
time-$s$ slice, i.e.,  $\cP(\mathcal V^{\rm str}(W_t^{i,\Pi^\T})_s)$, lies in
$B_{L_n}(\cP(W_s^{i,\Pi^\T}))$, which is a tree by $\mathcal{E}^i_{\rm tree}$.   Moreover, $\mathcal{E}^i_{\rm str}$ enusres that for $0\le s\leq T_{\rm ini}+T_1$, the projection of $\cP(\mathcal V^{\rm str}(W_t^{i,\Pi^\T})_s)$  is contained  in $\mathcal T_{\rm ini}$ which is also a tree. 

\smallskip
\noindent\textbf{Step 2: disjointness of two tube projections.} It remains to prove that, with high probability, time slices of projections  $\mathcal P (\mathcal V_t^{\rm str}(W_t^1)_s)$ and $\mathcal P(\mathcal V_t^{\rm str}(W_t^2)_s)$  are well separated for $T_{\rm ini}+T_1\le s\le t$. To this end, put $$m_n:=10x_n+10 y_n.$$
For each $0\le s \le t$, we denote $D(s):=\mbox{dist}(\cP(W_{s}^{1,\Pi^\T}), \cP(W_{s}^{2,\Pi^\T}))$.
We next bound the probability of $D(s)>m_n$ for any $T_{\rm ini}+T_1\le s \le t$.

Under $\mathcal{E}^1_{\rm str}\cap \mathcal{E}^2_{\rm str}$, $\cP(W_{s}^{1,\Pi^\T}), \cP(W_{s}^{2,\Pi^\T})\in \mathcal T_{\rm ini}$ for all $0\le s\le T_{\rm ini}+T_1$.
By considering the evolution of the distance $D(s)$, and the distance from $\cP(W_{s}^{1,\Pi^\T}), \cP(W_{s}^{2,\Pi^\T})$ to the vertex $u_*$, and using Chernoff bounds, we have
\[
\P[\{D(T_{\rm ini}+T_1) \le 2m_n\}\cap \mathcal{E}^1_{\rm str}\cap \mathcal{E}^2_{\rm str}] < Ce^{-c\log(n)}.
\]
Also by using Chernoff bounds, for any $0\le s\le t$ we have
\[
\P\Big[\Big\{D(s+L_n )\leq 2 m_n \mbox{ or } \inf_{s\leq r\leq s+L_n}  D(r) \leq  m_n  \Big\} \cap \{D(s) \geq 2 m_n\} \cap \mathcal{E}_{\rm tree}^1 \cap  \mathcal{E}_{\rm tree}^2 \Big] < C e^{-c m_n}.
\] 
Consequently, by iterating this estimate for $O(1)$ many times, we find that 
$$
\P\Big[\Big\{\inf_{T_{\rm ini}+T_1 \leq s\leq t} D(s) \leq m_n\Big\} \cap \mathcal{E}^1_{\rm conf}\cap \mathcal{E}^2_{\rm conf} \Big]< C e^{-c (\log(n))^{0.49}}. 
$$
Combining this with the bound \eqref{eq:omega_conf_bd}, the conclusion follows.
\end{proof}

\begin{prop}
\label{prop:tree-pullback}
Under the setup of \Cref{lem:structural-corridors},
\[
  \P\bigg[
    \exists i\in\{1,2\}:
    \frac{X_t^{*,i}(\Pi^\T)}
         {X_t^{*,i}(\widetilde\Pi)}
    \notin(1/2,2)
  \bigg]
  < Ce^{-c(\log(n))^{0.02}}.
\]
\end{prop}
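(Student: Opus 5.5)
The plan is to pass through the structural (tube-restricted) weights $S_t^{*,i}$ and use \Cref{lem:structural-corridors} as the bridge. For each $\Xi\in\{\Pi^\T,\widetilde\Pi\}$ and $i\in\{1,2\}$, we always have $S_t^{*,i}(\Xi)\le X_t^{*,i}(\Xi)$, because $S_t(v;\Xi)$ sums over a subset of the paths counted in $w_{0,t}(o,v;\Xi)$. If we show that with high probability
\[
S_t^{*,i}(\Xi) > \tfrac{\sqrt2}{2}\, X_t^{*,i}(\Xi),
\]
then on the coupling event of \Cref{lem:structural-corridors}, where $S_t^{*,i}(\Pi^\T)=S_t^{*,i}(\widetilde\Pi)$, the two ratios $X_t^{*,i}(\Pi^\T)/S$ and $X_t^{*,i}(\widetilde\Pi)/S$ both lie in $[1,\sqrt2)$. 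Their quotient then lies in $(1/\sqrt2,\sqrt2)\subset(1/2,2)$. A union bound over $i$ and the two fields, added to the error in \eqref{eq:structural-restricted-coupling}, gives the claim.

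\textbf{Step 1: the restricted weight nearly exhausts the full weight.} Fix $\Xi$ and $i$, and work conditionally on $\Xi$. The law of $W_t^{i,\Xi}$ is $p(\cdot)=w_{0,t}(o,\cdot;\Xi)$. Set $q(\cdot)=S_t(\cdot;\Xi)\le p(\cdot)$. \Cref{lem:sb-markov} with $\alpha=1/\sqrt2$ gives
\[
\P\bigl[S_t^{*,i}(\Xi)<\alpha X_t^{*,i}(\Xi)\bigr]\le \frac{1}{1-\alpha}\,\E\Bigl[1-\sum_v S_t(v;\Xi)\Bigr].
\]
The expectation on the right is exactly $\P\bigl[W^{\Xi}|_{[0,t]}\notin \mathcal C_t^{\rm str}[W_t^{\Xi};\Xi]\bigr]$. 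This holds because the walk's path law given $\Xi$ is the path weight, so the total mass of non-structural paths equals the probability that the walk is non-structural.

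\textbf{Step 2: bounding the non-structural probability.} For $\Xi=\Pi^\T$ this probability is at most $Ce^{-c(\log n)^{0.02}}$ by \eqref{eq:strone}.

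For $\Xi=\widetilde\Pi$ one cannot quote \eqref{eq:strone} directly, since $\widetilde\Pi$ is not an independent field. However, the annealed law of $W^{\widetilde\Pi}$ is still a rate-$1/(2d)$ random walk on $\T$. Indeed, its projection $\cP(W^{\widetilde\Pi})$ is the walk generated by $\Pi^n$, whose annealed law is the continuous-time random walk on $G_n$. Lifting that walk gives the simple random walk on $\T$.

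The event $\{W|_{[0,t]}\in\mathcal C_t^{\rm str}[W_t]\}$ depends only on the path of $W$. It does not depend on the field, since $\mathcal V_t^{\rm str}$ is defined geometrically and compatibility is automatic for the walk. Hence its probability is the same as in the $\Pi^\T$ case. This is the one point I would double-check. The estimate behind \eqref{eq:strone} combines \Cref{lem:tube}, which is an annealed statement about a rate-$1/(2d)$ walk, with \eqref{eq:WsPi}, which is stated for $W^{\Pi^n}$, i.e.\ for the projection. So both ingredients transfer verbatim.

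\textbf{Main obstacle.} The main subtlety is justifying this annealed identification for the correlated pullback field $\widetilde\Pi$. In particular, one must check that the walk generated by $\widetilde\Pi$ on $\T$ projects onto the walk generated by $\Pi^n$. This amounts to checking that path weights are preserved under $\cP$: the rings incident to a lifted path correspond bijectively to rings incident to its projection, because $\cP$ is a local isomorphism on edges. Given that, the remaining steps are direct applications of \Cref{lem:sb-markov}, \eqref{eq:strone} and \Cref{lem:structural-corridors}.
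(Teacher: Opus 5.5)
Your proposal is correct and follows the paper's proof. You bound the non-structural mass by \eqref{eq:strone}, apply \Cref{lem:sb-markov} to get $S_t^{*,i}(\Xi)\le X_t^{*,i}(\Xi)\le \alpha^{-1}S_t^{*,i}(\Xi)$ for both fields, and conclude on the coupling event of \Cref{lem:structural-corridors}. Your two refinements fill in details the paper leaves implicit: $\alpha=1/\sqrt2$ instead of the paper's $\alpha=1/2$ places the ratio in $[1/\sqrt2,\sqrt2]\subset(1/2,2)$ rather than the closed $[1/2,2]$, and you justify applying \eqref{eq:strone} to $\widetilde\Pi$ because $W^{\widetilde\Pi}$ has the same annealed law as $W^{\Pi^\T}$ and the structural event depends only on the path.
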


\begin{proof}
For $\Xi \in \{\Pi^\T,\,  \widetilde \Pi\}$, the construction of the walk $W^\Xi$ gives
\[
  \E\Big[\sum_{v\in V}\bigl(w_{0,t}^\Xi(o,v)-S_t^\Xi(v)\bigr)\Big]
  =\P[W^{\Xi}|_{[0,t]}\not\in \mathcal C_t^{\rm str}[W^{\Xi}_t;\Xi]].
\]
By \eqref{eq:strone}, the right-hand side is bounded
$Ce^{-c(\log(n))^{0.02}}$.  Applying \Cref{lem:sb-markov} with
$\alpha=1/2$ shows that,
outside an event of probability $Ce^{-c(\log(n))^{0.02}}$, 
\[
  S_t^{*,i}(\Xi)\le X_t^{*,i}(\Xi)
  \le 2S_t^{*,i}(\Xi),
  \qquad i\in\{1,2\}.
\]
Combining this with Lemma \ref{lem:structural-corridors} finishes the proof.
\end{proof}

\subsection{Pullback weight versus finite-graph weight}\label{ssec:pullback_finite}

In this subsection, we assume that the graph $G_n$ with root $o_n$ is $\delta$-good in the sense of Definition \ref{def:good-graph}.
We compare $X_t^{*,i}(\widetilde\Pi)$ and $X_t^{*,i}(\Pi^n)$ for $i=1,2$. 
Unlike the previous subsection, where one needs to construct a coupling between $W^{1,\Pi^\T}, W^{2,\Pi^\T}, \Pi^\T$ and $W^{1,\widetilde\Pi}, W^{2,\widetilde\Pi}, \widetilde\Pi$, there is a natural coupling between $\widetilde\Pi$ and $\Pi^n$ and the walks, through the projection map $\cP$. 
We work under this coupling, and it suffices to consider one walk, i.e., compare $X_t^{*}(\widetilde\Pi)$ and $X_t^{*}(\Pi^n)$ under this coupling, since the comparison between two walks obviously follows by a union bound.

Note that under the coupling between $\widetilde\Pi$ and $\Pi^n$ through $\cP$, we always have $X_t^*(\widetilde\Pi)\le X_t^*(\Pi^n)$.
The  reverse comparison is then the more difficult and technically demanding part. 
We will prove that  $X_t^*(\Pi^n)$ can be (almost) bounded by $2X_t^*(\widetilde \Pi)$, with probability at least $1-C\delta +o(1)$, for $G_n$ being a $\delta$-good rooted graph.

For this, we rewrite the difference $X_t^*(\Pi^n)-X_t^*(\widetilde\Pi) $ as follows. 
    For $\gamma\in \mathcal C_{0,t}[o_n, W_t^{\Pi^n}; \Pi^n]$, its \emph{lift} $\widetilde\gamma: [0, t]\to V$ is the unique function, such that $\widetilde\gamma(0)=o$, $\cP(\widetilde\gamma)=\gamma$, and  for any $0< s\le t$, we have $\dist(\widetilde\gamma(s-),\widetilde\gamma(s))\le 1$. 
The fact that $\widetilde\gamma$ is uniquely determined by the conditions relies on that $G_n$ is a simple graph.
We then have
\begin{equation}   \label{eq:diffXnt}
X_t^*(\Pi^n)-X_t^*(\widetilde\Pi) = \sum_{\gamma\in \mathcal C_{0,t}[o_n, W_t^{\Pi^n}; \Pi^n], \widetilde\gamma(t)\neq W_t^{\widetilde\Pi}} w(\gamma; \Pi^n).
\end{equation}
To control the right-hand side, we introduce a classification of all $\gamma\in \mathcal C_{0,t}[o_n, W_t^{\Pi^n}; \Pi^n]$, in terms of the sizes of its \textit{gaps} away from the \textit{spine}, i.e., $W^{\Pi^n}$.

\begin{definition}\label{def:gap}
For each $\gamma \in \mathcal C_{0,t}[o_n, W_t^{\Pi^n}; \Pi^n]$,
record each maximal interval
$[a,b)$ in which  $\gamma$ differs from $W^{\Pi^n}$, and call
  it a \emph{gap}.  Its  endpoints $W_{a-}^{\Pi^n}$ and $W_b^{\Pi^n}$  are
   called \emph{divergence endpoint} and \emph{reunion endpoint}, respectively. 
  We call the edge $e_a=\{W_a^{\Pi^n}, \bar W_a^{\Pi^n}\}$ such that $(e_a, a)\in \Pi^n$ the \emph{divergence edge}.  
\end{definition}

Define the base tube $ \mathcal V$ and its enlargement $  \mathcal V^+$ by
\[
  \mathcal V
  :=\mathcal V_{0,t;x_n}(o, W_t^{\widetilde\Pi};2y_n),\qquad
  \mathcal V^+
  :=\left\{
    (v,s):\dist\bigl(v,\mathcal V_s\bigr)\le L_n
  \right\},\qquad
  \mathcal U^+:=\mathcal R(\mathcal V^+).
\]
For $\gamma \in \mathcal C_{0,t}[o_n, W_t^{\Pi^n}; \Pi^n]$, its \emph{piecewise lift} $\hat\gamma:[0, t]\to V$ is the unique function, such that $\cP(\hat\gamma)=\gamma$, and  $\hat\gamma(s)=W^{\widetilde\Pi}_s$ whenever $\cP(\hat\gamma(s))=\cP(W^{\widetilde\Pi}_s)$, and for any other $0\le s \le t$, we have $\dist(\hat\gamma(s-), \hat\gamma(s))\le 1$.
In words, $\hat\gamma$ is the lift which restarts from $W^{\widetilde\Pi}$ whenever $\cP(\hat\gamma)$ hits $W^{\Pi^n}$.

\begin{definition}\label{defn:gammaclass}
     For $\gamma \in \mathcal C_{0,t}[o_n, W_t^{\Pi^n}; \Pi^n]$, a gap $[a,b)$ of it is classified as
    \begin{itemize}
        \item \emph{initial}: if $\exists \, a\leq s\leq b \wedge (T_{\rm ini}+T_1) $ such that $\gamma(s)\notin \mathcal T_{\rm ini}$;
        \item \emph{short}: if $b-a\leq T_1$ and this gap is not initial; moreover, it can be further classified as
        \begin{itemize}
            \item \emph{bad short}: if 
        $(\hat \gamma (s), s) \notin \mathcal V^+ $ for some $s\in [a,b)$;
            \item \emph{good short}: otherwise;
        \end{itemize}
        \item \emph{medium}: if $b-a \in (T_1, T_2]$ and this gap is not initial;
        \item \emph{long}: if $b-a>T_2$ and this gap is not initial.
    \end{itemize}
    We also classify $\gamma$ as follows. If all its gaps are good short, we classify $\gamma$ to be \emph{ordinary}.
    Otherwise, consider its first gap that is either initial, bad short, medium, or long. We classify $\gamma$ to be either
    \emph{initial}, \emph{bad short}, \emph{medium}, or \emph{long}, accordingly.
\end{definition}
    For brevity, we also write ord/ini/bsh/med/long for the classes.
For  $\bullet \in \{\mathrm{ord},\ \mathrm{ini} ,\ \mathrm{bsh},
    \, \mathrm{med},\ \mathrm{long}\} $, we denote by $X_t^\bullet$ the total weights of paths in the class $\bullet$, i.e.,
$$
X_t^\bullet=\sum_{\gamma\in \mathcal C_{0,t}[o_n, W_t^{\Pi^n}; \Pi^n], \gamma \textrm{ is in class }\bullet} w(\gamma; \Pi^n).
$$
To bound \eqref{eq:diffXnt}, we analyze contributions from these five classes respectively, and our strategy is as follows.
\begin{itemize}
    \item For $\gamma$ from ord, unless $W^{\Pi^n}$ behaves atypically, it would have $\widetilde\gamma(t)=W_t^{\widetilde\Pi}$, so that $\gamma$ does not contribute to $X_t^*(\Pi^n)-X_t^*(\widetilde\Pi)$. The probability of such atypical behaviors are bounded using the preliminary estimates in \Cref{ssec:setupprees}.
    \item For the two terms $X_t^{\rm ini}$ and $X_t^{\rm bsh}$, using \Cref{lem:sb-markov}, bounds on them would also be reduced to random walk estimates in \Cref{ssec:setupprees}.
    \item For $X_t^{\rm med}$, the idea is to consider the \emph{off-spine weight} 
\[
D_{a,b}
  :=
  \frac12
  \sum_{\substack{\gamma\in \mathcal{C}_{a,b}[\bar W_a^{\Pi^n}, W_b^{\Pi^n};\Pi^n],\\
       \gamma(r)\ne W_r^{\Pi^n},\ a\le r<b}}
  w(\gamma;\Pi^n),
\]
   for any $\gamma\in \mathcal C_{0,t}[o_n, W_t^{\Pi^n}; \Pi^n]$ and its medium gap $[a,b)$. Here $\bar W_a^{\Pi^n}$ is the other endpoint of the divergence edge $e_a$, and the factor $1/2$ here comes from the branch chosen at the divergence endpoint.\\
We will show that the off-spine weight $D_{a,b}$ is (almost) bounded by  random walk transition probability. Heuristically speaking, this bound is due to the disjointness, i.e., $\gamma(r)\ne W_r^{\Pi^n}$ for $a\le r<b$, so that conditioning on $W^{\Pi^n}$ would not significantly increase $w(\gamma;\Pi^n)$. \\
With the transition probability decay  \eqref{eq:entropic-mixing}, we then get that $D_{a,b}$ is much smaller than the corresponding weight $w_{a-,b}^{\Pi^n} (W_{a-}^{\Pi^n}, W_b^{\Pi^n})$ along the spine, since $\kappa_*<\kappa_2$.
From there we conclude that
$X_t^{\rm med}$ is much smaller than $X_t^*(\Pi^n) $.
\item For $X_t^{\rm long}$, we also bound the off-spine weights, but now for long gaps. 
As in the medium gap case, we use the disjointness and \eqref{eq:TV-mixing}, and bound the long gap off-spine weights by order $n^{-1+o(1)}$.
\end{itemize}
Bounds for ord, ini, and bsh, i.e., $\gamma$ with only initial or short gaps, will be given in \Cref{sssec:geometric_setup}.
Sections \ref{sssec:medium_gaps} and \ref{sssec:long_gaps} are then devoted to med and long, respectively.

\smallskip

\noindent\textbf{Geometric events.}
We introduce the following events, which will be used repeatedly later.
Take the cycle-cut tree component $\mathcal T_{\rm ini}$ from
\eqref{eq:initial-tree-component}. 
Let the
tube-confinement event $\mathcal{E}_{\rm tube}$ be
\[
  \{W^{\widetilde\Pi}|_{[0,t]}\in\mathcal C_t^{\rm str}[W_t^{\widetilde\Pi};\widetilde\Pi]\}
  =\{(W_s^{\widetilde\Pi},s)\in\mathcal V,
  \, \forall\, 0\le s\le t\}\cap 
  \{  \mathcal P(W_s)\in {\mathcal T}_{\rm ini}, \, \forall\, 0\le s\le T_{\rm ini}+T_1\},
\]
where recall \eqref{eq:structural-path-set} for the path set $\mathcal C_t^{\rm str}[W_t^{\widetilde\Pi};\widetilde\Pi]$.
Let $N_{\rm sp}(I)$ count Poisson points incident to $W^{\Pi^n}$ in a
time interval $I$, i.e., 
\[
N_{\rm sp}(I) = | \mathcal R(W^{\Pi^n}|_I) \cap \Pi^n |.
\]
Fix $C_{\rm sp}$ and $C_{\rm pad}$ sufficiently
large and set
\begin{equation}\label{eq:padding-mark-scale}
  m_n^{\rm pad}
  :=\left\lceil
    C_{\rm pad}\frac{\log\log(n)}{\log\log\log(n)}
  \right\rceil.
\end{equation}
Our main event of interest is
\begin{equation}\label{eq:finite-geometry-event}
\begin{split}
  \mathcal{E}_{\rm geom}:&=\mathcal{E}_{\rm tube}\cap  \biggl\{
      \sup_{\substack{0\le u,v\le t\\|u-v|\le4x_n}}
    \dist(W_u^{\Pi^n},W_v^{\Pi^n})\le10x_n ,\;\; \dist(o,W_t^{\widetilde\Pi})\le2t,\\
  &  B_{K_n/2}(W_s^{\Pi^n})\text{ is a tree }, \forall\, 
       0\le s\le T_{\rm ini},\;\;  B_{2L_n}(W_{s'}^{\Pi^n})\text{ is a tree }, \forall \,
       T_{\rm ini}\le s' \le t,\, 
  \\
  &N_{\rm sp}([0,t])\le C_{\rm sp}\log(n),\;\; \max_{s\in\Z}
    N_{\rm sp}\bigl((s-2,s+2]\cap[0,t]\bigr)
    \le m_n^{\rm pad}
  \biggr\}.
\end{split}
\end{equation}
We have that\begin{equation}\label{eq:intrinsic-geometry-probability}
  \P[\mathcal{E}_{\rm geom}^c]=o(1).
\end{equation}
Indeed, \eqref{eq:strone} gives that $\P[\mathcal{E}_{\rm tube}^c]=o(1)$;
\eqref{eq:WsPi} and \eqref{eq:B2LWs} bound the events in the second line of \eqref{eq:finite-geometry-event}; and the remaining events are bounded using Poisson estimates.

\subsubsection{Initial and short gaps}\label{sssec:geometric_setup}
We start with ordinary $\gamma$.
\begin{lemma}    \label{lem:gap-class-decomposition}
Under $\mathcal{E}_{\rm geom}$, every ordinary $\gamma\in \mathcal C_{0,t}[o_n, W_t^{\Pi^n}; \Pi^n]$ must have $\widetilde\gamma(t)=W_t^{\widetilde\Pi}$, therefore  
\begin{equation}
  0\le X_t^*(\Pi^n)-X_t^*(\widetilde\Pi)
  \le X_t^{\rm ini}+X_t^{\rm bsh}
     +X_t^{\rm med}+X_t^{\rm long}.
\end{equation}
\end{lemma}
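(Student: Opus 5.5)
The proof will show that an ordinary path $\gamma$ has a lift $\widetilde\gamma$ that coincides with its piecewise lift $\hat\gamma$. The inequality then follows from \eqref{eq:diffXnt}, since every path with $\widetilde\gamma(t)\ne W_t^{\widetilde\Pi}$ is non-ordinary and is therefore counted in exactly one of $X_t^{\rm ini},X_t^{\rm bsh},X_t^{\rm med},X_t^{\rm long}$. The lower bound $0\le X_t^*(\Pi^n)-X_t^*(\widetilde\Pi)$ is immediate: under the projection coupling, every compatible path for $\widetilde\Pi$ ending at $W_t^{\widetilde\Pi}$ projects to a compatible path for $\Pi^n$ ending at $W_t^{\Pi^n}$ with the same weight, and this correspondence is injective.

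The core claim is handled gap by gap. Let $\gamma$ be ordinary and list its gaps $[a_1,b_1),[a_2,b_2),\dots$ in time order; each is good short. Outside the gaps we have $\gamma=W^{\Pi^n}=\cP(W^{\widetilde\Pi})$. I will argue by induction on the gap index that $\widetilde\gamma(s)=W_s^{\widetilde\Pi}$ for every $s$ outside the gaps, and that $\widetilde\gamma=\hat\gamma$ on each gap. Suppose $\widetilde\gamma(a_k-)=W_{a_k-}^{\widetilde\Pi}$. On $[a_k,b_k)$ both $\widetilde\gamma$ and $\hat\gamma$ are the nearest-neighbour lift of $\gamma$ started from $W^{\widetilde\Pi}_{a_k-}$, so they agree there. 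The issue is at the reunion time $b_k$. There $\hat\gamma(b_k)=W^{\widetilde\Pi}_{b_k}$ by definition, while $\widetilde\gamma(b_k)$ is the unique neighbour of $\widetilde\gamma(b_k-)$ that projects to $W_{b_k}^{\Pi^n}$. We need these two vertices to be the same.

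The key geometric step is to show that the spine lift and the gap lift over $[a_k,b_k]$ both stay inside a single time slice region on which $\cP$ is injective. Since the gap is good short, $(\hat\gamma(s),s)\in\mathcal V^+$ for all $s\in[a_k,b_k)$. By $\mathcal{E}_{\rm tube}$, $(W_s^{\widetilde\Pi},s)\in\mathcal V$. Because the gap is not initial, for $s\le T_{\rm ini}+T_1$ the projected path stays in $\mathcal T_{\rm ini}$, on which $\cP$ restricted to $\widetilde{\mathcal T}_{\rm ini}$ is an isomorphism. In that case both $\widetilde\gamma(b_k)$ and $W_{b_k}^{\widetilde\Pi}$ lie in $\widetilde{\mathcal T}_{\rm ini}$, have the same projection, and hence coincide.

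For later gaps ($s\ge T_{\rm ini}$), I will use the $\mathcal{E}_{\rm geom}$ bound $\dist(W_u^{\Pi^n},W_v^{\Pi^n})\le 10x_n$ for $|u-v|\le 4x_n$, together with the tube widths $x_n,y_n$ and the padding $L_n$. These give $\dist(\hat\gamma(s),W^{\widetilde\Pi}_{s'})\le L_n+O(x_n+y_n)<2L_n$ for $s,s'\in[b_k-\epsilon,b_k]$. Indeed, $\hat\gamma(s)$ is within $L_n$ of $\mathcal V_s$, and $\mathcal V_s$ has diameter $O(x_n+y_n)$ around the space-time line near $W^{\widetilde\Pi}_s$. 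Since $B_{2L_n}(W^{\Pi^n}_{b_k})$ is a tree, $\cP$ is injective on $B_{2L_n}(W^{\widetilde\Pi}_{b_k})$. The vertices $\widetilde\gamma(b_k)$ and $W^{\widetilde\Pi}_{b_k}$ both lie in this ball (the first being adjacent to $\widetilde\gamma(b_k-)=\hat\gamma(b_k-)$) and have the same projection, so they are equal. This closes the induction, and at the final time $\widetilde\gamma(t)=W_t^{\widetilde\Pi}$.

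I expect the main obstacle to be the bookkeeping in this distance bound. One has to check that the slice $\mathcal V_s$ really lies within $O(x_n+y_n)$ of $W^{\widetilde\Pi}_s$, which requires comparing the straight space-time line with the spine through the tube event and the $10x_n$ modulus. One also has to handle the boundary case of a gap straddling $T_{\rm ini}$, where a non-initial gap still ends inside $\mathcal T_{\rm ini}$, or else both ball conditions in $\mathcal{E}_{\rm geom}$ are available.
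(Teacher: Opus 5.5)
Your proposal is correct and follows the paper's proof. Both reduce to showing $\widetilde\gamma=\hat\gamma$, i.e.\ $\hat\gamma$ never restarts at a reunion endpoint: reunions before $T_{\rm ini}+T_1$ are handled by the isomorphism $\widetilde{\mathcal T}_{\rm ini}\to\mathcal T_{\rm ini}$, and reunions at times $b\ge T_{\rm ini}$ by the containment $\mathcal V^+_b\subset B_{2L_n-1}(W_b^{\widetilde\Pi})$ together with the tree property of $B_{2L_n}(W_b^{\Pi^n})$.

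On the bookkeeping you flag:
\begin{itemize}
\item The slice bound $\mathcal V_s\subset B_{4x_n+4y_n+1}(W_s^{\widetilde\Pi})$ follows from $\mathcal E_{\rm tube}$ and $\dist(o,W_t^{\widetilde\Pi})\le 2t$ alone, since the space-time line moves at speed at most $2$. The $10x_n$ modulus of $W^{\Pi^n}$ is not needed; it bounds distances in $G_n$, not in $\T$.
\item No straddling case arises, because the ball argument applies to every reunion time $b\ge T_{\rm ini}$ regardless of $a$.
\item At a reunion, $\widetilde\gamma(b_k)$ may equal $\widetilde\gamma(b_k-)$ when the spine jumps onto $\gamma$. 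So it is the unique vertex within distance one of $\widetilde\gamma(b_k-)$, not necessarily a neighbour. The ball argument covers this case unchanged.
\end{itemize}
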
 
\begin{proof}
Now it suffices to show that, under $\mathcal{E}_{\rm geom}$, any ordinary $\gamma\in \mathcal C_{0,t}[o_n, W_t^{\Pi^n};\widetilde \Pi^n]$ must have $\widetilde\gamma=\hat\gamma$, since then we must have $\widetilde\gamma(t)=\hat\gamma(t)=W_t^{\widetilde\Pi}$.

In words, we want to show that $\hat\gamma$ never restarts. 
Since that $\gamma$ has no initial gap, for any $0\le s\le T_{\rm ini}$ we must have $\widetilde\gamma(s)\in \widetilde{\mathcal T}_{\rm ini}$. As $\cP$ is a bijection on $\widetilde{\mathcal T}_{\rm ini}$,  $\hat\gamma$ cannot restart in $[0, T_{\rm ini}]$.

We next consider time in $[T_{\rm ini}, t]$. Note that any restarting of $\hat\gamma$ must happen at the reunion endpoint $b$ of some gap $[a, b]$.
This gap must be good short since that $\gamma$ is ordinary, so we have $\hat\gamma(b-), \hat\gamma(b)\in \mathcal V^+_b$.
However, we claim that under the event $\mathcal{E}_{\rm geom}$, $\cP$ is a bijection on $\mathcal R(\mathcal V^+_s)$ for any $T_{\rm ini}\leq s\leq t$. In particular, this implies that $\hat\gamma$ cannot restart at $b$, and the conclusion follows.

We next prove the claim.
Under the event $\mathcal{E}_{\rm tube}\cap \{\dist(o,W_t^{\widetilde\Pi})\le2t\}$, for any $0\le s \le t$, we have $\mathcal V_s\subset B_{L_n-1}(W_s^{\widetilde\Pi})$, thus $\mathcal V^+_s\subset B_{2L_n-1}(W_s^{\widetilde\Pi})$.
Since that under the event $\mathcal{E}_{\rm geom}$,
$B_{2L_n}(W_s^{\Pi^n}))$ is a tree for all $T_{\rm ini}\leq s\leq t$, we must have that $\cP$ is a bijection on $\mathcal R(\mathcal V^+_s)$.
\end{proof}

For all $\gamma$ with only short gaps, now that we have eliminated ordinary $\gamma$ under $\mathcal E_{\rm geom}$, it remains to bound $X_t^{\rm ini}$ and $X_t^{\rm bsh}$.
\begin{lemma}\label{lem:short-gaps}
We have
$\P[X_t^{\rm ini}+X_t^{\rm bsh}\leq 0.2X_t^*(\Pi^n)]>1-o(1)$.
\end{lemma}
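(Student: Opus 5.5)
The plan is to use the same device as in \Cref{prop:tree-pullback} and \Cref{lem:separated-branches}. We apply \Cref{lem:sb-markov} conditionally on $\Pi^n$, taking $p(\cdot)=w_{0,t}(o_n,\cdot;\Pi^n)$ and $q(v)=p(v)-X_t^{\rm ini}(v)-X_t^{\rm bsh}(v)$. Here $X_t^{\bullet}(v)$ denotes the class-$\bullet$ weight of paths ending at $v$ (rather than at $W_t^{\Pi^n}$). The point is that the class of $\gamma$ depends on the spine $W^{\Pi^n}$, so the endpoint cannot simply be replaced by a fixed $v$. I will handle this by the path-switching identity: conditionally on $\Pi^n$, the spine and an independent path $\gamma$ sampled from the polymer measure are exchangeable. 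Thus
\[
\E\Big[\frac{X_t^{\rm ini}+X_t^{\rm bsh}}{X_t^*(\Pi^n)}\Big]
=\P\big[\gamma' \text{ is initial or bad short relative to } W^{\Pi^n}\big],
\]
where $\gamma'$ is drawn from the polymer measure conditioned to end at $W_t^{\Pi^n}$. Equivalently, it is the probability that, for two conditionally independent walks $W^1=W^{\Pi^n}$ and $W^2$ with $W^2_t=W^1_t$ reweighted, the gap structure of $W^2$ relative to $W^1$ has an initial or bad-short gap. By Markov's inequality at level $0.2$, it then suffices to show that this expectation is $o(1)$.

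\textbf{Initial gaps.} A path with an initial gap leaves $\mathcal T_{\rm ini}$ before time $T_{\rm ini}+T_1$. Since $\gamma'$ is itself distributed, after size-biasing, as a walk generated by $\Pi^n$, this is bounded by \eqref{eq:WsPi}, applied with the roles of the spine and $\gamma'$ exchanged. It gives $Ce^{-cK_n}$, up to the $o(1)$ cost of $\mathcal E_{\rm geom}^c$ from \eqref{eq:intrinsic-geometry-probability}.

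\textbf{Bad short gaps.} Here, on a gap $[a,b)$ with $b-a\le T_1$ that is not initial, the piecewise lift $\hat\gamma$ leaves $\mathcal V^+$. Since $\hat\gamma$ restarts on the spine lift $W^{\widetilde\Pi}$, which lies in $\mathcal V$ on $\mathcal E_{\rm tube}$, the excursion must reach distance at least $L_n$ from the spine within time $T_1=L_n/10$. I would bound this as follows. Either the lifted walk $\hat\gamma$ makes at least $L_n$ jumps within a time interval of length $T_1$, which under the annealed rate-$1/2$ jump law has probability $e^{-cL_n}$ per unit window; a union bound over $O(t)$ windows still gives $o(1)$. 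Or $\gamma'$ deviates from its own tube; this is controlled by \eqref{eq:strone} and \Cref{lem:tube}, applied to $\gamma'$ as a walk, together with the fact that both tubes have width $O(x_n+y_n)=o(L_n)$ around the common endpoint.

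The main obstacle is justifying the exchange step: the statement that, after averaging, an independent polymer path to the same endpoint behaves like an annealed walk to which the tube and tree estimates apply. I would make it rigorous by writing $\E\sum_v X_t^{\rm bsh}(v)$ with the spine as a second, conditionally independent walk, and then dropping the endpoint-matching constraint to obtain an upper bound. I would then use the symmetric role of the two walks, so that each event is bounded by a single-walk probability from \Cref{ssec:setupprees}. Combining these bounds, the total expectation is $o(1)$, and Markov's inequality yields $\P[X_t^{\rm ini}+X_t^{\rm bsh}>0.2X_t^*(\Pi^n)]=o(1)$.
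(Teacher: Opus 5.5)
Your proposal is correct and is essentially the paper's argument. On a high-probability spine event, initial and bad-short paths are contained in events depending only on the path itself: leaving $\mathcal T_{\rm ini}$ before time $T_{\rm ini}+T_1$, or its lift being displaced by at least $L_n/2$ within a time window of length $T_1$. Size-biasing (\Cref{lem:sb-markov} in the paper, your equivalent exchangeable-pair identity here) then converts these into the single-walk bound \eqref{eq:WsPi} and Poisson displacement estimates.

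Two minor points. First, the jump threshold should be $L_n/2$ rather than $L_n$, because the slice $\mathcal V_s$ may drift by about $2T_1$ from $\mathcal V_a$; once you use $L_n/2$, your tube-deviation alternative is superfluous. Second, your closing idea of dropping the endpoint-matching constraint would not give an upper bound, since the matched law carries the factor $1/X_t^*(\Pi^n)\ge 1$; it is not needed, because your exchangeability identity is already exact.
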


\begin{proof}
For each $v\in V_n$, let $X_t^{\rm ini}(v)$  and $X_t^{\rm bsh}(v)$ be the sum of $w(\gamma; \Pi^n)$ for $\gamma\in \mathcal C_{0,t}[o_n, v; \Pi^n]$, satisfying that $\gamma(s)\not\in\mathcal T_{\rm ini}$ for some $0\le s\le T_{\rm ini}+T_1$ , or that $\dist(\widetilde\gamma(s), \widetilde\gamma(r))\ge L_n/2$ for some $0\le s<r\le t$, $r-s\le T_1$, respectively.
By Lemma \ref{lem:sb-markov} we have
\[
        \P[ X_t^{\rm ini}\ge 0.1 X_t^*(\Pi^n) ]
\le 10 \E \Big[\sum_{v\in V_n} X_t^{\rm ini}(v)\Big] \\
\le 10\P[W_s^{\Pi^n} \not\in  \mathcal T_{\rm ini},\, \exists\, 0\le s\le T_{\rm ini}+T_1]
=o(1),\]
where the last equality is by \eqref{eq:WsPi}.

We next bound $X_t^{\rm bsh}$.
For any $\gamma$ in the bad short class, there exists a gap $[a,b)$ with $b-a<T_1$,
such  that its piecewise lift $\hat{\gamma}$ exits $\mathcal V^+$ at some time $s\in [a,b)$.
On the event $\mathcal{E}_{\rm geom}$, we have $\hat\gamma(a)\in\mathcal V_a$, and $\dist(\mathcal V_a, \mathcal V_s)\leq 2(s-a)\leq 2T_1$. These imply that $\dist(\hat{\gamma}(a), \mathcal V_s)\leq 2T_1+10(x_n+y_n)\le L_n/2$.
On the other hand, since $\hat\gamma(s)\not\in\mathcal V_+$, from the  definition of $\mathcal V^+$ we have that $\dist (\hat{\gamma}(s-), \mathcal V_s)\geq L_n$.
Thus we have $\dist(\widetilde\gamma(s-), \widetilde\gamma(a))=\dist(\hat\gamma(s-), \hat\gamma(a))\ge L_n/2$.
Now using Lemma \ref{lem:sb-markov} we have
\begin{multline*}
        \P[ X_t^{\rm bsh}\ge 0.1 X_t^*(\Pi^n) ]
\le 10 \E \Big[\sum_{v\in V_n} X_t^{\rm bsh}(v)\Big]+\P[\mathcal{E}_{\rm geom}^c] \\
\le 10\P\big[ \dist(W_{s_1}^{\widetilde\Pi}, W_{s_2}^{\widetilde\Pi})\ge L_n/2, \, \exists\, 0\le s_1 < s_2\le t,\, s_2-s_1<T_1\big]+\P[\mathcal{E}_{\rm geom}^c]
=o(1),
\end{multline*}
where the last equality is by Poisson estimates.
\end{proof}

\subsubsection{Medium gaps}\label{sssec:medium_gaps}
Following the above stated strategy of comparing off-spine and total weights, we start with the following weight lower bound.
\begin{lemma}
\label{lem:bridge-lower}
The following holds true with $1-o(1)$ probability:
\begin{equation}\label{eq:medium_weight_lb}
  \forall\,\gamma\in \mathcal C_{0,t}[o_n, W_t^{\Pi^n}; \Pi^n] \mbox{ and its medium gap }[a,b),\quad w_{a-,b}^{\Pi^n} (W_{a-}^{\Pi^n}, W_b^{\Pi^n}) > e^{-\kappa_1 (b-a)-\sqrt{\log n}}.
\end{equation}
\end{lemma}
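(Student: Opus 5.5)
The spine quantity $w_{a-,b}^{\Pi^n}(W_{a-}^{\Pi^n},W_b^{\Pi^n})$ depends on the gap only through its endpoints $a,b$, and $a$ must be a spine-incident ring time. So I will prove the bound uniformly over a discrete family of spine segments and then read off the lemma. Concretely, I will discretize: for integers $j<k$ in $[0,t]$ with $k-j\in[T_1-2,T_2+2]$, I will show that
\[
w_{j,k}^{\Pi^n}(W_j^{\Pi^n},W_k^{\Pi^n})>e^{-\kappa_1'(k-j)-\sqrt{\log n}/2},
\]
with some $\kappa_1'\in(\kappa_*,\kappa_1)$, simultaneously for all such pairs, with probability $1-o(1)$. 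There are $O(\log(n)^2)$ pairs, so a union bound requires a per-pair failure probability of $o(\log(n)^{-2})$.

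\textbf{Step 1 (one segment on the finite graph).} Fix a pair $(j,k)$ and set $h=k-j\ge T_1-2\asymp \log n$. The restarted spine segment $w_{j,k}(W_j,W_k)$ on $\Pi^n$ should behave like $X_h^*$ on the tree. I will use the tree-to-pullback comparison of \Cref{ssec:independent_pullback} (\Cref{prop:tree-pullback}), applied with root $W_j^{\Pi^n}$. By the Markov property of the spine and \eqref{eq:B2LWs}, the ball $B_{2L_n}(W_j)$ is a tree with high probability, so a version of that coupling started at $W_j$ applies. Together with the trivial inequality $X^*(\widetilde\Pi)\le X^*(\Pi^n)$, this gives
\[
w_{j,k}^{\Pi^n}(W_j,W_k)\ge w_{j,k}^{\widetilde\Pi}(\widetilde W_j,\widetilde W_k),
\]
and the right-hand side is at least half its tree counterpart outside an event of probability $Ce^{-c(\log n)^{0.02}}$. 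The tree counterpart is then handled by \eqref{eq:typical-weight} with $\eta=\kappa_1'-\kappa_*$: it exceeds $e^{-\kappa_1' h}$ except on an event of probability $Ce^{-ch}=n^{-c}$. The difficulty is that $e^{-c(\log n)^{0.02}}$ is not $o(\log(n)^{-2})$. To fix this, I will avoid the full comparison and only use the tube-restricted lower bound, since a lower bound is all that is needed. Any tree path confined to a tube whose projection is injective projects to a $\Pi^n$ path of the same weight. Failure of confinement is controlled by \Cref{lem:tube} with a wider tube ($x=h^{0.6}$, $y=C\log\log n$), and \Cref{lem:sb-markov} then gives a failure probability of order $e^{-ch^{0.2}}+h(d-1)^{-y/2}$, which is polylogarithmically small for large $C$. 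Injectivity of the tube's projection needs $B_{2L_n}$ around the spine to be a tree, which holds up to an $e^{-cL_n}$ error by \eqref{eq:B2LWs}. The slack $\sqrt{\log n}/2$ absorbs the loss factors of $2$.

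\textbf{Step 2 (from the grid to actual gaps).} For a medium gap $[a,b)$, take $j=\lceil a\rceil$ and $k=\lfloor b\rfloor$. Then
\[
w_{a-,b}(W_{a-},W_b)\ge w_{a-,j}(W_{a-},W_j)\,w_{j,k}(W_j,W_k)\,w_{k,b}(W_k,W_b),
\]
because the spine itself is a compatible path. Each end factor is at least $2^{-N_{\rm sp}}$ over an interval of length at most $1$, and this is at least $2^{-m_n^{\rm pad}}$ on $\mathcal E_{\rm geom}$ (here \eqref{eq:intrinsic-geometry-probability} is used). The resulting loss $e^{-2m_n^{\rm pad}\log 2}$ is $e^{o(\sqrt{\log n})}$, and the difference $(\kappa_1-\kappa_1')(b-a)\ge 0$ supplies additional room. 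Combining this with the bound from Step 1 proves \eqref{eq:medium_weight_lb}.

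\textbf{Main obstacle.} The hard part is the quantitative union bound in Step 1: obtaining a per-segment failure probability that beats $\log(n)^2$, uniformly over starting points $W_j$ that are not a fixed root. I would first try the tube-plus-injectivity argument above. If that falls short, I would instead take a union over only $O(1/\varrho)$ coarse segment lengths and exploit monotonicity through the composition law.
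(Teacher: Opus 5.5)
This is essentially the paper's proof: discretize to integer pairs, lower bound the spine weight by $w^{\widetilde\Pi}\le w^{\Pi^n}$, compare with the tree via a re-rooted \Cref{prop:tree-pullback} and \eqref{eq:typical-weight}, take a union bound over $O((\log n)^2)$ pairs, and pad the two ends by $2^{-m_n^{\rm pad}}$ on $\mathcal E_{\rm geom}$. Your ``main obstacle'' is not real: since $(\log n)^{0.02}\gg\log\log n$, we have $e^{-c(\log n)^{0.02}}=o((\log n)^{-k})$ for every fixed $k$, so the error bound in \Cref{prop:tree-pullback} already survives the union bound and the wider-tube detour is unnecessary. One correction to your Step 1: \eqref{eq:B2LWs} only controls $s\ge T_{\rm ini}$, while medium gaps may start earlier. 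For small $j$ you should instead use that $B_{K_n/2}(W_s^{\Pi^n})$ is a tree on $\mathcal E_{\rm geom}$, and restrict early tube slices to the initial tree component, which is what the paper's re-rooted comparison does.
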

\begin{proof}
We first show that, uniformly over all pairs
$s, r\in \Z\cap [0, t]$, with $r-s\ge T_1-2$,
\begin{equation} \label{eq:spine_poly_lowerbd}
  \P\big[
    \mathcal{E}_{\rm geom}\cap\big\{w_{s,r}^{\widetilde\Pi}(W_s^{\widetilde\Pi},W_r^{\widetilde\Pi})
    <
    e^{-\kappa_1(r-s)}\big\}
  \big]
  <
  Ce^{-c(r-s)}+Ce^{-c(\log(n))^{0.02}}.
\end{equation}
For this, we note that on $\mathcal{E}_{\rm geom}$, $B_{K_n/2}(W^{\Pi^n}_s)$ is a tree for all $0\le s\leq t$. 
Conditional on that $B_{K_n/2}(W^{\Pi^n}_s)$ is a tree, by \Cref{prop:tree-pullback} (which can be easily adapted for the current application, with a smaller radius of tree and time interval), we can couple $w_{s,r}^{\widetilde\Pi}(W_s^{\widetilde\Pi},W_r^{\widetilde\Pi})$ with $X_{r-s}^*(\Pi^\T)$, such that their ratio is in $(1/2, 2)$, with probability $>1-Ce^{-c(\log(n))^{0.02}}$.
Then by \eqref{eq:typical-weight} and $\kappa_1>\kappa_*$, we get \eqref{eq:spine_poly_lowerbd}.

By taking a union bound of \eqref{eq:spine_poly_lowerbd}, using \eqref{eq:intrinsic-geometry-probability}, and noting that $w_{s,r}^{\widetilde\Pi}(W_s^{\widetilde\Pi},W_r^{\widetilde\Pi})\le w_{s,r}^{\Pi^n}(W_s^{\Pi^n},W_r^{\Pi^n})$, we have 
\[
  \P\bigg[
    \mathcal{E}_{\rm geom}\cap\bigcap_{s,r\in\Z\cap [0, t], r-s\ge T_1-2}\big\{w_{s,r}^{\Pi^n}(W_s^{\Pi^n},W_r^{\Pi^n})
    \ge 
    e^{-\kappa_1(r-s)}\big\}
  \bigg] = 1-o(1).
\]
For any $\gamma\in \mathcal C_{0,t}[o_n, W_t^{\Pi^n}; \Pi^n]$ and its medium gap $[a, b]$, take $s=\lceil a\rceil$ and $r=\lfloor b\rfloor$.
On $\mathcal{E}_{\rm geom}$, $N_{\rm sp}(I)\leq m_n^{\rm pad}$ for each  interval $I\subset [0,t]$ (with integer endpoints) of length $2$. This implies that  
$$
w^{\Pi^n}_{a, s}(W_a^{\Pi^n}, W_{s}^{\Pi^n}) \wedge 
w^{\Pi^n}_{r, b }(W_{r}^{\Pi^n}, W_{b }^{\Pi^n})
\geq 2^{-m_n^{\rm pad}}> e^{-\sqrt{\log n}/2}.
$$ Then (by \eqref{eq:chapman-kolmogorov})  
\begin{equation*}
\begin{split}
  w_{a-,b}^{\Pi^n}(W_{a-}^{\Pi^n},W_b^{\Pi^n})
&  \ge  w_{a-,s}^{\Pi^n}(W_{a-}^{\Pi^n},W_{s}^{\Pi^n})
 w_{s,r  }^{\Pi^n}(W_{s}^{\Pi^n},W_{r}^{\Pi^n})
   w_{r,b  }^{\Pi^n}(W_{r}^{\Pi^n}, W_b^{\Pi^n})\\
&  > e^{-\sqrt{\log n}}
  w_{s,r}^{\Pi^n}
  (W_{s}^{\Pi^n},W_{r}^{\Pi^n}).
  \end{split}
\end{equation*}
Thus the conclusion follows.
\end{proof}

We next upper bound the off-spine weight $D_{a,b}$ for any medium gap, using random walk transition probabilities.
For this, we introduce the notion of \emph{padded bubble kernel}.  Consider
\[
  V_n^{2,\rm off}:=\{(v,u)\in V_n^2: v\ne u\}.
\]
The \emph{split kernel} $\mathsf S: V_n\times V_n^{2,\rm off}\to\R_{\ge 0}$ and \emph{reunion kernel} $\mathsf C: V_n^{2,\rm off}\times V_n\to\R_{\ge 0}$ are defined as
\[
\begin{split}
  \mathsf S(u, (u,v))=\mathsf S(u, (v,u))&=\frac1{4d},
    \quad u\sim v,\\
  \mathsf C((u,v), u)=\mathsf C((u,v), v)&=\frac1{4d},
    \quad u\sim v,
\end{split}
\]
and all other entries vanish.
For any $h, s>0$, we define the restricted transition kernel as
\[
  \mathsf U_s^{(h)}(u, v)
  :=\P\big[W^u_s=v,\, |\mathcal R(W^u|_{[0,s]}) \cap \Pi^n|\le h\big],
\] 
where $W^u$ is a random walk generated by $\Pi^n$ on time interval $[0, \infty)$, starting from $u$.
Also, let  
$\mathsf Q_s$ be the killed transition kernel of two walks:
$$
\mathsf Q_s((u_1,u_2), (v_1, v_2)):=\P\big[ W^{u_1}_s=v_1, W^{u_2}_s=v_2, W^{u_1}_r\neq W^{u_2}_r, \, \forall \,0\leq r\leq s\big],
$$
where $W^{u_1}$ and $W^{u_2}$ are random walks generated by $\Pi^n$ on the time interval $[0, \infty)$, starting from $u_1$ and $u_2$ respectively, and they are independent conditional on $\Pi^n$.

For real numbers $s_1,s_2,s_3>0$, define the padded bubble kernel
\begin{equation}\label{eq:padded-bubble-definition}
  \mathcal J_{s_1,s_2,s_3}^{\rm pad}
  :=\mathsf U_{s_1}^{(m_n^{\rm pad})}\mathsf S
    \mathsf Q_{s_2}\mathsf C
    \mathsf U_{s_3}^{(m_n^{\rm pad})}.
\end{equation}
In words, $\mathcal J_{s_1,s_2,s_3}^{\rm pad}$ is
the expected weight of one \emph{padded bubble}.
Also recall that $p^n_s$ denotes the time $s$ transition probability for a random walk on $G_n$, with edge jump rate $1/(2d)$.

\begin{lemma}\label{lem:offspine-first-moment}
For $0\le s_1,s_3\le 1$ and $T_1< s_2\le T_2$,
we have
\begin{equation}\label{eq:weighted-bridge-bubble}
  \max_{u\in V_n}\sum_{v\in V_n}
  \frac{\mathcal J_{s_1,s_2,s_3}^{\rm pad}(u,v)}
       {p_{s_1+s_2+s_3+2}^n(u,v)}
  <  (\log(n))^C.
\end{equation}
Then with probability $>1-o(1)$ the following is true: for any $\gamma\in \mathcal C_{0,t}[o_n, W_t^{\Pi^n}; \Pi^n]$ and its medium gap $[a,b)$,
\begin{equation}\label{eq:dab_small_p}
      D_{a,b}
  <
  n^{o(1)}
  p_{\lceil b\rceil \wedge t-\lfloor a\rfloor+2}^n
  (W_{\lfloor a\rfloor}^{\Pi^n},W_{\lceil b\rceil \wedge t}^{\Pi^n}).
\end{equation}
\end{lemma}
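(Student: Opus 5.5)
The plan has two parts: first the deterministic kernel bound \eqref{eq:weighted-bridge-bubble}, then the pathwise bound \eqref{eq:dab_small_p} obtained from it by a first-moment argument.

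\textbf{The kernel bound.} The key observation is that the two-walk killed kernel $\mathsf Q_{s_2}$ is dominated by the product of two independent annealed walks. By the same splitting of $\Pi^n$ into two independent rate-$1/(2d)$ fields as in the proof of \Cref{lem:graphical-nonconnection}, one has $\mathsf Q_{s_2}((u_1,u_2),(v_1,v_2))\le p^n_{s_2}(u_1,v_1)p^n_{s_2}(u_2,v_2)$. Also $\mathsf U^{(h)}_s\le p^n_s$ entrywise. Hence $\mathcal J^{\rm pad}$ is bounded by a sum of four terms, each of the form
\[
  \tfrac{1}{(4d)^2}\sum_{u'\sim u''}\ \sum_{v'\sim v''} p^n_{s_1}(u,u')\,p^n_{s_2}(u',v')\,p^n_{s_2}(u'',v'')\,p^n_{s_3}(v'',v).
\]
I would sum over $v$ first, bounding $p^n_{s_3}(v'',v)/p^n_{s_1+s_2+s_3+2}(u,v)$. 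Using one unit of time to make a prescribed jump (probability bounded below), one gets
\[
  p^n_{s_1+s_2+s_3+2}(u,v)\gtrsim p^n_{s_1}(u,u')\,p^n_{s_2}(u',v')\,p^n_{s_3}(v'',v).
\]
The troublesome factor is then $p^n_{s_2}(u'',v'')$, which is summable over $v''$ to $1$. This leaves $\sum_v$ over a ratio that could be large where $p^n_{s_1}(u,u')$ is tiny. To handle this, I would sum $u'$ against $p^n_{s_1}(u,u')$ only in the numerator, and use $s_1,s_3\le1$: the jump counts are bounded by the padding $m_n^{\rm pad}$, so all relevant vertices lie within distance $m_n^{\rm pad}$. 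Ratios of transition probabilities at distance $m$ over time $1$ are at most $C^m m!\le(\log n)^{C}$ by the choice of $m_n^{\rm pad}$. This is exactly why the restricted kernel $\mathsf U^{(m_n^{\rm pad})}$ appears, and it yields the polylogarithmic bound.

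\textbf{The pathwise bound.} For \eqref{eq:dab_small_p}, I would take a union over integer pairs $(\lfloor a\rfloor,\lceil b\rceil)$; there are $O(\log^2 n)$ of them. For fixed integers $s<r$, define the random variable
\[
  \mathcal D_{s,r}:=\sum_{a\in[s,s+1),\,b\in(r-1,r]}D_{a,b},
\]
restricted to spine windows with at most $m_n^{\rm pad}$ incident rings, which holds on $\mathcal E_{\rm geom}$. Write $Z_{s,r}$ for the ratio of $\mathcal D_{s,r}$ to $p^n_{r-s+2}(W^{\Pi^n}_s,W^{\Pi^n}_r)$.

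The expectation of $Z_{s,r}$ is computed by reversing the size-biasing. The spine $W^{\Pi^n}$ sampled with weight $w$, together with the off-spine path, forms a pair of paths whose joint expected weight is a sum over spine and bubble configurations. Between $s$ and the divergence, the spine weight is dominated by $\mathsf U$; the divergence and reunion contribute the factors $\mathsf S,\mathsf C$; during the gap the two paths are disjoint, so the expectation is bounded by $\mathsf Q$ via the splitting argument; after reunion the spine again contributes $\mathsf U$. Integrating over $a,b$ therefore bounds $\E Z_{s,r}$ by the left-hand side of \eqref{eq:weighted-bridge-bubble}, up to a factor $(\log n)^C$ coming from the spine density before $s$ and after $r$.

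Markov's inequality with threshold $n^{\epsilon}$, combined with the union bound over the $O(\log^2 n)$ pairs, then gives $\mathcal D_{s,r}\le n^{\epsilon}p^n(\cdots)$ for all pairs with probability $1-o(1)$. Each individual $D_{a,b}$ is at most $\mathcal D_{\lfloor a\rfloor,\lceil b\rceil}$, which gives \eqref{eq:dab_small_p}; taking $\epsilon\downarrow0$ slowly gives the $n^{o(1)}$ factor.

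\textbf{Main obstacle.} I expect the hardest step to be this first-moment identity. The spine is size-biased by the same field $\Pi^n$, so one must check that conditioning on the spine does not inflate the off-spine weight. This is where disjointness ($\gamma(r)\ne W_r$) together with the Palm description of \Cref{lem:palm-law}, adapted to two disjoint paths, must be used carefully. Clock rings on edges shared by both paths are the delicate point: they are exactly excluded from non-switching pairs, and in $\mathsf Q$ they are handled by splitting each ring between the two walks.
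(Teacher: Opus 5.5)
\textbf{The gap: your bound on $\mathsf Q_{s_2}$.} You claim $\mathsf Q_{s_2}((u_1,u_2),(v_1,v_2))\le p^n_{s_2}(u_1,v_1)p^n_{s_2}(u_2,v_2)$ ``by the same splitting as in \Cref{lem:graphical-nonconnection}''. That splitting argument only computes the expected weight of \emph{non-switchable} pairs. These are pairs that never sit at the two endpoints of a ringing edge; in the split coupling such a ring always makes the walks meet. But $\mathsf Q_s$ only forbids coincidence. It therefore also counts pairs that share a ring on the edge between them and either both stay or both cross, which is a swap. The splitting discards exactly these pairs.

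The ordered bound is in fact false in general. For $x\sim y$ and small $s$, $\mathsf Q_s((x,y),(y,x))\approx s/(4d)$, whereas $p^n_s(x,y)p^n_s(y,x)\approx (s/(2d))^2$.

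The missing input is the one the paper uses. The unordered killed pair is a two-particle symmetric exclusion process with edge rate $1/(2d)$ plus extra killing. The exclusion-versus-independent-walks comparison (negative correlation) then gives \eqref{eq:Qsbd}, which is a sum of products including swapped terms.

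A genuine product bound is indispensable here. A bound controlling only one of the two walks would leave an unweighted sum over the reunion region, of order $n$. Once you use \eqref{eq:Qsbd}, the extra terms cost nothing, since $u',u''$ lie within $m_n^{\rm pad}+1$ of $u$ and $v',v''$ lie within $m_n^{\rm pad}+1$ of $v$.

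This comparison, not the first-moment step, is where shared rings matter. The expected product of weights of two disjoint paths is \emph{exactly} $\mathsf Q$, because conditionally independent walks select a pair of paths with probability $w(\gamma_1)w(\gamma_2)$. So the ``main obstacle'' you flagged is not where the difficulty lies.

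\textbf{The rest follows the paper, with two remarks.} The overall route is the paper's: distance restriction of $\mathsf U^{(m_n^{\rm pad})}$, unit-time transition probabilities at least $(\log n)^{-C}$ within distance $m_n^{\rm pad}$, the first moment of $Z_{s,r}$ on $\mathcal E_{\rm geom}$ given $W_s^{\Pi^n}$, then Markov's inequality and a union bound over $O((\log n)^2)$ pairs. Your extra $(\log n)^C$ ``spine density'' factor is unnecessary but harmless.

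First, do not replace $\mathsf U$ by $p^n$ as in your first display. Doing so discards the distance restriction and leaves the sums over $u'$ and $v$ unweighted, which is the difficulty you then ran into.

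Second, keeping the restriction, the paper converts all four endpoint constraints into factors $(\log n)^C p^n_{s_1+1}(u,\cdot)$ and $(\log n)^C p^n_{s_3+1}(\cdot,v)$. Chapman--Kolmogorov then gives $\mathcal J^{\rm pad}_{s_1,s_2,s_3}(u,v)\le(\log n)^C p^n_{s_1+s_2+s_3+2}(u,v)^2$, and dividing and summing over $v$ is immediate. Your cleaned-up alternative also works: lower-bound $p^n_{s_1+s_2+s_3+2}(u,v)\ge(\log n)^{-C}p^n_{s_2}(u',v')$ through one walk, then sum the other walk's factor and the $\mathsf U$ factors as sub-probabilities.
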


\begin{proof}
We first prove \eqref{eq:weighted-bridge-bubble}.
Replace $\mathsf Q$ inside $\mathcal J^{\rm pad}_{s_1,s_2,s_3}$ to an unordered version: let $ V_n^{2, \rm un}:=\{\{u,v\}\subset V_n:u\ne v\}$, and for any $s>0$,
\[
  \mathsf Q_s^{\rm un}(\{u_1,u_2\},\{v_1,v_2\})
  :=\mathsf Q_s((u_1,u_2),(v_1,v_2))+\mathsf Q_s((u_1,u_2),(v_2,v_1)).
\]
By restricting to the survival event, this is bounded by the transition probability of a two-particle symmetric exclusion process  with edge rate $1/(2d)$.
Further using the comparison inequality of symmetric exclusion processes with  independent random walks (see e.g., \cite[Corollary 1.9 of Chapter VIII]{Liggett85}),
we conclude that 
\begin{equation}\label{eq:Qsbd}
    \mathsf Q_{s}((u_1,u_2),(v_1,v_2)) \le \mathsf Q_s^{\rm un}(\{u_1,u_2\},\{v_1,v_2\})
    \leq   \sum_{i_1,i_2 \in \{1,2\}} 
    p^n_{s}(u_{i_1},v_{1})p^n_{s}(u_{i_2},v_{2}) .
\end{equation}
For the restricted transition kernel 
$\mathsf U_{s_1}^{(m_n^{\rm pad})}(u,v)$,
we note that it equals 0 if $\dist(u,v)>m_n^{\rm pad}$. For $\dist(u,v)\leq m_n^{\rm pad}+3$ and $0\leq s_1 \leq 2$, we have the bound 
\[
  p_{s_1+1}^n(u,v)
  \ge
  e^{-4}
  \frac{1}{(2d)^{m_n^{\rm pad}+3}(m_n^{\rm pad}+3)!} 
  \ge (\log(n))^{-C}.
\]
The same bounds holds for $p_{s_3+1}^n$. Thus $J_{s_1,s_2,s_3}^{\rm pad}(u,v) 
        $ is bounded by
\begin{equation*}
    \begin{split}
       &  \sum_{u_1,v_1:\dist (u,u_1)\vee \dist (v_1,v)\leq m_n^{\rm pad} } \sum_{u_2\sim u_1; v_2\sim v_1} \sum_{i_1,i_2 \in \{1,2\}}  p^n_{s_2}(u_{i_1},v_{1})p^n_{s_2}(u_{i_2},v_{2}) \\
       < & \sum_{u_1,u_2,v_1,v_2\in V_n} (\log(n))^C
       p^n_{s_1+1}(u,u_1)p^n_{s_1+1}(u,u_2) p^n_{s_2}(u_1,v_1)p^n_{s_2}(u_2,v_2)p^n_{s_3+1}(v_1,v)p^n_{s_3+1}(v_2,v)\\
       = & (\log(n))^C p_{s_1+s_2+s_3+2}^n(u,v)^2,
    \end{split}
\end{equation*}
which proves \eqref{eq:weighted-bridge-bubble}.

Next we consider medium gaps. 
For any $s, r\in (\Z\cap [0, t])\cup \{t\}$ with $r-s\ge T_1$, define
\[
  \mathcal G_{s,r}:=\{[a,b):[a,b)\text{ is a medium gap for some }\gamma\in \mathcal C_{0,t}[o_n, W_t^{\Pi^n}; \Pi^n],\
    \lfloor a\rfloor=s,\ \lceil b\rceil \wedge t=r\}
\]
and the normalized sum
\begin{equation}\label{eq:normalized-gap-sum}
  Z_{s,r}:=
  \sum_{[a,b)\in\mathcal G_{s,r}}
  \frac{
    D_{a,b}
  }{
    p_{r-s+2}^n(W_s^{\Pi^n},W_r^{\Pi^n})
  }.
\end{equation}
Under $\mathcal{E}_{\rm geom}$, for any gap $[a,b)\in   \mathcal G_{s,r}$, both $\mathcal R(W^{\Pi^n}|_{[s,a]})\cap \Pi^n$ and $\mathcal R(W^{\Pi^n}|_{[b,r]})\cap \Pi^n$ are bounded by $m_n^{\rm pad}$.
Thus, by conditioning on $W_s^{\Pi^n}$ and $W_r^{\Pi^n}$, we get \[\E\big[Z_{s,r}\mathds{1}[\mathcal{E}_{\rm geom}]\,\big|\, W_s^{\Pi^n}=u\big]
  \le
  \int_{s}^{s+1}\int_{r-1}^r
  \sum_{v\in V_n}   \frac{
    \mathcal J_{a-s,b-a,r-b}^{\rm pad}(u,v)
  }{
    p_{r-s+2}^n(u,v)
  }
  \,\dd b\,\dd a \leq (\log(n))^C,
\]
where we used
\eqref{eq:weighted-bridge-bubble} in the last inequality. 
By using Markov's inequality to bound the upper tail of $Z_{s,r}\mathds{1}[\mathcal{E}_{\rm geom}]$, taking a union bound over all such $s, r$, and applying \eqref{eq:intrinsic-geometry-probability}, the conclusion follows.
\end{proof}

Based on the previous two lemmas, we can now show that the off-spine weight is much smaller than the total weight.
Recall that $G_n$ is assumed to be $\delta$-good.

\begin{lemma}\label{lem:medium-gap}
Outside an event of probability
$<O(\delta)+o(1)$, for every $\gamma\in \mathcal C_{0,t}[o_n, W_t^{\Pi^n}; \Pi^n]$ and its medium gap $[a,b)$, 
\begin{equation}\label{eq:dab_ratio_wab}
      \frac{D_{a,b}}{w_{a-,b}^{\Pi^n}(W_{a-}^{\Pi^n},W_b^{\Pi^n})}
  < e^{-c\log(n)}.
\end{equation}
Consequently, we have $\P[X_t^{\rm med}>e^{-c\log(n)}X_t^*(\Pi^n)]<O(\delta)+o(1)$.
\end{lemma}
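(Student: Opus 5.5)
The proof combines \Cref{lem:bridge-lower} and \Cref{lem:offspine-first-moment} with the entropic decay condition \eqref{eq:entropic-mixing}, which holds because $G_n$ is $\delta$-good. Work on the intersection of the two good events from those lemmas; it has probability $1-o(1)$. On this event, every medium gap $[a,b)$ with $s=\lfloor a\rfloor$ and $r=\lceil b\rceil\wedge t$ satisfies $r-s\in(T_1,T_2+2]$ and
\[
\frac{D_{a,b}}{w_{a-,b}^{\Pi^n}(W_{a-}^{\Pi^n},W_b^{\Pi^n})}
< n^{o(1)}e^{\kappa_1(b-a)+\sqrt{\log n}}\,
p^n_{r-s+2}(W_s^{\Pi^n},W_r^{\Pi^n}).
\]
It therefore suffices to show that, outside probability $O(\delta)+o(1)$, the bound $p^n_{r-s+2}(W_s^{\Pi^n},W_r^{\Pi^n})\le e^{-\kappa_2(r-s+2)}$ holds simultaneously for all integer pairs $s<r$ with $r-s\in(T_1,T_2+2]$.

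Given that, the ratio is at most $n^{o(1)}e^{-(\kappa_2-\kappa_1)(b-a)+O(1)}$. Since $b-a>T_1=L_n/10\asymp c_*\log n$, this is $e^{-c\log n}$, and \eqref{eq:dab_ratio_wab} follows.

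The uniform bound on $p^n$ along the spine comes from the annealed law. $W^{\Pi^n}$ is a continuous-time random walk on $G_n$ with edge rate $1/(2d)$, so for fixed $s$ and $h=r-s+2$, conditioning on $W_s^{\Pi^n}$ and using the Markov property gives
\[
\P\big[p_h^n(W_s,W_{s+h-2})>e^{-\kappa_2 h}\big]\le \max_v \sum_{u} p^n_{h-2}(v,u)\mathds 1[p^n_h(v,u)>e^{-\kappa_2 h}].
\]
The mismatch between times $h-2$ and $h$ is harmless. One route is to evaluate $p^n$ at the walk's actual elapsed time. Alternatively, use $p^n_h(v,u)\ge c\,p^n_{h-2}(v,u)$ (remaining put for two time units has probability bounded below) together with the slack between $\kappa_2$ and $\kappa_3$.

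Either way, the sum is controlled by \eqref{eq:entropic-mixing}. Mass on $\{u:p(v,u)>e^{-\kappa_2 h}\}$ is at most $\sum_u (p-e^{-\kappa_2 h})\vee 0$ plus $e^{-\kappa_2h}$ times the cardinality of that set. This is the step I expect to be the main obstacle. A single application only gives $O(\delta)$ per pair, while there are $O(\log^2 n)$ pairs. To get $O(\delta)$ overall, I would first use the slack between $\kappa_2$ and $\kappa_1$ and lower the threshold to $e^{-\kappa_2' h}$ with $\kappa_1<\kappa_2'<\kappa_2$. Then, on $\{p>e^{-\kappa'_2h}\}$, the excess over $e^{-\kappa_2h}$ is comparable to $p$ itself, and
\[
\sum_u p\,\mathds 1[p>e^{-\kappa_2'h}]\le (1-e^{-(\kappa_2-\kappa_2')h})^{-1}\delta\le 2\delta.
\]
Finally, I would not union bound over pairs; instead I would exploit monotone structure. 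A medium gap whose spine endpoints are atypical forces an atypical-$p$ event at scale $h$, and these events can be grouped by the $O(\log n/\varrho)$ coarse scales $h$ on a grid of spacing $\varrho\log n$ (this is what $\varrho$ and the $+2\varrho\log n$ in \eqref{eq:entropic-mixing} are for). This leaves a bounded number of scales, times the starting points $s$. If that still fails, I would accept an $o(1)$ loss by using $\delta$ small relative to $1/\log^2 n$ only in the final section.

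The consequence for $X_t^{\rm med}$ is then a decomposition at the first medium gap. Every medium-class path $\gamma$ agrees with the spine up to $a-$, takes the divergence edge, stays off-spine until $b$, and then continues arbitrarily. Summing, $X_t^{\rm med}$ is bounded by a sum over such first medium gaps of
\[
w(\text{prefix})\,D_{a,b}\,w_{b,t}(W_b,W_t).
\]
Replacing $D_{a,b}$ by $e^{-c\log n}w_{a-,b}(W_{a-},W_b)$ and using the composition law \eqref{eq:chapman-kolmogorov} gives at most $e^{-c\log n}X_t^*(\Pi^n)$ times the number of distinct $(a,b)$ pairs. On $\mathcal E_{\rm geom}$ that number is at most $(C_{\rm sp}\log n)^2$, since divergence and reunion times are spine-incident Poisson points. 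This is absorbed by reducing $c$.
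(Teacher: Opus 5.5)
Your reduction is the same as the paper's: combining \Cref{lem:bridge-lower} and \Cref{lem:offspine-first-moment}, it suffices to bound $p^n_{r-s+2}(W_s^{\Pi^n},W_r^{\Pi^n})$ by roughly $e^{-\kappa_2(r-s)}$, simultaneously for all integer pairs with $r-s\in(T_1,T_2+2]$. Your threshold adjustment is essentially the paper's factor-$2$ trick, and it gives $O(\delta)$ per pair. The $X_t^{\rm med}$ consequence is also fine once you sum over arbitrary prefixes to $W_{a-}^{\Pi^n}$. A medium-class path need not follow the spine before $a$, because it may have earlier good short gaps.

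The gap is the step you yourself flag, and none of your remedies closes it.
\begin{itemize}
\item Changing the threshold cannot push the per-pair failure probability below $O(\delta)$, since \eqref{eq:entropic-mixing} only ever gives $\delta$.
\item ``Grouping by coarse scales, times the starting points $s$'' still leaves $\asymp\log n$ starting points per scale. A union bound then costs order $\delta\log n$, and you give no mechanism for replacing an integer endpoint by a grid endpoint.
\item Taking $\delta\ll(\log n)^{-2}$ changes the statement, in which $\delta$ is fixed. It is also not supplied by \Cref{prop:entropic-heat-kernel}, which obtains \eqref{eq:entropic-mixing} from TV convergence in probability with no rate.
\end{itemize}

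The missing ingredient is a comparison between integer times and grid times whose failure probability is polynomially small in $n$, rather than of order $\delta$.
\begin{itemize}
\item Take an integer time $s$ and a point $s'\in\varrho\log(n)\Z$ within $\varrho\log n$ of it (similarly for $r$). Then $p^n_{|s-s'|}(W_{s'}^{\Pi^n},W_s^{\Pi^n})\ge n^{-C_{\rm cp}\varrho}$ fails with probability at most $Cn^{-c\varrho}$. This holds because the number of jumps is dominated by $\Poi(\varrho\log(n)/2)$, and $|B_{\varrho\log n}(u)|\,n^{-C_{\rm cp}\varrho}$ is polynomially small since $C_{\rm cp}>10+10\log(d-1)$. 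There are only $O(\log n)$ such pairs, so this costs $o(1)$ in total.
\item Chapman--Kolmogorov then gives $p^n_{r'-s'+2}(W_{s'},W_{r'})\ge n^{-2C_{\rm cp}\varrho}\,p^n_{r-s+2}(W_s,W_r)$ for grid points $s'\le s$ and $r'\ge r$.
\item Consequently \eqref{eq:entropic-mixing} only needs to be invoked at $O(\varrho^{-2})$ grid pairs, for a total cost of $O(\delta)$. This is why it is assumed up to time $T_2+2\varrho\log n+4$.
\item The loss $n^{2C_{\rm cp}\varrho}$ is absorbed by $e^{-(\kappa_2-\kappa_1)T_1}$, using $\varrho<(\kappa_2-\kappa_1)c_*/(80C_{\rm cp})$ from \eqref{eq:coarse-interval-scale}.
\end{itemize}
In short, the events that cost $\delta$ must be confined to a bounded number of pairs. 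The $\log n$ many integer-time pairs must be handled by an event whose failure probability is polynomially small in $n$.
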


\begin{proof}
By Lemmas \ref{lem:bridge-lower} and \ref{lem:offspine-first-moment}, to validate \eqref{eq:dab_ratio_wab} it suffices to show that, outside an event of probability
$<O(\delta)+o(1)$, 
\begin{equation}\label{eq:transprob_wswr}
 \forall\, s, r\in (\Z\cap [0,t])\cup\{t\}\text{ with } r-s\in (T_1, T_2+2],\quad 
    p_{r-s+2}^n(W_{s}^{\Pi^n}, W_{r}^{\Pi^n})\leq e^{-c\log n-\kappa_1 (r-s)}.
\end{equation}
We prove \eqref{eq:transprob_wswr} in two steps.
First, we claim that, with probability $>1-o(1)$,
\begin{equation}\label{eq:coarse_compare}
  p_{r-s}^n(W_s^{\Pi^n},W_r^{\Pi^n})
  \ge n^{-C_{\rm cp}\varrho}
\end{equation}
simultaneously for all $0\le s\le r\le t$ with $r-s\le\varrho\log n$,
and one of $s, r$ is in $(\Z\cap [0,t])\cup\{t\}$, the other one is in $(\varrho\log(n)\Z\cap[0,t])\cup\{t\}$.
Indeed, conditional on $W_s^{\Pi^n}=u$, the endpoint $W_r^{\Pi^n}$ has law
$p_{r-s}^n(u,\cdot)$, and the number of jumps between $s$ and $r$ is
stochastically dominated by a Poisson random variable with mean
$\varrho\log(n)/2$.  Hence
\begin{multline*}
\P\left[
    p_{r-s}^n(u,W_r^{\Pi^n})<n^{-C_{\rm cp}\varrho}
    \,\big|\, W_s^{\Pi^n}=u
  \right]\\ \le
  \P\left[
    \dist(u,W_r^{\Pi^n})>\varrho\log(n)
    \mid W_s^{\Pi^n}=u
  \right]
  +|B_{\varrho\log(n)}(u)|n^{-C_{\rm cp}\varrho}< Cn^{-c\varrho}.    
\end{multline*}
Here we used a Poisson tail bound,
$|B_{\varrho\log(n)}(u)|\le d (d-1)^{\varrho\log(n)-1}$, and
$C_{\rm cp}>10+10\log(d-1)$ from \eqref{eq:coarse-interval-scale}.
There are only $O(\log(n))$ such pairs of $s, r$, so a union bound proves
\eqref{eq:coarse_compare}.

Second, we consider $p_{r-s+2}^n(W_s^{\Pi^n}, W_r^{\Pi^n})$ for $s, r \in (\varrho\log(n)\Z\cap[0,t])\cup\{t\}$ with $T_1<r-s\le T_2+2\varrho\log(n)+2$.
For any $u\in V_n$, we have
\[
\P\big[
    p_{r-s+2}^n(u,W_r^{\Pi^n})
    >2e^{-\kappa_2(r-s+2)}
    \,\big|\, W_s^{\Pi^n}=u
  \big]= \sum_{v\in V_n}
  p_{r-s}^n(u,v)  \mathds{1}[p_{r-s+2}^n(u,v)>2e^{-\kappa_2(r-s+2)}].
\]
Noting that $p_{r-s}^n(u,v)\le e\,p_{r-s+2}^n(u,v)$, this is further bounded by
\[
\sum_{v\in V_n}
  e p_{r-s+2}^n(u,v)  \mathds{1}[p_{r-s+2}^n(u,v)>2e^{-\kappa_2(r-s+2)}] \le \sum_{v\in V_n}
  2e\big(p_{r-s+2}^n(u,v)-e^{-\kappa_2(r-s+2)}\big)\vee 0
  \le 2e\delta,
\]
where the last inequality is by \eqref{eq:entropic-mixing}.
Thus by a union bound over $O(\varrho^{-2})$ many such pairs, we have that,  outside an event of probability $O(\delta)$,
\begin{equation}\label{eq:prs_upperbd}
  p_{r-s+2}^n(W_s^{\Pi^n},W_r^{\Pi^n})
  \le 2e^{-\kappa_2(r-s+2)},
\end{equation}
simultaneously for all $s, r \in (\varrho\log(n)\Z\cap[0,t])\cup\{t\}$ with $T_1<r-s\le T_2+2\varrho\log(n)+2$.

We now finish proving \eqref{eq:transprob_wswr}.
For $s, r$ as in there, we take $s', r'\in (\varrho\log(n)\Z\cap[0,t])\cup\{t\}$ such that 
\[
  0\le s-s',\,r'-r\le\varrho\log n,\qquad
  T_1<r'-s'+2\le T_2+2\varrho\log n+4.
\]
By the semigroup property and \eqref{eq:coarse_compare},
\[
\begin{split}
  p_{r'-s'+2}^n(W_{s'}^{\Pi^n},W_{r'}^{\Pi^n})
  &\ge
  p_{s-s'}^n(W_{s'}^{\Pi^n},W_s^{\Pi^n})
  p_{r-s+2}^n(W_s^{\Pi^n},W_r^{\Pi^n})
  p_{r'-r}^n(W_r^{\Pi^n},W_{r'}^{\Pi^n})\\
  &\ge n^{-2C_{\rm cp}\varrho}
  p_{r-s+2}^n(W_s^{\Pi^n},W_r^{\Pi^n}).
\end{split}
\]
Combining this with \eqref{eq:prs_upperbd} gives
\[
  p_{r-s+2}^n(W_s^{\Pi^n},W_r^{\Pi^n})
  \le 2n^{2C_{\rm cp}\varrho}e^{-\kappa_2(r'-s'+2)}
  < 2e^{2C_{\rm cp}\varrho\log(n)-\kappa_2(r-s)}
  \le e^{-c\log(n)-\kappa_1(r-s)}.
\]
The last inequality follows from $r-s>T_1=(c_*/10+o(1))\log(n)$ and
\eqref{eq:coarse-interval-scale}.
This proves \eqref{eq:transprob_wswr}, and hence \eqref{eq:dab_ratio_wab},
outside an event of probability $O(\delta)+o(1)$.

We now prove the second statement on $X_t^{\rm med}$.
  Denote by $X_t^{\rm med}(a,b)$ the total weights of  paths $\gamma$ with a medium gap $[a,b)$. Then on the event    of  \eqref{eq:dab_ratio_wab}, 
$$
X_t^{\rm med}(a,b)< e^{-c\log n } w^{\Pi^n}_{0,a-}(o_n, W^{\Pi^n}_{a-}) w_{a-,b}^{\Pi^n}(W_{a-}^{\Pi^n},W_b^{\Pi^n})
w_{b,t}^{\Pi^n}(W_{b}^{\Pi^n},W_t^{\Pi^n})\leq e^{-c\log(n)} w^{\Pi^n}_{0,t}(o_n, W^{\Pi^n}_t),
$$
  and hence by further assuming $\mathcal{E}_{\rm geom}$, which implies that $N_{\rm sp}([0,t])\le C_{\rm sp}\log(n)$, we have
$$
X_t^{\rm med}<(C_{\rm cp}\log(n))^2 e^{-c\log(n)}X_t^*(\Pi^n).
$$
Thus the second statement of Lemma \ref{lem:medium-gap} follows.
\end{proof}

\subsubsection{Long gaps and completion of the finite-graph comparison}
\label{sssec:long_gaps}
We now estimate the final piece $X_t^{\rm long}$, and then finish the comparison between $X_t^*(\widetilde \Pi)$ and $X_t^*(\Pi^n)$.

For integers $1\le i<j\le 1+\lfloor t/(\varrho\log(n))\rfloor$ with
$(j-i-1)\varrho\log(n)>T_2^-$, define
\[
  \mathcal L_{i,j}
  :=\sum_{\substack{
    \gamma\in\mathcal C_{0,t}[o_n,W_t^{\Pi^n};\Pi^n]:\\
    \gamma\text{ has a long gap }[a,b),\\
    \lfloor a/(\varrho\log(n))\rfloor=i-1,\,
    \lfloor b/(\varrho\log(n))\rfloor=j-1
  }}w(\gamma;\Pi^n).
\]
We call such pairs $i,j$ admissible.

\begin{lemma}\label{lem:long-gap}
We have
\begin{equation}\label{eq:coarse-long-palm-bound}
  \P\left[
    \mathcal L_{i,j}>\delta^{-2}(\varrho\log(n))^4/n
    \text{ for some admissible }i,j
  \right]
  < C\delta.
\end{equation}
Consequently, we have $\P[X_t^{\rm long}>C\delta^{-2}(\log(n))^4/n]<C\delta$.
\end{lemma}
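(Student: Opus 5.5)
We bound $\E\mathcal L_{i,j}$ directly, at the annealed level, by a two-walk "bubble" computation, and then apply Markov's inequality and a union bound. The key step is the decomposition
\[
\mathcal L_{i,j}\le \sum_{[a,b)}w_{0,a-}(o_n,W_{a-}^{\Pi^n})\,D_{a,b}\,w_{b,t}(W_b^{\Pi^n},W_t^{\Pi^n}),
\]
which follows by cutting a path $\gamma$ at its long gap $[a,b)$. On the spine side we use $w_{0,a-}\le 1$ and $w_{b,t}\le 1$. This alone is too crude, so we keep the factor $w_{b,t}$ inside the Palm expectation, via a normalization like the one in \Cref{lem:offspine-first-moment}. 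Concretely, $\mathcal L_{i,j}$ is evaluated at the random endpoint $W_t^{\Pi^n}$, whose conditional law given $\Pi^n$ is $w_{0,t}(o_n,\cdot)$. We therefore write
\[
\E\mathcal L_{i,j}=\E\sum_{v}w_{0,t}(o_n,v)\,\mathcal L_{i,j}(v),
\]
where $\mathcal L_{i,j}(v)$ is the same sum with endpoint $v$ and spine a path to $v$.

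Expanding both the spine walk and the deviating path, the quantity $\E\mathcal L_{i,j}$ becomes an expectation over two walks generated by the same $\Pi^n$. They coincide up to time $a$ and split at $a$. Over $[a,b)$ they stay disjoint, so the pair is governed by the killed kernel $\mathsf Q_{b-a}$. They reunite at $b$ and then coincide again, all under the doubled weight $w(\gamma)w(\gamma')$. As in the proof of \Cref{lem:offspine-first-moment}, disjointness over the gap means the environment factorizes there. By \eqref{eq:Qsbd}, the middle factor is bounded by a sum of products $p^n_{b-a}(u_{i_1},v_1)\,p^n_{b-a}(u_{i_2},v_2)$, with $u_1\sim u_2$ the divergence edge and $v_1\sim v_2$ the reunion edge.

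The reunion step costs the factor $\mathsf C$ and forces the deviating walk to land next to the spine position. Since $b-a>T_2^-$, the TV-mixing bound \eqref{eq:TV-mixing} gives $p^n_{b-a}(u',v')\le 1/n+\mathrm{excess}(v')$. The excess has total mass at most $\delta$, so by Markov's inequality the endpoint lands on an excess point with probability $O(\delta)$. Off this event, each reunion location has density at most $2/n$.

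It remains to sum over the free parameters: the divergence time $a$ in a $\varrho\log n$ window, the reunion time $b$ in another window, and the $O(d)$ edge choices. Each window contributes $O(\varrho\log n)$, coming from the rate-$O(1)$ ring counts integrated over time. We expect this to yield
\[
\E\big[\mathcal L_{i,j}\mathds 1_{\mathcal E}\big]\lesssim (\varrho\log n)^2/n,
\]
where $\mathcal E$ is a good event with $\P[\mathcal E^c]\le C\delta$, on which the spine endpoints avoid the heavy sets of \eqref{eq:TV-mixing}. A cruder accounting that does not control the density of the reunion point gives the $\delta^{-1}$ factor instead. Markov's inequality at level $\delta^{-2}(\varrho\log n)^4/n$ then gives a failure probability of $\delta^2(\varrho\log n)^{-2}$ per pair. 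There are $O(\varrho^{-2})$ admissible pairs $(i,j)$, so the union bound leaves $C\delta$ in total.

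For the consequence, each long-gap path $\gamma$ is counted in some $\mathcal L_{i,j}$ through its first long gap. Summing over the $O(\varrho^{-2})$ admissible pairs gives $X_t^{\rm long}\le C\delta^{-2}(\log n)^4/n$.

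The main obstacle is the Palm step: conditioning on the spine endpoint size-biases $\Pi^n$, so disjointness has to be exploited carefully to replace the environment along the gap by annealed kernels. We will first try to handle this exactly as in \Cref{lem:graphical-nonconnection}, splitting $\Pi^n$ into two independent halves on the disjoint portions.
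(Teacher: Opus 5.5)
There is a genuine gap. Your argument rests on the bound $\E[\mathcal L_{i,j}\mathds 1_{\mathcal E}]\lesssim(\varrho\log n)^2/n$ for some event $\mathcal E$ with $\P[\mathcal E^c]\le C\delta$. You only ``expect'' this bound and never construct $\mathcal E$, and the mechanism you sketch cannot produce it.

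After \eqref{eq:Qsbd}, the reunion step requires bounding $\sum_{v_1\sim v_2}p^n_{b-a}(u,v_1)\,p^n_{b-a}(u',v_2)$ with $u\sim u'$. This is essentially the return probability $p^n_{2(b-a)+O(1)}(u,u')$. Condition \eqref{eq:TV-mixing} controls only the total excess mass $\sum_v(p^n_s(u,v)-1/n)\vee0$, not atoms. Lifting to the cover gives $p^n\ge p^{\T}$, and the tree return probability satisfies $p^{\T}_{2s}(o,o)=e^{-(1-2\sqrt{d-1}/d)s+O(\log s)}$. For $d=3$ and $s\approx T_2$ this is already $n^{-c}$ with $c<1$. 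So the bound you would obtain this way is not $O((\log n)^C/n)$. The true first moment need not be either: it is dominated by spine trajectories that return near their divergence point, which are rare for the spine but carry a huge bubble weight.

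A lightness event does not repair this at the required cost.
\begin{itemize}
\item Asking that $W_{b-}$ be light for $p^n_{b-a}(W_a,\cdot)$ simultaneously for all pairs of spine-incident marks in the two windows gives only $\P[\mathcal E^c]\lesssim\delta(\varrho\log n)^2$. This comes from a union bound over the $\asymp(\varrho\log n)^2$ mark pairs, and is not $C\delta$.
\item Imposing lightness only at deterministic grid times leaves the cross terms $p^n(u_2,v_1)p^n(u_1,v_2)$ in \eqref{eq:Qsbd} uncontrolled, since they involve the deviating path's position.
\end{itemize}
Your sentence about ``the endpoint'' landing on an excess point with probability $O(\delta)$ has the same problem. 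The deviating path is summed over in $\mathcal L_{i,j}$, so an exceptional set for it is not an event of small probability. For the spine, the excess set moves with $W_a$ and with $b-a$.

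The missing idea is the paper's normalization, which separates the atoms from the mixing input. Let $\mathsf M_{i,j}$ be the sum, over pairs of spine-incident marks in $\mathcal N_i\times\mathcal N_j$, of the four heat kernels $p^n_{r-s}$ between the endpoints of the divergence and reunion edges.
\begin{itemize}
\item $\mathcal L_{i,j}/\mathsf M_{i,j}$ is at most the sum over mark pairs of each gap weight divided by its own $\mathsf M$-term. By \eqref{eq:Qsbd}, each expected bubble is at most a constant times that term. Hence $\E[\mathcal L_{i,j}/\mathsf M_{i,j}]\lesssim(\varrho\log n)^2$ with no good event at all, because heavy atoms cancel between numerator and denominator.
\item Mixing is then needed only for $\mathsf M_{i,j}$, which is a functional of the annealed spine and its incident marks alone. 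Couple $W_{(j-1)\varrho\log n}$ with a uniform vertex, conditionally on the history through $i\varrho\log n$; this fails with probability at most $\delta$ by \eqref{eq:TV-mixing}.
\item On success, stationarity of the uniform law gives $\E[\widehat{\mathsf M}_{i,j}\mid\mathcal N_i]\le4|\mathcal N_i|\varrho\log n/n$. This is an expectation bound and needs no pointwise control of the heat kernel.
\end{itemize}
The two Markov steps each cost a factor $\delta^{-1}$ in the level and $C\delta$ in probability. That is exactly where $\delta^{-2}(\varrho\log n)^4/n$ comes from; your ``cruder accounting'' explanation of $\delta^{-2}$ has no counterpart in a working argument.

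Your final reduction of $X_t^{\rm long}$ to admissible pairs is fine, since $2\varrho\log n<T_2-T_2^-$.
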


\begin{proof}
For $1\le i\le1+\lfloor t/(\varrho\log(n))\rfloor$, let
\[
  \mathcal N_i:=
  \left\{(e,s)\in\mathcal R(W^{\Pi^n})\cap\Pi^n:
    0\le s\le t,\ \lfloor s/(\varrho\log(n))\rfloor=i-1
  \right\}
\] 
be the Poisson marks incident to the planted walk in the $i$-th time interval.
For any $(\{u_1,u_2\},s), (\{v_1,v_2\},r)\in\Pi^n$, set
\[
  \mathsf M(\{u_1,u_2\}, s, \{v_1,v_2\}, r):=
  p^n_{r-s}(u_1,v_1)+p^n_{r-s}(u_2,v_1)
  +p^n_{r-s}(u_1,v_2)+p^n_{r-s}(u_2,v_2),
\]
and define
\begin{equation}\label{eq:coarse-long-mixture-sum}
  \mathsf M_{i,j}:=
  \sum_{(\{u_1,u_2\},s)\in\mathcal N_i}\sum_{(\{v_1,v_2\},r)\in\mathcal N_j}
  \mathsf M(\{u_1,u_2\},s,\{v_1,v_2\},r).
\end{equation}

We first bound $\mathsf M_{i,j}$.  Conditional on the history through
$i\varrho\log(n)$, the mixing bound \eqref{eq:TV-mixing} and
$(j-i-1)\varrho\log(n)>T_2^-$ allow us to couple the planted walk at
$(j-1)\varrho\log(n)$ with a uniform vertex, with mismatch probability
at most $\delta$.  Start a replacement walk from this uniform vertex,
using the same subsequent clocks and choices to stay or cross at each clock ring.  Write
$\widehat{\mathcal N}_j$ for its incident marks in the $j$-th interval
and $\widehat{\mathsf M}_{i,j}$ for the sum obtained by replacing
$\mathcal N_j$ with $\widehat{\mathcal N}_j$.  Then
\begin{equation}\label{eq:cp_long}
  \P[\mathsf M_{i,j}=\widehat{\mathsf M}_{i,j}]\ge1-\delta.
\end{equation}
For the replacement walk, incident marks arrive at rate one and both
endpoints of each ringing edge have uniform marginals.  Hence
\[
  \E[\widehat{\mathsf M}_{i,j}\mid\mathcal N_i]
  \le4|\mathcal N_i|\varrho\log(n)/n.
\]
Since $\E[|\mathcal N_i|]\le\varrho\log(n)$, Markov's inequality and
\eqref{eq:cp_long} give
\[
  \P[\mathsf M_{i,j}>\delta^{-1}(\varrho\log(n))^2/n]<C\delta.
\]

We have (whenever $\mathsf M_{i,j}>0$)
\[
  \frac{\mathcal L_{i,j}}{\mathsf M_{i,j}}
  \le
  \sum_{(\{u_1,u_2\},s)\in\mathcal N_i}\sum_{(\{v_1,v_2\},r)\in\mathcal N_j}
  \frac{\sum_{\gamma\in\mathcal C_{0,t}[o_n,W_t^{\Pi^n};\Pi^n]: [s, r] \text{ is a gap of } \gamma}  w(\gamma; \Pi^n) }
       {\mathsf M(\{u_1,u_2\},s,\{v_1,v_2\},r)}.
\]
Its expectation is bounded by
\[
\E\Bigg[ \sum_{\substack{(\{u_1,u_2\},s), (\{v_1,v_2\},r) \in \Pi^n,\, 0\le s<r\le t,\\  \lfloor s/(\varrho\log(n))\rfloor=i-1, \lfloor r/(\varrho\log(n))\rfloor=j-1  }}  \frac{ \sum_{\substack{v\in V_n, \; \gamma, \gamma'\in \mathcal{C}_{0,t}[o_n, v; \Pi^n],\,\gamma(s')\neq \gamma'(s'),\,\forall s'\in [s,r)\\ \{\gamma(s), \gamma'(s)\}=\{u_1,u_2\}, \{\gamma(r-), \gamma'(r-)\}=\{v_1,v_2\} }} w(\gamma; \Pi^n) w(\gamma'; \Pi^n)  }
       {\mathsf M(\{u_1,u_2\},s,\{v_1,v_2\},r)} \Bigg],
\]
which (using \eqref{eq:Qsbd} from the proof of \Cref{lem:offspine-first-moment}) is further bounded by $(\varrho(\log(n)))^2$.
By Markov's inequality (noting that $\mathsf M_{i,j}=0$ implies $\mathcal L_{i,j}=0$),
\[
  \P\left[
    \mathcal L_{i,j}>
    \delta^{-1}(\varrho\log(n))^2\mathsf M_{i,j}
  \right]<C\delta.
\]
Combining this with the bound on $\mathsf M_{i,j}$ and taking a union
bound over  $O(\varrho^{-2})$ many admissible pairs proves
\eqref{eq:coarse-long-palm-bound}.

Finally, 
outside an event
of probability $C\delta$,
\[
  X_t^{\rm long}
  \le\sum_{\text{admissible }i,j}\mathcal L_{i,j}
  \le C\delta^{-2}(\log(n))^4/n,
\]
thus the second conclusion follows.
\end{proof}

All four  classes (initial, abd short, medium and long) have now been estimated.  Combing their analysis we get the following comparison of 
$X_t^*(\Pi^n)$ and $X_t^*(\widetilde\Pi)$.
\begin{prop}\label{prop:finite-cover}
For any $\delta$-good graph $G_n$ with root $o_n$, and $t$ satisfying
\eqref{eq:finite-comparison-window}, we have
\begin{equation}\label{eq:2x}
      \P\big[
    X_t^*(\widetilde\Pi) \le X_t^*(\Pi^n)
    \le
    2X_t^*(\widetilde\Pi)+C\delta^{-2}(\log(n))^4/n
  \big]
  \ge1-C\delta-o(1).
\end{equation}
Consequently, there exists a coupling of
$X_t^{*,1}(\Pi^n), X_t^{*,2}(\Pi^n)$ and $X_t^{*,1}(\Pi^{\T}), X_t^{*,2}(\Pi^{\T})$ such that
\begin{equation}\label{eq:2walker_Td_finite_comp}
  \P\left( \frac{X_t^{*,i}(\Pi^{\T}) }{2} \leq X_t^{*,i}(\Pi^n) \leq 4 X_t^{*,i}(\Pi^{\T}) +C\delta^{-2}(\log(n))^4/n,\, i=1,2\right)
    \geq 1-C\delta -o(1).
\end{equation}
\end{prop}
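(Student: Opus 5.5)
The plan is to assemble \eqref{eq:2x} from the four class estimates, working under the natural projection coupling of $\widetilde\Pi$, $\Pi^n$ and the walks. Then transfer to $\Pi^\T$ via \Cref{prop:tree-pullback}.

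\textbf{The lower bound in \eqref{eq:2x}.} The inequality $X_t^*(\widetilde\Pi)\le X_t^*(\Pi^n)$ holds deterministically. Every compatible path for $\widetilde\Pi$ from $o$ to $W_t^{\widetilde\Pi}$ projects injectively to a compatible path for $\Pi^n$ from $o_n$ to $W_t^{\Pi^n}=\cP(W_t^{\widetilde\Pi})$. Its weight is unchanged, since $G_n$ is simple and the incident rings correspond bijectively under the pullback.

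\textbf{The upper bound in \eqref{eq:2x}.} Intersect the following events:
\begin{itemize}
\item $\mathcal E_{\rm geom}$, of probability $1-o(1)$ by \eqref{eq:intrinsic-geometry-probability};
\item the event $X_t^{\rm ini}+X_t^{\rm bsh}\le 0.2X_t^*(\Pi^n)$ of \Cref{lem:short-gaps};
\item the event $X_t^{\rm med}\le e^{-c\log n}X_t^*(\Pi^n)$ of \Cref{lem:medium-gap}, of probability $1-O(\delta)-o(1)$;
\item the event $X_t^{\rm long}\le C\delta^{-2}(\log n)^4/n$ of \Cref{lem:long-gap}, of probability $1-C\delta$.
\end{itemize}
On this intersection, \Cref{lem:gap-class-decomposition} gives
\[
X_t^*(\Pi^n)-X_t^*(\widetilde\Pi)\le (0.2+e^{-c\log n})X_t^*(\Pi^n)+C\delta^{-2}(\log n)^4/n .
\]
Rearranging, with $1/(0.8-o(1))\le 2$ for large $n$, gives $X_t^*(\Pi^n)\le 2X_t^*(\widetilde\Pi)+C\delta^{-2}(\log n)^4/n$ after enlarging $C$. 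This proves \eqref{eq:2x}.

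\textbf{The two-walk statement \eqref{eq:2walker_Td_finite_comp}.} First apply \eqref{eq:2x} to each walk $W^{i,\Pi^n}$, $i=1,2$, under the projection coupling, and take a union bound. This is legitimate because each $W^{i,\Pi^n}$ is marginally a single walk generated by $\Pi^n$, so the one-walk estimates apply verbatim. Next, \Cref{prop:tree-pullback} provides a coupling of $(\Pi^\T,W^{1,\Pi^\T},W^{2,\Pi^\T})$ with $(\widetilde\Pi,W^{1,\widetilde\Pi},W^{2,\widetilde\Pi})$ in which $X_t^{*,i}(\Pi^\T)/X_t^{*,i}(\widetilde\Pi)\in(1/2,2)$ for both $i$, outside probability $o(1)$.

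To glue the two couplings, I would use the gluing lemma: both couplings share the marginal $(\widetilde\Pi,W^{1,\widetilde\Pi},W^{2,\widetilde\Pi})$, so sample this middle object first and then draw the other two conditionally independently. Chaining the inequalities on the intersection of the good events gives
\[
X_t^{*,i}(\Pi^\T)/2\le X_t^{*,i}(\widetilde\Pi)\le X_t^{*,i}(\Pi^n)
\]
and
\[
X_t^{*,i}(\Pi^n)\le 2X_t^{*,i}(\widetilde\Pi)+C\delta^{-2}(\log n)^4/n\le 4X_t^{*,i}(\Pi^\T)+C\delta^{-2}(\log n)^4/n .
\]

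\textbf{Main obstacle.} The only delicate point is making sure the two couplings are compatible, i.e.\ that the gluing is legitimate with the conditional-independence structure of the two walks preserved. Once the shared marginal is identified, this is routine, and the rest is bookkeeping.
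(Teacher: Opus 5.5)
Your proposal is correct and matches the paper's proof: you intersect $\mathcal E_{\rm geom}$ with the good events of \Cref{lem:short-gaps,lem:medium-gap,lem:long-gap}, apply \Cref{lem:gap-class-decomposition}, rearrange, and chain with \Cref{prop:tree-pullback} for the two-walk bound. Your extra details only spell out what the paper leaves implicit, namely the weight-preserving injective projection behind the lower bound, and the gluing, which is immediate because $\Pi^n$ and $W^{i,\Pi^n}=\cP(W^{i,\widetilde\Pi})$ are deterministic functions of the shared object $(\widetilde\Pi,W^{1,\widetilde\Pi},W^{2,\widetilde\Pi})$.
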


\begin{proof}
Combining 
\Cref{lem:gap-class-decomposition,lem:medium-gap,lem:long-gap,lem:short-gaps}, and the estimate \eqref{eq:intrinsic-geometry-probability},
we see that, with probability $>1-C\delta-o(1)$,
\[
  X_t^*(\Pi^n)-X_t^*(\widetilde\Pi)
  \le
  (0.2+o(1))X_t^*(\Pi^n)
  +
  C\delta^{-2}(\log(n))^4/n.
\]
 Rearranging the terms then proves \eqref{eq:2x}.
Then \eqref{eq:2walker_Td_finite_comp} follows by combining \eqref{eq:2x} and \Cref{prop:tree-pullback}.
\end{proof}

\section{Gaussian cutoff profile of Random $d$-regular graph}
\label{sec:profile}
In this section we finish proving the random $d$-regular graph cutoff result stated in Theorem \ref{thm:main-rdreg}, by combining results of the two previous sections.
Then \Cref{thm:main-rdreg} and \Cref{thm:tree-clt} together imply \Cref{thm:intro-tree-clt}.

This is accomplished in two steps. We first prove the Gaussian cutoff profile  under the $\delta$-good condition, and then upgrade this to all starting points, by letting
a random walk run for a short amount of time. Then we show that the good graph hypothesis of \Cref{def:good-graph}, other than (2) involving the root, are satisfied with high probability for a random regular graph. 

\subsection{Gaussian profile from general starting points}\label{ssec:GP_general_reg}
We start with deriving the central limit theorem for two walks on a good finite graph.
Recall the definition of $\tau_n(a)$ from \eqref{eq:profile-time}.
As in the previous section, we let $G_n=(V_n,E_n)$ be a connected simple $d$-regular graph with
$n=|V_n|$, and fix a root $o_n$.
We also continue using the notations $\Pi^n$ and $\Pi^\T$ for rate $1/d$ Poisson point processes on $G_n$ and the infinite $d$-regular tree $\T=(V,E)$, and $(X_t^{*,1}(\Pi^\T), X_t^{*,2}(\Pi^\T))$ and $(X_t^{*,1}(\Pi^n), X_t^{*,2}(\Pi^n))$ for two biased samples from the repeated averaging.

\begin{lemma}
\label{lem:finite-threshold}
Take any $\delta>0$ and $a\in\R$, and $n$ large enough depending on $\delta, a$.
For $G_n$ that is $\delta$-good, and $(\log(n))^5 \le m \le e^{\delta\sqrt{\log(n)}}$, we have
\[
\big|     \P[X_{\tau_n(a)}^{*,1}(\Pi^n)>m/n] -\Phi(-a) \big| < O(\delta) ,
\]
\[
\big|     \P[X_{\tau_n(a)}^{*,1}(\Pi^n)\wedge 
      X_{\tau_n(a)}^{*,2}(\Pi^n)>m/n] -\Phi(-a)^2 \big| < O(\delta) .
\]
\end{lemma}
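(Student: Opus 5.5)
The plan is to transfer the two-walk tree central limit theorem (\Cref{thm:two-walker-clt}) to $G_n$ through the coupling \eqref{eq:2walker_Td_finite_comp} of \Cref{prop:finite-cover}. Set $t=\tau_n(a)$. For $n$ large this lies in the window \eqref{eq:finite-comparison-window}, since $\tau_n(a)=\log(n)/\kappa_*+O(\sqrt{\log n})$ and $\kappa_*<\kappa_{\rm RW}$. On the event of \eqref{eq:2walker_Td_finite_comp}, which has probability at least $1-C\delta-o(1)$, we have for $i=1,2$
\[
  \tfrac12 X_t^{*,i}(\Pi^\T)\le X_t^{*,i}(\Pi^n)\le 4X_t^{*,i}(\Pi^\T)+C\delta^{-2}(\log(n))^4/n .
\]
This sandwich gives the two inclusions we need. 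First, $\{X_t^{*,i}(\Pi^\T)>2m/n\}$ is contained in $\{X_t^{*,i}(\Pi^n)>m/n\}$. Second, $\{X_t^{*,i}(\Pi^n)>m/n\}$ is contained in $\{X_t^{*,i}(\Pi^\T)>m/(8n)\}$, as long as $C\delta^{-2}(\log n)^4\le m/2$. That condition holds for $n$ large because $m\ge(\log n)^5$.

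It therefore suffices to show that the tree probabilities
\[
  \P\big[X_t^{*,1}(\Pi^\T)>\theta/n\big],\qquad \P\big[X_t^{*,1}(\Pi^\T)\wedge X_t^{*,2}(\Pi^\T)>\theta/n\big]
\]
lie within $o(1)$ of $\Phi(-a)$ and $\Phi(-a)^2$, uniformly over $\theta\in[(\log n)^5/8,\,2e^{\delta\sqrt{\log n}}]$. The event $X_t^{*,i}>\theta/n$ is the same as
\[
  \frac{-\log X_t^{*,i}-\kappa_* t}{\sigma\sqrt t}<\frac{\log n-\log\theta-\kappa_* t}{\sigma\sqrt t}.
\]
By \eqref{eq:profile-time} the right-hand side equals $-a\sqrt{\log n}/\sqrt{\kappa_* t}-\log\theta/(\sigma\sqrt t)$. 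The first term is $-a+o(1)$ because $\kappa_* t=\log n(1+o(1))$. For the second term, $\log\theta\le\delta\sqrt{\log n}+O(1)$ and $\sqrt t\asymp\sqrt{\log n}$, so it is $O(\delta)$; its lower end is $O(\log\log n/\sqrt{\log n})=o(1)$.

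We then apply \Cref{thm:two-walker-clt}, with $\sigma^2>0$ from \Cref{prop:sigma-positive}, together with continuity of the limiting Gaussian CDF. Since convergence in distribution to a continuous law is uniform in the threshold, this gives the one-walk probability as $\Phi(-a+O(\delta))+o(1)=\Phi(-a)+O(\delta)+o(1)$. The limit is jointly $\mathcal N(0,\sigma^2I_2)$, so the two-walk probability is $\Phi(-a)^2+O(\delta)+o(1)$. Because the bounds are uniform in the threshold, they apply both at $\theta=2m$ and at $\theta=m/8$.

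Combining the two inclusions with these tree estimates yields two-sided bounds for the finite-graph probabilities, with total error $C\delta+o(1)$. Absorbing the $o(1)$ into $O(\delta)$ for $n$ large gives both claimed estimates. For the second one, we intersect the sandwich events for $i=1,2$, which \eqref{eq:2walker_Td_finite_comp} supplies simultaneously.

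I expect the main obstacle to be bookkeeping rather than substance: keeping the additive error $C\delta^{-2}(\log n)^4/n$ negligible, which forces the lower bound $m\ge(\log n)^5$, and checking that the thresholds $2m/n$ and $m/(8n)$ produce only $O(\delta)$ shifts on the $\sqrt t$ scale, which forces the upper bound $m\le e^{\delta\sqrt{\log n}}$. Both constraints are exactly what the statement of the lemma allows, so this step should go through.
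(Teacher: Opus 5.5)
Your proposal is correct and follows the paper's proof. Like the paper, you combine the tree two-walk limit \Cref{thm:two-walker-clt}, uniform over thresholds, with the sandwich \eqref{eq:2walker_Td_finite_comp} from \Cref{prop:finite-cover}, using $m\ge(\log n)^5$ to absorb the additive $C\delta^{-2}(\log n)^4/n$ error and $m\le e^{\delta\sqrt{\log n}}$ to keep the threshold shift of order $O(\delta)$ on the $\sqrt t$ scale. One wording slip: at the top of your range of $\theta$ the tree probabilities are within $O(\delta)+o(1)$ of $\Phi(-a)$ and $\Phi(-a)^2$, not within $o(1)$; your computation in fact shows the $O(\delta)+o(1)$ bound, and that is all the argument needs.
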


\begin{proof}
We state the proof for two walks, and the one walk version is analogous.
By Theorem \ref{thm:two-walker-clt}, 
 the  weights $X_{\tau_n(a)}^{*,i}(\Pi^{\T})$ ($i=1,2$) satisfy, uniformly over $1\leq m\leq e^{2\delta \sqrt{\log n}}$,
\[
  \P[X_{\tau_n(a)}^{*,1}(\Pi^{\T})
\wedge        X_{\tau_n(a)}^{*,2}(\Pi^{\T})>m/n]
  =
  \Phi(-a)^2+O(\delta).
\]
By \Cref{prop:finite-cover}, we have the two-sided bounds
\begin{align*}
  \P[X_{\tau_n(a)}^{*,1}(\Pi^n)\wedge  X_{\tau_n(a)}^{*,2}(\Pi^n)>m/(2n)]
  &\ge\Phi(-a)^2-O(\delta),\\
  \P[X_{\tau_n(a)}^{*,1}(\Pi^n)\wedge  X_{\tau_n(a)}^{*,2}(\Pi^n)>4m/n+C\delta^{-2}(\log(n))^4/n]
  &\le\Phi(-a)^2+O(\delta).
\end{align*}  
Thus the conclusion follows.
\end{proof}

We are now ready to convert the central limit theorem for two walks into $L^1$ distance bound, using Proposition  
\ref{prop:threshold-L1}.
Below we let $X_t(v)=w_{0,t}(o_n, v;\Pi^n)$ denote the repeated averaging on $G_n$ generated by $\Pi^n$, starting with all mass at $o_n$.
\begin{prop}\label{prop:root-profile}
For any  $\delta>0$ and $a\in\R$, and $n$ large enough depending on $\delta$ and $a$, and $G_n$ being $\delta$-good, we have
\[
  \P\big[
    \big|
      \norm{X_{\tau_n(a)}-\one/n}_1-2\Phi(-a)
    \big|
    >C\delta^{1/3}
  \big]
  < C\delta^{1/3}.
\]
\end{prop}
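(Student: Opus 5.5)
The plan is to apply \Cref{prop:threshold-L1} twice, feeding it the threshold probabilities from \Cref{lem:finite-threshold}. The time rescaling needs care. In \Cref{sec:general-criterion} the clocks have rate $1$, while here every edge rings at rate $1/d$. So time $s$ in \Cref{prop:threshold-L1} corresponds to time $ds$ here, and the relevant gap is $\lambda_{2,n}/d\ge\lambda_{\rm sp}/d$. Throughout I fix $m=(\log n)^5$ and set $\theta=m$.

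\textbf{Lower bound.} Apply \eqref{eq:X011bd} with initial profile $X_0:=X_{\tau_n(a)}$. The clocks after $\tau_n(a)$ are independent of it, and it is random through $\Pi^n$ on $[0,\tau_n(a)]$. By \eqref{eq:WXsample}, the two independent samples $U_1,U_2$ from $X_{\tau_n(a)}$ have the law of $(W^{1,\Pi^n}_{\tau_n(a)},W^{2,\Pi^n}_{\tau_n(a)})$. Hence $\pi_1=\P[X^{*,1}_{\tau_n(a)}(\Pi^n)>m/n]$ and $\pi_2$ is the joint version. \Cref{lem:finite-threshold} gives $\pi_1=\Phi(-a)+O(\delta)$ and $\pi_2=\Phi(-a)^2+O(\delta)$, so $\pi_2-\pi_1^2=O(\delta)$. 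Choosing $\epsilon=\delta^{1/3}$ gives $\sqrt{\epsilon^{-1}(\pi_2-\pi_1^2)}=O(\delta^{1/3})$. Also $\theta^{-1}=(\log n)^{-5}=o(1)$. Therefore, with probability at least $1-\delta^{1/3}$,
\[
\norm{X_{\tau_n(a)}-\one/n}_1\ge 2\Phi(-a)-C\delta^{1/3}.
\]

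\textbf{Upper bound.} Here I would not apply \eqref{eq:Xp1ubd} at the target time directly. The smoothing time $s$ in \eqref{eq:cleanup_time_def} is of order $\log\theta+\log(1/\epsilon)=O(\log\log n)$, after rescaling by $d/\lambda_{\rm sp}$. This is $o(\sqrt{\log n})$, but it is not zero. So I start earlier: pick $a'<a$ with $\tau_n(a')+s_n=\tau_n(a)$, where $s_n=C\log\log n+C\log(1/\delta)$. Since $\tau_n(a)-\tau_n(a')=(a-a')\sigma\kappa_*^{-3/2}\sqrt{\log n}$, we get $a-a'=O(\log\log n/\sqrt{\log n})\to0$.

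There is a uniformity issue, because \Cref{lem:finite-threshold} is stated for fixed $a$ while $a'$ depends on $n$. I handle it with monotonicity. Since the $L^1$ distance is non-increasing in time, it suffices to work at $a'':=a-\eta$ for a fixed small $\eta$. I apply \eqref{eq:Xp1ubd} at time $\tau_n(a'')$ with $\theta=m$, $\epsilon=\delta^{1/3}$ and $\pi_i$ evaluated at $a''$. For large $n$ the cleanup time $s$ is at most $\tau_n(a)-\tau_n(a'')$. This gives, with probability at least $1-\delta^{1/3}$,
\[
\norm{X_{\tau_n(a)}-\one/n}_1\le 2\Phi(-a+\eta)+C\delta^{1/3}.
\]
Taking $\eta=\delta$, the continuity of $\Phi$ absorbs the shift into $O(\delta)$, with $n$ large depending on $\delta$ and $a$.

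\textbf{Expected obstacle.} The main subtlety is justifying \Cref{prop:threshold-L1}'s hypothesis that the clocks after the restart time are independent of the initial profile. This holds by the Poisson independent-increments property, with \Cref{lem:L2-contraction} applied after conditioning on $\cF_{\tau_n(a'')}$. The bookkeeping needed to combine the two error probabilities into a single $C\delta^{1/3}$ bound is then routine.
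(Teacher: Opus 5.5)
Your proposal is correct and follows the paper's proof: the lower bound is exactly \eqref{eq:X011bd} at time $\tau_n(a)$ with $\theta=(\log n)^5$ and $\epsilon=\delta^{1/3}$. The upper bound applies \eqref{eq:Xp1ubd} from a slightly earlier time and then uses monotonicity of the $L^1$ distance. The only difference is cosmetic: the paper restarts at $\tau_n(a)-(\log\log n)^2$, while you restart at $\tau_n(a-\delta)$, which has the mild advantage that \Cref{lem:finite-threshold} is only ever invoked at a fixed value of $a$.
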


\begin{proof}
We first prove the lower bound
\begin{equation}\label{eq:tvlb}
      \P\big[ 
      \norm{X_{\tau_n(a)}-\one/n}_1-2\Phi(-a)
    <-C\delta^{1/3}
  \big]
  < C\delta^{1/3}.
\end{equation}
We apply Proposition \ref{prop:threshold-L1} with starting common law given by $X_{\tau_n(a)}$ and parameters  $\theta=(\log(n))^5, \epsilon=\delta^{1/3}$.
Denote by $\pi_1,\pi_2$  the
resulting one- and two-walk probabilities, 
$$
\pi_1=\P\big[ X^{*,i}_{\tau_n(a)}>(\log(n))^5/n\big],\quad \pi_2=\P\big[ X^{*,1}_{\tau_n(a)}>(\log(n))^5/n, X^{*,2}_{\tau_n(a)}>(\log(n))^5/n\big].
$$ By \Cref{lem:finite-threshold}, we have $\pi_1=\Phi(-a)+O(\delta)+o(1)$ and $\pi_2=\Phi(-a)^2+O(\delta)+o(1)$.
The bound \eqref{eq:tvlb} now follows from \eqref{eq:X011bd}.
Next we prove the upper tail bound,
\begin{equation}\label{eq:tvub}
      \P\big[ 
      \norm{X_{\tau_n(a)}-\one/n}_1-2\Phi(-a)
    >C\delta^{1/3}
  \big]
  < C\delta^{1/3}.
\end{equation}
Replacing $\tau_n(a)$ by $\tau'_n(a):=\tau_n(a)-(\log \log(n))^2$, and let
\begin{equation*}
    \begin{split}
        \pi'_1&:=\P\big[ X^{*,i}_{\tau'_n(a)}>(\log(n))^5/n\big],\\ 
        \pi'_2&:=\P\big[ X^{*,1}_{\tau'_n(a)}>(\log(n))^5/n, X^{*,2}_{\tau'_n(a)}>(\log(n))^5/n\big].
    \end{split}
\end{equation*}
Under (3) in \Cref{def:good-graph}, the additional running time $s$ in \eqref{eq:cleanup_time_def} is bounded by $O(\log \log(n))$.  Thus \eqref{eq:tvub} follows from \eqref{eq:Xp1ubd} and the non-increasing in $t$ property of $\norm{X_t-\one/n}_1$.
\end{proof}

We next relax the condition  of a tree neighborhood for the initial starting point, i.e., below we instead assume that $G_n=(V_n, E_n)$ is a  connected simple $d$-regular graph with
$n=|V_n|$, satisfying (1), (3), (4) of \Cref{def:good-graph}.
An observation here is that, for $W^{v}$ being the random walk on $G_n$ generated by $\Pi^n$, on time $[0, \infty)$, starting from $v$.
\begin{equation}\label{eq:item2bd}
     \sup_{v\in V_n} \P\big[\tx(B_{K_n}(W_{8K_n}^v)\ge 1\big]
  < Ce^{-cK_n},
\end{equation} 
which can be proved in the same way as \cite[Lemma 3.2]{LS}; see \eqref{eq:root-after-burnin} above.
Based on this observation we next prove the Gaussian cutoff profile for any starting point.
Denote $X^{(v)}_t(u)=w_{0,t}(v, u;\Pi^n)$.

\begin{prop}\label{prop:any-root-profile}
For any  $\delta>0$ and $a\in\R$, and $n$ large enough depending on $\delta$ and $a$, we have
\begin{equation}\label{eq:xtau}
     \max_{v\in V_n}\P\left[
    \abs{
      \norm{X^{(v)}_{\tau_n(a)}-\one/n}_1-2\Phi(-a)
    }
    >C\delta^{1/12}
  \right]
  < C\delta^{1/12}.
\end{equation}
\end{prop}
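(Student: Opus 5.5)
We would reduce to \Cref{prop:root-profile} by a short burn-in. Fix $v\in V_n$ and set $s_0:=8K_n=O((\log n)^{0.49})$. Using the composition law \eqref{eq:chapman-kolmogorov} and linearity \eqref{eq:XWrel}, we write
\[
X^{(v)}_{\tau_n(a)}(\cdot)=\sum_{u\in V_n}X^{(v)}_{s_0}(u)\,w_{s_0,\tau_n(a)}(u,\cdot;\Pi^n).
\]
Here the clocks on $[s_0,\tau_n(a)]$ are independent of $X^{(v)}_{s_0}$. The profile $X^{(v)}_{s_0}$ is the conditional law, given $\Pi^n$, of $W^v_{s_0}$, and the annealed law of $W^v_{s_0}$ is $p^n_{s_0}(v,\cdot)$.

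\textbf{Splitting the burn-in mass.} Let $\mathcal B:=\{u:\tx(B_{K_n}(u))\ge1\}$. By \eqref{eq:item2bd} we have $\E\sum_{u\in\mathcal B}X^{(v)}_{s_0}(u)<Ce^{-cK_n}$, so by Markov the mass of $X^{(v)}_{s_0}$ on $\mathcal B$ is $o(1)$ with probability $1-o(1)$. By the triangle inequality, this part changes $\norm{X^{(v)}_{\tau_n(a)}-\one/n}_1$ by at most $o(1)$. Every $u\notin\mathcal B$ is a root at which $G_n$ is $\delta$-good, since items (1), (3), (4) are global and (2) holds at $u$. The time shift is harmless: $\tau_n(a)-s_0=\tau_n(a')$ with $a'=a-O((\log n)^{-0.01})$, and $\Phi$ is continuous. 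This is uniform in $u$, since the constants in \Cref{prop:root-profile} do not depend on the root.

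\textbf{Main obstacle.} The convex combination $\sum_u X_{s_0}(u)\,w_{s_0,\cdot}(u,\cdot)$ has $L^1$ distance at most the average of the individual distances. That gives the upper bound $2\Phi(-a)+C\delta^{1/3}$ with probability $\ge 1-C\delta^{1/6}$, after a Markov step over $u$. The lower bound does not follow from such averaging, because mixing many starting points could cancel. So we would not average \Cref{prop:root-profile}; instead we rerun its proof through \Cref{prop:threshold-L1} with initial profile $X^{(v)}_{\tau_n(a)}$ directly. That requires one- and two-walk threshold probabilities $\pi_1\approx\Phi(-a)$ and $\pi_2\approx\Phi(-a)^2$ for walks started at $v$. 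A walk from $v$ passes through $W_{s_0}=u$, and
\[
X^{(v)}_{\tau}(W_\tau)\ge X_{s_0}(u)\,w_{s_0,\tau}(u,W_\tau),\qquad X_{s_0}(u)\ge 2^{-\Poi(s_0)}=e^{-O((\log n)^{0.49})}
\]
with high probability by \eqref{eq:stocha}. This factor is $e^{o(\sqrt{\log n})}$, which is absorbed by the window $m\le e^{\delta\sqrt{\log n}}$ in \Cref{lem:finite-threshold}. This gives the lower bound on the threshold probabilities.

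\textbf{Upper bound on the threshold probabilities.} The matching upper bound needs $X^{(v)}_\tau(W_\tau)\le 2\,w_{s_0,\tau}(W_{s_0},W_\tau)+n^{-1}(\log n)^{C}$. This is a comparison of the same kind as \Cref{prop:finite-cover}, where paths diverging from the spine during $[0,s_0]$ play the role of short or initial gaps. We would control them by \Cref{lem:sb-markov} together with the medium and long gap lemmas applied from time $s_0$; this is the step we expect to be technically delicate. For two walks, the two samples at time $s_0$ are conditionally independent, and the argument of \Cref{lem:finite-threshold} goes through. Feeding these threshold estimates into \Cref{prop:threshold-L1} with $\epsilon=\delta^{1/12}$ and collecting the losses ($\delta^{1/3}$, then square-root Markov steps) yields \eqref{eq:xtau}. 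Taking the maximum over $v$ is free, since every bound above is uniform in $v$.
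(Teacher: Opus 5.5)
Your burn-in decomposition, the convexity upper bound, and the one-walk lower bound on the threshold probability all match the paper's proof. The last of these goes through $X^{(v)}_{s_0}(W_{s_0})\ge 2^{-O(K_n)}$ and \Cref{lem:finite-threshold} at the good root $W_{s_0}$.

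The gap is in the lower tail. Routing it through \eqref{eq:X011bd} forces you to make $\pi_2-\pi_1^2$ small. That requires \emph{upper} bounds on the one- and two-walk threshold probabilities for walks started at $v$, and neither ingredient you cite supplies them. The comparison $X^{(v)}_{\tau}(W_\tau)\le 2w_{s_0,\tau}(W_{s_0},W_\tau)+n^{-1}(\log n)^C$ does not follow from \Cref{prop:finite-cover}. That result is proved for a root satisfying item (2) of \Cref{def:good-graph}, and its gap analysis (via $\mathcal T_{\rm ini}$ and injectivity of $\cP$ near the root) is unavailable exactly when $B_{K_n}(v)$ contains a cycle, which is the only case this proposition adds. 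Likewise, \Cref{lem:finite-threshold} rests on \Cref{thm:two-walker-clt} and \eqref{eq:2walker_Td_finite_comp}, which concern two walks from a common root at time $0$. After burn-in, your two walks restart from two generally distinct points $W^1_{s_0}, W^2_{s_0}$, so you would need a new two-walk CLT and finite-graph comparison for distinct starting points. As written, the proof is incomplete at the step you yourself flag as delicate.

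The missing idea is that no upper-tail control of the weights is needed. By \eqref{eq:mtheta_comp} with $\theta=(\log n)^8$, your one-walk lower bound already gives $\E\norm{X^{(v)}_{\tau_n(a)}-\one/n}_1\ge 2\Phi(-a)-C\delta$. Your convexity bound gives $\norm{X^{(v)}_{\tau_n(a)}-\one/n}_1\le 2\Phi(-a)+C\delta^{1/6}$ outside an event of probability $C\delta^{1/6}$. (The Markov step over $u$ produces an excess of order $\delta^{1/6}$, not $\delta^{1/3}$.) Combined with $\norm{\cdot}_1\le 2$, this yields $\E[(\norm{X^{(v)}_{\tau_n(a)}-\one/n}_1-2\Phi(-a))\vee 0]\le C\delta^{1/6}$. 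Subtracting the expectation lower bound gives $\E[(2\Phi(-a)-\norm{X^{(v)}_{\tau_n(a)}-\one/n}_1)\vee 0]\le C\delta^{1/6}$. Markov's inequality at level $\delta^{1/12}$ then gives the lower tail with the stated exponent; this is the paper's argument.

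If you prefer to keep \Cref{prop:threshold-L1}, the same observation repairs your route. The bound $M_\theta\le\frac12\norm{X^{(v)}_{\tau_n(a)}-\one/n}_1+\theta^{-1}$ together with the convexity bound gives $\pi_2=\E M_\theta^2\le\Phi(-a)^2+C\delta^{1/6}$. Hence $\pi_2-\pi_1^2\le C\delta^{1/6}$ without any polymer comparison at $v$, at the cost of a worse power of $\delta$.
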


\begin{proof}
Put $b_n:=8K_n=8 \lfloor 8 (\log(n))^{0.49}\rfloor$. Note that with $a_n:= a-\kappa_*^{3/2}b_n/({\sigma\sqrt{\log(n)}})$, we have
$
\tau_n(a_n)=\tau_n(a)-b_n$, and $a_n-a=o(1)$.
Thus the conclusion of Proposition \ref{prop:root-profile} holds for the time $\tau_n(a)$ replaced by $\tau_n(a_n)$ as well.

Below we fix $v\in V_n$. Then by \eqref{eq:chapman-kolmogorov}, we have
\[
  X^{(v)}_{\tau_n(a)}(\cdot)
  =
  \sum_{u\in V_n} X^{(v)}_{b_n}(u)
  w_{b_n,\tau_n(a)}(u,\cdot; \Pi^n).
\]
Note that
$ w_{b_n,\tau_n(a)}(u,\cdot; \Pi^n)$ is independent  of $X_{b_n}$ and has the same law as $X^{(u)}_{\tau_n(a)-b_n}$.

\smallskip

\noindent\textbf{Upper bound.} We first prove 
\begin{equation}\label{eq:xtau2}
     \P\left[ 
      \norm{X^{(v)}_{\tau_n(a)}-\one/n}_1-2\Phi(-a)
    >\delta^{1/6}
  \right]
  < C\delta^{1/6}.
\end{equation}
By the convexity of $L^1$ norm, we have
\begin{equation}\label{eq:xtau3}
      \norm{X^{(v)}_{\tau_n(a)}-\one/n}_1
  \le
  \sum_{u\in V_n} X^{(v)}_{b_n}(u)\norm{ w_{b_n,\tau_n(a)}(u,\cdot; \Pi^n)-\one/n}_1.
\end{equation}
By Proposition \ref{prop:root-profile}, for any $u$ such that $B_{K_n}(u)$ is a tree,
\[
    \P\big[ 
 \big|\norm{ w_{b_n,\tau_n(a)}(u,\cdot; \Pi^n)-\one/n}_1-2\Phi(-a)\big|
    >C\delta^{1/3}\big]
  < C\delta^{1/3}.
\]
Then we have 
\begin{equation*}
\begin{split}
     & \E\Big[
    \sum_{u\in V_n}X^{(v)}_{b_n}(u)
\Big(\norm{ w_{b_n,\tau_n(a)}(u,\cdot; \Pi^n)-\one/n}_1-2\Phi(-a)
      -C\delta^{1/3}\Big)\vee 0
  \Big] \\
  = &  \E \Big[ \Big(\norm{ w_{b_n,\tau_n(a)}(W^v_{b_n},\cdot; \Pi^n)-\one/n}_1-2\Phi(-a)
      -C\delta^{1/3}\Big)\vee 0 \Big]  \\
  \leq  &\P\big[\tx(B_{K_n}(W^v_{b_n}))\ge 1\big]+ C \delta^{1/3}.
\end{split}
\end{equation*}
 The upper bound  \eqref{eq:xtau2} can then be obtained from \eqref{eq:item2bd} and \eqref{eq:xtau3}, and Markov's inequality.
\smallskip

\noindent\textbf{Lower bound.}  
We next prove
\begin{equation}\label{eq:xtau6}
     \P\left[ 
      \norm{X^{(v)}_{\tau_n(a)}-\one/n}_1-2\Phi(-a)
    <-\delta^{1/12}
  \right]
  < C\delta^{1/12}.
\end{equation}
Consider the random variable $$
Z= \sum_{u\in V_n}X^{(v)}_{\tau_n(a)}(u)
  \mathds{1}\big[X_{\tau_n(a)}^{(v)}(u)>(\log(n))^8 /n\big].
$$
As in e.g., \eqref{eq:mtheta_comp}, we have
\[
  \norm{X^{(v)}_{\tau_n(a)}-\one/n}_1 \geq 2(Z- (\log(n))^{-8}),
\]  
which then implies
\begin{equation}\label{eq:xtau5}
    \E \left[ \norm{X^{(v)}_{\tau_n(a)}-\one/n}_1 \right]
    \geq 2(\E Z-(\log(n))^{-8})=2\big(\P\big[X^{*,(v)}_{\tau_n(a)}>(\log(n))^8/n \big] -(\log(n))^{-8}\big),
\end{equation}
where $X^{*,(v)}_{\tau_n(a)}=X^{(v)}_{\tau_n(a)}(W_{\tau_n(a)}^v)$. Note that by \eqref{eq:chapman-kolmogorov}, we have
\begin{equation}\label{eq:xtaun4}
    X^{*,(v)}_{\tau_n(a)} \geq  X^{*,(v)}_{b_n} w_{b_n,\tau_n(a)}(W_{b_n}^v, W_{\tau_n(a)}^v ; \Pi^n). 
\end{equation}
 A standard Poisson deviation bounds give that $\P[X^{*,(v)}_{b_n} < 2^{-9K_n}]<Ce^{-cK_n}$.
Thus with Lemma \ref{lem:finite-threshold}, \eqref{eq:item2bd}, and \eqref{eq:xtaun4}, we get  
$$
\P\Big[X^{*,(v)}_{\tau_n(a)}>\frac{(\log(n))^8}{n} \Big]\geq \P\Big[w_{b_n,\tau_n(a)}(W_{b_n}^v, W_{\tau_n(a)}^v)>2^{9K_n}\frac{(\log(n))^8}{n}  \Big] -o(1) > \Phi(-a)-C\delta,
$$
which, together with \eqref{eq:xtau5}, leads to 
\[
  \E\left[\norm{X^{(v )}_{\tau_n(a)}-\one/n}_1\right]
  > 2\Phi(-a)-C\delta.
\]
Combining this with the upper bound \eqref{eq:xtau2} we see that
\[
  \E\Big[
    \Big(
      2\Phi(-a)-\norm{X^{(v)}_{\tau_n(a)}-\one/n}_1
    \Big)\vee 0
  \Big]
  < C\delta^{1/6}.
\]
By Markov's inequality, this proves the lower bound \eqref{eq:xtau6}, and thus the conclusion follows.\end{proof}

\subsection{Good graph hypothesis for random regular graphs}\label{ssec:satisfy_RRG}
We now show that with high probability, the $\delta$-good conditions (1), (3), and (4) of \Cref{def:good-graph} hold for a random $d$-regular graph. 

\begin{prop}\label{prop:entropic-heat-kernel}
Let $n\to\infty$ through integers for which $nd$ is even, and $G_n=(V_n, E_n)$ be a uniformly random simple connected $d$-regular graph on $n$ vertices. 
Take any fixed $\delta>0$. As $n\to \infty$,  with probability $>1-o(1)$, $G_n$ satisfies (1), (3), and (4) in Definition \ref{def:good-graph}.
\end{prop}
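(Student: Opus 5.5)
We check the three conditions separately, each with probability $1-o(1)$, and conclude by a union bound. Condition (1), that $\tx(B_{R_n}(v))\le 1$ for all $v$ with $R_n=\lfloor\frac15\log_{d-1}(n)\rfloor$, is the standard local-structure estimate for random regular graphs. Working in the configuration model (contiguous with the uniform simple connected model at fixed $d$), the expected number of vertices $v$ whose $R_n$-ball carries two independent cycles is at most $n\cdot (d-1)^{4R_n}\cdot O(n^{-2})=O(n^{-1/5})$. Indeed, a ball of radius $R_n$ has $O((d-1)^{R_n})$ half-edges, and each of two extra matchings closes inside it with probability $O((d-1)^{R_n}/n)$. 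This is exactly \cite[Lemma~2.1]{LS}. Condition (3) is Friedman's theorem: the second eigenvalue of the adjacency matrix is at most $2\sqrt{d-1}+o(1)$ whp, so $\lambda_{2,n}\ge d-2\sqrt{d-1}-o(1)$. This gives a fixed $\lambda_{\rm sp}>0$. A weaker uniform expansion bound via Bollob\'as's isoperimetric estimate and Cheeger's inequality would also suffice.

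Condition (4) is the substantive part. For \eqref{eq:TV-mixing}, note that $T_2^-=\log(n)/\kappa_4$ exceeds $(1+c)\log(n)/\kappa_{\rm RW}$, since $\kappa_4<\kappa_{\rm RW}$. By the Lubetzky--Sly worst-case cutoff theorem \cite{LS}, after Poissonization to our rate, for every fixed $\delta$ the total variation distance $\max_v\|p_s^n(v,\cdot)-\one/n\|_{\rm TV}\le\delta$ holds for all $s\ge(1+c)\log(n)/\kappa_{\rm RW}$ whp. Monotonicity in $s$ then handles all $s\ge T_2^-$. The left side of \eqref{eq:TV-mixing} is exactly this TV distance, so this part is immediate.

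For \eqref{eq:entropic-mixing} we must show, uniformly in $v$ and in $s\in(T_1,T_2+2\varrho\log n+4]$, that the mass of $p_s^n(v,\cdot)$ above level $e^{-\kappa_2 s}$ is at most $\delta$. The plan has two regimes.

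In the first regime, $s\le T_{\rm loc}:=\frac{\log n}{10\kappa_{\rm RW}}$. Here the $R_n$-ball around $v$ has tree excess at most one, so we lift the walk to the universal cover, or to the cover of a unicyclic graph. This gives $p_s^n(v,u)\le \sum_{\tilde u\in\cP^{-1}(u)}p_s(\tilde v,\tilde u)$, and with at most one cycle the number of relevant lifts within distance $s$ is $O(s)$. Combining with \eqref{eq:tree-entropic-tail} and the bound $p_s(o,w)\lesssim(d-1)^{-\dist(o,w)}$, the mass where $p^n_s>e^{-\kappa_2 s}$ is bounded by $\P[p_s(o,\tilde W_s)>e^{-\kappa_2 s}/O(s)]+\P[\text{walk exits }B_{R_n}]$. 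Both are $e^{-cs}\le e^{-cT_1}=o(1)$, because $\kappa_2<\kappa_{\rm RW}$.

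In the second regime, $s\in[T_{\rm loc},T_2+O(\log n)]$, we use the Lubetzky--Sly mechanism \cite{LS}. For the uniform random graph, the walk run for time $s$ has, whp uniformly in the start, $-\log p^n_s(v,W_s)\ge(\kappa_{\rm RW}-\eta)s\wedge(1-\eta)\log n$. Their proof shows that, up to time $\log n/\kappa_{\rm RW}$, the heat kernel at the walk's endpoint is dominated by the tree kernel along a tree-like exploration, plus an $n^{-1+o(1)}$ correction. This is the content of their "entropic time" lower and upper bounds and of the $L^2$ analysis beyond it. Since $e^{-\kappa_2 s}\ge n^{-\kappa_2/\kappa_3-o(1)}\gg n^{-1+c}$ for $s\le T_2+O(\varrho\log n)$, and $\varrho$ is small relative to $1/\kappa_2-1/\kappa_3$, the $n^{-1}$ floor never exceeds the threshold. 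The tree-like part contributes mass $o(1)$ above the threshold, as in the first regime. We take a union bound over a grid of $s$ with spacing $1$, and handle intermediate times using $p^n_{s'}\le e\,p^n_{s}$ for $|s-s'|\le 1$.

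\textbf{Main obstacle.} The second regime is the main obstacle. It requires a quantitative, uniform-in-start version of the Lubetzky--Sly heat kernel bound at times of order $\log n$ below the cutoff. I would try first to extract it directly from their construction of the "good" neighborhoods $B_{R}(v)$ with $R\approx\frac12\log_{d-1}n$, combined with the Chapman--Kolmogorov splitting $p^n_s=p^n_{s_1}p^n_{s-s_1}$. In that splitting the first factor is handled in the tree regime and the second factor by the spectral bound $\max_{u,w}p^n_{r}(u,w)\le 1/n+e^{-cr}$.
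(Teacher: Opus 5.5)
Your treatment of (1), (3) and \eqref{eq:TV-mixing} matches the paper: Lubetzky--Sly's local tree-excess lemma, Friedman's theorem, and the Poissonized worst-case TV cutoff of \cite{LS}. The gap is in \eqref{eq:entropic-mixing} for $s$ of order $\log n$ up to $t_\epsilon:=(1+\epsilon)\log(n)/\kappa_{\rm RW}$, which is your second regime. You flag this regime yourself but do not close it.

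You assert that \cite{LS} gives, uniformly in the start, $-\log p^n_s(v,W_s)\ge(\kappa_{\rm RW}-\eta)s\wedge(1-\eta)\log n$, i.e.\ tree-kernel domination plus an $n^{-1+o(1)}$ error. That is not a result stated there, and extracting it from their proof would be a substantial new local heat-kernel argument. Your fallback also fails. Splitting $p^n_s=p^n_{s_1}p^n_{s-s_1}$ and using $\max p^n_r\le 1/n+e^{-cr}$, or bounding the mass above $e^{-\kappa_2 s}$ by $p^n_{2s}(v,v)e^{\kappa_2 s}$, gives decay at most at twice the spectral rate $\lambda_{2,n}/(2d)$. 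By Alon--Boppana that rate is at most $(d-2\sqrt{d-1})/(2d)+o(1)$, so twice it is below $1$. By contrast, $\kappa_2\ge\frac25\kappa_{\rm RW}=\frac{d-2}{5d}\log(d-1)$, which exceeds $1$ for large $d$; for small $d$ nothing relates the two quantities.

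The missing idea is that no local heat-kernel information is needed. The paper argues by contradiction, using only TV mixing at the single time $t_{2\epsilon}$. For $s\ge t_\epsilon$, the threshold $e^{-\kappa_2 s}\ge n^{-1+c}$ exceeds $1/n$, so \eqref{eq:entropic-mixing} follows from TV mixing, as you noted.

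For $s\in(T_1,t_\epsilon)$, fix $v$ and set $A_v=\{u:p^n_s(v,u)>e^{-\kappa_2 s}\}$, so that $|A_v|<e^{\kappa_2 s}$. Suppose $p^n_s(v,A_v)\ge\delta$. The walk's displacement on $G_n$ is at most that of its lift to $\T$, and $t_{2\epsilon}-s\ge\epsilon\log(n)/\kappa_{\rm RW}$. Hence, with probability $\ge\delta-o(1)$, $\dist(W^v_{t_{2\epsilon}},A_v)\le(\nu_d+\epsilon)(t_{2\epsilon}-s)$, where $\nu_d=\frac{d-2}{2d}$. This neighbourhood of $A_v$ has at most $e^{\kappa_2 s}d(d-1)^{(\nu_d+\epsilon)(t_{2\epsilon}-s)}=\exp\{(1+O(\epsilon))\log n-(\kappa_{\rm RW}-\kappa_2)s\}\le n^{1-c}$ vertices, because $s>T_1\asymp c_*\log n$ and $\epsilon$ is small. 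Thus the TV distance of $p^n_{t_{2\epsilon}}(v,\cdot)$ from $\one/n$ is at least $\delta-o(1)$, contradicting \cite{LS}.

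This argument covers all $v$ and all $s$ in the range at once, with no grid or union bound in $s$. It also makes your tree-lifting first regime unnecessary.
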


\begin{proof}
The uniform local tree-excess estimate in \cite[Lemma~2.1]{LS} gives  condition (1).  
Friedman's theorem \cite{Friedman} gives condition (3) for a suitable fixed
$\lambda_{\rm sp}>0$. It remains to verify condition (4).

Using \cite[Theorem 1]{LS}, adapted to the continuous time setting via a Poissonisation, we have that for every fixed $\epsilon>0$, as $n\to\infty$,
\begin{equation}    \label{eq:TV-mixingP}
  \max_{v\in V_n} \| p_{t_\epsilon}^n(v,\cdot) - \one/n \|_1
  \xrightarrow{\Pp}0,
\end{equation}
where  $p_s^n$ denotes the time $s$ transition probability of random walk on $G_n$, with edge jump rate $1/(2d)$, and $t_{\epsilon}:= \frac{(1+\epsilon) \log(n)}{\kappa_{\rm RW}}$.
In particular, this implies that \eqref{eq:TV-mixing} holds with probability $1-o(1)$.

It remains to get \eqref{eq:entropic-mixing}. Take $\epsilon>0$, small enough depending on all other parameters. From \eqref{eq:TV-mixingP}, it suffices to consider $T_1<s<t_\epsilon$.
Assume that 
\eqref{eq:entropic-mixing} fails for some $v$ and $s$. Then the set 
$A_v:=\{u\in V_n:p_s^n(v,u)>e^{-\kappa_2s}\}$ 
satisfies that
$
\P[W_s^v\in A_v ]\geq \delta$ and $|A|\leq e^{\kappa_2 s}$.
By comparison with random walks on infinite $d$-regular tree, we see that
$$
\P[\dist(W^v_{t_{2\epsilon}}, A_v)<((d-2)/(2d)+\epsilon)(t_{2\epsilon} -s)   ]\ge \delta/2 ,
$$
for $n$ large enough (depending on $\delta,\epsilon$). Note that
$$
|\{u\in V_n: \dist(u, A_v)<((d-2)/(2d)+\epsilon)(t_{2\epsilon} -s)\}|
\leq e^{\kappa_2 s} d(d-1)^{((d-2)/(2d) +\epsilon)(t_{2\epsilon} -s) }
=o(n),
$$
which implies $$\|p_{t_{2\epsilon}}^n(v,\cdot), \one/n\|_1> \delta - o(1).$$ 
As the probability of above event tends to 0 as $n\to \infty$ by \eqref{eq:TV-mixing}, we conclude that
\eqref{eq:entropic-mixing} also holds with probability $1-o(1)$. This concludes the verification of condition (4) with probability $1-o(1)$.
\end{proof}

\begin{proof}[Proof of \Cref{thm:main-rdreg,thm:intro-tree-clt}]
\Cref{thm:main-rdreg} follows from applying \Cref{prop:any-root-profile,prop:entropic-heat-kernel} with fixed $\delta>0$ and $n\to \infty$, and then sending $\delta\to 0$.
\Cref{thm:intro-tree-clt} then follows from \Cref{thm:main-rdreg,thm:tree-clt}.
\end{proof}

\section{Lower bound of $\kappa_*$ for regular graphs}
\label{sec:general-lower}
In this section we establish Theorem \ref{thm:intro-general-lower}, the lower bounds of $L^1$ mixing time on general regular graphs, and finish proving the $\kappa_*<\log(2)$ and $\lim_{d\to\infty}\kappa_*=\log(2)$ part of Theorem  \ref{thm:kappasbounds}. 

\subsection{Proof of Theorem \ref{thm:intro-general-lower}}
Recall the definition of $\epsilon$-mixing time $t_{n,\epsilon}$ in \eqref{eq:tn_epsilon}. 
We now prove the general degrees $d_n$ bound \eqref{eq:general_mixing_lower} and the fixed degree $d$ bound \eqref{eq:bounded_mixing_lower}, respectively.

For the graph $G_n=(V_n, E_n)$, we let $\Pi^n$ be the rate $1/d_n$ Poisson point process on $E_n\times \R$. 
For any vertex  $v\in V_n$. Consider the random walk $W^{(n,v)}$ on $G_n$ generated by $\Pi^n$ on time $[0, \infty)$, started at $v$.

\smallskip

\noindent\textbf{Regular graphs of general degrees $d_n$.} 
For any $v\in V_n$, $t>0$, and $\Lambda_t^{\rm inc}\sim \Poi(t)$, we have
\[
    -\log_2 X^{(n,v)}_t(W_t^{(n,v)}) \le\Lambda_t^{\rm inc},
\]
in the sense of stochastic domination.
For any $\epsilon'<1$, we then have 
\begin{equation*} 
    \P[X^{(n,v)}_{(1-\epsilon')\log_2(n)}(W_{(1-\epsilon')\log_2(n)}^{(n,v)}) \geq 
 n^{-(1-\epsilon'/2)}
    ]\geq 1-\P[\Lambda_{(1-\epsilon')\log_2(n)}^{\rm inc} \geq (1-\epsilon'/2)\log_2(n)] > 1- n^{-c\epsilon'}, 
\end{equation*}
which (as e.g., \eqref{eq:mtheta_comp}) implies
\begin{equation}\label{eq:general_mixing_lb2}
     \E \left[ \norm{X^{(n,v)}_{(1-\epsilon')\log_2 n  }-\one/n}_1 \right] \geq 2(1-n^{-c\epsilon'})-2n^{-\epsilon'/2}.
\end{equation}
Hence for any $\epsilon >0$, 
\[
    \liminf_{n\to \infty} \frac{t_{n,\epsilon}}{(1-\epsilon')\log_2 n }\geq 1.
\]
Taking $\epsilon'\to 0$ leads to \eqref{eq:general_mixing_lower}.

\smallskip

\noindent\textbf{Regular graphs of fixed degree $d_n\equiv d$.}
For any $v\in V_n$ and $i\in \mathbb{N}$,  consider the event $\mathcal E_i$:
\begin{enumerate}
\item $|\{(e, s)\in \Pi^n: i\le s \le i+1/4, W_s^{(n,v)}\in e\}|=1$;
\item for the unique $(e_*, s_*)\in \Pi^n$ such that $W^{(n,v)}_{s_*}\in e_*$ and $i\le s_*\le i+1/4$, let $s_+=\min\{s>s_*: \exists (e,s)\in \Pi^n, e\cap e_*\neq\emptyset\}$, then we have $s_+\le i+1$ and $\{e\in E_n: (e, s_+)\in \Pi^n\}=\{e_*\}$.
\end{enumerate}
In words, $W^{(n,v)}$ is incidental to exactly one Poisson point $(e_*, s_*)$ in time $[i, i+1/4]$, and the first Poisson point incidental to $e_*$ after $s_*$ is before $i+1$ and still at $e_*$.

Under $\mathcal E_i$, one can see that the Poisson clock ring at $s_+$ is redundant in terms of repeated averaging: the mass distribution is unchanged. 
Let $\Lambda_t^{\rm red}$ denote the number $i\in\N$, $i+1\le t$, such that $\mathcal E_i$ happens. 
Then for any deterministic starting point $v$,  we have that $ -\log_2 X^{(n,v)}_t(W_t^{(n,v)})  + \Lambda_t^{\rm red}$ is stochastically dominated by $\Poi(t)$.
On the other hand, since $d_n=d$, there exists a constant $p_d>0$ such that
$$
\P[\mathcal E_i\,|\, \Pi^n\cap (E_n\times [0,i]), W^{(n,v)}|_{[0,i]}]>p_d, \quad \forall i\in \mathbb{N}.
$$
Thus Chernoff bound gives 
\[
    \P\big[  \Lambda_t^{\rm red}\ge p_dt/2 \big] > 1-e^{-ct},
\]
which implies
\[
    \P\big[-\log_2 X^{(n,v)}_t(W_t^{(n,v)}) \leq (1-p_d/3) t\big]> 1-e^{-ct}. 
\]
Set $\underline c_d=(1-p_d/4)^{-1}>1$.
Analogously to \eqref{eq:general_mixing_lb2},  we get
\[
       \E \left[ \norm{X^{(n,v)}_{\underline c_d \log_2(n)  }-\one/n}_1 \right] \geq 2-n^{-c},
\]
which further implies \eqref{eq:bounded_mixing_lower}. \qed

\subsection{Completion of the proof of Theorem \ref{thm:kappasbounds}}
The inequality $0<\kappa_*<\kappa_{\rm RW}$ has been established in 
Proposition \ref{prop:kappa-star}. The bound $\kappa_*<\log(2)$ follows from \eqref{eq:bounded_mixing_lower} of Theorem \ref{thm:intro-general-lower}. 
It remains to prove that
\[
    \liminf_{d\to \infty} \kappa_*(d)\geq \log(2).
\]
Here and below, we write $\kappa_*(d)$ for $\kappa_*$ to emphasize its dependence on $d$.
All the implicit constants and $C,c>0$ below are independent of $d$.

We work on the infinite $d$-regular tree $\T=(V,E)$ with root $o$, equipped with a rate $1/d$ Poisson point process $\Pi^\T$.
Write $X_t(v)=w_{0,t}(o,v;\Pi^\T)$ for every $t\ge 0$ and $v\in V$, and let $W$ be the random walk generated by $\Pi^\T$ on time interval $[0,\infty)$, starting from $o$.

The task of lower bounding $\kappa_*(d)$ is essentially upper bounding $X_t(W_t)$, according to \eqref{eq:kappa-star}.
For this, we need to control the weights of vertices around the walk $W$. 
For any $i\in\Z_{\ge 0}$, we call $i$ a \emph{good starting time}
if $W_i$ has at least $d-d^{3/4}$ children with zero weight at time $i$,
and denote this event by $\mathcal E_{\rm good}(i;d)$.
We call $[i,i+1)$ a \emph{good interval} if $i$ is a good
starting time, and for every $(\{W_{t-}, v\}, t)\in \Pi^\T$, $t\in [i,i+1)$, we have $X_{t-}(v)=0$.
We also denote this event by $\mathcal E_{\rm good}(i,i+1;d)$.

We proceed in two steps:  we first upper bound  $\P[\mathcal E^c_{\rm good}(i;d)]$; we then turn this into an upper bound for $\P(\mathcal E^c_{\rm good}(i,i+1;d))$, and use  \eqref{eq:forward-ratio-one} to handle the decay of $X_t(W_t)$ in intervals that are not good.

\smallskip

\noindent\textbf{Good starting time.}
Define a sequence of times $\tau_0=0<\tau_1<\tau_2<\cdots$
as follows: 
\[
\tau_k = \min\{t>\tau_{k-1}: X_{t-}(W_t)=0; \, W_s\neq W_{t-}, \,\forall s\ge t\},
\]
i.e., $\tau_k$ is the first time after $\tau_{k-1}$ at which
$W$ jumps to a vertex with zero weight immediately before the jump,
and never traverses the edge again.
We note that there is a renewal structure: whenever $W$ jumps to a vertex with zero weight immediately before the jump, which is a stopping time, with probability $\frac{d-2}{d-1}$ and independent of the history, $W$ will never traverse the edge again.
In particular, $\tau_k-\tau_{k-1}$ for all $k\ge 2$ are i.i.d.~random variables.

We next prove an upper tail for $\tau_2-\tau_1$.
Given $\tau_1$ and $W_{\tau_1}$, the law of $W|_{[\tau_1, \infty)}$ is a continuous time random walk on $\T$ with edge jump rate $1/(2d)$, conditioned never to visit the parent of $W_{\tau_1}$.
For $t>0$, let
\[
  \mathcal E_t
  :=\{\dist(W_{\tau_1+t}, W_{\tau_1})\ge t/4,\ U_t< t/16,\
          \dist(W_{\tau_1+s}, W_{\tau_1})>t/8,\;\forall s>t\},
\]
where $U_t=|\{s\in [\tau_1, \tau_1+t]: \dist(W_s, W_{\tau_1}) = \dist(W_{s-}, W_{\tau_1})-1\}|$, the number of backward steps in $[\tau_1, \tau_1+t]$.
For $d$ large enough, we have
\begin{equation}   \label{eq:Etcupbd}
    \P[\mathcal E_t^c]<Ce^{-ct}.
\end{equation}

Now conditional on $\tau_1$ and $W$ such that $\mathcal E_t$ holds, there exist at least $\lfloor t/16 \rfloor$ vertices on the geodesic connecting $W_{\tau_1}$ and $W_{\tau_1+t}$, such that the edge joining the vertex and its parent is traversed exactly once by $W$ after $\tau_1$.
Then we have $\tau_2\le \tau_1 + t$, unless that for each one of these $\lfloor t/16 \rfloor$ vertices $u$, and $\hat{s}=\min\{s: W_s=u\}$, we have $X_{\hat{s}-}(u)>0$.
Using \Cref{lem:palm-law}, we see the probability of this event, conditional on $\tau_1$ and $W$, is bounded by the probability that the sum of $\lfloor t/16\rfloor$ independent exponential random variables with mean $d$ is at most $t$.
For $d$ large enough this is $<Ce^{-ct}$.
Thus with \eqref{eq:Etcupbd}, we conclude that, for $d$ large enough, 
\begin{equation}   \label{eq:tau12bd}
\P[\tau_2-\tau_1>t]<Ce^{-ct}    
\end{equation} 
On the other hand, $\tau_2$ is not smaller than the time of the next incident Poisson point to $W_{\tau_1}$. Hence $\tau_2-\tau_1$ stochastically dominates an exponential random variable with mean 1.

Denote by $\T(u)$ the subtree rooted at $u\in V$.
With the renewal theorem, we now have
\begin{equation}\label{eq:renew}
  \begin{split}
  &\lim_{i\to\infty}\P[\mathcal E_{\rm good}(i;d)^c]\\
  &\quad=
  \frac{1}{\E[\tau_2-\tau_1]}
  \E\!\left[
    \int_{\tau_1}^{\tau_2}
      \mathds{1}[ |\{v\in \T(W_s): X_s(v)>0, \dist(W_s, v)=1\}| >d^{3/4}-1]
       \,\dd s\right].
  \end{split}
\end{equation}
Then consider the following event $\mathcal E_*$: 
for $s_*=\min\{s\ge \tau_1: W_s\neq W_{\tau_1}\}$, we have 
\begin{itemize}
    \item $X_{s_*-}(W_{s_*})=0$, 
    \item $s_*\le \tau_1 +\sqrt{d}$, 
    \item $|\{ v \in \T(W_{\tau_1}) : X_{s_*}(v)>0, \dist(W_{\tau_1}, v)=1\}|\le d^{3/4}-1$,
    \item $W_s\neq W_{\tau_1}$ for any $s\ge s_*$.
\end{itemize}
Under $\mathcal E_*$, we must have $\tau_2=s_*$, and the integral in \eqref{eq:renew} would be zero.
On the other hand, Poisson estimates imply that $\P[\mathcal E_*^c]<C/d$.
This with \eqref{eq:tau12bd} and \eqref{eq:renew}, and using Cauchy-Schwarz, yields
\begin{equation}   \label{eq:good_event_lb}
  \lim_{i\to\infty}\P[\mathcal E_{\rm good}(i;d)^c]
  \le
  \frac{\sqrt{\E[(\tau_2-\tau_1)^2]\,
                       \P[\mathcal E_*^c]}}
       {\E[\tau_2-\tau_1]}
  < \frac{C}{\sqrt d}.
\end{equation}

\noindent\textbf{Lower bound on entropy increase.}
Let \[\tau=\min\{s>i: \exists v\in V, \text{ such that }X_{s-}(v)>0, (\{W_{s-},v\}, s)\in \Pi^\T\}.\]
Conditional on $\mathcal E_{\rm good}(i;d)$, unless that
\[
|\{(e, s)\in \Pi^\T: s\in [i, i+1), W_s \in e\}| > d^{1/8},
\]
for each $s\in [i, (i+1)\wedge\tau)$, we have $|\{v\in V: \dist(v,W_s)=1, X_s(v)>0\}|\le d^{3/4}+d^{1/8}$.
Thus by Poisson estimates, we have 
\[
\P[\mathcal E_{\rm good}(i,i+1;d)|\mathcal E_{\rm good}(i;d) ] > 1-Ce^{-cd^{1/8}} - d^{1/8}\cdot\frac{d^{3/4}+d^{1/8}}{d} > 1-Cd^{-1/8}.
\] 
Combined with  \eqref{eq:good_event_lb}, this leads to
\begin{equation}\label{eq:limif_good_int}
 \liminf_{i\to \infty}   \P[\mathcal E_{\rm good} (i,i+1;d)]> 1-Cd^{-1/8}.
\end{equation} 
On $\mathcal E_{\rm good}(i,i+1;d)$, we have the exact identity
\begin{equation}    \label{eq:Xiiplog2}
\log X_i(W_i)-\log X_{i+1}(W_{i+1})=  \log(2)|\{(e, s)\in\Pi^\T: s\in [i, i+1), W_s \in e\}|.    
\end{equation}
We next consider $[i, i+1)$ that is not good.
The second bound in \eqref{eq:forward-ratio-one} and the inequality $(\log(x))^2 \leq x$ for $x\geq 1$ imply
\[
    \E\left[ 
    \left(\log \frac{X_{i+1}(W_{i+1}) }{X_i(W_i)} \right)^2 \mathds{1}[ X_{i+1}(W_{i+1})\geq X_i(W_i) ]\right]\leq \E \frac{X_{i+1}(W_{i+1}) }{X_i(W_i)}\le 1.
\]
We can then lower bound $ \E\left[  \log X_i(W_i)-\log X_{i+1}(W_{i+1})  \right] $ by
\begin{equation}\label{eq:ent_inc_larged}
\begin{split}
     &  \E\left[ \log \frac{X_i(W_i)}{X_{i+1}(W_{i+1}) }
     \mathds{1}[\mathcal E_{\rm good}(i,i+1;d)] \right]
     + \E\left[ \log \frac{X_i(W_i)}{X_{i+1}(W_{i+1}) }\mathds{1}[
     \mathcal E_{\rm good}(i,i+1;d)^c ] \right]
\\
\geq  & 
\log (2) \E[|\{(e, s)\in\Pi^\T: s\in [i, i+1), W_s \in e\}| \mathds{1}[\mathcal E_{\rm good}(i,i+1;d)] ]\\
&-    \E\left[ \log \frac{X_{i+1}(W_{i+1}) }{X_i(W_i)}  \mathds{1}[ X_{i+1}(W_{i+1})\geq X_i(W_i),\mathcal E_{\rm good}(i,i+1;d)^c ]\right]\\
\geq & \log(2) -C \sqrt{\P[\mathcal E_{\rm good}(i,i+1;d)^c]},
\end{split}
\end{equation}
where we used \eqref{eq:Xiiplog2} for the first inequality, and Cauchy-Schwarz for the second inequality.

Taking $\liminf$ in \eqref{eq:ent_inc_larged} and applying \eqref{eq:limif_good_int}, we get
\begin{equation}
    \liminf_{i\to \infty}   \E[ \log X_i(W_i)-\log X_{i+1}(W_{i+1})] \geq \log(2) - Cd^{-1/16}.
\end{equation}
Summing these increments for $i$ from 0 to $n-1$ and using \eqref{eq:kappa-star} gives
$\kappa_*(d)\geq\log(2)-Cd^{-1/16}$.
This proves $\kappa_*(d)\to\log(2)$ as $d\to\infty$. \qed

\bibliographystyle{alpha}
\bibliography{bibliography}

\end{document}